\DeclareMathOperator{\coker}{coker }
\providecommand{\U}[1]{\protect\rule{.1in}{.1in}}
\newtheorem{theorem}{Theorem}
\newtheorem{corollary}[theorem]{Corollary}
\newtheorem{definition}[theorem]{Definition}
\newtheorem{lemma}[theorem]{Lemma}
\newtheorem{proposition}[theorem]{Proposition}
\newcommand{\IMG}{\textup{IMG}}
\newcommand{\QIMG}{\textup{QIMG}}
\newcommand{\IMQ}{\textup{IMQ}}
\newcommand{\Core}{\textup{Core}}
\newcommand{\Aut}{\textup{Aut}}
\newcommand{\Dis}{\textup{Dis}}
\theoremstyle{remark}
\begin{document}

\title{Multivariate Alexander quandles, II. The involutory medial quandle of a link (corrected)}
\author{Lorenzo Traldi\\Lafayette College\\Easton, PA 18042, USA\\traldil@lafayette.edu
}
\date{ }
\maketitle

\begin{abstract}
Joyce showed that for a classical knot $K$, the involutory medial quandle $\textup{IMQ}(K)$ is isomorphic to the core quandle of the homology group $H_1(X_2)$, where $X_2$ is the cyclic double cover of $\mathbb S ^3$, branched over $K$. It follows that $|\textup{IMQ}(K)| = | \det K |$. In the present paper, the extension of Joyce's result to classical links is discussed. Among other things, we show that for a classical link $L$ of $\mu \geq 2$ components, the order of the involutory medial quandle is bounded as follows:
\[
\frac{\mu | \det L |}{2} \geq |\textup{IMQ}(L)| \geq \frac{ \mu | \det L |} {2^{\mu -1}}.
\]
In particular, $\textup{IMQ}(L)$ is infinite if and only if $\det L =0$. We also show that in general, $\textup{IMQ}(L)$ is a strictly stronger invariant than $H_1(X_2)$. That is, if $L$ and $L'$ are links with $\textup{IMQ}(L) \cong \textup{IMQ}(L')$, then $H_1(X_2) \cong H_1(X'_2)$; but it is possible to have $H_1(X_2) \cong H_1(X'_2)$ and $\textup{IMQ}(L) \not \cong \textup{IMQ}(L')$. In fact, it is possible to have $X_2 \cong X'_2$ and $\textup{IMQ}(L) \not \cong \textup{IMQ}(L')$.

\emph{Keywords}: Alexander module; branched double cover; determinant; involutory medial quandle; link coloring.

Mathematics Subject Classification 2020: 57K10
\end{abstract}

\section{Introduction}

Let $\mu$ be a positive integer, and let $L=K_1 \cup \dots \cup K_{\mu}$ be a classical link of $\mu$ components. That is, $K_1, \dots, K_{\mu}$ are pairwise disjoint, piecewise smooth knots (closed curves) in $\mathbb{S}^3$. Almost forty years ago, Joyce \cite{J} and Matveev \cite{M} introduced a powerful invariant of oriented links, the fundamental quandle $Q(L)$. Both the theory of link quandles and the general theory of quandles have seen considerable development since then. 

In this paper we focus on involutory medial quandles. (Joyce \cite{J} used the term ``abelian'' rather than ``medial.'') Involutory medial quandles are much simpler than arbitrary quandles, and they provide invariants of unoriented links. They are defined as follows.

\begin{definition}
\label{qdef}
An \emph{involutory medial quandle} is a set $Q$ equipped with a binary operation $\triangleright$, which satisfies the following properties.
\begin{enumerate}
\item (idempotence property) $x\triangleright x=x$ $\forall x \in Q$.
\item (involutory property) $(x\triangleright y) \triangleright y=x$ $\forall x,y \in Q$.
\item (right distributive property) $(x\triangleright y) \triangleright z=(x\triangleright z) \triangleright (y\triangleright z)$ $\forall x,y,z \in Q$.
\item (medial property) $(w\triangleright x) \triangleright (y\triangleright z)=(w\triangleright y) \triangleright (x\triangleright z)$ $\forall w,x,y,z \in Q$.
\end{enumerate}
\end{definition}

All the quandles we consider in this paper satisfy Definition \ref{qdef}, but we should mention that for general quandles the medial property is removed, and the involutory property is replaced by the weaker requirement that for each $y \in Q$, the formula $\beta_y(x)=x \triangleright y$ defines a permutation $\beta_y$ of $Q$. 

A particular type of involutory medial quandle is the core quandle of an abelian group.

\begin{definition}
\label{Core}
If $A$ is an abelian group, then the \emph{core quandle} $\textup{Core}(A)$ is an involutory medial quandle on the set $A$, defined by $a \triangleright b = 2 b - a$.
\end{definition}

In this paper we use the word ``link'' to mean an unoriented classical link; when we occasionally refer to an oriented link, we say so. A link is denoted $L=K_1 \cup \dots \cup K_{\mu}$, and $D$ denotes a diagram of $L$. As usual, $D$ is obtained from a generic projection of $L$ in the plane, i.e.\ a projection whose only singularities are crossings (transverse double points). At each crossing, two short segments are removed to distinguish the underpassing curve from the overpassing curve, as indicated in Fig.\ \ref{crossfig}. The effect of removing these segments is to cut the images of $K_1, \dots, K_ \mu$ into arcs. We use $A(D)$ to denote the set of arcs of $D$, and $C(D)$ to denote the set of crossings of $D$. 

\begin{figure} [bth]
\centering
\begin{tikzpicture} [>=angle 90]
\draw [thick] (1,0.5) -- (-0.6,-0.3);
\draw [thick] (-0.6,-0.3) -- (-1,-0.5);
\draw [thick] (-1,0.5) -- (-.2,0.1);
\draw [thick] (0.2,-0.1) -- (1,-0.5);
\node at (1.4,0.5) {$a$};
\node at (-1.4,0.5) {$b$};
\node at (1.4,-0.5) {$b'$};
\end{tikzpicture}
\caption{The arc labels $b$ and $b'$ may be interchanged.}
\label{crossfig}
\end{figure}
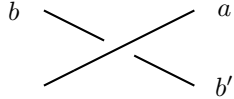

When we use notation like $S(L)$ to denote a structure $S$ obtained from a diagram $D$, we implicitly intend that the structure is a link type invariant, up to the appropriate kind of isomorphism. For the structures discussed in this paper, invariance is well known and may be verified using the Reidemeister moves.

\begin{definition}
\label{imq}
Let $D$ be a diagram of a link $L$. Then $\textup{IMQ}(L)$ is the involutory medial quandle generated by elements $q_a$, $a \in A(D)$, subject to the requirement that if Fig.\ \ref{crossfig} represents a crossing of $D$, then $q_b \triangleright q_a= q_{b'}$ and $q_{b'} \triangleright q_a= q_b$.
\end{definition}
Joyce \cite{J} used the notation $\mathrm{AbQ}_2(L)$ rather than $\textup{IMQ}(L)$. He proved the following. 

\begin{theorem} (Joyce \cite[Sec.\ 18]{J})
\label{joyceknot}
For a classical knot $K$, $\textup{IMQ}(K)$ is isomorphic to the core quandle of the homology group $H_1(X_2)$, where $X_2$ is the cyclic double cover of $\mathbb S ^3$, branched over $K$. It follows that $|\IMQ(K)|=|\det K|$.
\end{theorem}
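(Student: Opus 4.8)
The plan is to identify $\IMQ(K)$ with $\Core\bigl(H_1(X_2)\bigr)$ by rewriting the presentation of $\IMQ(K)$ given in Definition~\ref{imq} as a presentation of an abelian group together with its core structure. The guiding observation is that the core operation $a\triangleright b = 2b-a$ is exactly the Alexander-quandle operation $ta+(1-t)b$ specialized at $t=-1$; thus $\Core(A)$ is the affine involutory medial quandle built on $A$ with $t=-1$. Accordingly the real content is to show that $\IMQ(K)$ is affine and that the underlying abelian group is $H_1(X_2)$.

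First I would fix a basepoint arc $a_0$ and pass to difference coordinates $x_a = q_a - q_{a_0}$, regarding $\IMQ(K)$ as a torsor over an abelian group. Under the core operation the crossing relation $q_{b'} = q_b\triangleright q_a = 2q_a - q_b$ becomes $x_b + x_{b'} = 2x_a$, which is precisely the relation one reads off a diagram crossing in the standard Fox/Goeritz presentation of the homology of the branched double cover. Hence the abelian group generated by the arc labels subject to these relations is $H_1(X_2)$; the free $\mathbb{Z}$ summand present in the unnormalized coloring group (coming from the globally constant colourings $x_a\equiv c$) is removed by fixing the basepoint. The technical heart is to verify that no further collapsing occurs, i.e.\ that $\IMQ(K)$ genuinely is the affine quandle on this group and not a proper quotient. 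For this I would invoke the structure theory of medial quandles: the displacement maps $\beta_y\beta_z^{-1}$ commute by the medial law and generate an abelian group acting on $Q$ by translations, which forces $\IMQ(K)$ into affine form over some $(A',\sigma)$; matching the crossing relations then forces $\sigma = -\mathrm{id}$, so the affine structure is the core structure, and the presentation forces $A'\cong H_1(X_2)$.

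The step I expect to be the main obstacle is pinning down that $\IMQ(K)$ is the \emph{full} core quandle on $H_1(X_2)$, and in particular that it is connected. Here the special feature of a knot enters decisively: $\det K$ is odd, so $|H_1(X_2)|$ is odd and $H_1(X_2)$ is uniquely $2$-divisible. In a core quandle the displacement group acts by translations lying in $2A$, so when $A$ is $2$-divisible any two elements are joined by a translation, and the quandle generated by any group-generating set of arcs is all of $\Core(A)$. This is exactly where the knot case is cleaner than the link case: even determinants, hence $2$-torsion in $H_1(X_2)$, are what force the more delicate behaviour (disconnectedness, the factors of $2$ and $\mu$) studied in the rest of the paper.

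Finally, the cardinality statement follows immediately: for finite $A$ one has $|\Core(A)| = |A|$, and the order of $H_1(X_2)$ for the double branched cover of a knot is the classical quantity $|\det K|$, for instance via $|\det(V+V^{\mathrm{T}})|$ for a Seifert matrix $V$, or via the Goeritz matrix. Combining these, $|\IMQ(K)| = |H_1(X_2)| = |\det K|$, which is moreover always finite since $\det K$ is odd and hence nonzero.
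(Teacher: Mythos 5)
Your overall strategy---connectedness of $\IMQ(K)$, the structure theorem realizing a connected involutory medial quandle as $\Core(\Dis(Q)/S)$, the $t=-1$ Alexander presentation of $H_1(X_2)$, and oddness of $\det K$---is exactly the machinery this paper uses to recover Joyce's theorem (the $\mu=1$ case of Theorem \ref{main1}, via Propositions \ref{imqorb}, \ref{invorb}, \ref{ednew}, \ref{imq'} and \ref{kert}; the theorem statement itself is only cited to Joyce). But one load-bearing step is asserted rather than proved, and the worry you flag points in the wrong direction. ``No further collapsing'' is the easy direction: the arc classes in $\Core(H_1(X_2))$ satisfy the defining relations of Definition \ref{imq}, so the universal property of the presentation gives a quandle map $\IMQ(K) \to \Core(H_1(X_2))$, which is surjective by your $2$-divisibility argument; hence $\IMQ(K)$ cannot be a proper quotient. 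The genuine danger is that $\IMQ(K)$ is too \emph{large}: the free involutory medial quandle on the arcs is enormous, and you must show the crossing relations cut it down to at most $|\det K|$ elements. Your phrase ``the presentation forces $A'\cong H_1(X_2)$'' hides exactly this.

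To close the gap: once $\IMQ(K)\cong\Core(A')$ with $A'=\Dis(\IMQ(K))/S$, note that $\Dis(\IMQ(K))$ is generated by the elementary displacements $h_a=\beta_{q_a}\beta_{q_{a_0}}$ (use $\beta_{y\triangleright z}=\beta_z\beta_y\beta_z$ to reduce arbitrary translations to translations by generators), and that each crossing forces $h_{b'}=h_a^2h_b^{-1}$ in this abelian group. These are precisely the relations of your basepointed presentation of $H_1(X_2)\cong\ker w_\nu$, so $\Dis(\IMQ(K))$ is a quotient of $H_1(X_2)$ and $|A'|\leq|\Dis(\IMQ(K))|\leq|\det K|$; combined with the surjection onto $\Core(H_1(X_2))$ and finiteness ($\det K$ is odd, hence nonzero), all the maps are isomorphisms. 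This is the content of Corollary \ref{imh} and Proposition \ref{ednew}. A smaller point: connectedness of $\IMQ(K)$ itself, which you need in order to apply the affine structure theorem to the whole quandle, does not come from $2$-divisibility of $H_1(X_2)$; it comes from walking along the single component of the diagram (Proposition \ref{imqorb}). Your $2$-divisibility argument establishes the different (and also necessary) fact that $\Core(H_1(X_2))$ is connected and therefore generated by the arc classes.
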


The purpose of the present paper is to discuss the generalization of Theorem \ref{joyceknot} to classical links. The general situation is much more complicated: $\textup{IMQ}(L)$ and $\textup{Core}(H_1(X_2))$ are not isomorphic, in general, and there are other involutory medial quandles in the picture. 

The first of these other quandles is associated with a group defined as follows.

\begin{definition}
\label{img}
Let $D$ be a diagram of a link $L$. Then $\textup{IMG}(L)$, the involutory medial group of $L$, is the group generated by elements $g_a$, $a \in A(D)$, with three kinds of relations.
\begin{enumerate}
    \item If $a \in A(D)$, then $g_a ^2=1$.
    \item If $a_1,a_2,a_3 \in A(D)$ and $c_1,c_2,c_3$ are conjugates of $g_{a_1},g_{a_2},g_{a_3}$, then $c_1c_2c_3=c_3c_2c_1.$
    \item If Fig.\ \ref{crossfig} represents a crossing of $D$, then $g_a g_b g_a= g_{b'}$ and $g_a g_{b'} g_a= g_b$.
\end{enumerate}
\end{definition}

As with the quandle $\textup{IMQ(L)}$, the group $\textup{IMG(L)}$ appears in \cite{J}, but we use different notation and terminology from Joyce's. In particular, we use the term ``medial'' rather than ``abelian.'' It would be confusing to refer to $\textup{IMG}(L)$ as the ``involutory abelian group of $L$'' because $\textup{IMG}(L)$ is not commutative, in general.

\begin{definition}
\label{qimg}
Let $D$ be a diagram of a link $L$. Then $\textup{QIMG}(L)$ denotes the subset of $\textup{IMG}(L)$ that includes all conjugates of the $g_a$ elements, $a \in A(D)$. It is a quandle under the operation given by conjugation: $x \triangleright y = yxy^{-1}=yxy$. 
\end{definition}

Let $\mathbb{Z}^{A(D)}$ and $\mathbb{Z}^{C(D)}$ be the free abelian groups on the sets $A(D)$ and $C(D)$. Then there is a homomorphism $r_D:\mathbb{Z}^{C(D)} \to \mathbb{Z}^{A(D)}$ given by
\[
r_D(c)=2a - b - b'
\]
whenever $c \in C(D)$ is a crossing as represented in Fig.\ \ref{crossfig}. If the arcs $a,b,b'$ are not distinct then the corresponding terms in $r_D(c)$ are added together; for instance, if $a=b \neq b'$ then $r_D(c)=a - b'$.

\begin{definition}
\label{almod}
The cokernel of $r_D$ is denoted $M_A(L)_{\nu}$. The canonical epimorphism $\mathbb{Z}^{A(D)} \to M_A(L)_{\nu}$ is denoted $s_D$.
\end{definition}

The notation $M_A(L)_{\nu}$ reflects the fact that this group is the tensor product of two modules over the Laurent polynomial ring $\Lambda_ \mu = \mathbb Z [t_1^{\pm 1}, \dots, t_ \mu ^{\pm 1 }]$. One $\Lambda_ \mu$-module is the (multivariate) Alexander module $M_A(L)$ of an oriented version of $L$. The other $\Lambda_ \mu$-module is $\mathbb Z _ \nu$, the group of integers considered as a $\Lambda_ \mu$-module via the map $\nu:\Lambda_ \mu \to \mathbb Z$ with $\nu(t_i ^{\pm 1}) = -1$ $\forall i$. (That is, the scalar multiplication in $\mathbb Z _ \nu$ is given by $t_i^{\pm 1} \cdot n = -n$ $\forall i \in \{1, \dots, \mu\}$ $\forall n \in \mathbb Z$.) The group $M_A(L)_{\nu}$ may also be described in two other ways: it is the direct sum $\mathbb Z \oplus H_1(X_2)$, and it is the tensor product of the Alexander module of $\textup{IMG}(L)$ with $\mathbb Z _\nu$. See Sec.\ \ref{amod} for details.

Just as the group $\textup{IMG}(L)$ contains the quandle $\textup{QIMG}(L)$, the abelian group $M_A(L)_{\nu}$ contains a quandle $Q_A(L)_{\nu}$. In order to define $Q_A(L)_{\nu}$ we need one more ingredient, a homomorphism derived from the link module sequence of Crowell \cite{C1, C3}. 
Let $\kappa_D:A(D) \to \{1,\dots,\mu\}$ be the function with $\kappa_D(a)=i$ whenever $a$ is an arc of $D$ that belongs to the image of $K_i$, and let $A_{\mu}$ be the direct sum 
\[
A_ \mu = \mathbb{Z} \oplus \underbrace{ \mathbb{Z}_2 \oplus \cdots \oplus \mathbb{Z}_2}_{\mu-1}.
\] 
Let $\Phi_{\nu}:\mathbb{Z}^{A(D)} \to A_{\mu}$ be the homomorphism with $\Phi_{\nu}(a)=(1,0,\dots,0)$ for every $a \in A(D)$ with $\kappa_D(a)=1$, and $\Phi_{\nu}(a)=(1,0,\dots,0,1,0,\dots,0)$, with the second $1$ in the $i$th coordinate, for every $a \in A(D)$ with $\kappa_D(a)=i>1$. Then it is easy to see that $\Phi_{\nu} (r_D(c))=0$ $\forall c \in C(D)$, so $\Phi_{\nu}$ defines a homomorphism $\phi_{\nu}:M_A(L)_{\nu} \to A_{\mu}$ with $\phi_{\nu}(s_D(a))=\Phi_{\nu}(a)$ $\forall a \in A(D)$. 

\begin{definition}
\label{coreprime} 
The subset $\phi_{\nu}^{-1}(\phi_{\nu}(s_D(A(D)))) \subset M_A(L)_ \nu$ is denoted $Q_A(L)_ \nu$.
\end{definition}

Notice that every $x \in Q_A(L)_ \nu$ has $2 \phi_ \nu(x)=(2,0, \dots, 0) \in A_ \mu$. If $x,y \in Q_A(L)_ \nu$, then $\phi_ \nu (x \triangleright y) = 2 \phi_ \nu(y)-\phi_ \nu(x) = 2 \phi_ \nu(x)-\phi_ \nu(x) =\phi_ \nu(x)$, so $x \triangleright y \in $ $Q_A(L)_ \nu$. We deduce that $Q_A(L)_ \nu$ is a subquandle of $\textup{Core}(M_A(L)_ \nu)$.

We are now ready to discuss extending Theorem \ref{joyceknot} to links. The extension is stated in three separate theorems. The first two theorems concern the relationships among the quandles, and the third concerns the quandles' cardinalities.

\begin{theorem}
\label{main1}
\begin{enumerate}
    \item There is a surjective quandle map $\textup{IMQ}(L) \to \textup{QIMG}(L)$ defined by $q_a \mapsto g_a$ $\forall a \in A(D)$. This map is an isomorphism if $\mu=1$, or if $\mu=2$ and $\det L \neq 0$. In general, $\IMQ(L)$ and $\QIMG(L)$ are not isomorphic if $\mu >2$.
    \item There is a quandle isomorphism  $\textup{QIMG}(L) \to Q_A(L)_ \nu$ defined by $g_a \mapsto s_D(a)$ $\forall a \in A(D)$.
    \item There is an injective quandle map $Q_A(L)_ \nu \to \Core(H_1(X_2))$. If $\mu \leq 2$, then $Q_A(L)_ \nu$ and  $\textup{Core}(H_1(X_2))$ are isomorphic quandles. If $\mu>2$, there is no surjective quandle map $Q_A(L)_ \nu \to \Core(H_1(X_2))$.
\end{enumerate}
\end{theorem}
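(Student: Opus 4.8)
The plan is to construct the chain $\IMQ(L)\to\QIMG(L)\to Q_A(L)_\nu\to\Core(H_1(X_2))$ and analyze it one arrow at a time, treating parts~(2) and~(3) first because the isomorphism assertions for $\mu\le 2$ in part~(1) are cleanest when obtained by composing with those parts. For every map I would begin by checking that it is a well-defined quandle homomorphism. For $q_a\mapsto g_a$ this rests on $\QIMG(L)$ being an involutory medial quandle: idempotence and the involutory law follow from $g_a^2=1$, right self-distributivity holds in any conjugation quandle, and the medial law is exactly the content of relation~(2) in Definition~\ref{img}, since the identity $c_1c_2c_3=c_3c_2c_1$ on conjugates is equivalent to mediality of the conjugation quandle. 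The crossing relations $g_ag_bg_a=g_{b'}$ are relation~(3), so the universal property of $\IMQ(L)$ as the free involutory medial quandle on the arcs subject to the crossing relations supplies the homomorphism; it is surjective because every conjugate of a generator is a quandle word in the $g_a$.

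The heart of the theorem is the isomorphism in part~(2), and I would exploit mediality to linearize $\IMG(L)$. Let $K\le\IMG(L)$ be the subgroup generated by the products $g_ag_b$, i.e.\ the kernel of the parity augmentation $\IMG(L)\to\mathbb Z_2$. Relation~(2) should force $K$ to be abelian, so that $\IMG(L)$ is generalized dihedral, $K\rtimes\mathbb Z_2$, with each generator acting on $K$ by inversion. Writing $K$ additively and using the $\nu$-twisted $\Lambda_\mu$-action in which each $t_i$ acts as $-1$, the standard Alexander-module computation turns conjugation $x\mapsto yxy$ into the core operation $2y-x$; identifying $K$ with $M_A(L)_\nu$ (the $\nu$-twisted Alexander module of $\IMG(L)$, as asserted in the excerpt) then realizes $\QIMG(L)$ as the subquandle of $\Core(M_A(L)_\nu)$ generated by the $s_D(a)$. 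Surjectivity onto $Q_A(L)_\nu$ is a module computation: differences $s_D(a)-s_D(a')$ of arcs on a common component generate the relevant part of $\ker\phi_\nu$, and the core operation produces every integer multiple of such a difference, so the generated subquandle meets each fibre of $\phi_\nu$ in a full coset. \emph{I expect the injectivity here to be the main obstacle:} one must show that mediality collapses no more of $\IMG(L)$ than the module already records, i.e.\ that distinct conjugates of generators stay distinct in $K$. This is precisely where the representation theory of medial quandles as affine quandles over a module must be invoked.

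For part~(3) I would take the map to be the projection of $M_A(L)_\nu=\mathbb Z\oplus H_1(X_2)$ onto the second summand. Injectivity is immediate, since every element of $Q_A(L)_\nu$ has first coordinate of $\phi_\nu$ equal to $1$ and hence the same $\mathbb Z$-component, so two elements with equal $H_1(X_2)$-component coincide. Let $\psi\colon H_1(X_2)\to\mathbb Z_2^{\mu-1}$ be the $\mathbb Z_2^{\mu-1}$-part of $\phi_\nu$ on $H_1(X_2)=\ker(\text{augmentation})$; taking arcs $a_i,a_1$ on components $i>1$ and $1$, the difference $s_D(a_i)-s_D(a_1)$ lies in $H_1(X_2)$ with $\psi$-image $e_{i-1}$, so $\psi$ is surjective. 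For $\mu\le 2$ the set $\phi_\nu(s_D(A(D)))$ already exhausts $\{1\}\times\mathbb Z_2^{\mu-1}$, whence $Q_A(L)_\nu=\{1\}\times H_1(X_2)$ and the projection is an isomorphism. For $\mu>2$ the allowed image set $\{(1,0),(1,e_1),\dots,(1,e_{\mu-1})\}$ is proper, and counting fibres gives $|Q_A(L)_\nu|=\mu\,|H_1(X_2)|/2^{\mu-1}$; since $\mu<2^{\mu-1}$ for $\mu\ge 3$, this is strictly smaller than $|H_1(X_2)|$ in the finite case, so no map of sets, \emph{a fortiori} no quandle map, can be onto. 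The degenerate case $\det L=0$ I would handle separately, observing that the image of any quandle homomorphism out of $Q_A(L)_\nu$ is confined to the finitely many cosets of $\ker\psi$ meeting $Q_A(L)_\nu$.

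Finally I would return to the remaining claims of part~(1). For $\mu=1$ the composite $\IMQ(K)\to\QIMG(K)\to Q_A(K)_\nu\to\Core(H_1(X_2))$ agrees with Joyce's isomorphism of Theorem~\ref{joyceknot}; the composite being injective and the first arrow surjective forces that arrow to be an isomorphism. For $\mu=2$ with $\det L\ne 0$, parts~(2) and~(3) give $|\QIMG(L)|=|Q_A(L)_\nu|=|H_1(X_2)|=|\det L|$, matching the independently established upper bound $|\IMQ(L)|\le\mu|\det L|/2=|\det L|$, so the surjection $\IMQ(L)\to\QIMG(L)$ between equal finite sets is a bijection. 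To show the two quandles can fail to be isomorphic when $\mu>2$, I would exhibit an explicit three-component link whose $\IMQ$ is strictly larger than $\QIMG\cong Q_A(L)_\nu$, the extra elements arising because the defining relations of $\IMQ(L)$ do not impose the full commutativity of $K$ that mediality of $\IMG(L)$ forces. Besides the injectivity in part~(2), the remaining delicate points are the independent upper bound used in the $\mu=2$ case and the verification of the counterexample.
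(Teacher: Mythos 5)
Your chain of maps and overall architecture match the paper's, but the two places you yourself flag as ``the main obstacle'' and the ``remaining delicate points'' are exactly where the paper's real work lies, and your proposal does not supply the arguments. First, injectivity of $\QIMG(L)\to Q_A(L)_\nu$ in part~(2): the paper does \emph{not} get this by identifying $K=\IMG^2(L)$ with $M_A(L)_\nu$ --- that identification is false. For the connected sum of two Hopf links (Sec.~\ref{sec:twohopf}), $\IMG(L)$ has eight elements, so $\IMG^2(L)\cong\mathbb Z_2\oplus\mathbb Z_2$ is finite, while $M_A(L)_\nu\cong\mathbb Z\oplus\mathbb Z_2\oplus\mathbb Z_2$. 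What is actually true is that $\IMG^2(L)$ is a quotient of $\ker w_\nu$ (Proposition~\ref{ednew}) and that $\Dis(\QIMG(L))\cong\ker\phi_\nu=2\cdot\ker w_\nu$. The paper's injectivity proof goes through the semiregularity criterion of Corollary~\ref{surjiso}: one checks that $\QIMG(L)$ and $Q_A(L)_\nu$ are semiregular with matching orbits, and then builds an explicit inverse $f_D:\ker\phi_\nu\to\Dis(\QIMG(L))$ to the induced map of displacement groups, using the epimorphism $e_D$ together with the fact (Lemma~\ref{reglem}, Corollary~\ref{imgcor}) that any $x\in\IMG^2(L)$ with $x^2=1$ acts trivially on $\QIMG(L)$. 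Your appeal to ``the representation theory of medial quandles as affine quandles'' names the right circle of ideas but does not close this step.

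Second, the bound $|\IMQ(L)|\le\mu|\det L|/2$ that you invoke for the $\mu=2$ case is a substantive theorem, not a bookkeeping fact: the paper proves it (Corollary~\ref{orbcard}) via the longitudes of Sec.~\ref{sec:longitudes}. Each $\lambda_i$ is a $2$-torsion element of $\ker w_\nu$ which is nonzero whenever $\det L\neq 0$ (Theorem~\ref{longprops}, itself requiring a delicate determinant argument), and $\beta e_D(\lambda_i)$ fixes the $K_i$-orbit of $\IMQ(L)$ pointwise (Proposition~\ref{longfix}); either alternative --- $\lambda_i$ in the kernel of $\beta e_D$ or in the stabilizer --- halves the orbit. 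Nothing in your proposal produces this. Two smaller points: the $\mu>2$ counterexample for part~(1) must actually be exhibited (the paper uses the connected sum of two Hopf links, with $|\IMQ(L)|=6$ versus $|\QIMG(L)|=3$), and your diagnosis that the discrepancy comes from ``failure of full commutativity'' is off --- $\IMG(L)$ is commutative there; the discrepancy comes from $\IMQ(L)$ failing to be semiregular. Finally, in part~(3) your cardinality count only rules out a surjection when $\det L\neq 0$; the uniform argument (and the one the paper uses) is that a surjective quandle map cannot increase the number of orbits, and $\Core(H_1(X_2))$ has $2^{\mu-1}>\mu$ orbits.
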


\begin{theorem}
\label{main2}
Consider the following statements about involutory medial quandles associated to two links $L$ and $L'$.
\begin{enumerate}
    \item $\textup{IMQ}(L) \cong \textup{IMQ}(L')$.
    \item $\textup{QIMG}(L) \cong \textup{QIMG}(L')$.
    \item $Q_A(L)_ \nu \cong Q_A(L')_ \nu$.
    \item $\textup{Core}(H_1(X_2)) \cong \textup{Core}(H_1(X'_2))$, where $X'_2$ is the cyclic double cover of $\mathbb S ^3$, branched over $L'$.
\end{enumerate} 
The implications $1 \implies 2 \iff 3 \implies 4$ hold in general. The converse of $1 \implies 2$ holds when $\mu=1$, and it also holds when $\mu=2$ and $\det L \neq 0$; it fails when $\mu >2$. The converse of $3 \implies 4$ holds when $\mu \leq 3$, and fails when $\mu>3$.
\end{theorem}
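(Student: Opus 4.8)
The plan is to treat the chain $1\implies2\iff3\implies4$ first, and then the three converse statements. I would begin with a reduction that simplifies statement~4: if $f:\Core(A)\to\Core(A')$ is a quandle isomorphism of core quandles of abelian groups, then after composing with a translation (translations are core automorphisms) we may assume $f(0)=0$, so that $f(2y-x)=2f(y)-f(x)$; the identities $f(u+v)+f(u-v)=2f(u)$ and $f(u+v)-f(u-v)=2f(v)$ (got by specializing $y,x$ and using $f(-x)=-f(x)$) add to give $f(u+v)=f(u)+f(v)$, so $f$ is a group isomorphism. Hence statement~4 is equivalent to $H_1(X_2)\cong H_1(X_2')$, which I use throughout. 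Now $2\iff3$ is immediate from the natural isomorphism of Theorem~\ref{main1}(2). For $1\implies2$ I would check that the surjection $\IMQ(L)\to\QIMG(L)$ of Theorem~\ref{main1}(1) is functorial: $\IMG(L)$ is the enveloping involutory medial group of the quandle $\IMQ(L)$ (generators $g_x$ for $x\in\IMQ(L)$, with $g_x^2=1$, the medial relations, and $g_xg_yg_x=g_{x\triangleright y}$), and $\QIMG(L)$ is the image of $\IMQ(L)$ in its conjugation quandle; both are obtained functorially from $\IMQ(L)$, so an isomorphism $\IMQ(L)\cong\IMQ(L')$ induces $\IMG(L)\cong\IMG(L')$ and restricts to $\QIMG(L)\cong\QIMG(L')$. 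For $3\implies4$ I would use the difference map $\delta(x)=x-x_0$ for a fixed $x_0\in Q_A(L)_\nu$: it embeds $Q_A(L)_\nu$ into $H_1(X_2)$ as a subset closed under $(a,b)\mapsto 2b-a$ and satisfying $\delta(x\triangleright y)=2\delta(y)-\delta(x)$, and this subset generates $H_1(X_2)$ (differences within one coset give $\ker(\psi)$, and the other cosets supply representatives whose $\psi$-images span $\mathbb Z_2^{\mu-1}$). A based quandle isomorphism $Q_A(L)_\nu\cong Q_A(L')_\nu$ therefore extends, by the parallelogram argument again, to a group isomorphism $H_1(X_2)\cong H_1(X_2')$.

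For the converse analysis I would first record the concrete model of $Q_A(L)_\nu$ inside $\Core(H_1(X_2))$. Identifying $H_1(X_2)$ with the kernel of the $\mathbb Z$-component (degree) of $\phi_\nu$ and writing $K=\ker(\psi)$ for the restriction $\psi$ of the $\mathbb Z_2^{\mu-1}$-component of $\phi_\nu$, the set $Q_A(L)_\nu$ equals $\psi^{-1}(S)$, where $S=\{0,f_1,\dots,f_{\mu-1}\}\subset\mathbb Z_2^{\mu-1}$ is the set of vectors of weight at most $1$; the core operation keeps each coset $v_i+K$ invariant, and the isomorphism type of the resulting quandle is governed by $K$ together with the classes $2v_i\in K/2K$. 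With this picture the converses that hold are short. When $\mu=1$, or $\mu=2$ and $\det L\neq0$, Theorem~\ref{main1}(1) makes $\IMQ\to\QIMG$ an isomorphism; since $\det L\neq0$ forces $\QIMG(L)$ finite (while $\det=0$ forces it infinite), the hypothesis $\QIMG(L)\cong\QIMG(L')$ places both links in the range where $\IMQ\cong\QIMG$, giving $2\implies1$. When $\mu\leq2$, Theorem~\ref{main1}(3) gives $Q_A(L)_\nu\cong\Core(H_1(X_2))$, so $4\implies3$ is immediate.

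The delicate case is $4\implies3$ for $\mu=3$. Here $S=\{0,f_1,f_2\}$ is $\mathbb Z_2^2$ with one coset removed, so $Q_A(L)_\nu$ is all of $\Core(H_1(X_2))$ except the coset $v_1+v_2+K$. Starting from statement~4 I have a group isomorphism $\alpha:H_1(X_2)\to H_1(X_2')$, which is a core isomorphism carrying $Q_A(L)_\nu$ to $\psi''^{-1}(S)$ for $\psi''=\psi\circ\alpha^{-1}$; the task is to match $\psi''^{-1}(S)$ with $\psi'^{-1}(S)$, i.e.\ to compare the subquandles attached to two surjections $H_1(X_2')\to\mathbb Z_2^2$. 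By the model above this reduces to showing that the data $(K;\,2v_1,2v_2\bmod 2K)$ agree, up to $\Aut(H_1(X_2'))$ and interchange of the $f_i$, for all \emph{link-admissible} $\psi'$. I expect this to be the main obstacle. For an arbitrary surjection $H_1(X_2')\to\mathbb Z_2^2$ the kernel $K$ is not determined by the group alone (for instance $\mathbb Z_4\oplus\mathbb Z_2\oplus\mathbb Z_2$ admits index-$4$ kernels isomorphic to $\mathbb Z_4$ and to $\mathbb Z_2\oplus\mathbb Z_2$, giving non-isomorphic subquandles), so the argument must exploit the constraints that Crowell's link module sequence and the $\Lambda_\mu$-module structure of $M_A(L)_\nu$ impose on $\psi$. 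The plan is to show that these constraints pin $K$ down to a single $\Aut$-orbit when $\mu=3$ (tying $K$ to $2H_1(X_2)$ and the linking/longitude data), and that the displacement group $\Dis(Q_A(L)_\nu)\cong 2H_1(X_2)$ together with the $\Dis$-orbit structure recovers the remaining data intrinsically, forcing $\psi''^{-1}(S)\cong\psi'^{-1}(S)$.

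Finally, the two failure statements call for explicit constructions. For the converse of $1\implies2$ when $\mu>2$, where $\IMQ\to\QIMG$ is no longer injective, I would build two links with identical $Q_A(L)_\nu$ (hence isomorphic $\QIMG$) whose involutory medial quandles differ in the part collapsed by $\IMQ\to\QIMG$; the non-injectivity supplied by Theorem~\ref{main1}(1) is precisely what creates room for this. For the converse of $3\implies4$ when $\mu>3$, I would produce $\mu$-component links with $H_1(X_2)\cong H_1(X_2')$ (indeed with $X_2\cong X_2'$) but with $\psi$-data lying in different classes $(K;\,2v_i)$, so that $Q_A(L)_\nu\not\cong Q_A(L')_\nu$; the extra coordinates available in $\mathbb Z_2^{\mu-1}$ for $\mu\geq4$ are what make such inequivalent admissible $\psi$ possible. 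In each case I would compute $H_1(X_2)$ and the $\psi$-pairing from the Alexander data of concrete families (for example suitable connected sums or boundary links) to exhibit the examples.
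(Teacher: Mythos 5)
Your chain of implications is organized correctly, and several pieces ($2\iff3$ from Theorem \ref{main1}(2), the converse of $1\implies2$ via finiteness and orbit counts, $4\implies3$ for $\mu\le2$) match the paper. But there are two genuine gaps. First, the opening ``parallelogram'' reduction is wrong as argued: from $f(2y-x)=2f(y)-f(x)$ and $f(0)=0$ your two identities only yield $2\bigl(f(u+v)-f(u)-f(v)\bigr)=0$, and since the groups $H_1(X_2)$ here have abundant $2$-torsion this does not give additivity. Concretely, $\Core(\mathbb Z_2^3)$ is the trivial quandle on $8$ elements, so all $7!$ bijections fixing $0$ are quandle automorphisms while only $168$ are group automorphisms. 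The statement you want -- that $\Core(A)\cong\Core(A')$ forces $A\cong A'$ for finitely generated abelian groups -- is true, but it needs $\Dis(\Core(A))\cong A/A(2)$ (Proposition \ref{Coreprop}) together with the orbit count $|A/2A|$, or the presentation machinery of Sec.\ \ref{coresec}. The same defect undermines your proof of $3\implies4$, where you extend a quandle isomorphism of generating subquandles ``by the parallelogram argument again'': a map on a generating set does not extend to a homomorphism unless it kills the relations of a presentation, and well-definedness is exactly what is at stake. The paper avoids this by proving (Lemma \ref{klem}, Proposition \ref{kerf}, Theorem \ref{imqmod}) that $Q_A(L)_\nu$ literally presents $M_A(L)_\nu$ together with $\phi_\nu$, so a quandle isomorphism induces an isomorphism of presented groups with no affine-extension step.

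Second, you do not prove the $\mu=3$ case of $4\implies3$; you flag it as ``the main obstacle'' and propose to pin down $\ker\psi$ using link-admissibility constraints from Crowell's sequence. That plan is both incomplete and aimed at the wrong target: Lemma \ref{kerphi} shows $\ker\phi_\nu=2\cdot\ker w_\nu$ is canonical (so your worry about $\mathbb Z_4\oplus\mathbb Z_2\oplus\mathbb Z_2$ admitting non-isomorphic index-$4$ kernels is moot -- the kernel is always twice the group), and after translating into $\ker w_\nu$ the quandle $Q_A(L)_\nu$ becomes a union of $3$ of the $4$ cosets of $2\cdot\ker w_\nu$, i.e.\ the complement of a single coset. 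Translations of $\Core(\ker w_\nu)$ act on $\ker w_\nu/2\ker w_\nu\cong\mathbb Z_2^2$ by the regular action, hence transitively on cosets, so any two such complements are isomorphic subquandles; this is Proposition \ref{closeq}(5), and it finishes $\mu=3$ with no link-theoretic input. Finally, the two failure assertions require explicit examples, which you only promise to construct; the paper supplies them (the connected sum of two Hopf links versus $6^3_3$ for $1\centernot\implies2$ with $\mu=3$, and the four-component links of Secs.\ \ref{llink} and \ref{tlink2} together with the mutant split pair for $3\centernot\implies4$ with $\mu=4$), and producing them is a substantive part of the proof rather than a routine verification.
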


\begin{theorem}
\label{main3}
If $\mu=1$, then $\textup{IMQ}(L)$, $\textup{QIMG}(L)$, $Q_A(L)_ \nu$ and $H_1(X_2)$ are all of cardinality $|\det L|$.  If $\mu>1$ and $\det L \neq 0$, then $|H_1(X_2)|=| \det L|$ and
\[
\frac {\mu |\det L|}{2} \geq | \textup{IMQ}(L) | \geq | \textup{QIMG}(L) | = | Q_A(L)_ \nu|  =\frac {\mu |\det L|}{2^{\mu -1}}.
\]
If $\det L = 0$, then  $\textup{IMQ}(L)$, $\textup{QIMG}(L)$, $Q_A(L)_ \nu$ and $H_1(X_2)$ are all infinite.
\end{theorem}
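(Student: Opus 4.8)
The plan is to read off the cardinality relations from the structural maps of Theorem~\ref{main1}, to count $Q_A(L)_\nu$ directly from its definition as a $\phi_\nu$-preimage, and to treat the upper bound on $\IMQ(L)$ as a separate and harder matter. The chain of (in)equalities is formal: Theorem~\ref{main1}(1) gives a surjection $\IMQ(L)\twoheadrightarrow\QIMG(L)$, so $|\IMQ(L)|\ge|\QIMG(L)|$; part~(2) gives $|\QIMG(L)|=|Q_A(L)_\nu|$; and part~(3) gives an injection $Q_A(L)_\nu\hookrightarrow\Core(H_1(X_2))$, so $|Q_A(L)_\nu|\le|H_1(X_2)|$. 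When $\mu=1$, Joyce's Theorem~\ref{joyceknot} gives $\IMQ(L)\cong\Core(H_1(X_2))$ with $|H_1(X_2)|=|\det L|$; a knot determinant is odd, hence nonzero, so this is finite, and the maps in Theorem~\ref{main1} are isomorphisms for $\mu=1$, forcing all four cardinalities to equal $|\det L|$.

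The computational core is the case $\mu>1$, $\det L\ne0$. I would first record $|H_1(X_2)|=|\det L|$ from the classical identification $H_1(X_2)\cong\coker(V+V^{T})$ with $|\det(V+V^{T})|=|\det L|$; thus $H_1(X_2)$ is the finite torsion subgroup of $M_A(L)_\nu\cong\mathbb Z\oplus H_1(X_2)$, which has free rank $1$. By Definition~\ref{coreprime}, $Q_A(L)_\nu=\phi_\nu^{-1}(\phi_\nu(s_D(A(D))))$, and the image $\phi_\nu(s_D(A(D)))$ consists of exactly the $\mu$ distinct elements $(1,0,\dots,0)$ and $(1,\dots,1,\dots,0)$ (second $1$ in coordinate $i$, for $2\le i\le\mu$) of $A_\mu$. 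Each lies in $\mathrm{im}\,\phi_\nu$, so $Q_A(L)_\nu$ is a disjoint union of $\mu$ cosets of $\ker\phi_\nu$ and $|Q_A(L)_\nu|=\mu\,|\ker\phi_\nu|$. These same $\mu$ elements generate $A_\mu=\mathbb Z\oplus\mathbb Z_2^{\mu-1}$, so $\phi_\nu$ is onto.

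To evaluate $|\ker\phi_\nu|$ I would compare the rows $0\to H_1(X_2)\to M_A(L)_\nu\to\mathbb Z\to0$ and $0\to\mathbb Z_2^{\mu-1}\to A_\mu\to\mathbb Z\to0$ via the vertical map $\phi_\nu$: since every $s_D(a)$ has first coordinate $1$, the induced map on free quotients $\mathbb Z\to\mathbb Z$ carries a generator to $\pm1$ and is an isomorphism, so the snake lemma (using that $\phi_\nu$ is onto) forces the induced map $H_1(X_2)\to\mathbb Z_2^{\mu-1}$ on torsion to be onto with kernel $\ker\phi_\nu$. Hence $|\ker\phi_\nu|=|H_1(X_2)|/2^{\mu-1}=|\det L|/2^{\mu-1}$ and $|\QIMG(L)|=|Q_A(L)_\nu|=\mu|\det L|/2^{\mu-1}$. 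If instead $\det L=0$, then $H_1(X_2)$ is infinite, so $M_A(L)_\nu$ has rank $\ge2$ while $A_\mu$ has rank $1$; thus $\ker\phi_\nu$, every nonempty fiber of $\phi_\nu$, and hence $Q_A(L)_\nu$ are infinite, and Theorem~\ref{main1} makes $\QIMG(L)$ and $\IMQ(L)$ infinite as well.

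The main obstacle is the upper bound $|\IMQ(L)|\le\mu|\det L|/2$, which is not formal. Here the plan is to use the orbit decomposition. Since the two under-arcs at each crossing satisfy $q_{b'}=q_b\triangleright q_a$, all arcs of one component lie in a common orbit, so $\IMQ(L)$ has at most $\mu$ orbits; the orbit invariant $\phi_\nu$ (constant on orbits by $\phi_\nu(x\triangleright y)=\phi_\nu(x)$) takes $\mu$ distinct values on $Q_A(L)_\nu$, so $\QIMG(L)\cong Q_A(L)_\nu$ has at least $\mu$; and the surjection $\IMQ(L)\to\QIMG(L)$ is onto orbits, so both have exactly $\mu$ orbits, each orbit of $\QIMG(L)$ being a coset of $\ker\phi_\nu$ of size $m:=|\det L|/2^{\mu-1}$. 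Each orbit $O_i$ of $\IMQ(L)$ is a connected involutory medial quandle; by the affine classification of connected medial quandles the involutory condition forces the structure automorphism to be $x\mapsto -x$ with multiplication by $2$ invertible, so $O_i\cong\Core(A_i)$ for a finite abelian group $A_i$ of \emph{odd} order, and the surjection onto the corresponding orbit $\Core(B_i)$ of $\QIMG(L)$, $|B_i|=m$, is affine, whence $|O_i|=|A_i|=m\,k_i$ with $k_i$ odd. The estimate then reduces to $\sum_{i=1}^\mu k_i\le\mu\,2^{\mu-2}$, and this is the step I expect to be genuinely hard: it should not follow from quandle axioms alone but require identifying each $A_i$ as an explicit subquotient of $H_1(X_2)$ coming from Crowell's link module sequence, so that the $2$-primary room the $\mu$ orbits have beyond their common image of size $m$ is bounded by the single factor separating $|\det L|/2$ from $|\det L|/2^{\mu-1}$. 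That link-theoretic input, not any formal manipulation, is where the real work lies.
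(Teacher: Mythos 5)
Your formal skeleton matches the paper's: the chain $|\IMQ(L)|\ge|\QIMG(L)|=|Q_A(L)_\nu|$ from Theorem \ref{main1}, the count $|Q_A(L)_\nu|=\mu|\det L|/2^{\mu-1}$ (your snake-lemma computation is equivalent to the paper's Proposition \ref{imq'}, which counts $Q_A(L)_\nu$ as $\mu$ cosets of $\ker\phi_\nu$ and uses $|\ker w_\nu|=|\ker\phi_\nu|\cdot 2^{\mu-1}$), and the $\mu=1$ and $\det L=0$ cases are all handled correctly and essentially as in the paper.

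The genuine gap is the upper bound $|\IMQ(L)|\le\mu|\det L|/2$, which you yourself flag as unresolved. There are two problems. First, your structural set-up for attacking it is wrong: an orbit $O_i$ of $\IMQ(L)$ is generally \emph{not} a connected quandle under the restricted operation, because the displacements that move points within $O_i$ are built from translations $\beta_y$ with $y$ in \emph{other} orbits. Proposition \ref{invorb} only gives $O_i\cong\Core(\Dis(\IMQ(L))/S)$ for some stabilizer $S$, with no parity restriction on that group. The connected sum of two Hopf links (Sec.\ \ref{sec:twohopf}) is a direct counterexample to your claims: there each orbit has exactly $2$ elements and the restricted operation on the orbit of $q_a$ is trivial, so $O_i\cong\Core(\mathbb Z_2)$, a core quandle of an even-order group; and with $m=|\det L|/2^{\mu-1}=1$ your factor is $k_i=2$, contradicting ``$k_i$ odd.'' Second, even granting your reduction to $\sum_i k_i\le\mu\,2^{\mu-2}$, you supply no argument for it. The paper's actual mechanism is quite different: it introduces the longitudes $\lambda_i\in\ker w_\nu$ (Definition \ref{long}), shows each $\lambda_i$ has order dividing $2$ and is nonzero when $\mu>1$ and $\det L\ne0$ (Theorem \ref{longprops}), and shows that the displacement $\beta e_D(\lambda_i)$ fixes every element of the $K_i$ orbit (Proposition \ref{longfix}). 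Hence either $\beta e_D(\lambda_i)$ is a nontrivial stabilizer element of that orbit, or $\lambda_i$ lies in $\ker(\beta e_D)$ and $|\Dis(\IMQ(L))|\le|\ker w_\nu|/2$; either way the orbit has at most $|\det L|/2$ elements (Corollary \ref{orbcard}). That longitude argument is the missing ingredient, and nothing in your proposal substitutes for it.
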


When $\mu=1$, or $\mu=2$ and $\det L \neq 0$, Theorems \ref{main1} and \ref{main3} together imply $\textup{IMQ}(L) \cong \textup{QIMG}(L) \cong Q_A(L)_ \nu \cong \textup{Core}(H_1(X_2))$. Thus Theorems \ref{main1} -- \ref{main3} do extend Theorem \ref{joyceknot}.

Theorems \ref{main2} and \ref{main3} both leave room for improvement. In Theorem \ref{main2}, we do not know whether the converse of $1 \implies 2$ holds or fails when $\mu=2$ and $\det L =0$. In Theorem \ref{main3}, we hope that the inequalities can be sharpened.

Here is an outline of our discussion. In Sec.\ \ref{sec:mistakes}, we summarize mistakes that appeared in earlier versions of the paper. In Sec.\ \ref{struc}, we discuss the elementary theory of involutory medial quandles, which is a small part of the work of Jedli\v{c}ka, Pilitowska, Stanovsk\'{y} and Zamojska-Dzienio on general medial quandles \cite{JPSZ1, JPSZ2}. In Sec.\ \ref{sec:linkq}, we discuss some properties of $\IMG(L), \IMQ(L)$ and $\QIMG(L)$. In Sec.\ \ref{amod}, we connect the abelian group $M_A(L)_{\nu}$ with classical machinery involving Alexander matrices, Alexander modules, and branched double covers. 

In Sec.\ \ref{sec:longitudes}, we discuss the elements of $M_A(L)_\nu$ that represent longitudes of the components of $L$. In Sec.\ \ref{sec:twoexamples}, we illustrate the definitions of the previous sections with two 3-component links. These two examples serve to verify the failure of the converse of $1 \implies 2$ in Theorem \ref{main2}. In Sec.\ \ref{2red} we discuss $Q_A(L)_\nu$, and in Sec.\ \ref{proofmain}, we complete the proof of Theorem \ref{main1}. In Sec.\ \ref{sec:proof3}, we see that Theorem \ref{main3} follows almost immediately once we have Theorem \ref{main1}. 

It takes a little more time to verify Theorem \ref{main2}. In Sec.\ \ref{proof1}, we show that $M_A(L)_\nu$ and $\phi_\nu$ can be defined by modifying Definitions \ref{almod} and \ref{coreprime} to use elements of $Q_A(L)_\nu$, rather than elements of of $A(D)$. In Sec. \ref{proof2}, we show that $M_A(L)_\nu$ and $\phi_\nu$ can also be defined in a similar way, using elements of $\IMQ(L)$. In Sec.\ \ref{coresec}, we introduce the characteristic subquandle of the core quandle of a finitely generated abelian group, and prove that the characteristic subquandle is a classifying invariant. In Sec.\ \ref{proof3}, we use these results to verify the positive assertions  of Theorem \ref{main2} (i.e., the assertions that certain implications hold). 

The fact that $4 \centernot \implies 3$ in Theorem \ref{main3} is verified with two pairs of examples in Sec.\ \ref{sec:fiveexamples}. The links in the second pair are denoted $L'$ and $L''$. They have the interesting property that $Q_A(L')_\nu \not \cong Q_A(L'')_\nu$, even though the cyclic double covers of $\mathbb S^3$ branched over $L'$ and $L''$ are homeomorphic to each other.

\section{Mistakes}
\label{sec:mistakes}

The work presented in this paper was developed over a period of approximately two years. For much of that time, we mistakenly believed that the converses of $1 \implies 2$ and $3 \implies 4$ in Theorem \ref{main2} are generally valid. We persisted in the former mistake long enough that it was included in the account published in this journal \cite{mvaq2}. We are grateful to Kyle Miller \cite{Mi} for helping us understand the mistake. We are also grateful to the editors for the opportunity to publish a replacement for the entire paper \cite{mvaq2}, rather than a mere erratum. We hope that readers will simply ignore the incorrect account published in \cite{mvaq2}.

The first paper in the series \cite{mvaq1} was not affected by the errors in \cite{mvaq2}. (As far as we know, the only mistake in \cite{mvaq1} is a typographical error in a subscript on p.\ 20.) The results stated in the third paper \cite{mvaq3} are also unaffected by the errors in \cite{mvaq2}. However, there is a regrettable error in an offhand comment in the introduction of \cite{mvaq3}, where the fundamental quandle is mistakenly described as the union of the conjugacy classes of meridians in the link group. (This is the same kind of mistake as believing the converse of $1 \implies 2$ in Theorem \ref{main2}.) A correct version of this comment would describe the union of the conjugacy classes of meridians as an image of the fundamental quandle. This offhand comment was intended only for motivation, and the mistake does not affect any of the results stated in \cite{mvaq3}.

\section{Involutory medial quandles}
\label{struc}

In this section we give a brief account of some theory regarding involutory medial quandles. The results are extracted from the more general discussion of medial quandles given by Jedli\v{c}ka, Pilitowska, Stanovsk\'{y} and Zamojska-Dzienio \cite{JPSZ1, JPSZ2}. The notation and terminology in these papers are different from those of many knot-theoretic references, like \cite{EN} or \cite{J}; for instance the roles of the first and second variables in the quandle operation are reversed. So although the mathematical content of this section is all taken from \cite{JPSZ1} and \cite{JPSZ2}, notation and terminology have been modified for the convenience of readers familiar with the conventions of the  knot-theoretic literature.

Let $Q$ be an involutory medial quandle. An \emph{automorphism} of $Q$ is a bijection $f:Q \to Q$ with $f(x \triangleright y)=f(x) \triangleright f(y)$ $\forall x,y \in Q$. A group structure on the set $\Aut(Q)$ of automorphisms of $Q$ is defined by function composition. If $y \in Q$ then the \emph{translation} of $Q$ corresponding to $y$ is the function $\beta_y:Q \to Q$ given by $\beta_y(x)=x \triangleright y$; property 3 of Definition \ref{qdef} implies that $\beta_y$ is an automorphism of $Q$. (Translations are called \emph{inner automorphisms} in some references.) Notice that property 2 of Definition \ref{qdef} implies that $\beta_y^{-1}=\beta_y$ $\forall y \in Q$. If $y,z \in Q$ then the composition $\beta_y\beta_z^{-1}=\beta_y \beta_z$ is an \emph{elementary displacement} of $Q$; the subgroup of $\Aut(Q)$ generated by the elementary displacements is denoted $\Dis(Q)$, and its elements are \emph{displacements}. (Displacements are called \emph{transvections} in some references.)

\begin{proposition}
\label{imqprops}
If $Q$ is an involutory medial quandle, then the following properties hold.
\begin{enumerate}
    \item $\beta_{y \triangleright z}=\beta_z\beta_y\beta_z$ $\forall y,z \in Q$.
    \item $\beta_y\beta_z\beta_x=\beta_x\beta_z\beta_y$ $\forall x,y,z \in Q$.
    \item $\Dis(Q)$ is an abelian group.
\end{enumerate}
\end{proposition}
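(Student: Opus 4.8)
The plan is to translate each quandle axiom into a statement about the translation maps $\beta_y$ and then argue entirely inside $\Aut(Q)$. The single most useful fact is that property 3 of Definition \ref{qdef} says precisely that every $\beta_z$ is an automorphism, and that for any automorphism $f$ and any $y \in Q$ one has the conjugation rule $f\beta_y f^{-1}=\beta_{f(y)}$. This follows at once by evaluating both sides at an arbitrary $x$ and using $f(x\triangleright y)=f(x)\triangleright f(y)$. I would record this rule first, since all three parts flow from it. For part 1, I would apply the rule with $f=\beta_z$ (legitimate because $\beta_z\in\Aut(Q)$), obtaining $\beta_z\beta_y\beta_z^{-1}=\beta_{\beta_z(y)}=\beta_{y\triangleright z}$; since property 2 gives $\beta_z^{-1}=\beta_z$, this reads $\beta_z\beta_y\beta_z=\beta_{y\triangleright z}$, which is the claim.

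For part 2, I would first rewrite the medial property as an identity of maps. Reading property 4 with $w$ as the running variable and recognizing $w\triangleright x=\beta_x(w)$ and $w\triangleright y=\beta_y(w)$, the two sides become $\beta_{y\triangleright z}(\beta_x(w))$ and $\beta_{x\triangleright z}(\beta_y(w))$, so mediality is equivalent to $\beta_{y\triangleright z}\,\beta_x=\beta_{x\triangleright z}\,\beta_y$. Substituting the formula from part 1 for both $\beta_{y\triangleright z}$ and $\beta_{x\triangleright z}$ turns this into $\beta_z\beta_y\beta_z\beta_x=\beta_z\beta_x\beta_z\beta_y$, and cancelling the leading $\beta_z$ yields $\beta_y\beta_z\beta_x=\beta_x\beta_z\beta_y$, as desired.

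Finally, for part 3, since $\Dis(Q)$ is generated by the elementary displacements $\beta_y\beta_z$, it suffices to verify that any two generators commute; a group generated by pairwise commuting elements is abelian. Applying part 2 twice, once to the first three factors and once to the last three, I would compute
\[
\beta_a\beta_b\beta_c\beta_d=\beta_c\beta_b\beta_a\beta_d=\beta_c\beta_d\beta_a\beta_b,
\]
so $(\beta_a\beta_b)(\beta_c\beta_d)=(\beta_c\beta_d)(\beta_a\beta_b)$, and $\Dis(Q)$ is abelian. I expect the only genuine obstacle to be conceptual rather than computational: spotting the correct map-level reformulations of properties 3 and 4, namely the conjugation rule and the identity $\beta_{y\triangleright z}\beta_x=\beta_{x\triangleright z}\beta_y$. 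Once those are in hand, the remaining steps are short cancellations. The one point requiring care is that every identity here is an equality of functions, so each should be justified by evaluation at an arbitrary element of $Q$ before manipulating it in $\Aut(Q)$.
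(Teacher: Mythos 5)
Your proposal is correct and follows essentially the same path as the paper: the map-level medial identity $\beta_{y\triangleright z}\beta_x=\beta_{x\triangleright z}\beta_y$ combined with item 1 yields item 2 after cancelling $\beta_z^2$, and a double application of item 2 shows the elementary displacements commute. The only variation is in item 1, where you derive $\beta_{y\triangleright z}=\beta_z\beta_y\beta_z$ from the general conjugation formula $f\beta_yf^{-1}=\beta_{f(y)}$ for $f\in\Aut(Q)$ (using only right distributivity and the involutory axiom), whereas the paper computes $\beta_{y\triangleright z}(x)$ directly via the involutory and medial axioms; both arguments are valid.
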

\begin{proof}
Suppose $x,y,z \in Q$. For item 1, notice that
\[
\beta_{y \triangleright z}(x)=x \triangleright (y \triangleright z)=((x \triangleright z) \triangleright z) \triangleright (y \triangleright z)
\]
\[
=((x \triangleright z) \triangleright y) \triangleright (z \triangleright z)=((x \triangleright z) \triangleright y) \triangleright z =\beta_z\beta_y\beta_z(x) \text{.}
\]
For item 2, notice that property 4 of Definition \ref{qdef} tells us $\beta_{y \triangleright z}\beta_x =\beta_{x \triangleright z}\beta_y$. It follows from this and item 1 that $\beta_z\beta_y\beta_z\beta_x=\beta_z\beta_x\beta_z\beta_y$. As $\beta_z^2$ is the identity map, we deduce that
\[
\beta_y\beta_z\beta_x=\beta_z\beta_z\beta_y\beta_z\beta_x=\beta_z\beta_z\beta_x\beta_z\beta_y=\beta_x\beta_z\beta_y.
\]

Now, suppose $a,b,c,d \in Q$. Using the formula of item 2 twice, we have \[(\beta_a\beta_b)(\beta_c\beta_d)=\beta_a(\beta_b\beta_c\beta_d)=\beta_a(\beta_d\beta_c\beta_b)
\]
\[=(\beta_a\beta_d\beta_c)\beta_b=(\beta_c\beta_d\beta_a)\beta_b=(\beta_c\beta_d)(\beta_a\beta_b).
\]
That is, the elementary displacements $\beta_a\beta_b$ and $\beta_c\beta_d$ commute.
\end{proof}
\begin{definition}
\label{orbit}
Let $Q$ be an involutory medial quandle. An \emph{orbit} in $Q$ is an equivalence 
 class under the equivalence relation generated by $x \sim x \triangleright y$ $\forall x,y \in Q$.
\end{definition}
\begin{proposition}
\label{orb}
If $x \in Q$ then the orbit of $x$ in $Q$ is $\{d(x) \mid d \in \Dis(Q)\}$.
\end{proposition}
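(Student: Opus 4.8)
The plan is to identify the orbit of $x$ with the orbit of $x$ under the group $\Gamma$ generated by all the translations $\beta_y$, $y \in Q$, and then to show that this $\Gamma$-orbit coincides with the $\Dis(Q)$-orbit. For the first step I would observe that, because $\beta_y^{-1}=\beta_y$, the one-step relation ``$q = p \triangleright y$ for some $y$'' is already symmetric (if $q=\beta_y(p)$ then $p=\beta_y(q)$), so the equivalence relation of Definition \ref{orbit} is exactly ``$q=g(p)$ for some $g \in \Gamma$''. Hence the orbit of $x$ equals $\{g(x) \mid g \in \Gamma\}$, where each $g \in \Gamma$ is a product of finitely many translations; no inverses are needed, again because every $\beta_y$ is an involution.

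Since $\Dis(Q)$ is by definition a subgroup of $\Gamma$, the inclusion $\{d(x)\mid d\in\Dis(Q)\}\subseteq\{g(x)\mid g\in\Gamma\}$ is immediate, and the real content is the reverse inclusion. Here I would argue by parity. Each elementary displacement $\beta_y\beta_z$ is a product of two translations, so every element of $\Dis(Q)$ is a product of an even number of translations; conversely any even-length product of translations lies in $\Dis(Q)$. Thus $\Dis(Q)$ is precisely the ``even part'' of $\Gamma$.

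Now take an arbitrary $g\in\Gamma$ and write $g=\beta_{y_n}\cdots\beta_{y_1}$. If $n$ is even then $g\in\Dis(Q)$ and there is nothing to prove. If $n$ is odd, the key device is to append one further translation, namely $\beta_x$ itself: the product $g\beta_x$ then has even length and so lies in $\Dis(Q)$, while the idempotence property (property 1 of Definition \ref{qdef}) gives $(g\beta_x)(x)=g(\beta_x(x))=g(x\triangleright x)=g(x)$. Hence $g(x)=(g\beta_x)(x)$ belongs to the $\Dis(Q)$-orbit of $x$. Combining the two cases yields $\{g(x)\mid g\in\Gamma\}\subseteq\{d(x)\mid d\in\Dis(Q)\}$, which finishes the argument.

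The proof is short, and the single point that genuinely requires care — the only real obstacle — is the parity mismatch between $\Gamma$ and $\Dis(Q)$: a priori the full $\Gamma$-orbit could be strictly larger than the $\Dis(Q)$-orbit, since an odd-length product of translations need not be a displacement. The resolution is exactly the observation that $\beta_x$ fixes $x$ by idempotence, so one may adjust the parity of any $g$ without altering the value $g(x)$. I would also be sure to state explicitly why no inverses of translations are required in the reformulation (the involutory property), so that the identification of the orbit with $\{g(x)\mid g\in\Gamma\}$ is airtight.
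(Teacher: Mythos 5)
Your proof is correct and takes essentially the same approach as the paper: both handle the reverse inclusion by writing an orbit element as $\beta_{y_n}\cdots\beta_{y_1}(x)$, grouping the translations into pairs to get a displacement when $n$ is even, and appending $\beta_x$ (which fixes $x$ by idempotence) to repair the parity when $n$ is odd. The only cosmetic difference is that you package the translations into a group $\Gamma$ and describe $\Dis(Q)$ as its ``even part,'' which the paper leaves implicit.
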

\begin{proof}
A displacement is a composition of translations, so the orbit of $x$ includes $d(x)$ for every displacement $d$. 

Now, suppose $y$ is an element of the orbit of $x$. Then there are elements $y_1,\dots,y_n \in Q$ such that $y=\beta_{y_n}\cdots \beta_{y_1}(x)$. If $n$ is even, then $\beta_{y_n}\cdots \beta_{y_1}=(\beta_{y_n}\beta_{y_{n-1}})\cdots (\beta_{y_2}\beta_{y_1})$ is a displacement. If $n$ is odd, then $y=\beta_{y_n}\cdots \beta_{y_1}\beta_x(x)$ and $\beta_{y_n}\cdots \beta_{y_1}\beta_x=(\beta_{y_n}\beta_{y_{n-1}})\cdots (\beta_{y_3}\beta_{y_{2}})(\beta_{y_1}\beta_x)$ is a displacement.
\end{proof}
\begin{definition}
\label{semireg}
An involutory medial quandle is \emph{semiregular} if the identity map is the only displacement with a fixed point.
\end{definition}

If $A$ is an abelian group, the subgroup $\{a \in A \mid 2a=0\}$ is denoted $A(2)$.
\begin{proposition}
\label{Coreprop}
Let $A$ be an abelian group. Then $\textup{Core}(A)$ is involutory, medial and semiregular. Moreover, $\Dis(\Core(A)) \cong A/A(2)$.
\end{proposition}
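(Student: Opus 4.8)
The plan is to verify the three defining properties of an involutory medial quandle by direct computation with the formula $a \triangleright b = 2b - a$, then to establish semiregularity by computing an explicit form for the displacements, and finally to read off the isomorphism $\Dis(\Core(A)) \cong A/A(2)$ from that explicit form. First I would check the axioms of Definition \ref{qdef}. Idempotence is immediate since $a \triangleright a = 2a - a = a$. The involutory property follows from $(a \triangleright b)\triangleright b = 2b - (2b - a) = a$. For right distributivity and mediality I would simply expand both sides as $\mathbb{Z}$-linear combinations of the group elements and observe they agree; for instance $(x\triangleright y)\triangleright z = 2z - (2y - x) = x - 2y + 2z$, and one checks the analogous expansion of the right-hand sides matches. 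These are routine and I would not belabor them.

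The substance of the proposition is the description of the displacement group. The key observation is that the translation $\beta_b$ is the affine map $\beta_b(x) = x \triangleright b = 2b - x$, so an elementary displacement $\beta_b \beta_c$ acts by $\beta_b\beta_c(x) = 2b - (2c - x) = x + 2(b-c)$. Thus every elementary displacement is a \emph{translation of the group} (in the additive sense) by an element of the form $2(b-c)$, and composing these shows that a general displacement has the form $x \mapsto x + a$ where $a$ ranges over the subgroup $2A = \{2a \mid a \in A\}$. I would make this precise by defining a map from $2A$ to $\Dis(\Core(A))$ sending $2(b-c)$ to $\beta_b\beta_c$, and verifying it is a well-defined surjective group homomorphism (well-definedness and the homomorphism property both reduce to the fact that composition of additive translations corresponds to addition of the shift vectors).

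From this description semiregularity is immediate: a displacement fixing a point $x$ satisfies $x + a = x$ for its shift vector $a \in 2A$, forcing $a = 0$ and hence the displacement to be the identity. For the isomorphism, the surjection $2A \to \Dis(\Core(A))$ just described has trivial kernel by the same argument, so $\Dis(\Core(A)) \cong 2A$. Finally I would identify $2A$ with $A/A(2)$: the doubling map $A \to A$, $a \mapsto 2a$, has image $2A$ and kernel exactly $A(2) = \{a \mid 2a = 0\}$, so by the first isomorphism theorem $2A \cong A/A(2)$. Composing gives $\Dis(\Core(A)) \cong A/A(2)$.

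The only point requiring genuine care, rather than mechanical expansion, is confirming that the assignment $2(b-c) \mapsto \beta_b\beta_c$ is well defined as a function of the shift vector alone — i.e.\ that two elementary displacements with the same shift vector are equal as maps, and that the generic displacement is captured by this affine-translation picture. This follows cleanly once one notes every displacement is a composition of the maps $x \mapsto x + 2(b-c)$, whose net effect depends only on the sum of the shift vectors; the commutativity guaranteed by Proposition \ref{imqprops}(3) is consistent with this but the affine computation makes it transparent. I expect this identification to be the main (though modest) obstacle, with everything else being verification.
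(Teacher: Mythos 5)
Your proposal is correct and follows essentially the same route as the paper: both arguments rest on the computation that an elementary displacement $\beta_b\beta_c$ is additive translation by $2(b-c)$, which gives semiregularity at once, and both realize the isomorphism with $A/A(2)$ via the correspondence between displacements and translations by elements of $2A$ (the paper maps $A\to\Dis(\Core(A))$ directly by $a\mapsto\beta_a\beta_0$ with kernel $A(2)$, while you factor through $2A$ and invoke the first isomorphism theorem — a purely cosmetic difference).
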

\begin{proof}
 It is easy to see that core quandles satisfy Definition \ref{qdef}. 
 
To verify semiregularity, suppose $d \in \Dis(\Core(A))$. Then $d=\beta_{a_1}\cdots\beta_{a_{2n}}$ for some elements $a_1,\dots,a_{2n} \in A$, so $d(a)=2a_1-2a_2+- \dots -2a_{2n}+a$ $\forall a \in A$. If $d(a)=a$ for one $a \in A$, it must be that $2a_1-2a_2+- \dots -2a_{2n}=0$, and hence $d(a)=a$ for every $a \in A$.

Let $f:A \to \Dis(\Core(A))$ be the function with $f(a)=\beta_a \beta_0$ $\forall a \in A$. Then $f(a)(x)=2a-(2 \cdot 0 - x)=2a+x$ $\forall a,x \in A$. As
\[
f(a_1+a_2)(x)=2(a_1+a_2)+x=2a_1 + (2a_2+x)=f(a_1)(f(a_2)(x)) \text{,}
\]
$f$ is a homomorphism. It is obvious that $\ker f=A(2)$. If $a_1,a_2 \in A$ then the elementary displacement $\beta_{a_1}\beta_{a_2}$ is given by $\beta_{a_1}\beta_{a_2}(x)=2a_1-(2a_2-x)=2(a_1-a_2)+x$, so $\beta_{a_1}\beta_{a_2}=f(a_1-a_2)$. The elementary displacements $\beta_{a_1}\beta_{a_2}$ generate $\Dis(\Core(A))$, so it follows that $f$ is surjective. 
\end{proof}

\begin{proposition}
\label{invorb}
Let $Q$ be an involutory medial quandle. Then for each orbit in $Q$, there is a subgroup $S \subseteq \Dis(Q)$ such that the orbit is isomorphic, as a quandle, to $\textup{Core}(\Dis(Q)/S)$. If $Q$ is semiregular, then each orbit in $Q$ is isomorphic to $\textup{Core}(\Dis(Q))$. 
\end{proposition}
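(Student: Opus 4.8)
The plan is to fix an orbit $O$ and a basepoint $e \in O$, and to identify $O$ with a quotient of $\Dis(Q)$ on which the quandle operation becomes the core operation. First note that $O$ is a subquandle: since $x \sim x \triangleright y$ for all $y$, the orbit of $x$ is closed under $\triangleright$. By Proposition \ref{orb} the abelian group $G := \Dis(Q)$ (abelian by Proposition \ref{imqprops}) acts transitively on $O$, so with $S := \{d \in G : d(e)=e\}$ the stabilizer of $e$, the orbit--stabilizer correspondence gives a bijection $\bar\pi : G/S \to O$, $dS \mapsto d(e)$. Because $G$ is abelian, the stabilizer of every point of $O$ equals $S$, so $S$ is basepoint-independent. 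I would then show that $\bar\pi$ is an isomorphism from $\Core(G/S)$ to $O$; the whole content is to compute the transported operation.

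The computation rests on two facts about translations. First, for any automorphism $\phi$ of $Q$ and any $z \in Q$ one has $\phi \beta_z \phi^{-1} = \beta_{\phi(z)}$, a direct consequence of $\phi$ preserving $\triangleright$. Second, conjugation by the single translation $\beta_e$ carries $G$ into itself (it sends a product of two elementary displacements to another such product), and on $G$ it is \emph{inversion}: writing $\sigma(d)=\beta_e d\beta_e$, for a generator $d=\beta_y\beta_z$ the identity $\beta_e\beta_y\beta_z\beta_e=\beta_z\beta_y=(\beta_y\beta_z)^{-1}$ is exactly item 2 of Proposition \ref{imqprops}, applied in the form $\beta_y\beta_z\beta_e=\beta_e\beta_z\beta_y$ and then cancelling $\beta_e^2=1$. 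Since both $\sigma$ and inversion are homomorphisms of the abelian group $G$ and they agree on the generating elementary displacements, $\sigma(d)=d^{-1}$ for all $d \in G$.

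With these in hand I would compute, for $d_1,d_2 \in G$ and $x=d_1(e)$, $y=d_2(e)$,
\[
x \triangleright y = \beta_{d_2(e)}(d_1(e)) = d_2\beta_e d_2^{-1}d_1(e) = d_2\,\sigma(d_2^{-1}d_1)(e) = d_2^2 d_1^{-1}(e),
\]
using the conjugation formula, then $\beta_e(e)=e$ together with $\beta_e h = \sigma(h)\beta_e$, then $\sigma=\text{inversion}$ and commutativity. Transporting through $\bar\pi$ and writing the abelian group $G/S$ additively with $[d]$ the coset $dS$, this reads $[d_1]\triangleright[d_2]=2[d_2]-[d_1]$, which is precisely the core operation of Definition \ref{Core}. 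Hence $\bar\pi$ is a quandle isomorphism $\Core(G/S)\cong O$.

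Finally, for the semiregular case (Definition \ref{semireg}): any $d \in S$ fixes $e$, hence is a displacement with a fixed point, hence is the identity; so $S$ is trivial and $O \cong \Core(G/S)=\Core(\Dis(Q))$. I expect the main obstacle to be the middle computation—specifically the bookkeeping around $\beta_e$, which is a translation and not itself a displacement and so cannot simply be commuted past elements of $G$. The relation $\beta_e h = \sigma(h)\beta_e$ with $\sigma$ equal to inversion is exactly what converts the stray translation into the square $d_2^2$ that produces the coefficient $2$ in the core formula.
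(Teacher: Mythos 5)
Your proof is correct and follows essentially the same route as the paper: the orbit--stabilizer bijection $dS \mapsto d(e)$, the key identity that conjugation by a translation inverts displacements, and the computation $d_1(e) \triangleright d_2(e) = d_2^2 d_1^{-1}(e)$ identifying the transported operation with the core operation. The only cosmetic difference is that you derive $\beta_e d \beta_e = d^{-1}$ from item 2 of Proposition \ref{imqprops} while the paper derives it from the commutativity of $\Dis(Q)$; both are valid.
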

\begin{proof}
Let $x \in Q$. Observe that if $d=\beta_y \beta_z$ is an elementary displacement, then as $\Dis(Q)$ is commutative,  \[
\beta_x d \beta_x = (\beta_x \beta_y) (\beta_z \beta_x) = (\beta_z \beta_x) (\beta_x \beta_y) = \beta_z \beta_x^2 \beta_y = \beta_z  \beta_y= \beta_z^{-1}  \beta_y^{-1} = d^{-1}.
\]
The elementary displacements generate $\Dis(Q)$, so it follows that $\beta_x d \beta_x = d^{-1}$ $\forall d \in \Dis(Q)$.

Let $S_x=\{d \in \Dis(Q) \mid d(x)=x\}$ be the stabilizer of $x$ in $\Dis(Q)$. Then $d_1,d_2 \in \Dis(Q)$ determine the same coset in the quotient group $\Dis(Q)/S_x$ if and only if $d_1(x)=d_2(x)$, so there is an injective map $f_x$ from $\Dis(Q)/S_x$ to the orbit of $x$ in $Q$, defined by $f_x(dS_x)= d(x)$. According to Proposition \ref{orb}, $f_x$ is not only injective; it is also surjective.
 
Suppose $c,d \in \Dis(Q)$. As $d$ is a quandle automorphism of $Q$,
\[
d(d^{-1}c(x) \triangleright x)=dd^{-1}c(x) \triangleright d(x)=c(x) \triangleright d(x)
\]
and hence
\[
c(x) \triangleright d(x) = d(d^{-1}c(x) \triangleright x)= d \beta_xd^{-1}c(x)= (d) (\beta_x d^{-1} \beta_x) (\beta_x c \beta_x)(x).
\]
According to the observation of the first paragraph, it follows that 
\[
c(x) \triangleright d(x) = (d) (d) (c^{-1}) (x) = d^2c^{-1}(x) \text{.}
 \]
The quandle operation in the core quandle of an abelian group is given by $a \triangleright b = 2b-a$ in additive notation, or $a \triangleright b = b^2a^{-1}$ in multiplicative notation. It follows that
\[
f_x(cS_x \triangleright dS_x)  = f_x(d^2c^{-1}S_x) = d^2c^{-1}(x) = c(x) \triangleright d(x) = f_x(cS_x) \triangleright f_x(dS_x) \text{,}
\]
so the bijection $f _x$ is a quandle isomorphism between $\textup{Core}(\Dis(Q)/S_x)$ and the orbit of $x$ in $Q$.

If $Q$ is semiregular then for every $x$ in $Q$, $S_x$ contains only the identity map.
\end{proof}

\begin{corollary}
\label{surjiso}
Suppose $Q_1$ and $Q_2$ are semiregular, involutory medial quandles, and $f:Q_1 \to Q_2$ is a surjective quandle map. Then $f$ induces an epimorphism $\Dis(f):\Dis(Q_1) \to \Dis(Q_2)$ of abelian groups, and $f$ is an isomorphism if and only if both of these statements hold: (a) $\Dis(f)$ is an isomorphism. (b) If $x$ and $y$ belong to different orbits in $Q_1$, then $f(x) \neq f(y)$.
\end{corollary}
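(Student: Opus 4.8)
The plan is to build the map $\Dis(f)$ from the compatibility between $f$ and translations, establish surjectivity from the surjectivity of $f$ alone, reduce the isomorphism question to injectivity of $f$, and then prove the biconditional using the fixed-point characterization of semiregularity.

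First I would record the basic relation. Since $f$ is a quandle map, $f(\beta_y(x)) = f(x \triangleright y) = f(x) \triangleright f(y) = \beta_{f(y)}(f(x))$, so $f \circ \beta_y = \beta_{f(y)} \circ f$ for every $y \in Q_1$. Chaining this, for any word $d = \beta_{y_n}\cdots\beta_{y_1}$ representing a displacement we obtain $f \circ d = (\beta_{f(y_n)}\cdots\beta_{f(y_1)}) \circ f$. I would use this to set $\Dis(f)(\beta_y\beta_z) = \beta_{f(y)}\beta_{f(z)}$ and extend multiplicatively. The only point needing care is well-definedness: if a word represents $\mathrm{id} \in \Dis(Q_1)$, its image must represent $\mathrm{id} \in \Dis(Q_2)$. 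But if $d = \mathrm{id}$, the displayed relation gives $\Dis(f)(d) \circ f = f$, so $\Dis(f)(d)$ fixes every point of $\mathrm{im}(f) = Q_2$ and hence equals $\mathrm{id}_{Q_2}$; this uses only surjectivity of $f$. That $\Dis(f)$ is a homomorphism is clear from concatenation of words, and it is surjective because each elementary displacement $\beta_u\beta_v$ of $Q_2$ has $u = f(y)$, $v = f(z)$ for some $y,z \in Q_1$, so it equals $\Dis(f)(\beta_y\beta_z)$.

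Next I would reduce the isomorphism assertion to injectivity: a bijective quandle homomorphism has a quandle-homomorphism inverse, so since $f$ is already surjective, $f$ is an isomorphism if and only if $f$ is injective. For the forward direction, assume $f$ injective, hence bijective. Then (b) is immediate, and the relation $f \circ d = \Dis(f)(d) \circ f$ becomes $\Dis(f)(d) = f d f^{-1}$, exhibiting $\Dis(f)$ as conjugation by an isomorphism, hence itself an isomorphism, so (a) holds. For the converse, assume (a) and (b) and suppose $f(x) = f(y)$. By the contrapositive of (b), $x$ and $y$ lie in a common orbit, so by Proposition \ref{orb} there is $d \in \Dis(Q_1)$ with $y = d(x)$. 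Then $f(x) = f(y) = f(d(x)) = \Dis(f)(d)(f(x))$, so $\Dis(f)(d)$ has a fixed point; since $Q_2$ is semiregular, $\Dis(f)(d) = \mathrm{id}_{Q_2}$, and since $\Dis(f)$ is injective by (a), $d = \mathrm{id}_{Q_1}$, whence $y = x$.

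I expect the only genuinely delicate point to be the well-definedness of $\Dis(f)$, where one is tempted to invoke semiregularity of $Q_1$; in fact surjectivity of $f$ alone suffices there, and semiregularity enters only in the converse, through $Q_2$, to upgrade a fixed point of $\Dis(f)(d)$ to the identity. Everything else is a routine verification that the operations on both sides match under the conjugation relation.
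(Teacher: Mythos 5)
Your proof is correct and follows essentially the same route as the paper: the same formula for $\Dis(f)$ on words of translations, and the same converse argument using Proposition \ref{orb} plus semiregularity of $Q_2$ to turn a fixed point of $\Dis(f)(d)$ into $d \in \ker \Dis(f)$. Your explicit check that $\Dis(f)$ is well defined (using only surjectivity of $f$) is a detail the paper leaves implicit, and it is handled correctly.
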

\begin{proof}
The epimorphism $\Dis(f)$ is given by 
\[
\Dis(f)(\beta_{q_1} \cdots \beta_{q_{2n}}) = \beta_{f(q_1)} \cdots \beta_{f(q_{2n})} \quad \forall q_1, \dots , q_{2n} \in Q_1.
\]

If $f$ is an isomorphism, then it is clear that (a) and (b) hold. For the converse, suppose (a) and (b) hold, $x \neq y \in Q_1$ and $f(x)=f(y)$. Then (b) tells us that $y$ belongs to the orbit of $x$ in $Q_1$. According to Proposition \ref{orb}, it follows that there is a displacement $d \in \Dis(Q_1)$ with $d(x)=y$. Then $\Dis(f)(d)(f(x))=f(y)$, so $f(x)=f(y)$ is a fixed point of $\Dis(f)(d)$. As $Q_2$ is semiregular, it follows that $\Dis(f)(d)$ is the identity map of $Q_2$. Hence $d \in \ker \Dis(f)$, violating (a).
\end{proof}

Before proceeding, we should mention that the theory developed by Jedli\v{c}ka, Pilitowska, Stanovsk\'{y} and Zamojska-Dzienio \cite{JPSZ1,JPSZ2} is more general and more powerful than we have indicated; they provide a complete structure theory of medial quandles. 

\section{The quandles $\IMQ(L)$ and $\textup{QIMG}(L)$}
\label{sec:linkq}

Let $D$ be a diagram of a link $L$. The group $\textup{IMG}(L)$ and the quandles $\textup{IMQ}(L)$ and $\QIMG(L)$ are defined in the introduction. In this section we mention some properties of the two quandles, and we discuss the relationships between their automorphism groups and $\IMG(L)$. These relationships fall under Joyce's concept of ``augmented quandles'' \cite{J}.

It is not difficult to count the orbits in $\textup{IMQ}(L)$.
\begin{proposition}
\label{imqorb}
$\textup{IMQ}(L)$ has $\mu$ orbits, one for each component of $L$. The orbit corresponding to $K_i$ includes every element $q_a$ such that $a \in A(D)$ and $\kappa_D(a)=i$.
\end{proposition}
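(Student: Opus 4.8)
The plan is to identify the $\mu$ orbits as the sets $O_i = \{q_a \mid \kappa_D(a) = i\}$ and to proceed in three steps: first show that each $O_i$ lies in a single orbit, then that every element of $\IMQ(L)$ lies in the orbit of some generator, and finally that generators on distinct components cannot share an orbit.

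For the first step I would traverse a component $K_i$ and use the crossing relations directly. The arcs of $K_i$ form a cyclic sequence in which consecutive arcs $b$ and $b'$ meet at an undercrossing; at such a crossing Definition \ref{imq} gives $q_b \triangleright q_a = q_{b'}$, where $a$ is the overpassing arc. By Definition \ref{orbit} this means $q_b \sim q_{b'}$, no matter which component carries $a$. Chaining these relations around the cyclic sequence of arcs of $K_i$ shows that all the generators $q_a$ with $\kappa_D(a) = i$ lie in one orbit, which settles the second sentence of the proposition.

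For the second step I would let $U$ denote the union of those orbits containing at least one generator and argue that $U$ is a subquandle. If $x, y \in U$, then $x \triangleright y \sim x$ by Definition \ref{orbit}, so $x \triangleright y$ lies in the same orbit as $x$, hence in $U$; since the operation is involutory, the inverse translation $\beta_y$ also preserves $U$. As $U$ contains every generator and $\IMQ(L)$ is generated by the $q_a$, we get $U = \IMQ(L)$. Thus every element lies in the orbit of some $q_a$, and combined with the first step this shows there are at most $\mu$ orbits.

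The remaining step is where the genuine content lies: showing these orbits are distinct. I would exhibit a quandle homomorphism from $\IMQ(L)$ onto the trivial quandle $T_\mu$ on $\{1, \dots, \mu\}$ (with $i \triangleright j = i$), sending $q_a \mapsto \kappa_D(a)$. To see this is well defined I must check that the defining relations are respected: at a crossing, $q_b \triangleright q_a = q_{b'}$ should map to $\kappa_D(b) \triangleright \kappa_D(a) = \kappa_D(b')$, and in $T_\mu$ the left side is just $\kappa_D(b)$, which equals $\kappa_D(b')$ because $b$ and $b'$ are arcs of the same component. Any quandle homomorphism sends each orbit into a single orbit of the target, and in $T_\mu$ every element is its own orbit; hence generators lying in a common orbit of $\IMQ(L)$ must have the same $\kappa_D$-value. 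This forces the $\mu$ sets $O_i$ to be separate orbits, giving the exact count. The main obstacle is precisely this last step: the first two only manipulate the defining relations, whereas separating the orbits requires producing an invariant — the map to $T_\mu$ — and verifying that it is a genuine quandle homomorphism.
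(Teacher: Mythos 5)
Your proposal is correct and follows essentially the same route as the paper: traverse each component to merge its generators into one orbit, note that every element of $\IMQ(L)$ is reachable from a generator by $\triangleright$ operations, and separate the orbits via the quotient map onto the trivial quandle on $\mu$ elements (which is exactly the paper's quandle obtained by imposing $x \triangleright y = x$).
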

\begin{proof}
By definition, $\textup{IMQ}(L)$ is generated by the elements $q_a$ with $a \in A(D)$, so every $x \in \IMQ(L)$ is obtained from some $q_a$ through some sequence of $\triangleright$ operations. Thus every orbit in $\textup{IMQ}(L)$ contains an element associated with a particular component $K_i$ of $L$.

Suppose $i \in \{1,\dots,\mu\}$, and $a$ is an arc of $A(D)$ that belongs to $K_i$. As we walk along $K_i$ starting at $a$, each time we pass from one arc of $K_i$ to another we obtain another element of the same orbit of $\textup{IMQ}(L)$, because we pass through a crossing in which the two arcs of $K_i$ are the two underpassing arcs. Therefore every arc $b$ belonging to $K_i$ has $q_b$ in the same orbit of $\textup{IMQ}(L)$ as $q_a$. 

To verify that no orbit contains $q_a$ elements corresponding to arcs belonging to distinct components, let $Q$ be the quandle obtained from $\textup{IMQ}(L)$ by adding relations that require $x \triangleright y = x$ $\forall x,y$. It is easy to see that $Q$ has $\mu$ elements, one for each component of $L$; and there is a well-defined quandle homomorphism mapping $\textup{IMQ}(L)$ onto $Q$.
\end{proof}

\begin{proposition}
\label{imgbetahom}
There is a homomorphism $\beta:\textup{IMG}(L) \to \Aut(\textup{IMQ}(L))$, with $\beta(g_a)=\beta_{q_a}$ $\forall a \in A(D)$.
\end{proposition}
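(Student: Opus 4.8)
The plan is to exploit the universal property of the presentation of $\textup{IMG}(L)$. Since $\textup{IMG}(L)$ is generated by the elements $g_a$, I would first define a homomorphism $\tilde\beta$ from the free group on $\{g_a : a \in A(D)\}$ to $\Aut(\textup{IMQ}(L))$ by setting $\tilde\beta(g_a)=\beta_{q_a}$; this exists and is unique by freeness. The whole task then reduces to checking that $\tilde\beta$ sends each of the three families of defining relators to the identity, so that it descends to the desired homomorphism $\beta$ on the quotient $\textup{IMG}(L)$. Throughout I would rely on Proposition \ref{imqprops}, which supplies the two identities $\beta_{y \triangleright z}=\beta_z\beta_y\beta_z$ and $\beta_y\beta_z\beta_x=\beta_x\beta_z\beta_y$, together with the involutory consequence $\beta_{q_a}^2=\mathrm{id}$.

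Two of the three relation types are straightforward. For relation 1, the involutory property of Definition \ref{qdef} gives $\beta_{q_a}^2=\mathrm{id}$, so each relator $g_a^2$ is killed. For relation 3, at a crossing as in Fig.\ \ref{crossfig}, item 1 of Proposition \ref{imqprops} gives $\tilde\beta(g_a g_b g_a)=\beta_{q_a}\beta_{q_b}\beta_{q_a}=\beta_{q_b \triangleright q_a}$, and the defining relation $q_b \triangleright q_a=q_{b'}$ of $\textup{IMQ}(L)$ turns this into $\beta_{q_{b'}}=\tilde\beta(g_{b'})$; the relation $g_a g_{b'} g_a=g_b$ is handled identically using $q_{b'} \triangleright q_a=q_b$.

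The main work is relation 2, since it involves arbitrary conjugates $c_i$ of the $g_{a_i}$. The key observation I would establish first is that the set of translations $\{\beta_y : y \in \textup{IMQ}(L)\}$ is closed under conjugation by any translation: since $\beta_z^{-1}=\beta_z$, item 1 of Proposition \ref{imqprops} gives $\beta_z\beta_y\beta_z^{-1}=\beta_{y \triangleright z}$, and $y \triangleright z$ lies in $\textup{IMQ}(L)$ because the quandle is closed under $\triangleright$. Writing a conjugating word $v_i$ so that $\tilde\beta(v_i)$ is a product of translations $\beta_{q_{p_1}}\cdots\beta_{q_{p_k}}$ (inverses of generators also map to translations, as $\beta_{q_a}^{-1}=\beta_{q_a}$), repeated application of this observation shows that $\tilde\beta(c_i)=\tilde\beta(v_i)\beta_{q_{a_i}}\tilde\beta(v_i)^{-1}$ is again a single translation, say $\beta_{w_i}$ with $w_i=(\cdots(q_{a_i} \triangleright q_{p_k}) \cdots \triangleright q_{p_1}) \in \textup{IMQ}(L)$. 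Once all three $\tilde\beta(c_i)$ are recognized as translations $\beta_{w_1},\beta_{w_2},\beta_{w_3}$, item 2 of Proposition \ref{imqprops} immediately yields
\[
\tilde\beta(c_1 c_2 c_3)=\beta_{w_1}\beta_{w_2}\beta_{w_3}=\beta_{w_3}\beta_{w_2}\beta_{w_1}=\tilde\beta(c_3 c_2 c_1),
\]
so this relator is killed as well.

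I expect the closure observation in the previous paragraph to be the only real obstacle: everything else is a direct substitution into the identities of Proposition \ref{imqprops}. The one point demanding care is to confirm that conjugation of a translation by the image of an arbitrary word — not merely by a single generator $\beta_{q_a}$ — still lands among the translations; this is exactly what the inductive unwinding of $\tilde\beta(v_i)$ into single-generator conjugations provides, using that $\textup{IMQ}(L)$ is closed under $\triangleright$. With all relators verified, $\tilde\beta$ factors through $\textup{IMG}(L)$ to give $\beta$ with $\beta(g_a)=\beta_{q_a}$, as required.
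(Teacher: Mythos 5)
Your proof is correct and follows essentially the same route as the paper's: define the map on the free group and check that each of the three families of defining relators is killed, using items 1 and 2 of Proposition \ref{imqprops} for the crossing relations and the three-term relations respectively. The only difference is that you make explicit the step the paper leaves implicit — that $\tilde\beta$ sends any conjugate of a generator to a single translation (via repeated use of $\beta_z\beta_y\beta_z=\beta_{y\triangleright z}$), which is indeed needed before item 2 of Proposition \ref{imqprops} can be applied to the relators of the second kind.
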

\begin{proof}
Recall that $\IMG(L)$ is generated by the elements $g_a$ with $a \in A(D)$, subject to the three kinds of relations mentioned in Definition \ref{img}. To prove the proposition, it suffices to show that these three kinds of relations are satisfied in $\Aut(\textup{IMQ}(L))$.

The first kind of relation, $\beta_{q_a}^2=1$, follows immediately from the fact that $\textup{IMQ}(L)$ is involutory. The second and third kinds of relations are verified in items 2 and 1 of Proposition \ref{imqprops}, respectively. \end{proof}

\begin{corollary}
\label{imh}
Let $\IMG ^2(L)$ be the subgroup of $\IMG(L)$ generated by the products $g_a g_b$ with $a,b \in A(D)$, and let $a^* \in A(D)$ be a fixed element. Then:
\begin{enumerate}
    \item $\IMG ^2(L)$ is an abelian group.
    \item $\IMG ^2(L)$ is generated by the elements $h_a = g_a g_{a^*}$ with $a \in A(D)$. Also, $h_{a^*}=1$.
    \item For any crossing of $D$ as pictured in Fig.\ \ref{crossfig}, $h_{b'}=h_a^2h_b^{-1}$. 
    \item The homomorphism $\beta$ of Proposition \ref{imgbetahom} has $\beta(\IMG ^2(L))=\Dis(\IMQ(L))$.
\end{enumerate}
\end{corollary}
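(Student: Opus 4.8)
The plan is to treat the four items in order, exploiting the close parallel between the defining relations of $\IMG(L)$ and the properties of displacements recorded in Proposition \ref{imqprops}.

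For item 1, I would first observe that taking each $c_i$ to be $g_{a_i}$ in the second kind of relation (a generator is a conjugate of itself) gives $g_a g_b g_c = g_c g_b g_a$ for all $a,b,c \in A(D)$, the exact analog of item 2 of Proposition \ref{imqprops}. The commutativity computation at the end of that proof then transcribes verbatim: $(g_a g_b)(g_c g_d) = g_a(g_b g_c g_d) = g_a(g_d g_c g_b) = (g_a g_d g_c)g_b = (g_c g_d g_a)g_b = (g_c g_d)(g_a g_b)$, so the generators of $\IMG^2(L)$ commute and $\IMG^2(L)$ is abelian.

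Items 2 and 3 are short calculations. For item 2, $h_{a^*} = g_{a^*}^2 = 1$ by the first kind of relation, and since each $g_a$ is an involution, a generator $g_a g_b$ of $\IMG^2(L)$ equals $g_a g_{a^*} g_{a^*} g_b = h_a h_b^{-1}$; conversely each $h_a$ is a generator of $\IMG^2(L)$, so the $h_a$ generate. For item 3, the third kind of relation gives $g_{b'} = g_a g_b g_a$, whence $h_{b'} = g_a g_b g_a g_{a^*} = (g_a g_b)(g_a g_{a^*}) = (h_a h_b^{-1}) h_a = h_a^2 h_b^{-1}$, the last equality by the commutativity established in item 1.

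The real content is item 4. Since $\beta$ is a homomorphism and $\IMG^2(L)$ is generated by the products $g_a g_b$, its image $\beta(\IMG^2(L))$ is generated by the elements $\beta_{q_a}\beta_{q_b}$, each an elementary displacement, so $\beta(\IMG^2(L)) \subseteq \Dis(\IMQ(L))$. The reverse inclusion requires care, because $\Dis(\IMQ(L))$ is generated by \emph{all} elementary displacements $\beta_y \beta_z$ with $y,z \in \IMQ(L)$, not only those arising from the generators $q_a$. To bridge the gap I would show, by induction using item 1 of Proposition \ref{imqprops} ($\beta_{y \triangleright z} = \beta_z \beta_y \beta_z$), that for every $y \in \IMQ(L)$ the translation $\beta_y$ is a product of an \emph{odd} number of the $\beta_{q_a}$: this holds for $y = q_a$, and if $\beta_u,\beta_v$ are odd products then so is $\beta_{u \triangleright v} = \beta_v \beta_u \beta_v$. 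Hence every elementary displacement $\beta_y \beta_z$ is a product of an even number of $\beta_{q_a}$, and pairing consecutive factors writes it as a product of elements $\beta_{q_a}\beta_{q_b}$, which lie in $\beta(\IMG^2(L))$. This inductive generation step is the only part not mechanically dictated by the group relations, so I expect it to be the main obstacle.
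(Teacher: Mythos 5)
Your proposal is correct and follows essentially the same route as the paper: items 1--3 are verified by the identical computations, and item 4 by matching generators of $\IMG^2(L)$ with elementary displacements. The paper disposes of item 4 in a single sentence (``it follows from the fact that $\Dis(\IMQ(L))$ is generated by the elementary displacements''), so your odd-product argument --- showing via $\beta_{u \triangleright v} = \beta_v \beta_u \beta_v$ that every translation $\beta_y$, $y \in \IMQ(L)$, is an odd product of the $\beta_{q_a}$, and hence that the elementary displacements $\beta_{q_a}\beta_{q_b}$ already generate $\Dis(\IMQ(L))$ --- correctly supplies a detail the paper leaves implicit.
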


\begin{proof}
For item 1, notice that if $a,b,c,d \in A(D)$ then 
\[
(g_ag_b)(g_cg_d) = g_a(g_bg_cg_d) = g_a(g_dg_cg_b) 
\]
\[
= (g_ag_dg_c)g_b = 
(g_cg_dg_a)(g_b)=
(g_cg_d)(g_ag_b).
\]
That is, the generators of $\IMG ^2(L)$ all commute with each other.

Item 2 follows from the equalities $g_ag_b = h_ah_b^{-1}$ and $g_{a^*}^2=1$.

For item 3, notice that 
\[
h_{b'} = g_{b'}g_{a^*} = g_ag_bg_ag_{a^*} = g_ag_{a^*}g_{a^*}g_bg_ag_{a^*}=h_ah_b^{-1}h_a
\]
and according to item 1, $h_ah_b^{-1}h_a=h_a^2 h_b^{-1}$.

Item 4 follows from the fact that $\Dis(\IMQ(L))$ is generated by the elementary displacements.
\end{proof}

The following analogous results hold for $\QIMG(L)$. The proofs are the same, \emph{mutatis mutandi}.

\begin{proposition}
\label{qimgorb}
$\QIMG(L)$ has $\mu$ orbits, one for each component of $L$. The orbit corresponding to $K_i$ includes every element $g_a$ such that $a \in A(D)$ and $\kappa_D(a)=i$.
\end{proposition}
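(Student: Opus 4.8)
The plan is to mirror the proof of Proposition \ref{imqorb} verbatim, replacing each $q_a$ by $g_a$ and each translation of $\IMQ(L)$ by conjugation in $\IMG(L)$. As before, three things must be checked: that every orbit of $\QIMG(L)$ meets the generating set $\{g_a \mid a \in A(D)\}$; that all $g_a$ attached to a single component lie in one orbit; and that $g_a$ attached to distinct components lie in distinct orbits.

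First I would record that $\QIMG(L)$ is generated as a quandle by the $g_a$, in the strong sense that every element is reachable from a single $g_a$ by $\triangleright$-operations. A typical element of $\QIMG(L)$ is a conjugate $wg_aw^{-1}$; writing $w=g_{a_1}\cdots g_{a_n}$ as a product of generators of $\IMG(L)$ and using $g_{a_j}^2=1$, one has $wg_aw^{-1}=g_{a_1}\cdots g_{a_n}g_a g_{a_n}\cdots g_{a_1}$, which is exactly $(\cdots(g_a\triangleright g_{a_n})\triangleright\cdots)\triangleright g_{a_1}$. Hence $wg_aw^{-1}$ lies in the orbit of $g_a$, and every orbit contains some $g_a$. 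Second, for the claim that the arcs of a component $K_i$ yield $g_a$ in one orbit, I would walk along $K_i$: at each underpass the two underpassing arcs $b,b'$ satisfy $g_{b'}=g_ag_bg_a=g_b\triangleright g_a$ for the overpassing arc $a$, so $g_b$ and $g_{b'}$ share an orbit, and traversing $K_i$ links all its arcs together. Combined with the first step, this shows $\QIMG(L)$ has at most $\mu$ orbits.

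The main obstacle is the third step, showing these $\mu$ orbits are genuinely distinct, because $\QIMG(L)$ is presented as a subset of a group rather than by a quandle presentation; I therefore cannot simply ``add the relations $x\triangleright y=x$'' as in the proof of Proposition \ref{imqorb}. Instead I would produce an honest invariant: the group homomorphism $\IMG(L)\to(\mathbb{Z}_2)^{\mu}$ sending $g_a\mapsto e_{\kappa_D(a)}$, where $e_i$ is the $i$th standard basis vector. Checking the three families of relations of Definition \ref{img} is routine: $g_a^2\mapsto 2e_{\kappa_D(a)}=0$, while the abelian target trivializes the relations of types 2 and 3, the latter using $\kappa_D(b)=\kappa_D(b')$. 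Since conjugation is trivial in an abelian group, any group homomorphism restricts to a quandle homomorphism between conjugation quandles, and this one sends every element of $\QIMG(L)$ to the basis vector of its component. As an orbit maps to a single element and the $e_i$ are distinct, generators of distinct components must lie in distinct orbits, giving exactly $\mu$ orbits and completing the proof.
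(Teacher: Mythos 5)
Your proof is correct and follows essentially the same route as the paper, which simply declares the argument for Proposition \ref{qimgorb} to be that of Proposition \ref{imqorb} \emph{mutatis mutandis}: the first two steps are direct translations, and your third step replaces the paper's quotient quandle (obtained by adding the relations $x \triangleright y = x$) with the abelianization homomorphism $\IMG(L) \to (\mathbb{Z}_2)^{\mu}$, which is the natural way to carry out that mutation since $\QIMG(L)$ is not given by a quandle presentation. This is exactly the invariant implicit in the paper's treatment of $\IMG(L)/\IMG(L)'$ in Sec.\ \ref{amod}, so no new idea is introduced, but your version supplies details the paper leaves to the reader.
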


\begin{proposition}
\label{qimgbetahom}
There is a homomorphism $\beta:\textup{IMG}(L) \to \Aut(\QIMG(L))$, with $\beta(g_a)=\beta_{g_a}$ $\forall a \in A(D)$.
\end{proposition}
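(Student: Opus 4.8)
The plan is to follow the proof of Proposition~\ref{imgbetahom} essentially verbatim, replacing $\IMQ(L)$ by $\QIMG(L)$ and the translations $\beta_{q_a}$ by the translations $\beta_{g_a}$ of $\QIMG(L)$. The one thing that must be in place before the argument runs is that $\QIMG(L)$ is itself an involutory medial quandle, so that Proposition~\ref{imqprops} applies to it. The idempotent, involutory and right-distributive axioms hold for any conjugation quandle whose elements are involutions, and every element of $\QIMG(L)$ is a conjugate of some $g_a$, hence an involution (relation~1 of Definition~\ref{img}). The medial axiom is the one that genuinely uses the group structure: it follows from relation~2 of Definition~\ref{img}. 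Granting this, each $g_a$ lies in $\QIMG(L)$, and by the right-distributive axiom its translation $\beta_{g_a}$ is an automorphism of $\QIMG(L)$.

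Next I would invoke the universal property of $\IMG(L)$: since $\IMG(L)$ is presented by the generators $g_a$ subject to the three families of relations in Definition~\ref{img}, the assignment $g_a \mapsto \beta_{g_a}$ extends to a homomorphism into $\Aut(\QIMG(L))$ as soon as these three families of relations are satisfied by the elements $\beta_{g_a}$. Relation~1, $\beta_{g_a}^2 = \mathrm{id}$, is immediate from the involutory axiom. Relation~3 is item~1 of Proposition~\ref{imqprops}: with $y = g_b$ and $z = g_a$ one has $g_b \triangleright g_a = g_a g_b g_a = g_{b'}$, so the identity $\beta_{g_{b'}} = \beta_{g_a}\beta_{g_b}\beta_{g_a}$ is exactly the statement $\beta_{y \triangleright z} = \beta_z \beta_y \beta_z$.

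For relation~2 I would use item~1 of Proposition~\ref{imqprops} to reduce to item~2. Item~1 says that conjugating a translation by a translation yields a translation; iterating, the image under $\beta$ of any conjugate $c_i$ of a generator $g_{a_i}$ is again a translation $\beta_{x_i}$, where $x_i \in \QIMG(L)$ is the corresponding conjugate. Thus $\beta(c_1)\beta(c_2)\beta(c_3) = \beta_{x_1}\beta_{x_2}\beta_{x_3}$, and item~2 of Proposition~\ref{imqprops} gives $\beta_{x_1}\beta_{x_2}\beta_{x_3} = \beta_{x_3}\beta_{x_2}\beta_{x_1} = \beta(c_3)\beta(c_2)\beta(c_1)$, which is precisely the image of relation~2. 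With all three families verified, the homomorphism $\beta$ exists and satisfies $\beta(g_a) = \beta_{g_a}$.

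The main obstacle I anticipate is bookkeeping rather than conceptual: confirming that $\QIMG(L)$ satisfies the medial axiom (so that Proposition~\ref{imqprops} is available at all), and correctly matching the conjugates $c_1,c_2,c_3$ appearing in relation~2 of Definition~\ref{img} with translations via item~1 of Proposition~\ref{imqprops}. Both points are exactly what make the phrase ``the same, \emph{mutatis mutandis}'' accurate, and neither requires ideas beyond those already used for $\IMQ(L)$.
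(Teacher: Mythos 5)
Your proposal is correct and follows the same route as the paper, which proves Proposition \ref{qimgbetahom} by declaring its proof identical, \emph{mutatis mutandis}, to that of Proposition \ref{imgbetahom}: check the three families of relations of Definition \ref{img} in $\Aut(\QIMG(L))$, using the involutory property for relation 1 and items 1 and 2 of Proposition \ref{imqprops} for relations 3 and 2. Your preliminary verification that $\QIMG(L)$ is an involutory medial quandle (with mediality coming from relation 2 of Definition \ref{img}) is a point the paper leaves implicit, and your handling of the conjugates $c_i$ via item 1 of Proposition \ref{imqprops} is the intended bookkeeping.
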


\begin{corollary}
\label{qimgh}
The homomorphism $\beta$ of Proposition \ref{qimgbetahom} maps $\IMG ^2(L)$ onto $\Dis(\IMQ(L))$.
\end{corollary}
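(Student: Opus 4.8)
The plan is to obtain this as the exact analogue of Corollary \ref{imh}(4), and then to pin the target down to $\Dis(\IMQ(L))$ by comparing the two displacement groups as quotients of the abelian group $\IMG^2(L)$. Write $\beta^{\QIMG}$ for the homomorphism of Proposition \ref{qimgbetahom} and $\beta^{\IMQ}$ for that of Proposition \ref{imgbetahom}. The first step is the routine observation, proved word for word as in Corollary \ref{imh}(4): each elementary displacement $\beta_{g_a}\beta_{g_b}$ of $\QIMG(L)$ equals $\beta^{\QIMG}(g_ag_b)$, the products $g_ag_b$ generate $\IMG^2(L)$ by Corollary \ref{imh}(2), and the elementary displacements generate the whole displacement group. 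Thus $\beta^{\QIMG}$ carries $\IMG^2(L)$ onto the group generated by these elementary displacements, and the content of the corollary is the sharper assertion that this image is $\Dis(\IMQ(L))$ itself.

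To establish that, I would compare the two kernels $\ker(\beta^{\IMQ}|_{\IMG^2(L)})$ and $\ker(\beta^{\QIMG}|_{\IMG^2(L)})$, since $\Dis(\IMQ(L))$ is the first quotient of $\IMG^2(L)$ by Corollary \ref{imh}(4). The second kernel is easy to identify: because $\beta^{\QIMG}(w)$ is conjugation by $w$ on $\QIMG(L)$, and $\QIMG(L)$ contains the generators $g_a$ of $\IMG(L)$, an element $w\in\IMG^2(L)$ lies in it exactly when $w$ centralizes every $g_a$, i.e.\ when $w\in\IMG^2(L)\cap Z(\IMG(L))$. To compare the actions I would use the surjective quandle map $f\colon\IMQ(L)\to\QIMG(L)$ with $q_a\mapsto g_a$, which exists by the universal property of $\IMQ(L)$ because the crossing relations $g_b\triangleright g_a=g_ag_bg_a=g_{b'}$ hold in $\QIMG(L)$ by Definition \ref{img}(3). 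Since $f$ is a quandle map, $f\circ\beta_{q_a}=\beta_{g_a}\circ f$, whence $f\circ\beta^{\IMQ}(w)=\beta^{\QIMG}(w)\circ f$ for all $w$; as $f$ is surjective, this yields at once the inclusion $\ker(\beta^{\IMQ}|_{\IMG^2(L)})\subseteq\IMG^2(L)\cap Z(\IMG(L))$, so $\Dis(\IMQ(L))$ surjects onto the image found above.

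The remaining, and in my view decisive, point is the reverse inclusion: every $w\in\IMG^2(L)$ that is central in $\IMG(L)$ must already act trivially on $\IMQ(L)$ through $\beta^{\IMQ}$. Equivalently, the surjection $\Dis(f)\colon\Dis(\IMQ(L))\to\Dis(\QIMG(L))$ induced by $f$ is injective, after which $\beta^{\QIMG}(\IMG^2(L))$ is carried back to $\Dis(\IMQ(L))$ under $\Dis(f)$. This is exactly the step I expect to be the main obstacle, because it is precisely where $\IMQ(L)$ and $\QIMG(L)$ are liable to diverge. I would approach it orbit by orbit: by Propositions \ref{imqorb} and \ref{qimgorb} both quandles have $\mu$ orbits indexed by the components of $L$, with $f$ respecting the indexing, so that elements in distinct orbits of $\IMQ(L)$ have distinct images and condition (b) of Corollary \ref{surjiso} holds. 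Proposition \ref{invorb} then presents each orbit of $\IMQ(L)$ as a core quandle $\Core(\Dis(\IMQ(L))/S)$, on which the displacement of a central $w$ must act as a translation; the task is to show that the vanishing of this translation on $\QIMG(L)$ forces its vanishing on $\IMQ(L)$, using the compatibility $\Dis(f)\circ\beta^{\IMQ}=\beta^{\QIMG}$ recorded above together with whatever semiregularity is required to invoke Corollary \ref{surjiso}. Verifying this reverse inclusion is where the real work of the corollary lies.
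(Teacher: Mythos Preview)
You have been misled by a typographical error in the statement. The corollary is the direct analogue of Corollary \ref{imh}(4) for $\QIMG(L)$ in place of $\IMQ(L)$; the paper introduces it with ``The following analogous results hold for $\QIMG(L)$. The proofs are the same, \emph{mutatis mutandi},'' and every later citation of Corollary \ref{qimgh} (in Corollary \ref{imgcor}, Proposition \ref{imqsemi}, Proposition \ref{fed}, Corollary \ref{inverse}) uses it as the assertion that $\beta$ maps $\IMG^2(L)$ onto $\Dis(\QIMG(L))$. With that reading, your first paragraph is already the entire proof: $\beta^{\QIMG}(g_ag_b)=\beta_{g_a}\beta_{g_b}$, these elementary displacements generate $\Dis(\QIMG(L))$, and the products $g_ag_b$ generate $\IMG^2(L)$.

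The ``sharper assertion'' you then set out to prove, namely that $\beta^{\QIMG}(\IMG^2(L))$ can be identified with $\Dis(\IMQ(L))$ via an isomorphism $\Dis(f)$, is false in general. The example in Sec.\ \ref{sec:twohopf} (the connected sum of two Hopf links) is an explicit counterexample: there $\IMG(L)$ is abelian, so $\IMG^2(L)\cap Z(\IMG(L))=\IMG^2(L)$ and $\Dis(\QIMG(L))$ is trivial, while $\Dis(\IMQ(L))$ has order $2$ (indeed $w=g_ag_b$ is central but $\beta^{\IMQ}(w)=\beta_{q_a}\beta_{q_b}=\beta_{q_a}\neq 1$). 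Thus the reverse inclusion you flag as ``the decisive point'' fails, and $\Dis(f)$ is not injective. Your instinct that this is ``precisely where $\IMQ(L)$ and $\QIMG(L)$ are liable to diverge'' was correct; the divergence is real, and the statement as literally printed cannot be salvaged.
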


The following results provide some more properties of $\QIMG(L)$.

\begin{lemma}
\label{reglem}
Suppose $n$ is an odd, positive integer, and $c_1,\dots,c_n$ are conjugates in $\textup{IMG}(L)$ of $g_{a_1},\dots, g_{a_n}$, where $a_1,\dots,a_n \in A(D)$. Then $c_1 \cdots c_n=(c_1 \cdots c_n)^{-1}=c_n \cdots c_1$ in $\textup{IMG}(L)$.
\end{lemma}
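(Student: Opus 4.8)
The plan is to first dispose of the second equality and then reduce the first to a statement about involutions. Since each $c_i$ is conjugate to some $g_{a_i}$ and $g_{a_i}^2=1$ by relation 1 of Definition \ref{img}, each $c_i$ is itself an involution, so $c_i^{-1}=c_i$. Consequently $(c_1\cdots c_n)^{-1}=c_n^{-1}\cdots c_1^{-1}=c_n\cdots c_1$, which is exactly the second claimed equality. Moreover this identity shows that the first equality $c_1\cdots c_n=c_n\cdots c_1$ is equivalent to $(c_1\cdots c_n)^2=1$, so it suffices to prove that the product of an odd number of conjugates of generators is an involution. (The case $n=3$ is precisely relation 2 of Definition \ref{img}, so that relation is the engine I expect to drive the argument.)

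The key structural input I would establish is that conjugation by any conjugate of a generator inverts the abelian group $\IMG ^2(L)$. Applying relation 2 of Definition \ref{img} to the three (trivially conjugate) elements $g_b,g_c,g_a$ gives $g_bg_cg_a=g_ag_cg_b$, whence $g_a(g_bg_c)g_a=g_cg_b=(g_bg_c)^{-1}$. Because the elements $g_bg_c$ generate $\IMG ^2(L)$, which is abelian by Corollary \ref{imh}, conjugation by a single generator $g_a$ agrees with inversion on all of $\IMG ^2(L)$. To pass to an arbitrary conjugate $c=wg_aw^{-1}$ I would use that $\IMG ^2(L)$ is normal (it is precisely the subgroup of even-length words, the kernel of the length-parity homomorphism $\IMG (L)\to\mathbb{Z}_2$): for $h\in\IMG ^2(L)$ one has $chc^{-1}=wg_a(w^{-1}hw)g_aw^{-1}=w(w^{-1}hw)^{-1}w^{-1}=h^{-1}$.

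With the inversion property in hand, the conclusion follows by a short computation. Writing $h=c_2\cdots c_n$, the fact that $n-1$ is even and that each $c_i$ maps to the nontrivial element of $\mathbb{Z}_2$ under the parity homomorphism shows $h\in\IMG ^2(L)$. Then, using $c_1^{-1}=c_1$ together with the inversion property applied to $c_1$,
\[
(c_1\cdots c_n)^2=(c_1h)(c_1h)=(c_1hc_1)h=h^{-1}h=1,
\]
as required. Conceptually, this amounts to recognizing $\IMG (L)$ as a generalized dihedral group $\IMG ^2(L)\rtimes\mathbb{Z}_2$ in which the $\mathbb{Z}_2$ acts by inversion, so that conjugates of generators are ``reflections'' and an odd product of reflections is again a reflection, hence an involution.

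I expect the main obstacle to be the middle step: correctly extracting the inversion property from the somewhat unwieldy relation 2 and then upgrading it from bare generators to arbitrary conjugates. The extraction is a clean application of relation 2, but the upgrade requires the normality of $\IMG ^2(L)$, which in turn rests on the length-parity homomorphism being well defined — a point that must be checked against all three families of defining relations in Definition \ref{img}. Everything after that is routine bookkeeping with parity and with the abelian group $\IMG ^2(L)$.
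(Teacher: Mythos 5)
Your proof is correct, but it takes a genuinely different route from the paper's. The paper argues by induction on odd $n$: the case $n=1$ is immediate from $g_{a_1}^2=1$, the case $n=3$ is a direct computation with the relation $c_1c_2c_3=c_3c_2c_1$, and the step for $n\geq 5$ is a telescoping calculation that repeatedly invokes the smaller cases $1$, $3$, $n-4$ and $n-2$. You instead exhibit $\IMG(L)$ as a generalized dihedral group: relation 2 of Definition \ref{img} applied to $g_b,g_c,g_a$ shows that conjugation by a generator inverts each $g_bg_c$, hence inverts all of $\IMG^2(L)$ since that group is abelian (Corollary \ref{imh}) and generated by such products; you then upgrade to arbitrary conjugates using the normality of $\IMG^2(L)$, obtained as the kernel of the length-parity homomorphism. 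There is no circularity here, since Corollary \ref{imh} precedes Lemma \ref{reglem} and does not depend on it; the reduction of both equalities to $(c_1\cdots c_n)^2=1$ and the closing computation $c_1hc_1h=h^{-1}h=1$ are correct. The one step you flag but do not carry out, that the parity homomorphism respects all three families of relations so that $\IMG^2(L)$ is exactly its kernel, is indeed routine: every relator has even length, and since the generators are involutions every even-parity element is a product of pairs $g_ag_b$. What your approach buys is a conceptual explanation (an odd product of ``reflections'' is a reflection, hence an involution) together with the inversion action of reflections on $\IMG^2(L)$, a fact the paper establishes separately at the quandle level inside the proof of Proposition \ref{invorb}; what the paper's induction buys is self-containment, using nothing beyond the defining relations themselves.
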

\begin{proof}
If $n=1$ we have $c_1=gg_{a_1}g^{-1}$ for some $g \in \IMG(L)$, and according to part 1 of Definition \ref{img}, $c_1^2=gg_{a_1}^2g^{-1}=gg^{-1}=1$. If $n=3$ then according to part 2 of Definition \ref{img} and the $n=1$ case of the lemma, we have $(c_1c_2 c_3)^2=(c_1c_2c_3)(c_3c_2c_1)=(c_1c_2)(c_3^2)(c_2c_1)=c_1c_2^2c_1=c_1^2=1$.

The proof proceeds using induction on $n \geq 5$. The inductive hypothesis implies that the lemma holds when $n$ is replaced by $1,3,n-4$ or $n-2$, so
\[
(c_1 \cdots c_n)^2= (c_1 \cdots c_{n-2})(c_{n-1}c_nc_1)(c_2 \cdots c_n)
\]
\[
= (c_{n-2} \cdots c_1) (c_1c_nc_{n-1})(c_2 \cdots c_n) = (c_{n-2} \cdots c_2) c_1^2 (c_nc_{n-1}c_2)(c_3 \cdots c_n)
\]
\[
= (c_{n-2} \cdots c_2) (c_2c_{n-1}c_n)(c_3 \cdots c_n)= (c_{n-2} \cdots c_3) c_2^2 c_{n-1}c_n (c_3 \cdots c_n)
\]
\[
= (c_3 \cdots c_{n-2}) c_{n-1}c_n (c_3 \cdots c_n) = (c_3 \cdots c_n)^2=1.
\]
\end{proof}

\begin{corollary}
\label{imgcor}
If $x \in \IMG^2(L)$ and $x^2=1$, then $\beta(x)=1 \in \Dis(\QIMG(L))$.
\end{corollary}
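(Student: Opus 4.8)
The plan is to show that the displacement $\beta(x) \in \Dis(\QIMG(L))$ is the identity automorphism by computing its action on $\QIMG(L)$ directly. First I would record that $\beta$ sends each generator $g_a$ to the translation $\beta_{g_a}$, which acts on a conjugate $w \in \QIMG(L)$ by $\beta_{g_a}(w) = w \triangleright g_a = g_a w g_a$. Since $\beta$ is a homomorphism and each $g_a$ is an involution, writing $x = g_{a_1}\cdots g_{a_k}$ gives
\[
\beta(x)(w) = g_{a_1}\cdots g_{a_k}\, w\, g_{a_k}\cdots g_{a_1} = xwx^{-1}
\]
for every $w \in \QIMG(L)$; that is, $\beta(x)$ is simply conjugation by $x$. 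Consequently $\beta(x)=1$ if and only if $x$ commutes with every element of $\QIMG(L)$, and since $\QIMG(L)$ contains the generators $g_a$, this holds if and only if $x$ is central in $\IMG(L)$. So the corollary reduces to showing that an involution in $\IMG^2(L)$ is central.

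The key step is the observation that conjugation by any generator $g_c$ acts as inversion on the abelian group $\IMG^2(L)$. To see this, apply Corollary \ref{imh} with the fixed arc taken to be $a^* = c$: the generators $h_a = g_a g_c$ satisfy
\[
g_c h_a g_c = g_c g_a g_c g_c = g_c g_a = (g_a g_c)^{-1} = h_a^{-1}.
\]
Since $\IMG^2(L)$ is abelian and generated by the $h_a$, the automorphism $y \mapsto g_c y g_c$ agrees with $y \mapsto y^{-1}$ on a generating set and hence on all of $\IMG^2(L)$.

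Finally I would combine these observations. Given $x \in \IMG^2(L)$ with $x^2 = 1$, we have $x^{-1} = x$, so for every arc $c$,
\[
g_c x g_c = x^{-1} = x,
\]
i.e.\ $x$ commutes with $g_c$. As the $g_c$ generate $\IMG(L)$, the element $x$ is central, and therefore $\beta(x)(w) = xwx^{-1} = w$ for all $w \in \QIMG(L)$. Thus $\beta(x)$ is the identity of $\Dis(\QIMG(L))$. I expect the only genuinely substantive point to be the inversion identity of the second step; once the right generating set (taking $a^* = c$) is chosen it is immediate, but it is the engine that converts the hypothesis $x^2 = 1$ into centrality. Everything else is bookkeeping about the homomorphism $\beta$ and the involutory nature of the generators.
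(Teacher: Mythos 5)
Your proof is correct, but it takes a genuinely different route from the paper's. The paper deduces the corollary from Lemma \ref{reglem}: writing $\beta(x)(z)=xzx^{-1}$ with $z$ a conjugate of a generator, it applies the reversal identity to the odd product $zg_{a_{2n}}\cdots g_{a_1}$ to turn $xzx^{-1}$ into $x^2z=z$. You instead prove that conjugation by any generator $g_c$ acts as inversion on the abelian group $\IMG^2(L)$ --- by checking it on the generating set $h_a=g_ag_c$ from Corollary \ref{imh} with $a^*=c$ --- so that an involution in $\IMG^2(L)$ is fixed by these conjugations, commutes with every $g_c$, and is therefore central in $\IMG(L)$. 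Both arguments are sound and short. Your inversion identity is the exact analogue, at the level of $\IMG^2(L)$, of the relation $\beta_x d\beta_x=d^{-1}$ that the paper establishes for displacements in the proof of Proposition \ref{invorb}, so it fits naturally into the surrounding theory; it also avoids the induction behind Lemma \ref{reglem} and yields the slightly sharper conclusion that $x$ is central in all of $\IMG(L)$, not merely that it acts trivially on $\QIMG(L)$. The paper's choice of Lemma \ref{reglem} is motivated by reuse: the same lemma drives the proof of semiregularity in Proposition \ref{imqsemi}, where your centrality argument would not apply directly because the fixed-point hypothesis there concerns a single element rather than all of $\QIMG(L)$. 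The one point you leave implicit --- that conjugation by $g_c$ preserves $\IMG^2(L)$ --- is immediate since $g_c(g_ag_b)g_c=(g_cg_a)(g_bg_c)$, so it is not a gap.
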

\begin{proof}
There are $a_1,\dots,a_{2n} \in A(D)$ such that $x=g_{a_1} \cdots g_{a_{2n}}$. As $\beta(x)$ is defined using conjugation by $x$, it follows that for any $z\in \IMQ(L)$, $\beta(x)(z)=xzx^{-1}$. Then according to Lemma \ref{reglem}, 
\[
\beta(x)(z)=g_{a_1} \cdots g_{a_{2n}} \cdot(z g_{a_{2n}} \cdots g_{a_{1}})=g_{a_1} \cdots g_{a_{2n}} \cdot(g_{a_1} \cdots g_{a_{2n}} z) = x^2 z = z.
\]
\end{proof}

\begin{proposition}
\label{imqsemi}
$\textup{QIMG}(L)$ is semiregular.
\end{proposition}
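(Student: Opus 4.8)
The plan is to unwind the definition of semiregularity (Definition \ref{semireg}): I must show that if a displacement $d \in \Dis(\QIMG(L))$ has a fixed point in $\QIMG(L)$, then $d$ is the identity automorphism. The starting point is the analog of item 4 of Corollary \ref{imh}, namely Corollary \ref{qimgh}, which tells us that every displacement of $\QIMG(L)$ is of the form $\beta(x)$ for some $x \in \IMG^2(L)$; and, as in the proof of Corollary \ref{imgcor}, $\beta(x)$ acts by conjugation, $\beta(x)(z)=xzx^{-1}$. So first I would fix such an $x$, write $x=g_{a_1}\cdots g_{a_{2n}}$ with $a_1,\dots,a_{2n}\in A(D)$, and suppose that $\beta(x)$ fixes some $z \in \QIMG(L)$.

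The fixed-point condition $xzx^{-1}=z$ is precisely the statement that $x$ and $z$ commute in $\IMG(L)$. The key observation is that $z$, being an element of $\QIMG(L)$, is a conjugate of some generator $g_a$, while each $g_{a_i}$ is (trivially) a conjugate of a generator; hence $xz=g_{a_1}\cdots g_{a_{2n}}z$ is a product of $2n+1$ conjugates of generators. Since $2n+1$ is odd, Lemma \ref{reglem} gives $(xz)^2=1$. On the other hand, because $x$ and $z$ commute and $z^2=1$ (again by Lemma \ref{reglem}, with a single factor), I would compute $(xz)^2=x^2z^2=x^2$. Combining the two facts yields $x^2=1$.

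To finish, I would invoke Corollary \ref{imgcor}, which says that an element of $\IMG^2(L)$ whose square is trivial maps under $\beta$ to the identity of $\Dis(\QIMG(L))$. Thus $d=\beta(x)=1$, which is exactly what semiregularity requires.

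All of the steps are short, and I expect no serious obstacle from routine computation; in particular the argument never needs an explicit description of the elements of $\QIMG(L)$ or of the fixed point $z$, only that $z$ is a conjugate of a generator. The one point that genuinely requires the right idea is the middle step: recognizing that the commuting relation $xz=zx$ is what lets one transfer the involutivity of the odd-length product $xz$ (supplied by Lemma \ref{reglem}) onto $x$ itself, thereby reducing the entire claim to Corollary \ref{imgcor}.
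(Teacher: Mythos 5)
Your proof is correct and follows essentially the same route as the paper: both reduce to showing $x^2=1$ via Lemma \ref{reglem} and then invoke Corollary \ref{imgcor}. The only (cosmetic) difference is that you apply the lemma's involutivity to the odd-length product $xz$ and use commutativity, whereas the paper applies the lemma's reversal identity $c_1\cdots c_n=c_n\cdots c_1$ directly inside the conjugate $xzx^{-1}$ to obtain $z=x^2z$.
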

\begin{proof}
Suppose $d \in \Dis(\QIMG(L))$ and there is a $z \in \IMQ(L)$ with $z=d(z)$. According to Corollary \ref{qimgh}, there is an $x=g_{a_1} \cdots g_{a_{2n}} \in \IMG^2(L)$ such that $d=\beta(x)$; then $z=d(z)=\beta(x)(z)=xzx^{-1}$. Then according to Lemma \ref{reglem},
\[
z= x z x^{-1} = g_{a_1} \cdots g_{a_{2n}} \cdot(z g_{a_{2n}} \cdots g_{a_{1}})=g_{a_1} \cdots g_{a_{2n}} \cdot(g_{a_1} \cdots g_{a_{2n}} z) = x^2 z.
\]
Cancelling $z$, we conclude that $x^2=1$. Corollary \ref{imgcor} tells us that $d=\beta(x)$ is the identity map.
\end{proof}

\section{The group $M_A(L)_{\nu}$}
\label{amod}
In this section we connect $M_A(L)_{\nu}$ with three other abelian groups. Each of the three connections provides its own insight into the properties of $M_A(L)_{\nu}$.

\subsection{$M_A(L)_{\nu}$ and the Alexander module of $L$}

The (multivariate) Alexander module $M_A(L)$ is a famous invariant of oriented links. It is a module over the ring $\Lambda_ \mu=\mathbb{Z}[t_1^{\pm 1},\dots,t_{\mu}^{\pm 1}]$ of Laurent polynomials, and the effect of reversing the orientation of a link component is to interchange the roles of $t_i$ and $t_i^{-1}$ for the variable $t_i$ corresponding to that component. (N.b. Many references use the term ``Alexander module'' to refer to the reduced version of the module, obtained by setting $t_i=t_j$ $\forall i,j$.) The theory of Alexander modules is very rich, and includes many connections with other invariants. We do not attempt to survey this rich theory here; the reader who would like an overview is referred to Fox's famous survey \cite{F}, and to Hillman's excellent book \cite{H}.

It will be useful to work with the definition of the Alexander module derived from the Wirtinger presentation using Fox's free differential calculus. 

\begin{definition}(Fox \cite[(2.8)]{F1})
\label{fdc}
Let $F$ be the free group on a set $S$, and let 
\[
w = \prod\limits _{i=1}^n s_i^{\epsilon_i} \in F \text{,}
\]
where $s_1, \dots, s_n \in S$ and $\epsilon_1, \dots , \epsilon_n \in \{ \pm 1\}$. For $1 \leq i \leq n$, define $w_i$ as follows.
\[
w_i = \begin{cases}
\prod\limits_{j=1}^{i-1} s_j^{\epsilon_j} , & \text{if } \epsilon_i=1 \\
\prod\limits _{j=1}^{i} s_j^{\epsilon_j} , & \text{if } \epsilon_i=-1
\end{cases}
\]
Then for each $s \in S$, the free derivative of $w$ with respect to $s$ is the following element of the integral group ring $\mathbb Z F$:
\[
\frac{\partial w}{\partial s} = \sum\limits_{s_i=s} \epsilon_i w_i
\]
\end{definition}

Let $L$ be an oriented link with a diagram $D$. The Wirtinger presentation of the link group $G=\pi_1(\mathbb S ^3 - L)$ has generators corresponding to the arcs of $D$, and relators corresponding to the crossings of $D$. The relator corresponding to a crossing of $D$ as indicated in Fig.\ \ref{orcrossfig} is $ab'a^{-1}b^{-1}$. (N.b.\ The labels $b,b'$ in Fig.\ \ref{orcrossfig} are not interchangeable: with respect to the orientation of $a$, $b$ is on the left and $b'$ is on the right.) It is easy to see that the abelianization $G/G'$ is free abelian, with one generator $t_i$ for each component $K_i$ of $L$; $t_i$ is the image in $G/G'$ of every generator $a \in A(D)$ with $\kappa_D(a)=i$. The abelianization map $\alpha:G \to G/G'$ is given by $\alpha(a) = t_{\kappa_D(a)}$ $\forall a \in A(D)$. The integral group ring of the abelianization, $\mathbb Z (G/G')$, is naturally isomorphic to the Laurent polynomial ring $\Lambda_\mu$, and it is conventional to identify the two rings with each other.

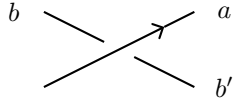
\begin{figure} [bth]
\centering
\begin{tikzpicture} [>=angle 90]
\draw [thick] (1,0.5) -- (0.6,0.3);
\draw [thick] [<-] (0.6,0.3) -- (-1,-0.5);
\draw [thick] (-1,0.5) -- (-.2,0.1);
\draw [thick] (0.2,-0.1) -- (1,-0.5);
\node at (1.4,0.5) {$a$};
\node at (-1.4,0.5) {$b$};
\node at (1.4,-0.5) {$b'$};
\end{tikzpicture}
\caption{A crossing in an oriented diagram.}
\label{orcrossfig}
\end{figure}

Now, let $\Lambda_{\mu}^{A(D)}$ and $\Lambda_{\mu}^{C(D)}$ be the free $\Lambda_{\mu}$-modules on the sets $A(D)$ and $C(D)$, and let $\rho_D:\Lambda_{\mu}^{C(D)} \to \Lambda_{\mu}^{A(D)}$ be the $\Lambda_{\mu}$-linear map given by
\[
\rho_D(c)=(1-t_{\kappa_D(b)})a+t_{\kappa_D(a)}b'-b
\]
whenever $c \in C(D)$ is a crossing of $D$ as indicated in Fig.\ \ref{orcrossfig}. That is, $\rho_D$ is defined by applying Definition \ref{fdc} to the Wirtinger relator $ab'a^{-1}b^{-1}$, and then applying $\alpha$ to the free derivatives.

\begin{definition}
\label{multialmod}
The \emph{Alexander module} $M_A(L)$ is the cokernel of $\rho_D$. The canonical surjection $\Lambda_{\mu}^{A(D)} \to M_A(L)$ is denoted $\gamma_D$.
\end{definition}

If $\nu:\Lambda_\mu \to \mathbb Z$ is the homomorphism with $\nu(t_i)=-1$ $\forall i \in \{1, \dots, \mu\}$, then the description of the group $M_A(L)_ \nu$ in Definition \ref{almod} is obtained from Definition \ref{multialmod} simply by applying $\nu$ to all coefficients. As mentioned in the introduction, it follows that $M_A(L)_\nu$ is the tensor product of $M_A(L)$ with the $\Lambda_\mu$-module $\mathbb Z _\nu$ obtained from the abelian group $\mathbb Z$ by setting $t_i^{\pm 1} \cdot n=-n $ $ \forall i \in \{1, \dots, \mu\}$ $\forall n \in \mathbb Z$. To say the same thing in a different way: the matrix $R_D$ representing the map $r_D$ of Definition \ref{almod} is the image under $\nu$ of an Alexander matrix representing the map $\rho_D$.

\begin{proposition}
\label{redundant}
Let $r_D:\mathbb Z ^{C(D)} \to \mathbb Z ^ {A(D)}$ be the homomorphism that appears in Definition \ref{almod}. Then for each crossing $c_0 \in C(D)$, the image of $r_D$ is generated by the elements $r_D(c)$ with $c \neq c_0$.
\end{proposition}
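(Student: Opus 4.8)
The plan is to deduce this from the analogous, classical redundancy for the Wirtinger presentation and then specialize the coefficients via $\nu$. Recall from Definitions \ref{fdc} and \ref{multialmod} that $r_D=\nu\circ\rho_D$, where $\rho_D$ is the Jacobian map obtained by Fox-differentiating the Wirtinger relators $R_c=ab'a^{-1}b^{-1}$ and applying $\alpha$, and $\nu:\Lambda_\mu\to\mathbb Z$ is the ring map with $\nu(t_i^{\pm1})=-1$. So it suffices to prove the corresponding statement one level up, over $\Lambda_\mu$: namely that $\rho_D(c_0)$ is a $\Lambda_\mu$-combination of the rows $\rho_D(c)$ with $c\neq c_0$. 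Applying the ring homomorphism $\nu$ to such a combination immediately expresses $r_D(c_0)$ as an integral combination of the $r_D(c)$, $c\neq c_0$, which is exactly what the proposition asserts.

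To get the $\Lambda_\mu$-statement I would invoke the classical fact that, in a Wirtinger presentation, any single relator is a consequence of the others: traversing each component $K_i$ of $L$, each Wirtinger relation rewrites one underarc as a conjugate of the preceding underarc by a power of the overstrand, and closing up the loop expresses $R_{c_0}$ as a product of conjugates of the remaining relators (see e.g.\ Fox \cite{F}). Thus in the free group $F$ on the arcs one has $R_{c_0}=\prod_k w_k R_{c_{i_k}}^{\epsilon_k}w_k^{-1}$ with $c_{i_k}\neq c_0$, each $w_k$ a product of generators and each $\epsilon_k=\pm1$. Differentiating this identity by Fox's product rule and applying $\alpha$ (under which every $R_{c}\mapsto 1$), the derivative terms coming from the conjugators $w_k$ and $w_k^{-1}$ cancel in pairs, leaving
\[
\rho_D(c_0)=\sum_k \epsilon_k\,\alpha(w_k)\,\rho_D(c_{i_k}).
\]
Since each $\alpha(w_k)$ is a monomial in $\Lambda_\mu$, applying $\nu$ gives $r_D(c_0)=\sum_k \epsilon_k\,\nu(\alpha(w_k))\,r_D(c_{i_k})$ with every coefficient $\nu(\alpha(w_k))=\pm1$ an integer, completing the reduction. (For a split diagram one runs this on the connected piece containing $c_0$; the other pieces contribute nothing.)

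The main obstacle is the Fox-calculus bookkeeping: verifying that upon applying $\alpha$ the conjugator contributions really do drop out, so that $\rho_D(c_0)$ is expressed purely in terms of the \emph{other} rows and no self-coefficient appears. The cancellation uses $\alpha(R_{c_{i_k}})=1$ together with $\frac{\partial w^{-1}}{\partial s}=-w^{-1}\frac{\partial w}{\partial s}$, and for $\epsilon_k=-1$ the identity $\frac{\partial R^{-1}}{\partial s}=-R^{-1}\frac{\partial R}{\partial s}$; tracking these signs carefully is the only delicate point. An alternative, more self-contained route would be to exhibit a single integer vector $v\in\mathbb Z^{C(D)}$ with $r_D(v)=0$ and all entries $\pm1$ (coming from an alternating sign along the underpasses of each component, twisted by the crossing signs), and to verify directly that the overstrand contributions of each arc balance its two underpass contributions. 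That direct verification, however, requires handling the orientation-dependent crossing signs by hand, which is precisely the bookkeeping the Fox-calculus argument packages automatically; for that reason I expect the specialization-of-$\rho_D$ approach above to be the cleanest to carry out.
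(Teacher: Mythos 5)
Your proposal is correct and follows essentially the same route as the paper: the paper's proof likewise cites the well-known redundancy of any single Wirtinger relator, notes that the corresponding row of the Fox--Jacobian Alexander matrix is therefore a $\Lambda_\mu$-combination of the other rows, and specializes via $t_i\mapsto -1$. You have merely spelled out the Fox-calculus cancellation that the paper leaves implicit, and that computation is carried out correctly.
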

\begin{proof}
It is well known that any one relator in a Wirtinger presentation of a link group is redundant; see \cite{F} for instance. It follows that any one row of an Alexander matrix derived from a Wirtinger presentation using Fox's free differential calculus is redundant. The matrix $R_D$ representing $r_D$ is obtained from such an Alexander matrix by setting $t_1, \dots, t_ \mu$ equal to $-1$, so any one row of $R_D$ is redundant.
\end{proof}

\subsection{$M_A(L)_{\nu}$ and the Alexander module of $\IMG(L)$}

In this subsection we discuss another way to describe $M_A(L)_\nu$ as a tensor product of an Alexander module with $\mathbb Z _\nu$. 

Definition \ref{img} implies that if $\alpha:\IMG(L) \to \IMG(L)/ \IMG(L)'$ is the abelianization map, then $\IMG(L)/\IMG(L)'$ is the abelian group generated by the elements $\alpha(g_a)$, $a \in A(D)$, subject to two kinds of relations: $\alpha(g_a)^2=1$ $\forall a \in A(D)$, and $\alpha(g_b)=\alpha(g_{b'})$ for each crossing of $D$ as indicated in Fig.\ \ref{crossfig}. The latter relations imply that $\alpha(g_{b_1})=\alpha(g_{b_2})$ if and only if $\kappa_D(b_1)=\kappa_D(b_2)$, so the images under $\alpha$ of the $g_a$ elements of $\IMG(L)$ may be denoted $t_1, \dots, t_\mu$ without ambiguity. The former relations imply directly that $t_i^2=1$ $\forall i$. These facts allow us to identify the integral group ring $\mathbb Z(\IMG(L)/ \IMG(L)')$ with the quotient ring $\Lambda'_\mu = \Lambda_ \mu / (t_1^2-1, \dots, t_\mu ^2 -1)$ in a natural way. 

Abusing notation, we use $\nu$ to denote both the map $\nu:\mathbb Z (\IMG(L)) \to \mathbb Z$ with $\nu(g_a)=-1$ $\forall a \in A(D)$, and the map $\nu:\Lambda'_\mu \to \mathbb Z$ with $\nu(t_i)=-1 $ $\forall i \in \{1, \dots, \mu \}$. We use $\mathbb Z _\nu$ to denote both the $\mathbb Z (\IMG(L))$-module and the $\Lambda'_\mu$-module on $\mathbb Z$ defined using these $\nu$ maps.

\begin{proposition}
\label{modiso}
If $M$ is the Alexander module of the group $\IMG(L)$, then $M_A(L)_\nu \cong M \otimes _{\Lambda'_\mu} \mathbb Z _\nu$.
\end{proposition}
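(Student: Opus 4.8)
The plan is to compute $M$ directly from the presentation of $\IMG(L)$ recorded in Definition \ref{img}, using Fox's free differential calculus (Definition \ref{fdc}), and then pass to $M \otimes_{\Lambda'_\mu}\mathbb Z_\nu$. The organizing observation is that $\mathbb Z_\nu \cong \Lambda'_\mu/(t_1+1,\dots,t_\mu+1)$ as a $\Lambda'_\mu$-module, so by right-exactness of the tensor product, $M \otimes_{\Lambda'_\mu}\mathbb Z_\nu$ is the cokernel of the Fox Jacobian of the presentation after every entry is pushed through the ring map $\nu:\Lambda'_\mu \to \mathbb Z$. Since $M_A(L)_\nu = \coker r_D$ by Definition \ref{almod}, and the columns in both cases are indexed by $A(D)$, the whole proposition reduces to checking that applying $\nu$ to the Fox Jacobian of the Definition \ref{img} presentation reproduces the matrix $R_D$ representing $r_D$.

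I would organize the relator-by-relator computation according to the three types of relations. A Type 1 relator $g_a^2$ has as its only nonzero derivative $\partial(g_a^2)/\partial g_a = 1 + t_{\kappa_D(a)}$, which $\nu$ sends to $1 + (-1) = 0$; so Type 1 relators contribute only zero rows. For a Type 3 relator $g_a g_b g_a g_{b'}^{-1}$ coming from a crossing as in Fig.\ \ref{crossfig}, a direct calculation gives derivatives $1 + t_{\kappa_D(a)}t_{\kappa_D(b)}$, $t_{\kappa_D(a)}$ and $-1$ with respect to $g_a$, $g_b$ and $g_{b'}$ (using that the two underarcs share a component, so $t_{\kappa_D(b)}=t_{\kappa_D(b')}$, and that $t_i^2=1$ in $\Lambda'_\mu$). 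Applying $\nu$ turns these into $2$, $-1$ and $-1$, that is, the row $2a - b - b' = r_D(c)$. Hence the Type 3 rows, after $\nu$, are exactly the rows of $R_D$.

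The step requiring the most care is showing that every Type 2 relator yields a zero row under $\nu$; there are infinitely many such relators, and each involves conjugates $c_i = w_i g_{a_i} w_i^{-1}$ by arbitrary words $w_i$. I would first record the general formula
\[
\frac{\partial c_i}{\partial s} = (1 - \alpha(g_{a_i}))\frac{\partial w_i}{\partial s} + \alpha(w_i)\frac{\partial g_{a_i}}{\partial s},
\]
and then expand the Fox derivative of the relator $c_1 c_2 c_3 c_1^{-1}c_2^{-1}c_3^{-1}$. Writing $a = \alpha(g_{a_1})$, $b = \alpha(g_{a_2})$ and $c = \alpha(g_{a_3})$, and using commutativity of the abelianization, the derivative collapses to
\[
(1 - bc)\frac{\partial c_1}{\partial s} + (a - c)\frac{\partial c_2}{\partial s} + (ab - 1)\frac{\partial c_3}{\partial s}.
\]
The key point is that the three scalar coefficients lie in $\ker\nu$: since $\nu(a)=\nu(b)=\nu(c)=-1$, we have $\nu(1-bc)=\nu(a-c)=\nu(ab-1)=0$, so the entire row vanishes under $\nu$ no matter what the words $w_i$ are. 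This is exactly the place where the defining feature of $\nu$ (sending every meridian to $-1$) does the work.

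Assembling the pieces, after applying $\nu$ the Fox Jacobian of the Definition \ref{img} presentation has only its Type 3 rows surviving, and these coincide with the rows of $R_D$; its cokernel is therefore $\coker r_D = M_A(L)_\nu$. Matching each generator $g_a$ of $M$ with the corresponding basis element $a$ then yields the asserted isomorphism $M \otimes_{\Lambda'_\mu}\mathbb Z_\nu \cong M_A(L)_\nu$.
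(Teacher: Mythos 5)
Your proposal is correct and follows essentially the same route as the paper: both compute the Fox Jacobian of the presentation in Definition \ref{img}, use right-exactness to identify $M \otimes_{\Lambda'_\mu}\mathbb Z_\nu$ with the cokernel of $\nu(J)$, and check relator by relator that Types 1 and 2 contribute zero rows while Type 3 reproduces $R_D$. Your treatment of the Type 2 relators, collapsing the derivative to $(1-bc)\partial c_1/\partial s + (a-c)\partial c_2/\partial s + (ab-1)\partial c_3/\partial s$ with coefficients in $\ker\nu$, is a slightly tidier packaging of the same verification the paper carries out by enumerating the contributions of each occurrence of each generator.
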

\begin{proof}
The Alexander module $M$ of $\IMG(L)$ is a $\Lambda'_\mu$-module with a presentation matrix $J$ that has a row for each relator in Definition \ref{img}, and a column for each generator. The entries of this ``Jacobian'' matrix $J$ are obtained by applying the free differential calculus to each relator from Definition \ref{almod}, applying $\alpha$ to the free derivatives, and identifying $\mathbb Z (\IMG(L)/\IMG(L)')$ with $\Lambda'_\mu$ as mentioned above. For a thorough discussion of this approach to the Alexander modules of finitely presented groups, we refer to Crowell \cite{C3}.

We proceed to describe the entries of the Jacobian matrix $J$. Remember that each $a \in A(D)$ has $\alpha(g_a)=t_{\kappa_D(a)}$ and $t_{\kappa_D(a)}^2=1$.

1. The only nonzero free derivative of a relator $g_a^2$ is $1+g_a$, with respect to $g_a$. Its image under $\alpha$ is $1+t_{\kappa_D(a)}$. 

2. (i) If $a_1,a_2,a_3 \in A(D)$ then the relation $g_{a_1}g_{a_2}g_{a_3}=g_{a_3}g_{a_2}g_{a_1}$ gives rise to the relator $g_{a_1}g_{a_2}g_{a_3}g_{a_1}g_{a_2}g_{a_3}$, or $(g_{a_1}g_{a_2}g_{a_3})^2$. The images under $\alpha$ of the nonzero free derivatives of this relator are $1+t_{\kappa_D(a_1)}t_{\kappa_D(a_2)}t_{\kappa_D(a_3)}$ with respect to $g_{a_1}$, $t_{\kappa_D(a_1)}+t_{\kappa_D(a_2)}t_{\kappa_D(a_3)}$ with respect to $g_{a_2}$ and $t_{\kappa_D(a_1)}t_{\kappa_D(a_2)}+t_{\kappa_D(a_3)}$ with respect to $g_{a_3}$. 

(ii) If we replace $g_{a_1},g_{a_2},g_{a_3}$ with conjugates $c_1=wg_{a_1}w^{-1},c_2=xg_{a_2}x^{-1}$ and $c_3=yg_{a_3}y^{-1}$, we obtain the relator $r=(c_1c_2c_3)^2$. The free derivative of $r$ with respect to a generator $g_a$ is a sum of terms, one for each appearance of $g_a$ in $r$. For instance, if $x=x_1g_ax_2$ then this appearance of $g_a$ in $x$ provides four appearances of $g_a$ in the relator $r$. These four appearances contribute 
\[
t_{\kappa_D(a_1)}\alpha(x_1)+t_{\kappa_D(a_1)}t_{\kappa_D(a_2)}\alpha(x_1)+t_{\kappa_D(a_2)}t_{\kappa_D(a_3)}\alpha(x_1)+t_{\kappa_D(a_3)}\alpha(x_1)
\]
to the value of $\alpha(\partial r / \partial g_a)$. 

In addition to these contributions from generators appearing in $w,x$ and $y$, there are contributions from the appearances of $g_{a_1}, g_{a_2}$ and $g_{a_3}$ in the middles of $c_1, c_2$ and $c_3$. These contributions are $(1+t_{\kappa_D(a_1)}t_{\kappa_D(a_2)}t_{\kappa_D(a_3)})\alpha(w)$ with respect to $g_{a_1}$, $(t_{\kappa_D(a_1)}+t_{\kappa_D(a_2)}t_{\kappa_D(a_3)})\alpha(x)$ with respect to $g_{a_2}$ and $(t_{\kappa_D(a_1)}t_{\kappa_D(a_2)}+t_{\kappa_D(a_3)})\alpha(y)$ with respect to $g_{a_3}$.

3. If $c \in C(D)$ is a crossing as illustrated in Fig.\ \ref{crossfig}, then the relation $g_a g_b g_a= g_{b'}$ gives rise to the relator $g_a g_b g_ag_{b'}$. The images under $\alpha$ of the nonzero free derivatives of this relator are $1+t_{\kappa_D(a)}t_{\kappa_D(b)}$ with respect to $g_a$, $t_{\kappa_D(a)}$ with respect to $g_b$, and $t_{\kappa_D(b)}$ with respect to $g_{b'}$.

When we form the tensor product of the Alexander module of $\IMG(L)$ with $\mathbb Z _\nu$, the right exactness of tensor products implies that the resulting abelian group has a presentation matrix $\nu(J)$, where $J$ is the matrix whose entries are described in 1, 2, 3 above. When we apply $\nu$ to the image under $\alpha$ of a free derivative of a relator of either of the first two types, we always get $0$. For the third type of relator, we get $2$ for $g_a$, $-1$ for $g_b$ and $-1$ for $g_{b'}$. That is, the tensor product $M \otimes _{\Lambda'_\mu} \mathbb Z _\nu$ has a presentation matrix $\nu(J)$ that is the same as the matrix $R_D$ representing the map $r_D$ of Definition \ref{almod}, with extra rows of zeroes. The proposition follows. 
\end{proof}

The isomorphism $M_A(L)_\nu \cong M \otimes _{\Lambda'_\mu} \mathbb Z _\nu$ is useful because the Alexander module $M$ is itself isomorphic to a tensor product:
\begin{equation}
\label{isomodiso}
I \otimes _{\mathbb Z (\IMG(L))} \Lambda'_{\mu} \cong M \text{,}
\end{equation}
where $I$ is the augmentation ideal of the integral group ring $\mathbb Z (\IMG(L))$. (That is, $I$ is the ideal of $\mathbb Z (\IMG(L))$ generated by the elements $g-1$, where $g \in \IMG(L)$.) To be specific, if $g_a$ is one of the generators of $\IMG(L)$ then the isomorphism (\ref{isomodiso}) maps $(g_a-1) \otimes 1$ to the element of $M$ corresponding to the column of the Jacobian matrix $J$ obtained from free derivatives with respect to $g_a$. Again, we refer to Crowell \cite{C3} for details. 

Tensoring the isomorphism (\ref{isomodiso}) with the identity map of $\mathbb Z _\nu$, we obtain an isomorphism
\[
I \otimes _{\mathbb Z (\IMG(L))} \mathbb Z_{\nu} \cong I \otimes _{\mathbb Z (\IMG(L))} (\Lambda'_{\mu} \otimes_{\Lambda'_{\mu}} \mathbb Z _\nu) 
\]
\[
\cong (I \otimes _{\mathbb Z (\IMG(L))} \Lambda'_{\mu}) \otimes_{\Lambda'_{\mu}} \mathbb Z _\nu \cong M \otimes_{\Lambda'_{\mu}} \mathbb Z _\nu \text{.}
\]
Composing this isomorphism with the one from Proposition \ref{modiso}, we deduce the following.

\begin{corollary}
\label{mapping1}
There is an isomorphism 
\[
f:I \otimes _{\mathbb Z (\IMG(L))} \mathbb Z_{\nu} \to M_A(L)_\nu 
\]
with $f((g_a-1) \otimes 1)=s_D(a)$ $\forall a \in A(D)$.
\end{corollary}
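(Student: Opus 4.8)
The plan is to assemble $f$ from the two isomorphisms already established just above the corollary --- the presentation-level identification (\ref{isomodiso}) of $I \otimes_{\mathbb Z (\IMG(L))} \Lambda'_\mu$ with the Alexander module $M$ of $\IMG(L)$, and the isomorphism $M_A(L)_\nu \cong M \otimes_{\Lambda'_\mu} \mathbb Z_\nu$ of Proposition \ref{modiso} --- together with the standard change-of-rings (associativity) property of the tensor product. First I would record the observation that makes the change of rings legitimate: the $\mathbb Z (\IMG(L))$-module structure on $\mathbb Z_\nu$ is the restriction of its $\Lambda'_\mu$-module structure along the abelianization map $\alpha: \mathbb Z (\IMG(L)) \to \Lambda'_\mu$. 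Indeed $\nu(g_a) = -1$ for every $a$, and $\nu$ annihilates the ideal $(t_1^2-1, \dots, t_\mu^2-1)$, so the $\nu$ map on $\mathbb Z (\IMG(L))$ factors through $\alpha$. This yields a natural identification $\mathbb Z_\nu \cong \Lambda'_\mu \otimes_{\Lambda'_\mu} \mathbb Z_\nu$ of $\mathbb Z (\IMG(L))$-modules, with $\Lambda'_\mu$ viewed as a bimodule through $\alpha$.

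With this in place I would simply run the chain of canonical isomorphisms
\[
I \otimes_{\mathbb Z (\IMG(L))} \mathbb Z_\nu \cong I \otimes_{\mathbb Z (\IMG(L))} (\Lambda'_\mu \otimes_{\Lambda'_\mu} \mathbb Z_\nu) \cong (I \otimes_{\mathbb Z (\IMG(L))} \Lambda'_\mu) \otimes_{\Lambda'_\mu} \mathbb Z_\nu,
\]
in which the second step is the associativity isomorphism for the bimodule $\Lambda'_\mu$. Applying (\ref{isomodiso}) to the inner factor rewrites the right-hand side as $M \otimes_{\Lambda'_\mu} \mathbb Z_\nu$, and Proposition \ref{modiso} identifies this last group with $M_A(L)_\nu$. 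Defining $f$ to be the composite of these four maps produces the required isomorphism; since every link in the chain is a standard natural isomorphism, the existence of $f$ is essentially formal.

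The one step that demands real care --- and which I expect to be the sole genuine obstacle --- is verifying the explicit formula $f((g_a-1) \otimes 1) = s_D(a)$, which amounts to chasing the distinguished generator through all four maps. Under (\ref{isomodiso}) the element $(g_a - 1) \otimes 1$ is sent to the class in $M$ of the column of the Jacobian matrix $J$ obtained by free differentiation with respect to $g_a$; equivalently, it is the image of the basis element $a$ under the canonical surjection from the free $\Lambda'_\mu$-module on $A(D)$ onto $M$. After tensoring with $\mathbb Z_\nu$, I would invoke the computation in the proof of Proposition \ref{modiso}, where $\nu(J)$ is shown to agree (up to rows of zeros) with the matrix $R_D$ presenting $r_D$; this forces the class above to be carried to the image of the basis element $a$ in $\coker r_D = M_A(L)_\nu$, which is by definition $s_D(a)$. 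I would write out this matching of the two presentations carefully, since keeping track of which tensor factor carries the basis element $a$, and confirming that the coefficients reduce correctly under $\nu$, is exactly where an indexing or sign slip could occur; the underlying algebra is otherwise routine.
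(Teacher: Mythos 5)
Your proposal is correct and follows essentially the same route as the paper: the paper also obtains $f$ by composing $I \otimes _{\mathbb Z (\IMG(L))} \mathbb Z_{\nu} \cong I \otimes _{\mathbb Z (\IMG(L))} (\Lambda'_{\mu} \otimes_{\Lambda'_{\mu}} \mathbb Z _\nu) \cong (I \otimes _{\mathbb Z (\IMG(L))} \Lambda'_{\mu}) \otimes_{\Lambda'_{\mu}} \mathbb Z _\nu \cong M \otimes_{\Lambda'_{\mu}} \mathbb Z _\nu$ with the isomorphism of Proposition \ref{modiso}, tracking $(g_a-1)\otimes 1$ through the column of the Jacobian matrix exactly as you describe. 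Your extra care in checking that the $\mathbb Z(\IMG(L))$-structure on $\mathbb Z_\nu$ factors through $\alpha$ and in chasing the generator explicitly only makes the argument more complete than the paper's terse version.
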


The next result tells us that Corollary \ref{mapping1} provides a natural quandle map $\QIMG(L)  \to Q_A(L)_\nu$. Later, we will see that this map is always an isomorphism.

\begin{corollary}
\label{mapping}
There is a quandle map $\widehat s_D:\QIMG(L) \to Q_A(L)_\nu$, given by $\widehat s_D(g_a)=s_D(a)$ $\forall a \in A(D)$.
\end{corollary}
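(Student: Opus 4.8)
The plan is to construct $\widehat s_D$ as the restriction to $\QIMG(L)$ of a single function defined on the whole group $\IMG(L)$, so that well-definedness comes for free and only the quandle identity and the target set need checking. Concretely, write $G = \IMG(L)$, let $I$ be its augmentation ideal, and let $f$ be the isomorphism of Corollary \ref{mapping1}. For $g \in G$ the element $g-1$ lies in $I$, so $(g-1)\otimes 1$ is a well-defined element of $I \otimes_{\mathbb Z G}\mathbb Z_\nu$, and I would set $\bar d(g) = f((g-1)\otimes 1) \in M_A(L)_\nu$. By Corollary \ref{mapping1}, $\bar d(g_a)=s_D(a)$ for every arc $a$. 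I then define $\widehat s_D$ to be the restriction of $\bar d$ to the subset $\QIMG(L) \subseteq G$; since $\bar d$ depends only on $g$ and not on any expression of $g$ as a conjugate, there is nothing to check for well-definedness.

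The heart of the matter is a twisted additivity property of $\bar d$. In $\mathbb Z G$ the map $g \mapsto g-1$ is a derivation: $gh-1 = (g-1)+g(h-1)$ and $g^{-1}-1 = -g^{-1}(g-1)$. The key point — and the step I expect to be the main obstacle — is to check that, after tensoring the augmentation ideal with $\mathbb Z_\nu$, the left action of $g\in G$ becomes multiplication by the scalar $\nu(g) = \pm 1$; this is exactly the bookkeeping that must be done carefully with the module conventions of Crowell's construction. One confirms consistency with the presentation by noting that $\bar d(g_a^2) = (1+\nu(g_a))s_D(a)=0$ and that the relator $g_ag_bg_ag_{b'}$ yields $2s_D(a)-s_D(b)-s_D(b') = s_D(r_D(c)) = 0$. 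Granting this, one obtains $\bar d(gh)=\bar d(g)+\nu(g)\bar d(h)$ and $\bar d(g^{-1}) = -\nu(g)\bar d(g)$. Now any $x,y \in \QIMG(L)$ are conjugates of generators, so $\nu(x)=\nu(y)=-1$, and a short expansion gives
\[
\bar d(yxy^{-1}) = \bar d(y) + \nu(y)\bar d(x) - \nu(y)^2\nu(x)\bar d(y) = 2\bar d(y) - \bar d(x).
\]
Since the core operation on $M_A(L)_\nu$ is $u \triangleright v = 2v-u$ and the quandle operation on $\QIMG(L)$ is $x\triangleright y = yxy^{-1}$, this is precisely the identity $\widehat s_D(x\triangleright y)=\widehat s_D(x)\triangleright \widehat s_D(y)$, so $\widehat s_D$ is a quandle homomorphism into $\Core(M_A(L)_\nu)$.

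It remains to see that the image lies in $Q_A(L)_\nu$, and here I would invoke the orbit structure rather than compute $\phi_\nu$ directly. By Proposition \ref{qimgorb} every element of $\QIMG(L)$ lies in the orbit of some generator $g_a$; writing a conjugate as $wg_aw^{-1}$ with $w = g_{b_1}\cdots g_{b_k}$ and using $g_{b_j}^{-1}=g_{b_j}$, one expresses it as $(\cdots(g_a \triangleright g_{b_k})\cdots)\triangleright g_{b_1}$, a sequence of quandle operations applied to $g_a$ using generators. Applying the quandle map $\widehat s_D$ turns this into the same sequence of core operations applied to $s_D(a)$ using the elements $s_D(b_j)$. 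Since $s_D(a)$ and each $s_D(b_j)$ lie in $Q_A(L)_\nu$, and $Q_A(L)_\nu$ is a subquandle of $\Core(M_A(L)_\nu)$ (as observed after Definition \ref{coreprime}), the entire trajectory stays inside $Q_A(L)_\nu$; hence $\widehat s_D(\QIMG(L)) \subseteq Q_A(L)_\nu$. Alternatively one can verify $\phi_\nu(\widehat s_D(wg_aw^{-1})) = \Phi_\nu(a)$ directly, using that the first coordinate of $\phi_\nu(\bar d(w))$ equals $(1-\nu(w))/2$ and that doubling annihilates the $\mathbb Z_2$ summands of $A_\mu$. This completes the construction of the quandle map $\widehat s_D$.
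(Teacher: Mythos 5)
Your overall architecture is exactly the paper's: define the map on all of $\IMG(L)$ by $g \mapsto f((g-1)\otimes 1)$ using Corollary \ref{mapping1}, get well-definedness for free, verify the quandle identity from the derivation property of $g \mapsto g-1$, and then check that the image lands in $Q_A(L)_\nu$ by a closure argument (your orbit/trajectory argument is essentially the paper's ``closed under conjugation by generators'' step, and it is fine). But the step you yourself flag as ``the main obstacle'' --- that the \emph{left} action of $g$ on $I \otimes_{\mathbb Z(\IMG(L))}\mathbb Z_\nu$ becomes multiplication by $\nu(g)$ --- is not merely unverified; under the convention the paper states, namely $(iz)\otimes 1 = i \otimes \nu(z)$, it is false. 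That rule balances the \emph{right} action of the group ring on $I$, so the usable factorization is $gh-1=(g-1)h+(h-1)$, giving $\bar d(gh)=\nu(h)\bar d(g)+\bar d(h)$, not your $\bar d(gh)=\bar d(g)+\nu(g)\bar d(h)$. The two disagree already on $g_ag_b$: the correct value is $s_D(b)-s_D(a)$, yours gives $s_D(a)-s_D(b)$, and these differ by $2(s_D(a)-s_D(b))$, which is a generally nonzero element of $\ker\phi_\nu$. Concretely, $(g(h-1))\otimes 1 = (h-1)\otimes 1 + (g-1)\otimes\nu(h-1)$, and $\nu(h-1)=-2\neq 0$, so left multiplication does not pass through as a scalar.

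Your final formula $\bar d(yxy^{-1})=2\bar d(y)-\bar d(x)$ is nevertheless correct --- conjugates of generators by products of involutions are palindromic words, and for those the left-handed and right-handed expansions coincide, so the sign errors cancel. But as written the derivation is not justified, and the same left-handed expansion applied to a non-palindromic product would give a wrong answer. The fix is small: replace your twisted additivity by the right-handed version $\bar d(gh)=\nu(h)\bar d(g)+\bar d(h)$ (equivalently, telescope $xyx^{-1}-1$ as $(x-1)yx^{-1}+(y-1)x^{-1}+(1-x)x^{-1}$ and move the right-hand factors across the tensor sign via $\nu$, which is exactly what the paper does). One further small omission: since you define $\bar d$ by $g\mapsto f((g-1)\otimes 1)$ on the group itself, you do not actually need to check consistency with the relators of $\IMG(L)$ at all; and if you did want to define $\bar d$ by a word formula, you would also have to check the medial relators $(c_1c_2c_3)^2$, not just $g_a^2$ and the crossing relators.
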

\begin{proof}
There is certainly a well-defined function $\IMG(L) \to I \otimes _{\mathbb Z (\IMG(L))} \mathbb Z_{\nu}$, given by $g_a \mapsto (g_a-1) \otimes 1$ $\forall a \in A(D)$. Composing this function with the isomorphism $f$ of Corollary \ref{mapping1}, we obtain a function $\IMG(L) \to M_A(L)_{\nu}$ under which the image of each generator $g_a$ is $s_D(a)$. The restriction of this function to $\QIMG(L)$ is the map $\widehat s_D$ of the statement. We must verify that $\widehat s_D$ maps $\QIMG(L)$ into $Q_A(L)_\nu$, and that $\widehat s_D$ is a quandle map.

Recall that if $i \in I$ and $z \in \mathbb Z (\IMG(L))$, then in $I \otimes _{\mathbb Z (\IMG(L))} \mathbb Z_{\nu}$, we have $(iz)\otimes 1 = i \otimes \nu(z)$. It follows that if $x,y\in \QIMG(L)$, then
\[
\widehat s _D(xyx^{-1})=f((xyx^{-1}-1) \otimes 1)
\]
\[
=f((xyx^{-1}-yx^{-1}) \otimes 1)+f((yx^{-1}-x^{-1}) \otimes 1)+f((x^{-1}-1) \otimes 1)
\]
\[
=f((x-1) \otimes \nu(yx^{-1}))+f((y-1) \otimes \nu(x^{-1}))+f((1-x) \otimes \nu(x^{-1})).
\]
By definition, the elements of $\QIMG(L)$ are conjugates of generators $g_a$, $a \in A(D)$. It follows that every element of $\QIMG(L)$ is represented by a product of an odd number of these generators, so $\nu(y)=\nu(x^{-1})=-1$. Hence 
\[
\widehat s _D(xyx^{-1})=f((x-1) \otimes 1)+f((y-1) \otimes (-1))+f((1-x) \otimes (-1))
\]
\[
=f((x-1) \otimes 1)-f((y-1) \otimes 1)+f((x-1) \otimes 1)
\]
\[
=2f((x-1) \otimes 1)-f((y-1) \otimes 1)=2 \widehat s_D(x) - \widehat s_D(y).
\]
That is, $\widehat s_D$ is a quandle map from $\QIMG(L)$ (with $\triangleright$ defined by $y \triangleright x = xyx^{-1}$) to $\textup{Core}(M_A(L)_\nu)$ (with $\triangleright$ defined by $r \triangleright s = 2s-r$). 

To verify that $\widehat s_D(\QIMG(L))$ is contained in $Q_A(L)_\nu$, notice first that if $a \in A(D)$ then $\widehat s_D(g_a)=s_D(a)$ is certainly an element of $\phi_\nu^{-1} (\phi_\nu (s_D(A(D))))=Q_A(L)_\nu$.

Suppose $g \in \QIMG(L)$ has $\widehat s_D(g) \in Q_A(L)_\nu$. Then some $a \in A(D)$ has $\phi_\nu(\widehat s_D(g)) = \phi_\nu (s_D(a))$. According to the second paragraph of the proof, for any $b \in A(D)$
\[
\phi_\nu(\widehat s_D(g_bgg_b^{-1})) = 2\phi_\nu(\widehat s_D(g_b)) - \phi_\nu(\widehat s_D(g)) 
= 2\phi_\nu( s_D(b)) - \phi_\nu( s_D(a)) .
\]
As $2\phi_\nu( s_D(a))=2\phi_\nu( s_D(b))$ $\forall a,b \in A(D)$, it follows that $\phi_\nu(\widehat s_D(g_bgg_b^{-1})) = \phi_\nu( s_D(a))$, and hence $\widehat s_D(g_bgg_b^{-1}) \in \phi_\nu^{-1} (\phi_\nu (s_D(A(D))))=Q_A(L)_\nu$.

We conclude that $(\widehat s_D) ^{-1}(Q_A(L)_\nu)$ contains $g_a$ for every $a \in A(D)$, and is closed under conjugation by elements $g_b$, $b \in A(D)$. As the $g_b$ elements generate $\IMG(L)$, it follows that $(\widehat s_D) ^{-1}(Q_A(L)_\nu)$ contains all conjugates of elements $g_a$, $a \in A(D)$. That is, $(\widehat s_D) ^{-1}(Q_A(L)_\nu)$ contains all elements of $\QIMG(L)$.
\end{proof}

\subsection{$M_A(L)_{\nu}$ and $H_1(X_2)$}
\label{kerw}
\begin{lemma}
\label{tepi}
There is a homomorphism  $w_{\nu}:M_A(L)_{\nu} \to \mathbb{Z}$ with $w_{\nu}(s_D(a)) = 1$ $\forall a \in A(D)$. For any particular arc $a^* \in A(D)$, $M_A(L)_{\nu}$ is the internal direct sum of $\ker w_{\nu}$ and the infinite cyclic subgroup generated by $s_D(a^*)$.
\end{lemma}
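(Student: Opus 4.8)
The plan is to construct $w_\nu$ by descending a map off the free abelian group $\mathbb Z^{A(D)}$, and then to obtain the direct sum decomposition as the splitting of a short exact sequence whose quotient is free. First I would define the augmentation homomorphism $W:\mathbb Z^{A(D)} \to \mathbb Z$ by $W(a)=1$ for every generator $a \in A(D)$, so that $W$ simply sums the coordinates of an element. To see that $W$ descends to the cokernel $M_A(L)_\nu$, it suffices to check that $W$ annihilates every generator $r_D(c)$ of the image of $r_D$. For a crossing $c$ as in Fig.\ \ref{crossfig}, $r_D(c)=2a-b-b'$, so $W(r_D(c))=2-1-1=0$; since $W$ only reads off coefficients, the same computation applies even when the arcs $a,b,b'$ are not distinct, because the coefficients still sum to $2-1-1=0$. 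Hence $W$ factors through $s_D$, yielding a well-defined homomorphism $w_\nu:M_A(L)_\nu \to \mathbb Z$ with $w_\nu(s_D(a))=W(a)=1$ for all $a \in A(D)$.

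For the decomposition, fix $a^* \in A(D)$ and let $C$ denote the cyclic subgroup generated by $s_D(a^*)$. The key observation is that $w_\nu(n\,s_D(a^*))=n$ for every $n \in \mathbb Z$, which immediately gives three facts. (i) $C$ is infinite cyclic, since $n\,s_D(a^*)=0$ forces $n=w_\nu(n\,s_D(a^*))=0$. (ii) $C \cap \ker w_\nu=0$, since any element $n\,s_D(a^*)$ of $C$ lying in $\ker w_\nu$ has $n=w_\nu(n\,s_D(a^*))=0$. (iii) $C+\ker w_\nu=M_A(L)_\nu$: for arbitrary $x$, set $n=w_\nu(x)$; then $x-n\,s_D(a^*)\in \ker w_\nu$ because $w_\nu(x-n\,s_D(a^*))=n-n=0$, and $x=(x-n\,s_D(a^*))+n\,s_D(a^*)$ exhibits $x$ as a sum of an element of $\ker w_\nu$ and an element of $C$. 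Facts (ii) and (iii) together say exactly that $M_A(L)_\nu$ is the internal direct sum of $\ker w_\nu$ and $C$.

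I expect no serious obstacle: the argument is the standard splitting of the short exact sequence $0 \to \ker w_\nu \to M_A(L)_\nu \xrightarrow{w_\nu} \mathbb Z \to 0$, in which the surjection $w_\nu$ admits the section $1 \mapsto s_D(a^*)$ and the quotient $\mathbb Z$ is free. The only point requiring a moment's care is the coefficient bookkeeping in verifying $W(r_D(c))=0$ when the three arc labels at a crossing fail to be distinct; but because $W$ is additive and blind to which basis elements coincide, the coefficient sum $2-1-1$ is unaffected, so the verification goes through uniformly.
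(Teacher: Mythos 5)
Your proposal is correct and follows essentially the same route as the paper: define the augmentation $W$ on $\mathbb Z^{A(D)}$, check it kills each $r_D(c)$ so it descends to $w_\nu$, and then use $w_\nu(n\,s_D(a^*))=n$ to get both that the sum is direct and that the subgroup generated by $s_D(a^*)$ is infinite cyclic. Your verification that the sum is all of $M_A(L)_\nu$ (writing $x=(x-w_\nu(x)s_D(a^*))+w_\nu(x)s_D(a^*)$) is a trivially equivalent variant of the paper's observation that each generator $s_D(a)$ differs from $s_D(a^*)$ by an element of $\ker w_\nu$.
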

\begin{proof}
There is a homomorphism $W:\mathbb{Z}^{A(D)} \to \mathbb{Z}$ with $W(a)=1$ $\forall a \in A(D)$. As $W(r_D(c))=0$ $\forall c \in C(D)$, $W$ induces a map $w_{\nu}$ on $\coker r_D$.

As $w_{\nu}(s_D(a^*))=1$, $a^*$ is of infinite order in $M_A(L)_{\nu}$. Every other $a \in A(D)$ has $s_D(a)-s_D(a^*) \in \ker w_{\nu}$, so $M_A(L)_{\nu}$ is the sum of $\ker w_{\nu}$ and the subgroup generated by $s_D(a^*)$. The sum is direct because $w_{\nu}(n s_D(a^*)))=n$, so the only multiple of $s_D(a^*)$ contained in $\ker w_{\nu}$ is $0$.
\end{proof}

Notice that $w_\nu$ is simply the first coordinate of the map $\phi_{\nu}:M_A(L)_{\nu} \to A_ \mu$ mentioned in the introduction.

\begin{lemma}
\label{mstruc}
There is an isomorphism
\[
M_A(L)_{\nu} \cong \mathbb{Z}^r
\oplus \mathbb{Z}_{2^{n_1}}
\oplus \dots \oplus \mathbb{Z}_{2^{n_k}} \oplus B
\]
where $r \in \{1, \dots, \mu \}$, $r+k=\mu$ and $|B|$ is an odd integer.
\end{lemma}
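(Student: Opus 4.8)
The plan is to read off the structure of $M_A(L)_\nu$ from the fundamental theorem of finitely generated abelian groups and then pin down the two numerical claims — that the free rank lies in $\{1,\dots,\mu\}$, and that the free rank plus the number of $2$-power cyclic summands equals $\mu$ — by a single mod-$2$ computation. Writing $M_A(L)_\nu \cong \mathbb Z^r \oplus \mathbb Z_{2^{n_1}} \oplus \dots \oplus \mathbb Z_{2^{n_k}} \oplus B$, with $B$ the odd-order complement of the $2$-primary torsion, is automatic; the content is the equality $r+k=\mu$ together with $r\ge 1$. The key observation is that $r+k$ is detected by tensoring with $\mathbb Z_2$: since $\mathbb Z \otimes_{\mathbb Z} \mathbb Z_2 \cong \mathbb Z_2$, each $\mathbb Z_{2^{n_j}} \otimes_{\mathbb Z} \mathbb Z_2 \cong \mathbb Z_2$, and $B \otimes_{\mathbb Z} \mathbb Z_2 = 0$ because $|B|$ is odd, we get $\dim_{\mathbb Z_2}(M_A(L)_\nu \otimes_{\mathbb Z} \mathbb Z_2) = r+k$. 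So it suffices to show this dimension is exactly $\mu$.

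To compute the dimension, I would apply right exactness of $-\otimes_{\mathbb Z}\mathbb Z_2$ to the presentation $\mathbb Z^{C(D)} \xrightarrow{r_D} \mathbb Z^{A(D)} \to M_A(L)_\nu \to 0$. This identifies $M_A(L)_\nu \otimes_{\mathbb Z} \mathbb Z_2$ with $\coker(\bar r_D)$, where $\bar r_D$ is the reduction of $r_D$ mod $2$. Reducing the defining formula $r_D(c)=2a-b-b'$ shows $\bar r_D(c)=b+b'$: the overarc term $2a$ vanishes, and one checks this holds even when $a,b,b'$ are not distinct. Thus $M_A(L)_\nu \otimes_{\mathbb Z} \mathbb Z_2$ is the $\mathbb Z_2$-vector space on $A(D)$ modulo the relations $b=b'$, one for each crossing, with $b,b'$ its two underarcs. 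Since the two underarcs at any crossing belong to the same component, these relations identify arcs only within a single component; and walking along $K_i$ exactly as in the proof of Proposition \ref{imqorb} shows that all arcs of $K_i$ become identified. Hence the quotient is free on the set of components, i.e.\ $M_A(L)_\nu \otimes_{\mathbb Z} \mathbb Z_2 \cong \mathbb Z_2^{\,\mu}$, giving $r+k=\mu$.

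Finally, the bound $r\ge 1$ follows from Lemma \ref{tepi}, which exhibits an infinite cyclic direct summand of $M_A(L)_\nu$ (generated by $s_D(a^*)$), so the free rank is at least $1$; and $r\le \mu$ is immediate from $r+k=\mu$ with $k\ge 0$. Combining these with the decomposition of the first paragraph yields the stated isomorphism with $r\in\{1,\dots,\mu\}$ and $r+k=\mu$.

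I expect the only delicate point to be the combinatorial claim that the mod-$2$ relations identify precisely the arcs of a common component and nothing more — equivalently, that the span of the vectors $b+b'$ over all crossings is exactly the subspace of $\mathbb Z_2^{A(D)}$ of vectors whose restriction to each component's coordinate block has even weight. This reduces to the elementary fact that the consecutive-arc relations around a single component span that even-weight subspace, which is routine but is where the diagram combinatorics (including degenerate crossings) must be handled with care.
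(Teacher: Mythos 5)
Your proposal is correct and follows essentially the same route as the paper's proof: both invoke the structure theorem, obtain $r\geq 1$ from Lemma \ref{tepi}, and establish $r+k=\mu$ by tensoring the presentation with $\mathbb Z_2$ via right exactness, observing that the reduced relations identify exactly the arcs lying on a common component so that $M_A(L)_\nu \otimes_{\mathbb Z} \mathbb Z_2 \cong \mathbb Z_2^{\mu}$.
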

\begin{proof}
Every finitely generated abelian group satisfies such an isomorphism, for some $r,k \geq 0$. Lemma \ref{tepi} tells us that $M_A(L)_{\nu}$ is infinite, so $r>0$. 

To verify that $r+k=\mu$, notice that if $\text{id}:\mathbb Z _2 \to \mathbb Z _2$ is the identity map, the right exactness of tensor products implies that $M_A(L)_{\nu} \otimes _{\mathbb Z} \mathbb Z_2$ is isomorphic to the cokernel of the map $r_D \otimes \text{id}:\mathbb{Z}^{C(D)} \otimes \mathbb Z_2 \to \mathbb{Z}^{A(D)} \otimes \mathbb Z_2$. This map has $(r_D \otimes \text{id}) (c \otimes 1)=(b \otimes 1) - (b' \otimes 1)$ whenever $c \in C(D)$ is a crossing with underpassing arcs $b,b'$. It follows that $M_A(L)_{\nu} \otimes _{\mathbb Z} \mathbb Z_2$ is generated by the elements of $(s_D \otimes \text{id})(A(D) \otimes 1)$, and if $a_1,a_2 \in A(D)$, then $(s_D \otimes \text{id})(a_1 \otimes 1)=(s_D \otimes \text{id})(a_2 \otimes 1)$ if and only if $\kappa_D(a_1)=\kappa_D(a_2)$. Therefore $M_A(L)_{\nu} \otimes _{\mathbb Z} \mathbb Z_2 \cong \mathbb Z_2 ^{\mu}$.
\end{proof}

\begin{corollary}
\label{kstruc}
There is an isomorphism
\[
\ker w_{\nu} \cong \mathbb{Z}^{r-1}
\oplus \mathbb{Z}_{2^{n_1}}
\oplus \dots \oplus \mathbb{Z}_{2^{n_k}} \oplus B \text{,}
\]
with the same $r,k$ and $B$ as in Lemma \ref{mstruc}.
\end{corollary}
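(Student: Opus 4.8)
The plan is to deduce the corollary directly from the internal direct sum decomposition furnished by Lemma \ref{tepi}, combined with the uniqueness statements in the structure theorem for finitely generated abelian groups. Lemma \ref{tepi} tells us that, for a fixed arc $a^* \in A(D)$, the group $M_A(L)_\nu$ is the internal direct sum of $\ker w_\nu$ and the infinite cyclic subgroup generated by $s_D(a^*)$; that is, $M_A(L)_\nu \cong \ker w_\nu \oplus \mathbb Z$. The corollary will follow once I pin down both the free rank and the torsion subgroup of $\ker w_\nu$, so the strategy is simply to read off these two invariants from the isomorphism above.

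First I would observe that $\ker w_\nu$, being a subgroup of the finitely generated abelian group $M_A(L)_\nu$, is itself finitely generated, so the structure theorem applies and gives $\ker w_\nu \cong \mathbb Z^{s} \oplus T'$ for some integer $s \geq 0$ and some finite abelian group $T'$. Next I would compare invariants across $M_A(L)_\nu \cong \ker w_\nu \oplus \mathbb Z$. The torsion subgroup of a direct sum is the direct sum of the torsion subgroups, and the infinite cyclic summand $\langle s_D(a^*)\rangle$ is torsion-free (indeed $w_\nu(n\, s_D(a^*)) = n$, so it contains no nonzero torsion), whence the torsion subgroup of $\ker w_\nu$ is isomorphic to that of $M_A(L)_\nu$. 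By Lemma \ref{mstruc} the latter is $\mathbb Z_{2^{n_1}} \oplus \dots \oplus \mathbb Z_{2^{n_k}} \oplus B$, and the uniqueness of the primary decomposition forces $T' \cong \mathbb Z_{2^{n_1}} \oplus \dots \oplus \mathbb Z_{2^{n_k}} \oplus B$ with the very same exponents $n_i$ and the same odd-order group $B$. Since free rank is additive over direct sums, $r = s + 1$, so $s = r - 1$, and assembling these pieces yields the stated isomorphism.

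I do not expect any genuine obstacle here: the whole argument is a routine application of the additivity of free rank and the uniqueness of the torsion subgroup for finitely generated abelian groups, once Lemma \ref{tepi} has split off a single infinite cyclic summand. The only point warranting a word of care is to keep the two summands of Lemma \ref{tepi}'s decomposition correctly identified as abelian groups, with $\langle s_D(a^*)\rangle \cong \mathbb Z$ genuinely torsion-free, so that the torsion subgroup passes intact to $\ker w_\nu$ and only one unit of free rank is lost.
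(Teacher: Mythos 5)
Your argument is correct and is exactly the route the paper takes: its proof of this corollary is the single sentence that it ``follows directly from Lemmas \ref{tepi} and \ref{mstruc},'' and your write-up simply spells out the implicit bookkeeping (torsion subgroups agree, free rank drops by one) that justifies that sentence.
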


\begin{proof}
The corollary follows directly from Lemmas \ref{tepi} and \ref{mstruc}.
\end{proof}

The group $\ker w_{\nu}$ is isomorphic to one of the oldest invariants in knot theory: the first homology group of the cyclic double cover $X_2$ of $\mathbb S ^3$, branched over $L$. We cannot provide a simple reference for this isomorphism, because the standard descriptions of $H_1(X_2)$ (e.g., in \cite {F} or \cite [Chap.\ 9]{L}) involve a Goeritz or Seifert matrix, rather than an Alexander matrix. One way to explain the connection is this: if $A$ is a Seifert matrix for $L$ then $A+A^T$ is a presentation matrix for $H_1(X_2)$ as a $\mathbb Z$-module (see for instance \cite[Theorem 9.1]{L}), and $tA-A^T$ is a presentation matrix for the first homology group of the total linking number cover as a $\mathbb Z [t,t^{-1}]$-module. The latter module can also be described as the quotient of the $\mathbb Z [t,t^{-1}]$-module presented by a reduced Alexander matrix (i.e. a matrix obtained from an Alexander matrix by setting all $t_i = t$) obtained by modding out a direct summand isomorphic to $\mathbb Z [t,t^{-1}]$. (See for instance \cite[p.\ 117]{L} \footnote{It is a regrettable fact that terminology is not standard in the literature. Lickorish \cite{L} used the term ``Alexander module'' for the $\mathbb Z [t,t^{-1}]$-module after the direct summand is modded out. We follow Hillman \cite{H} instead, and use the term ``reduced Alexander module'' for the $\mathbb Z [t,t^{-1}]$-module before the direct summand is modded out.}.) It does not matter which particular Alexander matrix is used, because all Alexander matrices of a link $L$ are equivalent as module presentation matrices. It follows that $H_1(X_2)$ is obtained from the abelian group presented by the matrix $R_D$ representing the map $r_D$ of Definition \ref{almod} (i.e., the abelian group $M_A(L)_{\nu}$) by modding out a direct summand isomorphic to $\mathbb Z$. According to Lemma \ref{tepi}, $\ker w_{\nu}$ can also be obtained from $M_A(L)_{\nu}$  by modding out a direct summand isomorphic to $\mathbb Z$, so $H_1(X_2) \cong \ker w_{\nu}$. A more direct description of the situation involves a recent result of Silver, Williams and the present author \cite{STW}: if $L$ is a link then it has a particular Alexander matrix which, when all the variables $t_i$ are set equal to $t$, becomes $tA-A^T$ with a column of zeroes adjoined. Then setting $t$ to $-1$ yields $-A-A^T=-(A+A^T)$ (presenting $H_1(X_2)$) with a column of zeroes adjoined (presenting a direct summand isomorphic to $\mathbb Z$). 

The next proposition is well known (see \cite[Corollary 9.2]{L}, for instance). We provide a proof for the sake of completeness.
\begin{proposition}
\label{kert}
If the determinant of $L$ is not $0$, then $|\ker w_{\nu}| =|\det L|$. If the determinant of $L$ is $0$, then $\ker w_{\nu}$ is infinite.
\end{proposition}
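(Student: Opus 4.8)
The plan is to present $\ker w_{\nu}$ as the cokernel of a \emph{square} integer matrix and then extract both the dichotomy and the order from a single determinant. The cleanest such matrix is already available from the discussion preceding the proposition: $\ker w_{\nu} \cong H_1(X_2)$, and $H_1(X_2)$ is presented, as an abelian group, by the symmetrized Seifert matrix $A + A^T$ of $L$. Hence $\ker w_{\nu} \cong \coker(A + A^T)$, the cokernel of a square integer matrix.

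The key tool I would then invoke is the standard Smith normal form fact: for a square integer matrix $B$, the group $\coker B$ is finite of order $|\det B|$ when $\det B \neq 0$, and is infinite when $\det B = 0$. Applying this with $B = A + A^T$ shows that $\ker w_{\nu}$ is infinite exactly when $\det(A + A^T) = 0$, and otherwise has order $|\det(A + A^T)|$. Since the link determinant is defined (up to sign) by $\det L = \det(A + A^T)$, this is already the assertion of the proposition.

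Staying entirely within Section \ref{amod}, one can instead build the square matrix from $r_D$ itself, which better matches the paper's own machinery. By Lemma \ref{tepi} and the identity $w_{\nu} \circ s_D = W$, we have $\ker w_{\nu} \cong \mathbb Z^{A(D)}_0 / \operatorname{im} r_D$, where $\mathbb Z^{A(D)}_0 = \ker W$ is the sublattice of coordinate-sum-zero vectors; note $\operatorname{im} r_D \subseteq \mathbb Z^{A(D)}_0$ because each $r_D(c) = 2a - b - b'$ has coordinate sum $2-1-1=0$. After a Reidemeister I move on any crossingless component — harmless, since both $\ker w_{\nu}$ and $\det L$ are link invariants — the diagram has $|A(D)| = |C(D)|$, so $\mathbb Z^{A(D)}_0$ is free of rank $|C(D)| - 1$; Proposition \ref{redundant} then lets me generate $\operatorname{im} r_D$ by $|C(D)| - 1$ of the vectors $r_D(c)$, producing a square matrix $N$ with $\ker w_{\nu} \cong \coker N$ and the same conclusion via Smith normal form.

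The main obstacle is not any hard argument — the proposition is classical — but the bookkeeping needed to reconcile the determinant produced by the proof with the paper's normalization of $\det L$. In the first route this reduces to the standard identity $|\det L| = |\det(A + A^T)|$ (equivalently $|\det L| = |\Delta_L(-1)|$), which is precisely the convention under which $|H_1(X_2)| = |\det L|$ as in the cited \cite[Corollary 9.2]{L}. In the second route it reduces to recognizing $|\det N|$ as the order of the \emph{same} group $H_1(X_2) \cong \ker w_{\nu} \cong \coker(A+A^T)$, so that $|\det N| = |\det(A + A^T)| = |\det L|$ automatically, all three determinants vanishing simultaneously; here the only nontrivial input is again that the absolute value of the determinant of a square presentation matrix is an invariant of the presented group.
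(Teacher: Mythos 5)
Your argument is correct, but it takes a genuinely different route from the paper's. The paper never leaves the matrix $R_D$: it takes $|\det L|=|\Delta(-1)|$ as its working definition, identifies this with the greatest common divisor of the $(n-1)\times(n-1)$ minors of $R_D$, presents $\ker w_\nu$ by the rectangular matrix $R'_D$ obtained by adjoining one row, and reads off finiteness and order from the gcd of the maximal minors of $R'_D$ --- a zeroth elementary ideal computation rather than a single determinant. You instead manufacture a \emph{square} presentation matrix (either $A+A^T$ via the branched cover, or your $N$ extracted from $r_D$ on the sum-zero sublattice) and apply the Smith normal form dichotomy, which is cleaner; the price is that in both of your routes the normalization $|\det N|=|\det L|$ is obtained by passing through $\ker w_\nu\cong H_1(X_2)\cong\coker(A+A^T)$ and the classical identity $|\det(A+A^T)|=|\Delta(-1)|$. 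That chain is exactly the content of the cited \cite[Corollary 9.2]{L}, and the point of the paper's proof ``for the sake of completeness'' is precisely to avoid leaning on it (the isomorphism $\ker w_\nu\cong H_1(X_2)$ is itself only sketched, without a single clean reference, in the paragraph preceding the proposition). So your proof is valid modulo standard Seifert-matrix facts, while the paper's is self-contained relative to its own definition of $\det L$. Two small repairs to your second route: the condition needed for $|A(D)|=|C(D)|$ is that every component have at least one \emph{undercrossing}, not that it be free of crossings (a component that only passes over others is still a single arc), so the Reidemeister I move should be applied to each component with no undercrossings; with that fixed, $\mathbb Z^{A(D)}_0$ has rank $|C(D)|-1$ and Proposition \ref{redundant} does supply $|C(D)|-1$ generators of $\operatorname{im} r_D$, so $N$ is indeed square.
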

\begin{proof}
Let $R_D$ be the matrix representing the homomorphism $r_D$. Let $m=|C(D)|$ and $n=|A(D)|$, so $R_D$ is an $m \times n$ matrix. The determinant of $L$ satisfies the formula $|\det L|=|\Delta(-1)|$, where $\Delta$ is the reduced (one-variable) Alexander polynomial of $L$. That is, $\Delta$ is the greatest common divisor of the determinants of the $(n-1) \times (n-1)$ submatrices of a matrix obtained from an Alexander matrix by setting all $t_i$ equal to $t$. From the connection between Alexander matrices and $R_D$ mentioned before Lemma \ref{tepi}, we deduce that $|\det L|$ is the greatest common divisor of the determinants of the $(n-1) \times (n-1)$ submatrices of $R_D$. As the columns of $R_D$ sum to $0$, for any $a^* \in A(D)$ this greatest common divisor is the same as the greatest common divisor of the determinants of those $(n-1) \times (n-1)$ submatrices of $R_D$ that avoid the $a^*$ column.

Choose an arc $a^* \in A(D)$, and let $R'_D$ be the $(m+1) \times n$ matrix obtained from $R_D$ by adjoining a row whose only nonzero entry is a $1$ in the $a^*$ column. Lemma \ref{tepi} implies that $R'_D$ is a presentation matrix for the abelian group $\ker w_{\nu}$.

The fundamental structure theorem for finitely generated abelian groups tells us that $\ker w_{\nu}$ is determined up to isomorphism by the elementary ideals of $R'_D$. In particular, $\ker w_{\nu}$ is finite if and only if the greatest common divisor of the determinants of $n \times n$ submatrices of $R'_D$ is not $0$, and if this is the case then this greatest common divisor equals the order of $\ker w_{\nu}$. An $n \times n$ submatrix $S$ of $R'_D$ is either an $n \times n$ submatrix of $R_D$ (in which case $\det S=0$, because the columns of $R_D$ sum to $0$) or a matrix obtained from an $(n-1) \times n$ submatrix of $R_D$ by adjoining the new row of $R'_D$ (in which case $\pm \det S$ equals the determinant of an $(n-1) \times (n-1)$ submatrix of $R_D$ that avoids the $a^*$ column). Considering the first paragraph of this proof, we conclude that $\ker w_{\nu}$ is finite if and only if $\det L \neq 0$, and if this is the case then $|\ker w_{\nu}| =|\det L|$. \end{proof}

\begin{corollary}
\label{torsion}
Suppose $\det L \neq 0$. Then $\ker w_{\nu}$ is the torsion subgroup of $M_A(L)_{\nu}$.
\end{corollary}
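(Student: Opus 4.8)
The plan is to combine the internal direct-sum decomposition of Lemma \ref{tepi} with the finiteness assertion of Proposition \ref{kert}; the corollary then follows formally, with no essential difficulty. Fix an arc $a^* \in A(D)$. By Lemma \ref{tepi} we have $M_A(L)_\nu = \ker w_\nu \oplus C$, an internal direct sum in which $C$ is the infinite cyclic subgroup generated by $s_D(a^*)$. Since $\det L \neq 0$, Proposition \ref{kert} tells us that $\ker w_\nu$ is finite.

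First I would verify the inclusion $\ker w_\nu \subseteq \textup{Tors}(M_A(L)_\nu)$. This is immediate: a finite abelian group consists entirely of torsion elements, so every element of $\ker w_\nu$ has finite order in $M_A(L)_\nu$. For the reverse inclusion, suppose $x$ is a torsion element of $M_A(L)_\nu$, and use the decomposition of Lemma \ref{tepi} to write $x = y + n\, s_D(a^*)$ with $y \in \ker w_\nu$ and $n \in \mathbb{Z}$. Some positive integer $m$ satisfies $mx = 0$; because the sum is direct, with $my \in \ker w_\nu$ and $mn\, s_D(a^*) \in C$, we obtain $mn\, s_D(a^*) = 0$. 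As $s_D(a^*)$ has infinite order, this forces $n = 0$, whence $x = y \in \ker w_\nu$. Thus $\textup{Tors}(M_A(L)_\nu) \subseteq \ker w_\nu$, and equality follows.

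There is no genuine obstacle here: the only source of infinite order in $M_A(L)_\nu$ is the free cyclic summand $C$, so once Proposition \ref{kert} guarantees that $\ker w_\nu$ is finite, the torsion subgroup can contain nothing from $C$ and must coincide with $\ker w_\nu$. If one prefers a structural viewpoint, the same conclusion can be read off from Lemma \ref{mstruc} and Corollary \ref{kstruc}: finiteness of $\ker w_\nu$ forces the free rank $r-1$ of $\ker w_\nu$ to be zero, so $r = 1$, $M_A(L)_\nu \cong \mathbb Z \oplus \ker w_\nu$ with $\ker w_\nu$ finite, and the torsion subgroup is exactly $\ker w_\nu$.
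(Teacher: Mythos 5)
Your proof is correct and follows essentially the same path as the paper: Proposition \ref{kert} gives finiteness of $\ker w_\nu$ (hence one inclusion), and the reverse inclusion comes from the fact that anything outside $\ker w_\nu$ maps to a nonzero integer under $w_\nu$ and so cannot be torsion — the paper argues this directly from the homomorphism to $\mathbb Z$, while you route it through the direct-sum decomposition of Lemma \ref{tepi}, which is the same observation in different packaging.
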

\begin{proof}
On the one hand, $| \ker w_{\nu}|= | \det L|$, so $\ker w_{\nu}$ is finite. Of course it follows that $\ker w_{\nu}$ is contained in the torsion subgroup of $M_A(L)_{\nu}$. On the other hand, $w_{\nu}$ is a homomorphism to $\mathbb Z$, so an element of $M_A(L)_{\nu}$ that is not included in $\ker w_{\nu}$ cannot be an element of the torsion subgroup. 
\end{proof}

\section{Longitudes}
\label{sec:longitudes}

In this section, it will be convenient for us to work with link diagrams that satisfy the following.
\begin{definition}
\label{evend}
A diagram $D$ of $L=K_1 \cup \dots \cup K_{\mu}$ is \emph{even} if each component $K_i$ has an even number of associated arcs in $D$.
\end{definition}
\begin{proposition}
\label{evens}
Every link has even diagrams.
\end{proposition}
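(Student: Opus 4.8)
The plan is to show that any diagram $D$ of $L$ can be modified, without changing the link type, so that every component has an even number of arcs. The key observation is that the number of arcs associated to a component $K_i$ equals the number of undercrossings along $K_i$ (each undercrossing cuts $K_i$ into one additional arc as we traverse the component once around), and a Reidemeister I move introducing a single new kink on $K_i$ changes this count by exactly one while leaving the link type unchanged. So the parity of the arc count for each component is controlled by a local, component-wise adjustment.

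First I would fix a diagram $D$ of $L$ and, for each component $K_i$, count the number $n_i$ of arcs of $D$ belonging to $K_i$. If $n_i$ is already even, I leave $K_i$ alone; if $n_i$ is odd, I perform a single Reidemeister I move on a portion of $K_i$ that lies in a region of the diagram away from all crossings. Such a move can always be carried out on any strand of any diagram: one simply introduces a small twist (a kink) on an arc of $K_i$. This creates one new crossing, whose two strands both belong to $K_i$, and the new self-crossing contributes exactly one additional undercrossing to $K_i$, thereby increasing $n_i$ by one and flipping its parity from odd to even.

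The verification I would spell out is that a Reidemeister I kink increases the arc count of the affected component by precisely one. Introducing the twist replaces a single arc with a configuration in which that arc is broken at the new undercrossing; tracing through Fig.\ \ref{crossfig} at the newly created crossing, the strand passing underneath is cut into two arcs while the overpassing strand is not subdivided at this crossing. Hence exactly one new arc is created on $K_i$, so $n_i$ increases by one. Since the Reidemeister I move does not change the link type, and since each such move affects only the arc count of the single component on which it is performed (the crossing is a self-crossing of $K_i$, involving no other component), I can apply these moves independently to each component with odd $n_i$.

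The main point requiring care, rather than a genuine obstacle, is confirming that the adjustments are independent across components: performing a kink on $K_i$ must not alter the parity of $n_j$ for $j \neq i$. This is immediate because the new crossing is a self-crossing of $K_i$, so both of its arcs belong to $K_i$ and no arc of any other component is subdivided or merged. After performing one Reidemeister I move on each component whose arc count was odd, I obtain a new diagram $D'$ of the same link $L$ in which every component has an even number of associated arcs. Thus $D'$ is an even diagram, and every link admits an even diagram.
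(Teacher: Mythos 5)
Your argument has a genuine gap in the degenerate case of a component with no undercrossings. Your key counting claim --- that the number of arcs of $K_i$ equals the number of undercrossings along $K_i$, so that one Reidemeister I kink always raises the arc count by one --- fails exactly when $K_i$ passes under nothing. Such a component has $0$ undercrossings but $1$ arc (the entire closed curve), which is odd; after a single kink it has $1$ undercrossing and \emph{still} $1$ arc, because a closed curve cut at a single point is a single arc. Your local reasoning that ``the strand passing underneath is cut into two arcs'' is only valid when the two ends created at the new undercrossing terminate at \emph{other} undercrossings; when the new crossing is the only undercrossing on $K_i$, the two local ends join up around the rest of the component and you get one arc, not two. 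So for a component with odd arc count and no undercrossings (e.g.\ a split unknotted circle, or a component that only ever overpasses), your procedure leaves the parity unchanged and the proof does not go through.

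The fix is small and is exactly what the paper does: it splits into two cases. If $K_i$ already underpasses at some crossing, a single trivial kink splits an existing arc in two and flips the parity, as in your argument. If $K_i$ underpasses nowhere, it has exactly one arc, and one instead inserts a \emph{pair} of trivial crossings, producing $2$ undercrossings and hence $2$ arcs. Your observation that these modifications are local to $K_i$ and do not disturb the arc counts of other components is correct and carries over unchanged. You should restate the arc count as $\max(1,\textup{number of undercrossings})$ rather than the number of undercrossings, and add the two-kink case.
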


\begin{proof}
Let $D$ be a diagram of $L$, in which $K_i$ has an odd number of arcs. Suppose $K_i$ is the underpassing component of some crossing of $D$, i.e.\ $\kappa_D(b)=\kappa_D(b')=i$ in Fig.\ \ref{crossfig}. Then we can use a Reidemeister move of the first type to introduce a trivial crossing, which splits an arc of $K_i$ in two. (See Fig.\ \ref{trivfig}.) If $K_i$ is not the underpassing component of any crossing of $D$, then $K_i$ has only one arc in $D$; we can split this arc in two with a pair of trivial crossings.  
\end{proof}

\begin{figure} [bht]
\centering
\begin{tikzpicture}
\draw [thick] (-.5,0) to [out=170, in=180] (0, .7);
\draw [thick] (.5,0) to [out=10, in=0] (0, .7);
\draw [thick] (-1.5,-.5) -- (.5,0);
\draw [thick] (-.3,-0.05) -- (-.5,0);
\draw [thick] (1.5,-.5) -- (.3,-0.2);
\end{tikzpicture}
\caption{A trivial crossing.}
\label{trivfig}
\end{figure}
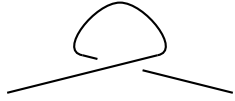

Now, suppose $D$ is an even diagram of $L$. For each component $K_i$, let the arcs of $K_i$ in $D$ be indexed as $b_{i0}, \dots, b_{i(2n_i-1)}, b_{i(2n_i)}=b_{i0}$, in the order they occur as one walks along $K_i$. (Either direction may be followed.) We consider the index $j$ of $b_{ij}$ modulo $2n_i$. Let $c_{ij}$ be the crossing of $D$ at which we pass from $b_{ij}$ to $b_{i(j+1)}$, and let $a_{ij}$ be the overpassing arc at $c_{ij}$. 

\begin{definition}
\label{long}
If $D$ is an even diagram of $L$ then the \emph{longitudes} in  $M_A(L)_\nu$ are $\lambda_1, \dots, \lambda _ \mu$, where
\[
\lambda_ i = \sum\limits _{j=1}^{2n_i} (-1)^j s_D(a_{ij}) .
\]
\end{definition}

Here are some properties of the longitudes. 

\begin{theorem}
\label{longprops}
\begin{enumerate}
    \item For every $i \in \{1, \dots, \mu \}$, $\lambda_i \in \ker w_ \nu$ and $2 \lambda_i = 0$.
    \item If $\mu=1$, then $\lambda_1=0$.
    \item The subgroup of $\ker w_ \nu$ generated by $\{\lambda_1, \dots, \lambda_\mu\}$ is generated by $\mu-1$ longitudes.
    \item If $\mu>1$, then $\det L = 0$ if and only if there is a proper subset $\{i_1, \dots, i_t\} \subset \{1, \dots, \mu\}$ such that 
    $\sum _{s=1}^t \lambda_{i_s}=0$.
    \item If $\mu=2$, then $\lambda_1 = \lambda _2$.
\end{enumerate}
\end{theorem}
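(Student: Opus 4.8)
The plan is to reduce the statement to $\lambda_1+\lambda_2=0$. Item 1 of this theorem gives $2\lambda_1=2\lambda_2=0$, so $\lambda_1=\lambda_2$ is equivalent to $\lambda_1+\lambda_2=0$, and it is the latter that I will establish. I work with the lift $\tilde\lambda_i=\sum_{j}(-1)^j a_{ij}\in\mathbb Z^{A(D)}$, which satisfies $s_D(\tilde\lambda_i)=\lambda_i$, and I aim to show $\tilde\lambda_1+\tilde\lambda_2\in\operatorname{im}(r_D)$, so that the image $\lambda_1+\lambda_2$ vanishes in $M_A(L)_\nu=\coker r_D$.

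The basic tool is the recursion coming from the crossings, namely $r_D(c_{ij})=2a_{ij}-b_{ij}-b_{i,j+1}$, i.e. $s_D(b_{i,j+1})=2s_D(a_{ij})-s_D(b_{ij})$. Forming the alternating combination along a single component telescopes the underpassing arcs to $0$ and leaves $\sum_{j}(-1)^j r_D(c_{ij})=2\tilde\lambda_i$; summing over $i$ gives $\sum_i\sum_j(-1)^j r_D(c_{ij})=2(\tilde\lambda_1+\tilde\lambda_2)$, which only recovers $2(\lambda_1+\lambda_2)=0$. The essential point is therefore to realize $\tilde\lambda_1+\tilde\lambda_2$ itself, and not merely its double, as an integral combination of the relators $r_D(c)$. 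I expect this factor of $2$ to be the main obstacle: every manipulation carried out on one component at a time reproduces it, because each relator contributes an even multiple of its overpassing arc.

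The idea for removing the factor of $2$ is to couple the two components through the mixed crossings. At a crossing where $K_1$ passes under $K_2$ (or conversely), the overpassing arc $a_{1j}$ is an arc of $K_2$, hence it also occurs as an underpassing arc $b_{2,\ast}$ in the recursion for $K_2$; consequently a term $2a_{1j}$ produced by $r_D(c_{1j})$ can be absorbed by the $-b_{2,\ast}$ terms of the neighbouring $K_2$-relators, dropping its coefficient from $2$ to $1$. Carrying this out systematically amounts to selecting, on each component, a suitable subset of its underpassing crossings so that the chosen overpassing and underpassing arcs pair off globally; the resulting signed sum of relators is exactly $\tilde\lambda_1+\tilde\lambda_2$. (For the simplest even diagrams, such as the closure of $\sigma_1^4$, two adjacent relators already suffice.) The hard part will be the bookkeeping: tracking how the two interleaved walks meet at the mixed crossings and checking that the signs are globally consistent.

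It is worth recording the features special to $\mu=2$, both as a guide and as a consistency check. First, $\phi_\nu(\lambda_1)=\phi_\nu(\lambda_2)$: the first coordinates vanish by item 1, and both $\mathbb Z_2$-coordinates equal $\operatorname{lk}(K_1,K_2)\bmod 2$ — for $\lambda_1$ this counts the crossings at which $K_1$ underpasses $K_2$, and for $\lambda_2$ it counts the self-crossings of $K_2$, which has the same parity precisely because the diagram is even, so that $K_2$ has an even number of arcs. Second, since $Q_A(L)_\nu$ has exactly two orbits, $M_A(L)_\nu\otimes\mathbb Z_2\cong\mathbb Z_2^2$ and the $2$-primary part of $\ker w_\nu\cong H_1(X_2)$ is cyclic; hence $\lambda_1,\lambda_2$ lie in a cyclic $2$-group of order at most $2$. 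When $\operatorname{lk}(K_1,K_2)$ is odd, or when this group has order $2$ and is not contained in $2M_A(L)_\nu$, the equality $\phi_\nu(\lambda_1)=\phi_\nu(\lambda_2)$ already forces $\lambda_1=\lambda_2$; the remaining case, where the linking number is even and the $2$-torsion sits inside $2M_A(L)_\nu$, is exactly the one in which the explicit cancellation of the previous paragraph is indispensable.
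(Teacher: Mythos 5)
Your proposal addresses only item 5 of this five\-part theorem, and even there it is a plan rather than a proof. Items 2, 3 and 4 are independent claims with their own content --- in particular the ``only if'' direction of item 4 is the most substantial part of the paper's argument (it constructs two distinct integral combinations $2\Sigma$ and $\Sigma'$ of the rows of a reduced relation matrix $S_D$ with the same sum $2\rho$, concluding that the rows are linearly dependent and hence $\det L=0$) --- and none of this appears in your write\-up. Your telescoping computation $\sum_j(-1)^j r_D(c_{ij})=2\tilde\lambda_i$ does reproduce the paper's proof of item 1, but items 2--4 are simply missing, and item 4 is needed both for item 5 and later in the paper (Corollary \ref{orbcard}).

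For item 5 itself, the central step of your approach --- exhibiting $\tilde\lambda_1+\tilde\lambda_2$ (not just its double) as an integral combination of the relators by ``coupling the two components through the mixed crossings'' --- is never carried out; you say yourself that ``the hard part will be the bookkeeping,'' and your final paragraph concedes that one case remains open. That unexecuted combinatorial argument is not needed. The paper's proof is a short structural one: by Corollary \ref{kstruc} with $\mu=2$ we have $\ker w_\nu\cong\mathbb Z^{r-1}\oplus\mathbb Z_{2^{n_1}}\oplus\cdots\oplus\mathbb Z_{2^{n_k}}\oplus B$ with $r+k=2$ and $|B|$ odd, so $\ker w_\nu$ has at most one element of order $2$. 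Since $2\lambda_1=2\lambda_2=0$ by item 1, either both longitudes are nonzero and hence both equal that unique involution, or some $\lambda_i=0$; in the latter case item 4 (applied to the proper subset $\{i\}$) gives $\det L=0$, so $\ker w_\nu$ is infinite, $r=2$, $k=0$, there is no $2$\-torsion at all, and both longitudes vanish. Your last paragraph gets close to the first half of this (uniqueness of the involution, via $\phi_\nu(\lambda_1)=\phi_\nu(\lambda_2)$), but the case you leave open --- linking number even and the involution lying in $2\ker w_\nu$ --- is exactly the one that item 4 disposes of, and without invoking item 4 you cannot close it by parity considerations alone.
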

\begin{proof} Let $D$ be an even diagram of $L$.

For item 1, note that if $1 \leq i \leq \mu$, then $w_ \nu (\lambda_i)=\sum (-1)^j=0$. Also, according to Definition \ref{almod}, every $j \in \{1, \dots, 2n_i \}$ has $2s_D(a_{ij})-s_D(b_{ij})-s_D(b_{i(j+1)})=0$. It follows that
\[
2\lambda_ i =  \sum\limits _{j=1}^{2n_i} (-1)^j \cdot 2 s_D(a_{ij})  = \sum\limits _{j=1}^{2n_i} (-1)^j (s_D(b_{ij})+s_D(b_{i(j+1)}))
\]
\[
=\sum\limits _{j=1}^{2n_i} ((-1)^j+(-1)^{j-1}) s_D(b_{ij}) =0.
\]

For item 2, recall that according to Corollary \ref{kstruc}, if $\mu=1$ then $\ker w_\nu$ is a finite abelian group of odd order. As $2 \lambda_1=0$, it follows that $\lambda_1=0$. 

For item 3, recall that Corollary \ref{kstruc} tells us
\begin{equation}
\label{group}
\ker w_{\nu} \cong \mathbb{Z}^{r-1}
\oplus \mathbb{Z}_{2^{n_1}}
\oplus \dots \oplus \mathbb{Z}_{2^{n_k}} \oplus B \text{,}
\end{equation}
where $|B|$ is an odd integer. There are $k$ elements of order 2 evident in (\ref{group}); they correspond to $(r+k)$-tuples with only one nonzero coordinate, equal to $2^{n_j-1}$ in $\mathbb Z _ {2^{n_j}}$. Every other element of order 2 in $\ker w_ \nu$ is obtained by adding together some of these $k$ elements. Therefore the set of elements of order $\leq 2$ in $\ker w_ \nu$ is a vector space of dimension $k$ over the field of two elements, $\textup{GF}(2)$. Item 3 follows, because the nonzero longitudes have order $2$ and $\mu -1 = r-1+k \geq k$.

One direction of item 4 is now easy to prove. If $\det L = 0$, then Proposition \ref{kert} tells us that $r-1 \geq 1$ in (\ref{group}), and hence $k \leq \mu-2$. It follows that the set of elements of order $\leq 2$ in $\ker w_ \nu$ is a vector space of dimension $\leq \mu-2$ over $\textup{GF}(2)$, so $\lambda_1, \dots, \lambda_{\mu-1}$ are linearly dependent over $\textup{GF}(2)$. As $1$ is the only nonzero scalar in $\textup{GF}(2)$, it follows that some of $\lambda_1, \dots, \lambda_{\mu-1}$ must add up to $0$.

Proving the other direction of item 4 is more difficult. Suppose $\mu>1$, $\{i_1, \dots, i_t\}$ is a proper subset of $\{1, \dots, \mu\}$, and $\sum_{s=1}^t \lambda_{i_s}=0$ in $M_A(L)_\nu$. Let $n=|A(D)|$; as $D$ is even, $n=|C(D)|$ too. Choose any $i_0 \in \{1, \dots, \mu\} - \{i_1, \dots, i_t\}$. Then according to Proposition \ref{redundant}, the row of $R_D$ corresponding to the crossing $c_{i_0 0}$ is redundant. Let $S_D$ be the $(n-1) \times n$ matrix obtained from $R_D$ by removing this redundant row. As $R_D$ and $S_D$ are both $n$-column presentation matrices for the abelian group $M_A(L)_\nu$, the theory of finitely generated abelian groups tells us that the determinants of $(n-1) \times (n-1)$ submatrices of $S_D$ generate the same ideal in $\mathbb Z$ as the determinants of $(n-1) \times (n-1)$ submatrices of $R_D$. As discussed in Sec.\ \ref{kerw}, the greatest common divisor of these determinants is $| \det L|$.

For each $i \in \{1, \dots, \mu\}$, let $\rho_i$ be the $1 \times n$ matrix corresponding to $\lambda_i$. That is, the entries of $\rho_i$ include $(-1)^j$ in the column corresponding to $a_{ij}$, for $1 \leq j \leq 2n_i$, and $0$ elsewhere. (If the same arc occurs as $a_{ij}$ for several values of $j$, the corresponding $(-1)^j$ values are added together.) As $\sum_{s=1}^t \lambda_{i_s}=0$ in $M_A(L)_\nu$, there is a linear combination of rows of $S_D$, with integer coefficients, whose sum is $\rho = \sum_{s=1}^t \rho_{i_s}$. Let us denote this linear combination $\Sigma$. Then $2\Sigma$ is a linear combination of rows of $S_D$ whose sum is $2\rho$, in which the coefficient of every row is an even integer.

If $1 \leq i \leq \mu$ and $1 \leq j \leq 2n_i$, then according to Definition \ref{almod}, the row of $R_D$ corresponding to the crossing $c_{ij}$ has nonzero entries equal to $2$ in the $a_{ij}$ column, $-1$ in the $b_{ij}$ column and $-1$ in the $b_{i(j+1)}$ column. (If two of these arcs are the same, the corresponding entries are added together.)  It follows that for a given $s \in \{1, \dots, t \}$, if we multiply the $c_{i_sj}$ row by $(-1)^j$ for each $j \in \{ 1, \dots, 2n_{i_s} \}$, then the sum of the resulting linear combination of row vectors is $2 \rho_{i_s}$. Adding these linear combinations together, for $s = 1, \dots, t$, we obtain a linear combination $\Sigma'$ of rows of $S_D$ whose sum is $2\rho$, in which every nonzero row coefficient is $\pm 1$. It is evident that $2\Sigma$ and $\Sigma'$ are different linear combinations, because the coefficient of every row in $2\Sigma$ is even. It follows that $2\Sigma - \Sigma'$ is a nontrivial linear combination of rows of $S_D$, whose sum is $2\rho-2\rho=0$. Therefore the $n-1$ rows of $S_D$ are linearly dependent, so every $(n-1) \times (n-1)$ submatrix of $S_D$ has determinant $0$. The determinant of $L$ is the greatest common divisor of the determinants of these $(n-1) \times (n-1)$ submatrices, so $\det L = 0$.

For item 5, suppose $\mu=2$. The isomorphism (\ref{group}) still holds, with $r-1+k=1$. It follows that $\ker w_\nu$ does not have more than one element of order $2$, so if $\lambda_1 \neq 0 \neq \lambda_2$ then $\lambda_1 = \lambda_2$. If $\lambda_1=0$ or $\lambda_2=0$ then $\det L = 0$, by item 4, and Proposition \ref{kert} tells us that $\ker w_\nu$ is infinite. Then $r-1=1$, so $k=0$. That is, $\ker w_v$ has no element of order $2$. Then $\lambda_1$ and $\lambda_2$ must both equal $0$. \end{proof}

\section{Two examples}
\label{sec:twoexamples}

In this section, we provide two examples to illustrate the ideas of Secs.\ \ref{sec:linkq} -- \ref{sec:longitudes}. These examples also serve to verify that $2 \centernot\implies  1$ in Theorem \ref{main2}.

\subsection{The connected sum of two Hopf links} \label{sec:twohopf}

Our first example is the link $L$ pictured in Fig.\ \ref{2hfig}. The involutory (non-medial) quandle of $L$ was analyzed by Winker \cite{W}.

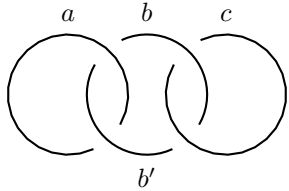
\begin{figure} [bht]
\centering
\begin{tikzpicture} 
\draw [thick, domain=-30:295] plot ({-5+(0.8)*cos(\x)}, {(0.8)*sin(\x)});
\draw [thick, domain=150:295] plot ({-3.95+(0.8)*cos(\x)}, {(0.8)*sin(\x)});
\draw [thick, domain=-30:115]  plot ({-3.95+(0.8)*cos(\x)}, {(0.8)*sin(\x)});
\draw [thick, domain=150:475] plot ({-2.9+(0.8)*cos(\x)}, {(0.8)*sin(\x)});
\node at (-5,1.05) {$a$};
\node at (-3.95,1.1) {$b$};
\node at (-2.9,1.05) {$c$};
\node at (-3.95,-1.1) {$b'$};
\end{tikzpicture}
\caption{$L$ is the connected sum of two Hopf links.}
\label{2hfig}
\end{figure}

The group $\IMG(L)$ is generated by $g_a,g_b,g_{b'}$ and $g_c$, with relations $x^2=1$ and $xyz=zyx$ for all elements $x,y,z$ that are conjugates of $g_a,g_b,g_{b'}$ and $g_c$. There are also four crossing relations: $g_a=g_{b'}g_ag_{b'}$, $g_b=g_ag_{b'}g_a$, $g_b=g_cg_{b'}g_c$ and $g_c=g_bg_cg_b$. The relation $g_a=g_{b'}g_ag_{b'}$ implies $g_{b'}g_a=g_ag_{b'}$, so $g_a$ and $g_{b'}$ commute; it follows that $g_b=g_ag_{b'}g_a=g_{b'}g^2_a=g_{b'}$. Also, $g_c=g_bg_cg_b$ implies $g_bg_c=g_cg_b$, so $g_b$ and $g_c$ commute. Furthermore, since $g_b=g_{b'}$ the other relations imply
\[
g_a g_c = g_bg_ag_bg_bg_cg_b = g_bg_a g_b^2 g_cg_b =  g_b(g_ag_cg_b) = g_b(g_bg_cg_a) = g_b^2g_cg_a = g_cg_a \text{,}
\]
so $g_a$ and $g_c$ commute.

We conclude that $\IMG(L)$ is a commutative group with eight elements, the various products $g_a^i g_b^j g_c^k$ with $i,j,k \in \{0,1\}$. It follows that  $\QIMG(L)=\{g_a,g_b,g_c\}$ is a trivial quandle, i.e.\ $x \triangleright y = x$ $\forall x,y \in \QIMG(L)$.

It takes only a little more work to describe $\IMQ(L)$. Notice first that $\IMG^2(L)$ is the subgroup of $\IMG(L)$ that includes the products $g_a^i g_b^j g_c^k$ with $i,j,k \in \{0,1\}$ and $i+j+k \in \{0,2\}$; hence $|\IMG^2(L)|=4$. According to part 4 of Corollary \ref{imh}, $\beta:\IMG^2(L) \to \Dis(\IMQ(L))$ is a surjective homomorphism. It follows that $\Dis(\IMQ(L)) = \{1,d_1,d_2,d_3\}$ where $d_1=\beta_{q_a} \beta_{q_b}=\beta_{q_b} \beta_{q_a}$, $d_2 = \beta_{q_a} \beta_{q_c}=\beta_{q_c} \beta_{q_a}$ and $d_3 = \beta_{q_b} \beta_{q_c}=\beta_{q_c} \beta_{q_b}$. Corollary \ref{imh} does not guarantee that these displacements are distinct, but it does guarantee that $|\Dis(\IMQ(L))|$ is a divisor of $4$.

As $g_b=g_{b'} \in \IMG(L)$, $\beta_{q_b}=\beta(g_b)=\beta(g_{b'}) =\beta_{q_{b'}} \in \Aut(\IMQ(L))$. Hence
\[
d_1(q_a)=\beta_{q_b} \beta_{q_a}(q_a)=\beta_{q_b}(q_a)=\beta_{q_{b'}}(q_a)=q_a \triangleright q_{b'}=q_a\text{,}
\]
so either (a) $d_1=1$ and hence $|\Dis(\IMQ(L))|\in \{1,2\}$, or (b) the stabilizer $S_{q_a}$ is a nontrivial subgroup of $\Dis(\IMQ(L))$. Either way, Proposition \ref{invorb} tells us that the orbit of $q_a$ in $\IMQ(L)$ has no more than two elements.

Similarly, $d_2(q_b)= \beta_{q_a} \beta_{q_c}(q_b)=\beta_{q_a} (q_{b'})=q_b$ and $d_3(q_c)=\beta_{q_b} \beta_{q_c}(q_c)=\beta_{q_b}(q_c)=q_c$ imply that the orbits of $q_b$ and $q_c$ in $\IMQ(L)$ have no more than two elements apiece. 

Notice that $\beta_{q_b}=\beta_{q_{b'}}$ fixes at least one element in each orbit: $\beta_{q_{b'}}(q_a)=q_a$, $\beta_{q_{b}}(q_b)=q_b$, and $\beta_{q_{b}}(q_c)=q_c$. As each orbit has no more than two elements, it follows that $\beta_{q_b}=\beta_{q_{b'}}$ is the identity map of $\IMQ(L)$.

To verify that each orbit of $\IMQ(L)$ does have two distinct elements, it suffices to exhibit an involutory medial quandle with the required properties. It is not hard to do so: the quandle has six distinct elements, $q_a, \widetilde q_a,q_b, q_{b'},q_c$ and $\widetilde  q_c$, and the quandle operation is given by the following permutations, in cycle notation: $\beta_{q_a}=\beta_{\widetilde q_a}=(q_b q_{b'})(q_c \widetilde  q_c)$, $\beta_{q_b}=\beta_{q_{b'}}=1$ and $\beta_{q_c}=\beta_{\widetilde q_c}= (q_a \widetilde q_a)(q_b q_{b'})$.

We take a moment to provide the (very easy) verification that this is a quandle. Let $Q$ be a set, given with a partition $P$ into subsets of cardinality 2. We say a transposition $(q_1q_2)$ of elements of $Q$ \emph{respects} $P$ if $\{q_1,q_2\} \in P$.

\begin{proposition}
\label{easyq}
Suppose that for each $q\in Q$, we are given a permutation $\beta_q:Q \to Q$. Suppose further that the following properties hold.
\begin{enumerate}
    \item For every $q \in Q$, $\beta_q(q)=q$.
    \item For every $q \in Q$, $\beta_q$ is the product of some transpositions that respect $P$.
    \item If $\{q_1,q_2\} \in P$, then $\beta_{q_1} = \beta_{q_2}$.
\end{enumerate}
Then $Q$ is an involutory medial quandle under the operation $p \triangleright q = \beta_q(p)$.
\end{proposition}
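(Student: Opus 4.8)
The plan is to verify the four axioms of Definition~\ref{qdef} for the operation $p \triangleright q = \beta_q(p)$, after first extracting two structural facts from the hypotheses. The first fact concerns the permutations themselves: since every block of $P$ has exactly two elements, the only transposition respecting a given block is the swap of its two members, and transpositions respecting \emph{distinct} blocks are disjoint. Hence all transpositions respecting $P$ pairwise commute and are involutions. By property~2 each $\beta_q$ is a product of such transpositions, so every $\beta_q$ is an involution (that is, $\beta_q^2 = \text{id}$, a product of disjoint commuting involutions being an involution), and any two of the permutations $\beta_q, \beta_{q'}$ commute. The second fact is that $\beta_{q'}(q)$ always lies in the same block of $P$ as $q$, because a transposition respecting $P$ either fixes $q$ or sends it to its partner within the block of $q$; combined with property~3, this yields the key identity $\beta_{\beta_{q'}(q)} = \beta_q$ for all $q, q' \in Q$.

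With these two facts in hand the axioms follow quickly. Idempotence $x \triangleright x = \beta_x(x) = x$ is exactly property~1, and the involutory law $(x \triangleright y) \triangleright y = \beta_y^2(x) = x$ is immediate from the first fact. For right distributivity I would compute both sides in terms of the $\beta$'s: the left side $(x \triangleright y) \triangleright z$ equals $\beta_z \beta_y(x)$, while the right side $(x \triangleright z) \triangleright (y \triangleright z)$ equals $\beta_{\beta_z(y)} \beta_z(x)$; the key identity replaces $\beta_{\beta_z(y)}$ by $\beta_y$, and commutativity of $\beta_y$ with $\beta_z$ then equates the two sides. The medial law is handled the same way: both $(w \triangleright x) \triangleright (y \triangleright z)$ and $(w \triangleright y) \triangleright (x \triangleright z)$ reduce, after applying the key identity to $\beta_{\beta_z(y)}$ and to $\beta_{\beta_z(x)}$, to $\beta_y \beta_x(w)$ and $\beta_x \beta_y(w)$ respectively, which agree by commutativity.

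The only step requiring genuine attention is the key identity $\beta_{\beta_{q'}(q)} = \beta_q$; everything else is a routine translation of the quandle operation into composition of permutations. This identity is precisely where the cardinality-2 hypothesis on the blocks of $P$ is used together with property~3: block-preservation of the respecting transpositions guarantees that $\beta_{q'}(q)$ lies in the block of $q$, and property~3 then identifies their associated permutations. I expect this to be the main (and essentially the only) conceptual point, after which both the distributive and medial laws collapse to the commutativity already established in the first fact.
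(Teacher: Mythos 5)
Your proposal is correct and follows essentially the same route as the paper: both arguments rest on the two observations that the $\beta_q$ pairwise commute (the paper expresses this pointwise via the ``included in precisely one of $\beta_y,\beta_z$'' parity argument, you express it via disjointness of the block transpositions) and that $\beta_{q'}(q)$ stays in the block of $q$, so property 3 gives $\beta_{\beta_{q'}(q)}=\beta_q$. The only difference is organizational -- you isolate these two facts up front and then compute with compositions, while the paper interleaves them -- so there is nothing substantive to add.
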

\begin{proof}
The idempotence and involutory properties follow from requirements 1 and 2, respectively. 

To verify the right distributive property, suppose $x,y,z \in Q$, and $\{x,x'\} \in P$. Then $(x \triangleright y) \triangleright z = x'$ if, and only if, the transposition $(xx')$ is included in precisely one of the permutations $\beta_y,\beta_z$. Otherwise, $(x \triangleright y) \triangleright z = x$. It follows that $(x \triangleright y) \triangleright z = (x \triangleright z) \triangleright y$. Requirement 2 implies that $y$ and $y \triangleright z$ are included in a single element of $P$, so requirement 3 implies that $\beta_y = \beta_{y \triangleright z}$. Hence $(x \triangleright y) \triangleright z = (x \triangleright z) \triangleright y = (x \triangleright z) \triangleright (y \triangleright z)$.

The medial property is verified in a similar way, using the fact that if $\{w,w'\} \in P$ then $(w \triangleright x) \triangleright (y \triangleright z)=w'$ if, and only if, $(ww')$ is included in precisely one of $\beta_x,\beta_y$.
\end{proof}

Notice that Corollary \ref{imgcor} and Proposition \ref{imqsemi} hold for $\QIMG(L)$, but they do not hold for $\IMQ(L)$. For instance, $x=g_ag_b \in \IMG^2(L)$ has $x^2= g_a^2 g_b^2 =1$, as $\IMG(L)$ is commutative. Of course $\beta(x)= \beta_{g_a} \beta_{g_b}=1 \in \Dis(\QIMG(L))$, as the only displacement of the trivial quandle $\QIMG(L)$ is the identity map. But $\beta(x) = \beta_{q_a} \beta_{q_b} = \beta_{q_a} \neq 1 \in \Dis(\IMQ(L))$. Moreover, $\IMQ(L)$ is not semiregular, as $\beta(x) = \beta_{q_a}$ fixes $q_a$.

The abelian group $M_A(L)_\nu$ is generated by $s_D(a),s_D(b),s_D(b')$ and $s_D(c)$. The crossing relations given by $r_D$ imply that $s_D(b)=s_D(b')$ and $2s_D(a)=2s_D(b)=2s_D(c)$. Therefore
\begin{equation}
\label{iso2}
M_A(L)_\nu \cong \mathbb Z \oplus \mathbb Z_2 \oplus \mathbb Z _2 \text{,}
\end{equation}
with the summands generated by $s_D(a)$, $s_D(b)-s_D(a)$ and $s_D(c)-s_D(a)$, respectively. The kernel of $w_\nu$ is the torsion subgroup of $M_A(L)_\nu$, and the map $\phi_\nu$ mentioned in the introduction is the isomorphism (\ref{iso2}). 

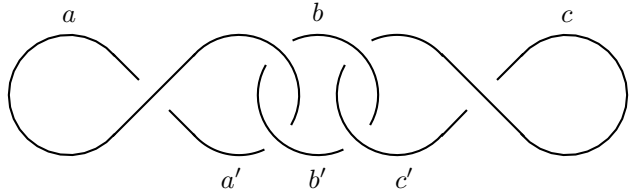
\begin{figure}  [bth]
\centering
\begin{tikzpicture} 
%left
\draw [thick, domain=-30:140] plot ({-5+(0.8)*cos(\x)}, {(0.8)*sin(\x)});
\draw [thick, domain=220:295] plot ({-5+(0.8)*cos(\x)}, {(0.8)*sin(\x)});
\draw [thick] (-5.58,0.55) -- (-6.68,-0.55);
\draw [thick] (-5.58,-0.55) -- (-5.93,-0.2);
\draw [thick] (-6.33,0.2) -- (-6.68,0.55);
\draw [thick, domain=40:320] plot ({-7.26+(0.8)*cos(\x)}, {(0.8)*sin(\x)});
%middle
\draw [thick, domain=150:295] plot ({-3.95+(0.8)*cos(\x)}, {(0.8)*sin(\x)});
\draw [thick, domain=-30:115]  plot ({-3.95+(0.8)*cos(\x)}, {(0.8)*sin(\x)});
%right
\draw [thick, domain=150:320] plot ({-2.9+(0.8)*cos(\x)}, {(0.8)*sin(\x)});
\draw [thick, domain=40:115] plot ({-2.9+(0.8)*cos(\x)}, {(0.8)*sin(\x)});
\draw [thick] (-2.31,0.55) -- (-1.21,-0.55);
\draw [thick] (-2.31,-0.55) -- (-1.97,-0.2);
\draw [thick] (-1.57,0.2) -- (-1.22,0.55);
\draw [thick, domain=-140:140] plot ({-0.64+(0.8)*cos(\x)}, {(0.8)*sin(\x)});
\node at (-7.26,1.05) {$a$};
\node at (-5.1,-1.1) {$a'$};
\node at (-3.95,1.1) {$b$};
\node at (-0.64,1.05) {$c$};
\node at (-2.8,-1.1) {$c'$};
\node at (-3.95,-1.1) {$b'$};
\end{tikzpicture}
\caption{An even diagram of $L$.}
\label{2hfigcurls}
\end{figure}

Consulting Fig.\ \ref{2hfigcurls}, we see that if we index the components $K_1,K_2,K_3$ in $a,b,c$ order, then the longitudes in $M_A(L)_\nu$ are $\lambda_1=s_D(b)-s_D(a)$, $\lambda_2=s_D(c)-s_D(a)$ and $\lambda_3=s_D(c)-s_D(b)$. These are the three nonzero elements of $\ker w_\nu$.  

\subsection {The link $6_3^3$}

Our second example is pictured in Fig. \ref{threechain}. As we will see, $6_3^3$ and the link $L$ of Sec.\ \ref{sec:twohopf} satisfy condition 2 of Theorem \ref{main2}, but they do not satisfy condition 1.

\begin{figure} [bht]
\centering
\begin{tikzpicture}
\draw [thick] (2,3) -- (0.5,3);
\draw [thick] (-2,3) -- (-0.5,3);
\draw [thick] (-2,3) -- (-2,0);
\draw [thick] (0.5,2) -- (-0.5,3);
\draw [thick] (0.1,2.6) -- (0.5,3);
\draw [thick] (-0.1,2.4) -- (-0.5,2);
\draw [thick] (0.5,1) -- (-0.5,2);
\draw [thick] (0.1,1.6) -- (0.5,2);
\draw [thick] (-0.1,1.4) -- (-0.5,1);
\draw [thick] (0.5,1) -- (0.5,0);
\draw [thick] (-0.5,1) -- (-0.5,.7);
\draw [thick] (-0.5,0) -- (-2,0);
\draw [thick] (-0.5,0) -- (-0.5,0.3);
\draw [thick] (-1.5,0.5) -- (0.3,0.5);
\draw [thick] (-1.5,0.5) -- (-1.5,0.2);
\draw [thick] (-1.5,-0.5) -- (-1.5,-0.2);
\draw [thick] (-1.5,-0.5) -- (1.5,-0.5);
\draw [thick] (1.5,0.5) -- (1.5,-0.5);
\draw [thick] (1.5,0.5) -- (0.7,0.5);
\draw [thick] (.5,0) -- (1.3,0);
\draw [thick] (2,0) -- (1.7,0);
\draw [thick] (2,0) -- (2,3);
\node at (-0.62,1.22) {$a'$};
\node at (-1.7,2.7) {$a$};
\node at (1.7,2.75) {$b'$};
\node at (-0.7,2) {$b$};
\node at (1.2,0.75) {$c$};
\node at (-1.2,0.75) {$c'$};
\end{tikzpicture}
\caption{The link $6_3^3$.}
\label{threechain}
\end{figure}
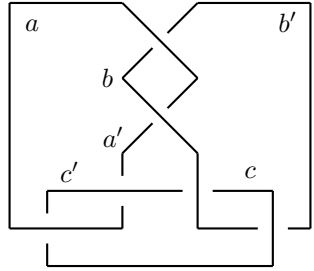

The group $\IMG(6_3^3)$ is generated by $g_a,g_{a'},g_b$, $g_{b'},g_c$ and $g_{c'}$. Relations include $x^2=1$ and $xyz=zyx$ for all elements $x,y,z$ that are conjugates of the listed generators, and the crossing relations. We use three of the crossing relations to eliminate $g_{a'},g_{b'}$ and $g_{c'}$:  $g_{a'}=g_bg_ag_b$, $g_{b'}=g_ag_bg_a$ and $g_{c'}=g_ag_cg_a$. The remaining crossing relations then imply $g_bg_ag_b = g_{c'}g_ag_{c'}=g_ag_cg_ag_cg_a$, $g_ag_bg_a=g_cg_bg_c$ and $g_ag_cg_a=g_bg_cg_b$. Then
\[
g_cg_b = g_bg_bg_cg_b = g_bg_ag_cg_a = g_ag_ag_bg_ag_cg_a = g_ag_cg_bg_cg_cg_a
\]
\[
= g_a(g_cg_bg_a) = g_a(g_ag_bg_c) = g^2_ag_bg_c = g_bg_c \text{,}
\]
so $g_b$ and $g_c$ commute. It follows that $g_ag_cg_a = g_bg_cg_b = g_cg^2_b = g_c$, so $g_cg_a = g_ag_ag_cg_a = g_ag_c$. 
Also,
\[
g_ag_b = g_ag_bg_ag_a = g_cg_bg_cg_a = g_c^2g_bg_a = g_bg_a.
\]
As $g_a,g_b$ and $g_c$ generate the group, we conclude that $\IMG(6_3^3)$ is commutative. It follows that $g_{a'} = g_a$, $g_{b'} = g_b$, $g_{c'} = g_c$ and $\QIMG(6^3_3)$ is the trivial quandle on the set $\{g_a,g_b,g_c\}$. Thus the link $L$ of Sec.\ \ref{sec:twohopf} has $\QIMG(L) \cong \QIMG(6^3_3)$.

The group $\IMG^2(6_3^3)$ is the subgroup of $\IMG(6_3^3)$ that includes the products $g_a^i g_b^j g_c^k$ with $i,j,k \in \{0,1\}$ and $i+j+k \in \{0,2\}$. Part 4 of Corollary \ref{imh} tells us that $\beta:\IMG^2(6_3^3) \to \Dis(\IMQ(6_3^3))$ is surjective, so $\Dis(\IMQ(6_3^3)) = \{1,\beta_{q_b} \beta_{q_a}, \beta_{q_c} \beta_{q_a}, \beta_{q_c} \beta_{q_b}\}$. 

One crossing relation of Fig.\ \ref{threechain} gives us $\beta_{q_c}\beta_{q_b}(q_a)=\beta_{q_c}(q_{a'})$. As $g_c=g_{c'}$ in $\IMG(6^3_3)$, $\beta_{q_{c'}} = \beta(g_{c'})=\beta(g_c)=\beta_{q_c}$, so $\beta_{q_c}(q_{a'}) = \beta_{q_{c'}}(q_{a'})=q_a$. It follows that the displacement $\beta_{q_c}\beta_{q_b}$ has $q_a$ as a fixed point. Therefore, either $\beta_{q_c}\beta_{q_b}=1$ or  $\beta_{q_c}\beta_{q_b}$ is a nontrivial element of the stabilizer $S_{q_a}$. In either case, Proposition \ref{invorb} tells us that the orbit of $q_a$ in $\IMQ(L)$ has no more than two elements.

In a similar way, the equalities $\beta_{q_a} \beta_{q_c}(q_b)=\beta_{q_a} (q_{b'})=q_b$ and $\beta_{q_b} \beta_{q_a}(q_c)=\beta_{q_b}(q_{c'})=q_c$ imply that the orbits of $q_b$ and $q_c$ in $\IMQ(L)$ have no more than two elements apiece. We conclude that $|\IMQ(6^3_3)| \leq 6$, with no more than two elements in any orbit.

To verify that each orbit of $\IMQ(6^3_3)$ does have two elements, consider the following permutations of the set $Q=\{q_a, q_{a'}, q_b,  q_{b'}, q_c, q_{c'}\}$: $\beta_{q_a}=\beta_{q_{a'}}=(q_b q_{b'})(q_c q_{c'})$, $\beta_{q_b}=\beta_{q_{b'}}=(q_a q_{a'})(q_c q_{c'})$ and $\beta_{q_c}=\beta_{q_{c'}}= (q_a q_{a'})(q_b q_{b'})$. Proposition \ref{easyq} tells us that these permutations define an involutory medial quandle on $Q$. As the crossing relations of Fig.\ \ref{threechain} are satisfied, and $|\IMQ(6^3_3)| \leq 6$, this quandle must be isomorphic to $\IMQ(6^3_3)$. No element $q$ has $\beta_q=1$, so this quandle is not isomorphic to the $\IMQ$ quandle of the link $L$ of Sec.\ \ref{sec:twohopf}. 

Analyzing the crossing relations of Fig.\ \ref{threechain}, we conclude that $M_A(6^3_3)_\nu \cong \mathbb Z \oplus \mathbb Z_2 \oplus \mathbb Z _2$, with the summands generated by $s_D(a)$, $s_D(b)-s_D(a)$ and $s_D(c)-s_D(a)$, respectively. This is precisely the same as the description of $M_A(L)_\nu$ in Sec.\ \ref{sec:twohopf}. In the terminology of Definition \ref{nueq}, $6^3_3$ and $L$ are $\phi_\nu$-equivalent. 

\section{The quandle $Q_A(L)_\nu$}
\label{2red}

The subquandle $Q_A(L)_\nu$ of $\textup{Core}(M_A(L)_\nu)$ was defined in the introduction, using the map $\phi_\nu:M_A(L)_\nu \to A_\mu$. Understanding the kernel of $\phi_\nu$ will help us understand $Q_A(L)_\nu$.

\begin{lemma}
\label{kerphi}
The kernel of $\phi_{\nu}$ is generated by $\{2(s_D(a)-s_D(a')) \mid a,a' \in A(D) \}$. That is, $\ker \phi_\nu = 2 \cdot \ker w_\nu$.
\end{lemma}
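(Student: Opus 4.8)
The plan is to prove the two inclusions $2\cdot\ker w_\nu \subseteq \ker\phi_\nu$ and $\ker\phi_\nu \subseteq 2\cdot\ker w_\nu$, after first disposing of the claim that the two formulations in the statement coincide. For the latter I would use that $w_\nu(s_D(a))=1$ for every $a\in A(D)$ (Lemma \ref{tepi}): subtracting off a fixed generator $s_D(a^*)$ shows that every element of $\ker w_\nu$ is an integer combination of differences $s_D(a)-s_D(a')$, so $2\cdot\ker w_\nu$ is precisely the subgroup generated by the elements $2(s_D(a)-s_D(a'))$, and the two descriptions agree.

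The easy inclusion is $2\cdot\ker w_\nu \subseteq \ker\phi_\nu$. Since $w_\nu$ is the first coordinate of $\phi_\nu$, an element $y\in\ker w_\nu$ has $\phi_\nu(y)=(0,e_1,\dots,e_{\mu-1})$ with each $e_i\in\mathbb Z_2$, so doubling annihilates every coordinate and $\phi_\nu(2y)=2\phi_\nu(y)=0$. The same observation shows $\ker\phi_\nu\subseteq\ker w_\nu$, so writing $\bar\phi$ for the restriction of $\phi_\nu$ to $\ker w_\nu$ (which lands in the torsion part $\mathbb Z_2^{\mu-1}$ of $A_\mu$), I have $\ker\phi_\nu=\ker\bar\phi$, and the problem reduces to proving $\ker\bar\phi=2\cdot\ker w_\nu$.

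For the reverse inclusion I would pass to the quotient $\ker w_\nu / 2\cdot\ker w_\nu$. Because $2\cdot\ker w_\nu\subseteq\ker\bar\phi$, the map $\bar\phi$ descends to a homomorphism $\tilde\phi:\ker w_\nu/2\cdot\ker w_\nu\to\mathbb Z_2^{\mu-1}$, and it suffices to show $\tilde\phi$ is injective. Both groups are finite of order $2^{\mu-1}$: the target obviously, and the source because Corollary \ref{kstruc} gives $\ker w_\nu\cong\mathbb Z^{r-1}\oplus\mathbb Z_{2^{n_1}}\oplus\dots\oplus\mathbb Z_{2^{n_k}}\oplus B$ with $r+k=\mu$ and $|B|$ odd, so reducing mod $2$ yields $\mathbb Z_2^{r-1+k}=\mathbb Z_2^{\mu-1}$, the odd-order summand $B$ contributing nothing. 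I would then check that $\tilde\phi$ is surjective, which forces it to be a bijection and hence injective. Surjectivity is the one step requiring a genuine construction: for each index $i>1$ I choose an arc $a$ with $\kappa_D(a)=i$ and an arc $a'$ with $\kappa_D(a')=1$, observe $s_D(a)-s_D(a')\in\ker w_\nu$, and compute $\bar\phi(s_D(a)-s_D(a'))$ to be the $i$-th standard basis vector of $\mathbb Z_2^{\mu-1}$; these classes generate the target.

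The main obstacle is this reverse inclusion, and within it the bookkeeping: extracting $\ker w_\nu/2\cdot\ker w_\nu\cong\mathbb Z_2^{\mu-1}$ honestly from Corollary \ref{kstruc} (crucially using that $|B|$ is odd, so $B$ dies mod $2$) and then invoking that a surjection between finite groups of equal order is an isomorphism. Everything else — the equivalence of the two formulations and the easy inclusion — is purely formal.
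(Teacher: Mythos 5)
Your proof is correct, but it takes a genuinely different route from the paper's. For the hard inclusion $\ker\phi_\nu\subseteq 2\cdot\ker w_\nu$, the paper argues constructively: it writes an arbitrary element of $\ker\phi_\nu$ as $\sum m_j s_D(a_j)$ with $\sum m_j=0$ and $\sum_{\kappa_D(a_j)=i}m_j$ even for each $i$, and then inducts on the number of odd coefficients, pairing two odd coefficients on the same component and ``walking'' one arc toward the other through successive crossings (using $s_D(a_1)=2s_D(a_o)-s_D(a_1')$) until the two terms can be merged. That argument uses only the defining relations of $M_A(L)_\nu$ and produces an explicit expression of any kernel element in terms of the generators $2(s_D(a)-s_D(a'))$. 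Your argument instead quotients by $2\cdot\ker w_\nu$, computes $|\ker w_\nu/2\cdot\ker w_\nu|=2^{\mu-1}$ from Corollary \ref{kstruc} (correctly using that the odd summand $B$ dies mod $2$), exhibits surjectivity of the induced map onto $\mathbb Z_2^{\mu-1}$ via the differences $s_D(a)-s_D(a')$ with $\kappa_D(a)=i>1$ and $\kappa_D(a')=1$, and concludes injectivity by an order count. This is shorter and cleaner, at the cost of importing the structure theory of $\ker w_\nu$ (Corollary \ref{kstruc}, hence Lemma \ref{mstruc}) and of being non-constructive; there is no circularity, since those results are established earlier and independently. Your preliminary observation that $\ker w_\nu$ is generated by the differences $s_D(a)-s_D(a')$, so that the two formulations in the statement agree, is also needed and correctly handled. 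Both proofs establish the full statement, including the generating-set form that is used later in Proposition \ref{imq'dis}.
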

\begin{proof}
If $x \in \ker w_\nu$, then $\phi_{\nu}(x)$ is an element of 
\[
A_ \mu = \mathbb{Z} \oplus \underbrace{ \mathbb{Z}_2 \oplus \cdots \oplus \mathbb{Z}_2}_{\mu-1}
\] 
whose first coordinate is $0$, so $2 \cdot \phi_{\nu}(x) = 0$.  Thus $2 \cdot \ker w_\nu \subseteq  \ker \phi_{\nu}$.

Now, suppose $a_1,\dots,a_n \in A(D)$, $m_1,\dots,m_n \in \mathbb{Z}$ and
\[
x = \sum _{j=1}^n m_j s_D(a_j) \in \ker \phi_{\nu}.
\]
As $\phi_\nu(x)=0$, these two properties must hold:
\begin{align*}
& \text{(1)  } \sum_{j=1}^n m_j=0 .\\
& \text{(2)  For each } i \in \{1,\dots,\mu\} \text{, } 
\sum _{\kappa_D(a_j)=i}m_j \text{ is even.}
\end{align*}
If $m_1,\dots,m_n$ are all even, then property (1) implies that the sum can be written as a sum of terms of the form $2(s_D(a)-s_D(a'))$, so the lemma is satisfied.

The argument proceeds using induction on the number of odd coefficients $m_j$. Suppose $m_1$ is odd. Property (2) implies that there is an index $j>1$ such that $\kappa_D(a_j)=\kappa_D(a_1)$ and $m_j$ is odd; we may as well assume that $j=2$. Rewrite the sum so that $m_1=m_2=1$, and there are new summands (if necessary) to contribute $(m_1-1)s_D(a_1)$ and $(m_2-1)s_D(a_2)$.

If $a_1=a_2$ then combine terms, replacing $m_1s_D(a_1)+m_2s_D(a_2)=s_D(a_1)+s_D(a_2)$ with $2s_D(a_1)$; this reduces the number of odd coefficients. If $a_1 \neq a_2$, let $a'_1$ be an arc of $K_{\kappa_D(a_1)}$ that is separated from $a_1$ by a crossing. If $a_o$ is the overpassing arc that separates $a_1$ from $a'_1$, then according to the definition of $s_D$, $s_D(a_1)=2s_D(a_o)-s_D(a'_1)=2s_D(a_o)-2s_D(a'_1)+s_D(a'_1)$. Therefore we can replace $s_D(a_1)$ with $2s_D(a_o)-s_D(a'_1)=2s_D(a_o)-2s_D(a'_1)+s_D(a'_1)$ in the sum representing $x$, without increasing the number of odd coefficients. If $a'_1=a_2$, we can combine terms as in the first sentence of this paragraph, and reduce the number of odd coefficients. If $a'_1 \neq a_2$, let $a''_1$ be the arc of $K_{\kappa_D(a_1)}$ that is separated from $a'_1$ by a crossing, and is not equal to $a_1$. By the same process as before, we can replace the sum equal to $x$ with a sum that has precisely the same terms with odd coefficients, except that the summand $s_D(a'_1)$ has been replaced with $s_D(a''_1)$. Repeating this process, we walk along $K_{\kappa_D(a_1)}=K_{\kappa_D(a_2)}$ until we reach $a_2$, and then we combine terms to reduce the number of odd coefficients in the sum. \end{proof}

\begin{proposition}
\label{imq'}
If the determinant of $L$ is $0$, then $Q_A(L)_\nu$ is infinite. If the determinant of $L$ is not $0$, then
\[
|Q_A(L)_\nu|=\frac {\mu |\det L|}{2^{\mu-1}}.
\]
\end{proposition}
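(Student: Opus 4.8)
The plan is to exploit the fact that $Q_A(L)_\nu = \phi_\nu^{-1}(\phi_\nu(s_D(A(D))))$ is a union of cosets of $\ker \phi_\nu$, so that counting $Q_A(L)_\nu$ reduces to counting the cosets and multiplying by $|\ker \phi_\nu|$. First I would observe that the finite set $\phi_\nu(s_D(A(D))) \subset A_\mu$ has exactly $\mu$ elements: by the definition of $\Phi_\nu$, an arc $a$ with $\kappa_D(a)=1$ gives $\phi_\nu(s_D(a))=(1,0,\dots,0)$, while an arc with $\kappa_D(a)=i>1$ gives the element of $A_\mu$ whose first coordinate is $1$ and whose only nonzero $\mathbb Z_2$-coordinate is the one indexed by $i$. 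These $\mu$ values are pairwise distinct, and each is realized, since every component has at least one arc. Hence $Q_A(L)_\nu$ is the disjoint union of the $\mu$ nonempty fibers $\phi_\nu^{-1}(v)$, each of which is a coset of $\ker \phi_\nu$, giving $|Q_A(L)_\nu|=\mu \cdot |\ker \phi_\nu|$ whenever $\ker \phi_\nu$ is finite.

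Next I would compute $|\ker \phi_\nu|$ using Lemma \ref{kerphi}, which identifies $\ker \phi_\nu$ with $2 \cdot \ker w_\nu$. Assume first that $\det L \neq 0$. Then Proposition \ref{kert} tells us $\ker w_\nu$ is finite of order $|\det L|$, and feeding finiteness into the structure isomorphism of Corollary \ref{kstruc} forces $r=1$ and $k=\mu-1$, so $\ker w_\nu \cong \mathbb Z_{2^{n_1}} \oplus \dots \oplus \mathbb Z_{2^{n_{\mu-1}}} \oplus B$ with each $n_j \geq 1$ and $|B|$ odd. Multiplication by $2$ sends each cyclic $2$-group $\mathbb Z_{2^{n_j}}$ onto a subgroup of index $2$, while it acts bijectively on the odd-order group $B$; therefore $|2 \cdot \ker w_\nu| = |\ker w_\nu| / 2^{\mu-1} = |\det L| / 2^{\mu-1}$. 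Combining this with the coset count yields $|Q_A(L)_\nu| = \mu |\det L| / 2^{\mu-1}$.

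Finally, for the case $\det L = 0$, Proposition \ref{kert} says $\ker w_\nu$ is infinite, so by Corollary \ref{kstruc} its free rank $r-1$ is at least $1$; then $2 \cdot \ker w_\nu = \ker \phi_\nu$ contains $2\mathbb Z^{r-1}$ and is infinite, and since $Q_A(L)_\nu$ contains a coset of $\ker \phi_\nu$ it is infinite as well. I expect the main work to be the order computation for $2 \cdot \ker w_\nu$: the key points are that finiteness of $\ker w_\nu$ pins down the number of cyclic $2$-group summands as exactly $\mu-1$ (via $r=1$, $k=\mu-1$), and that doubling divides the $2$-part of the order by exactly $2^{\mu-1}$ while leaving the odd part $|B|$ unchanged. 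Everything else is bookkeeping about cosets.
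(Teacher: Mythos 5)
Your proof is correct, and its skeleton matches the paper's: both arguments reduce the count to $|Q_A(L)_\nu| = \mu \cdot |\ker \phi_\nu|$ by writing $Q_A(L)_\nu$ as a union of $\mu$ cosets of $\ker\phi_\nu$, and then relate $|\ker\phi_\nu|$ to $|\ker w_\nu| = |\det L|$ via Proposition \ref{kert}. The difference is in how that relation is obtained. The paper observes that $\ker w_\nu = \phi_\nu^{-1}(T(A_\mu))$, where $T(A_\mu) \cong \mathbb Z_2^{\mu-1}$ is the torsion subgroup of $A_\mu$, and concludes $|\ker w_\nu| = |\ker\phi_\nu|\cdot 2^{\mu-1}$ by counting fibers; this is short but silently uses that $\phi_\nu$ restricted to $\ker w_\nu$ surjects onto $T(A_\mu)$ (true, since the images $\phi_\nu(s_D(a))$ of arcs from different components differ by the standard generators of $T(A_\mu)$). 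You instead invoke Lemma \ref{kerphi} to identify $\ker\phi_\nu$ with $2\cdot\ker w_\nu$ and then compute the index $[\ker w_\nu : 2\cdot\ker w_\nu] = 2^{\mu-1}$ from the structure decomposition of Corollary \ref{kstruc} (finiteness forcing $r=1$, $k=\mu-1$, doubling halving each $\mathbb Z_{2^{n_j}}$ and acting bijectively on the odd part). Your route leans on two previously established results where the paper uses an elementary fiber count, but it makes the surjectivity issue moot and handles the infinite case ($r-1\geq 1$, so $2\cdot\ker w_\nu$ contains an infinite free summand) cleanly. Both arguments are complete and yield the same formula.
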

\begin{proof}
Recall that $A_{\mu} = \mathbb Z \oplus \mathbb Z_2 \oplus \cdots \oplus \mathbb Z_2$, 
with $\mu -1 $ factors of $\mathbb Z_2$. Let $T(A_{\mu})$ be the torsion subgroup of $A_{\mu}$, i.e., the set of elements whose first coordinate is $0$. As $w_{\nu}:M_A(L)_{\nu} \to \mathbb Z$ is the first coordinate of $\phi_{\nu}:M_A(L)_{\nu} \to A_{\mu}$,
\[
|\ker w_{\nu}| = | \phi_{\nu}^{-1}(T(A_{\mu})) | = |\ker \phi_{\nu}| \cdot |T(A_{\mu})|= |\ker \phi_{\nu}| \cdot 2^{\mu -1}.
\]
By Definition \ref{coreprime}, $Q_A(L)_\nu$ is the union of $\mu$ cosets of $\ker \phi_{\nu}$ in $M_A(L)_{\nu}$, so $
|Q_A(L)_\nu| = \mu \cdot |\ker \phi_{\nu}| = \mu  \cdot (|\ker w_{\nu}| / 2^{\mu-1})$. The proposition now follows from Proposition \ref{kert}.
\end{proof}

\begin{proposition}
\label{imq'dis}
There is an isomorphism $\delta:\ker \phi_\nu \to \Dis(Q_A(L)_\nu)$, defined as follows: If $k \in \ker \phi_\nu$, then $\delta(k)$ is the function $\delta(k):Q_A(L)_\nu \to Q_A(L)_\nu$ given by $\delta(k)(x)=k+x$ $\forall x \in Q_A(L)_\nu$.
\end{proposition}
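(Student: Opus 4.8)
The plan is to realize $\Dis(Q_A(L)_\nu)$ concretely as the group of translations of $Q_A(L)_\nu$ by elements of $\ker \phi_\nu$, and then to identify this translation group with $\ker \phi_\nu$ itself. First I would check that $\delta$ is well defined: if $k \in \ker \phi_\nu$ and $x \in Q_A(L)_\nu$, then $\phi_\nu(k+x) = \phi_\nu(x) \in \phi_\nu(s_D(A(D)))$, so $k + x \in Q_A(L)_\nu$; thus $\delta(k)$ is a self-map of $Q_A(L)_\nu$, with inverse $\delta(-k)$, hence a bijection. The computation $\delta(k_1)\delta(k_2)(x) = k_1 + k_2 + x$ shows that $\delta$ is a homomorphism from $\ker \phi_\nu$ into the symmetric group of $Q_A(L)_\nu$, and it is injective because $\delta(k)$ being the identity forces $k = 0$.

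The heart of the argument is to show that the image of $\delta$ is exactly $\Dis(Q_A(L)_\nu)$. For this I would first compute the elementary displacements. Since the operation in $\Core(M_A(L)_\nu)$ is $r \triangleright s = 2s - r$, we have $\beta_y(x) = 2y - x$ and hence, for $y,z \in Q_A(L)_\nu$,
\[
\beta_y\beta_z(x) = 2y - (2z - x) = 2(y - z) + x.
\]
Because $y,z \in Q_A(L)_\nu$, their images $\phi_\nu(y), \phi_\nu(z)$ both lie in $\phi_\nu(s_D(A(D)))$ and so each have first coordinate $1$; thus $\phi_\nu(y-z)$ has first coordinate $0$, i.e.\ $y - z \in \ker w_\nu$, and therefore $2(y-z) \in 2 \cdot \ker w_\nu = \ker \phi_\nu$ by Lemma \ref{kerphi}. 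This shows $\beta_y\beta_z = \delta(2(y-z))$ with $2(y-z) \in \ker \phi_\nu$, so every elementary displacement, and hence every displacement, lies in the image of $\delta$.

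For the reverse inclusion I would invoke Lemma \ref{kerphi} once more: it tells us that $\ker \phi_\nu$ is generated by the elements $2(s_D(a) - s_D(a'))$ with $a,a' \in A(D)$. Each such generator equals $2(y-z)$ with $y = s_D(a)$ and $z = s_D(a')$ in $Q_A(L)_\nu$, so $\delta$ carries it to the elementary displacement $\beta_{s_D(a)}\beta_{s_D(a')} \in \Dis(Q_A(L)_\nu)$. Since $\delta$ is a homomorphism and $\Dis(Q_A(L)_\nu)$ is a group, the set of $k \in \ker \phi_\nu$ with $\delta(k) \in \Dis(Q_A(L)_\nu)$ is a subgroup containing these generators, hence all of $\ker \phi_\nu$. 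Combining the two inclusions gives $\delta(\ker \phi_\nu) = \Dis(Q_A(L)_\nu)$, and together with injectivity this makes $\delta$ the desired isomorphism.

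The step I expect to require the most care is matching the image of $\delta$ with $\Dis(Q_A(L)_\nu)$ on the nose. The subtlety is that $Q_A(L)_\nu$ is only a subquandle of a core quandle, so, unlike in Proposition \ref{Coreprop}, there is no element $0$ available inside $Q_A(L)_\nu$ to serve as a basepoint; the elementary displacements must be built from genuine differences $s_D(a) - s_D(a')$ of elements of $Q_A(L)_\nu$. The reason everything fits together is exactly Lemma \ref{kerphi}, whose generating set for $\ker \phi_\nu$ is precisely the set of translation amounts produced by these elementary displacements.
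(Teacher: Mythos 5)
Your proposal is correct and follows essentially the same route as the paper's proof: both directions rest on Lemma \ref{kerphi}, with the computation $\beta_y\beta_z = \delta(2(y-z))$ giving $\Dis(Q_A(L)_\nu) \subseteq \delta(\ker\phi_\nu)$ and the generating set $\{2(s_D(a)-s_D(a'))\}$ giving the reverse inclusion. The only cosmetic difference is that the paper writes $\delta(k)$ out explicitly as a product of elementary displacements, whereas you phrase the reverse inclusion as a subgroup-containing-generators argument; the substance is identical.
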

\begin{proof}
If $k \in \ker \phi_\nu$, then it is obvious that $k+x \in Q_A(L)_\nu$ $\forall x \in Q_A(L)_\nu$. Therefore, there is certainly a function $\delta(k):Q_A(L)_\nu \to Q_A(L)_\nu$ defined as in the statement. 

Notice that $\{ \delta(k) \mid k \in \ker \phi_\nu \}$ is closed under composition: if $k,\ell \in \ker \phi_ \nu$, then  $k + \ell \in \ker \phi_ \nu$, and
\begin{equation}
\label{hom}
(\delta(k) \circ \delta(\ell))(x) = \delta(k)(\ell+x) = k+ \ell + x = \delta(k+\ell)(x) \quad \forall x  \in Q_A(L)_\nu \text{.}
\end{equation}
The identity map of $Q_A(L)_\nu$ is $\delta(0)$, and if $k \in \ker \phi_\nu$ then $\delta(k)^{-1}=\delta(-k)$. We see that $\{ \delta(k) \mid k \in \ker \phi_\nu \}$ is a group under composition. The equation (\ref{hom}) shows that $\delta(k+\ell)=\delta(k) \circ \delta(\ell)$ $\forall k,\ell \in \ker \phi_\nu$, so we have a homomorphism $\delta:\ker \phi_\nu \to \{ \delta(k) \mid k \in \ker \phi_\nu \}$. It is obvious that $\delta$ is surjective. It is also injective: if $\delta(k)$ is the identity map, then $0=\delta(k)(0)=k+0$, so $k=0$.

To complete the proof, it suffices to show that $\{ \delta(k) \mid k \in \ker \phi_\nu \}=\Dis(Q_A(L)_\nu)$. 

First, suppose $y,z \in Q_A(L)_\nu$. Then $w_\nu(y) = w_ \nu(z)=1$, so $y-z \in \ker w_ \nu$. The elementary displacement $\beta_y \beta_z$ is given by 
\[
\beta_y \beta_z(x) = 2y - \beta_z(x) = 2y-(2z-x)=2(y-z)+x \quad \forall x \in Q_A(L)_\nu \text{.}
\]
Lemma \ref{kerphi} tells us $2(y-z) \in \ker \phi_ \nu$, so $\beta_y \beta_z =\delta(2(y-z))$. As $\{ \delta(k) \mid k \in \ker \phi_\nu \}$ is closed under composition and contains all the elementary displacements of $Q_A(L)_\nu$, $\{ \delta(k) \mid k \in \ker \phi_\nu \} \supseteq \Dis(Q_A(L)_\nu)$.

To verify the opposite inclusion $\{ \delta(k) \mid k \in \ker \phi_\nu \} \subseteq \Dis(Q_A(L)_\nu)$, suppose $k \in \ker \phi_ \nu$. According to Lemma \ref{kerphi}, there are $a_1,\dots,a_n,a'_1,\dots,a'_n \in A(D)$ such that
\[
k = 2s_D(a_1-a'_1)+2s_D(a_2-a'_2) + \cdots + 2s_D(a_n-a'_n). 
\]
It follows that $\delta(k)$ is a composition of $n$ elementary displacements:
\[
\delta(k)(x)= 2s_D(a_1)-2s_D(a'_1)+- \cdots + 2s_D(a_n)-2s_D(a'_n) + x
\]
\[
=\beta_{s_D(a_1)}\beta_{s_D(a'_1)} \cdots \beta_{s_D(a_n)}\beta_{s_D(a'_n)}(x)
\]
\[
=(\beta_{s_D(a_1)}\beta_{s_D(a'_1)}) \cdots (\beta_{s_D(a_n)}\beta_{s_D(a'_n)})(x) \quad \forall x \in Q_A(L)_\nu.
\]
Therefore $\{ \delta(k) \mid k \in \ker \phi_\nu \} = \Dis(Q_A(L)_\nu)$, as required. \end{proof}

For later reference, we extract two scholia from the proof of Proposition \ref{imq'dis}.

1. The equality $\beta_y \beta_z = \delta(2(y-z))$ implies that if $a,a' \in A(D)$, then $\beta_{s_D(a)} \beta_{s_D(a')} = \delta(2(s_D(a)-s_D(a')))$.

2. Every element of $\Dis(Q_A(L)_\nu)$ is $\beta_{s_D(a_1)}\beta_{s_D(a'_1)} \cdots \beta_{s_D(a_n)}\beta_{s_D(a'_n)}$ for some $a_1,\dots,a_n,a'_1,\dots,a'_n \in A(D)$.

\begin{proposition}
\label{imq'orb}
$Q_A(L)_\nu$ has $\mu$ orbits, one for each component of $L$. The orbit corresponding to $K_i$ is $\phi_ \nu ^{-1}(\phi_ \nu(s_D(a)))$ for every arc $a \in A(D)$ with $\kappa_D(a)=i$.
\end{proposition}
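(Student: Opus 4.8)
The plan is to identify the orbits of $Q_A(L)_\nu$ with the cosets of $\ker\phi_\nu$ that $\phi_\nu$ cuts out, using the explicit description of $\Dis(Q_A(L)_\nu)$ from Proposition \ref{imq'dis}. First I would record that $\phi_\nu(s_D(a))$ depends only on $\kappa_D(a)$: by the definition of $\Phi_\nu$ in the introduction, every arc $a$ with $\kappa_D(a)=1$ has $\phi_\nu(s_D(a))=(1,0,\dots,0)$, and every arc with $\kappa_D(a)=i>1$ has $\phi_\nu(s_D(a))=(1,0,\dots,0,1,0,\dots,0)$ with the second $1$ in coordinate $i$. These $\mu$ vectors $v_1,\dots,v_\mu \in A_\mu$ are pairwise distinct, so $\phi_\nu(s_D(A(D)))=\{v_1,\dots,v_\mu\}$ and
\[
Q_A(L)_\nu=\phi_\nu^{-1}(\{v_1,\dots,v_\mu\})=\bigsqcup_{i=1}^\mu \phi_\nu^{-1}(v_i),
\]
a disjoint union of $\mu$ cosets of $\ker\phi_\nu$ (each nonempty, since each $v_i$ lies in the image of $\phi_\nu$).

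The key step is to compute the orbit of a general element. By Proposition \ref{orb}, the orbit of $x \in Q_A(L)_\nu$ is $\{d(x)\mid d\in\Dis(Q_A(L)_\nu)\}$, and Proposition \ref{imq'dis} identifies $\Dis(Q_A(L)_\nu)$ with $\{\delta(k)\mid k\in\ker\phi_\nu\}$, where $\delta(k)(x)=k+x$. Hence the orbit of $x$ is $\{k+x\mid k\in\ker\phi_\nu\}=x+\ker\phi_\nu$, i.e.\ exactly the coset of $\ker\phi_\nu$ containing $x$. Since $\phi_\nu$ is a homomorphism, this coset is $\phi_\nu^{-1}(\phi_\nu(x))$; if $\kappa_D(a)=i$ and $\phi_\nu(x)=v_i$, it is $\phi_\nu^{-1}(v_i)$.

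Combining the two observations finishes the argument: the orbits of $Q_A(L)_\nu$ are precisely the $\mu$ cosets $\phi_\nu^{-1}(v_i)$, one for each component $K_i$, and the orbit associated to $K_i$ is $\phi_\nu^{-1}(\phi_\nu(s_D(a)))$ for any arc $a$ with $\kappa_D(a)=i$. I do not anticipate a serious obstacle here, since the whole proof is a direct translation of orbits into $\ker\phi_\nu$-cosets via Proposition \ref{imq'dis}. The only point requiring any care is the (routine) verification that $v_1,\dots,v_\mu$ are distinct, which is what guarantees there are exactly $\mu$ orbits rather than fewer.
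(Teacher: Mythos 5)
Your proof is correct. Both you and the paper rest the argument on Proposition \ref{imq'dis}, but you use its full strength --- the equality $\Dis(Q_A(L)_\nu)=\{\delta(k)\mid k\in\ker\phi_\nu\}$ --- to conclude in one stroke that the orbit of any $x$ is exactly the coset $x+\ker\phi_\nu=\phi_\nu^{-1}(\phi_\nu(x))$, after which counting orbits reduces to counting the distinct values $v_1,\dots,v_\mu$. The paper takes a slightly longer path: it first shows every orbit contains some $s_D(a)$ (using only the inclusion $\{\delta(k)\mid k\in\ker\phi_\nu\}\subseteq\Dis(Q_A(L)_\nu)$), then uses the crossing relations $s_D(b')=s_D(b)\triangleright s_D(a)$ to put all arcs of a given component into one orbit, and finally checks that $\phi_\nu$ is constant on orbits to keep distinct components apart. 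Your route makes the crossing-relation step unnecessary, since arcs of the same component automatically land in the same coset of $\ker\phi_\nu$; the paper's route has the minor virtue of exhibiting explicitly how the quandle operation moves among the arcs of one component. Your closing point of care --- that $v_1,\dots,v_\mu$ are pairwise distinct and each is actually attained because every component contributes at least one arc --- is exactly what guarantees there are $\mu$ orbits rather than fewer.
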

\begin{proof}
We begin with a claim: every orbit of $Q_A(L)_\nu$ contains $s_D(a)$ for some $a \in A(D)$. If $x \in Q_A(L)_\nu$ then according to Definition \ref{coreprime}, there is an $a \in A(D)$ with $x-s_D(a) \in \ker \phi_\nu$. Then 
\[
x=x-s_D(a)+s_D(a)= \delta(x-s_D(a))(s_D(a)) \text{,}
\]
where $\delta:\ker \phi_\nu \to \Dis(Q_A(L)_\nu)$ is the isomorphism of Proposition \ref{imq'dis}. It follows that $x$ and $s_D(a)$ are elements of the same orbit of $Q_A(L)_\nu$. This justifies the claim.

Now, suppose there is a crossing of $D$ with overpassing arc $a$ and underpassing arcs $b,b'$. Then $s_D(2a-b-b')=0$, so according to the definition of a core quandle, $s_D(b')=2s_D(a)-s_D(b)= s_D(b) \triangleright s_D(a)$ in $Q_A(L)_\nu$. Thus $s_D(b)$ and $s_D(b')$ are elements of the same orbit of $Q_A(L)_\nu$. This applies at every crossing of $D$, so for each component $K_i$ of $L$, a single orbit of $Q_A(L)_\nu$ contains $s_D(b)$ for every $b \in A(D)$ with $\kappa_D(b)=i$. 

On the other hand, if $x,y \in Q_A(L)_\nu$ then $\phi_{\nu}(x \triangleright y)=\phi_{\nu}(2y-x)=\phi_{\nu}(2y-2x)+\phi_{\nu}(x)=0+\phi_{\nu}(x)$, so $\phi_{\nu}$ is constant on each orbit in $Q_A(L)_\nu$. It follows that $\kappa_D$ is also constant on each orbit. \end{proof}

\section{Proof of Theorem \ref{main1}}
\label{proofmain}

We complete the proof of Theorem \ref{main1} in this section. Item 2 is useful in proving item 1, so we prove item 2 first.

\subsection{Item 2 of Theorem \ref{main1}}

\begin{proposition}
\label{ednew}
Let $D$ be a diagram of a link $L$, and let $a^*$ be a fixed arc of $D$. Then there is an epimorphism $e_D:\ker w_{\nu} \to \IMG^2(L)$ with $e_D(s_D(a)-s_D(a^{*})) = h_a=g_a g_{a^*}$ $\forall a \in A(D)$.
\end{proposition}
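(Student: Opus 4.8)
The plan is to realize $e_D$ as the restriction to $\ker w_\nu$ of a homomorphism defined on all of $M_A(L)_\nu$, which in turn comes from factoring a map out of the free abelian group $\mathbb{Z}^{A(D)}$ through the cokernel defining $M_A(L)_\nu$. The two facts that make this work are that $\IMG^2(L)$ is abelian and that the $h_a$ obey the crossing relation, both supplied by Corollary \ref{imh}.

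First I would define a homomorphism $\tilde e:\mathbb{Z}^{A(D)} \to \IMG^2(L)$ on the free generators by $\tilde e(a)=h_a$. This extends (uniquely) to a homomorphism precisely because $\IMG^2(L)$ is abelian by item 1 of Corollary \ref{imh}, so there is no obstruction to prescribing values on a basis. Note that $\tilde e$ converts the additive notation of $\mathbb{Z}^{A(D)}$ into the multiplicative notation of $\IMG^2(L)$.

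The one substantive step is to check that $\tilde e$ annihilates the image of $r_D$, so that it descends to the cokernel $M_A(L)_\nu = \coker r_D$. For a crossing $c$ as in Fig.\ \ref{crossfig} we have $r_D(c)=2a-b-b'$, hence $\tilde e(r_D(c)) = h_a^2 h_b^{-1} h_{b'}^{-1}$. Item 3 of Corollary \ref{imh} says $h_{b'}=h_a^2 h_b^{-1}$, so $\tilde e(r_D(c)) = h_a^2 h_b^{-1}(h_a^2 h_b^{-1})^{-1}=1$. Because the target is abelian the exponents simply add, so the degenerate crossings in which $a,b,b'$ are not distinct need no separate treatment. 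Thus $\tilde e$ factors through $s_D$, giving a homomorphism $\bar e:M_A(L)_\nu \to \IMG^2(L)$ with $\bar e(s_D(a))=h_a$ for all $a\in A(D)$.

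Finally I would put $e_D = \bar e|_{\ker w_\nu}$; restricting to a subgroup is automatic and needs no further checking. Since $w_\nu(s_D(a)-s_D(a^*))=1-1=0$, each $s_D(a)-s_D(a^*)$ indeed lies in $\ker w_\nu$, and $e_D(s_D(a)-s_D(a^*)) = h_a h_{a^*}^{-1} = h_a$ using $h_{a^*}=1$ (item 2 of Corollary \ref{imh}). Surjectivity is then immediate: the $h_a$ generate $\IMG^2(L)$ by item 2 of Corollary \ref{imh}, and each is the image of $s_D(a)-s_D(a^*)\in\ker w_\nu$. I do not expect a genuine obstacle here; the entire content is the relation check $\tilde e(r_D(c))=1$, which collapses to a single line once Corollary \ref{imh} is available, and the only thing to stay careful about is keeping the additive structure of $M_A(L)_\nu$ and the multiplicative structure of $\IMG^2(L)$ straight.
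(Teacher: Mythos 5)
Your proof is correct, and it rests on exactly the same two facts as the paper's argument: the relation check $h_a^2h_b^{-1}h_{b'}^{-1}=1$ supplied by item 3 of Corollary \ref{imh}, and the facts that $\IMG^2(L)$ is abelian, generated by the $h_a$, with $h_{a^*}=1$. The only difference is organizational: the paper works directly with a presentation of $\ker w_{\nu}$, namely the matrix $R'_D$ obtained from $R_D$ by adjoining a row with a single $1$ in the $a^*$ column (this is a presentation of $\ker w_\nu$ on the generators $s_D(a)-s_D(a^*)$, as established via Lemma \ref{tepi} in the proof of Proposition \ref{kert}), and checks that the row relations, including the extra row matching $h_{a^*}=1$, hold among the $h_a$. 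You instead factor the map through the cokernel $M_A(L)_\nu$, where the presentation is immediate from Definition \ref{almod}, and then restrict to the subgroup $\ker w_\nu$; this sidesteps any appeal to $R'_D$ being a presentation of $\ker w_\nu$, at the cost of the (trivial) extra observation that each $s_D(a)-s_D(a^*)$ lies in $\ker w_\nu$ and that the restriction still hits every generator $h_a$. Both routes are sound; yours is marginally more self-contained.
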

\begin{proof}
Let $R_D$ be the matrix representing $r_D:\mathbb{Z}^{C(D)} \to \mathbb{Z}^{A(D)}$. Let $R'_D$ be the matrix obtained from $R_D$ by adjoining a row whose only nonzero entry is a $1$ in the $a^*$ column, as in the proof of Proposition \ref{kert}. As noted there, $R'_D$ is a presentation matrix for $\ker w_{\nu}$. To be explicit: there is an epimorphism $s'_D: \mathbb{Z}^{A(D)} \to \ker w_{\nu}$ with $s'_D(a)=s_D(a)-s_D(a^*)$ $\forall a \in A(D)$, and the kernel of $s'_D$ is generated by the elements of $\mathbb{Z}^{A(D)}$ represented by the rows of $R'_D$.

According to Corollary \ref{imh}, $\IMG^2(L)$ is generated (as an abelian group written multiplicatively) by the elements $h_a=g_ag_{a^*}$, and whenever $c$ is a crossing of $D$ with overpassing arc $a$ and underpassing arcs $b,b'$, the formula $h_{b'}=h_a^2 h_b$ holds in $\IMG^2(L)$. This formula matches the element of $\mathbb{Z}^{A(D)}$ represented by the $c$ row of $R'(D)$, namely $r_D(c)=2a - b- b'$ (in additive notation). The one row of $R'_D$ that does not correspond to a crossing of $D$ is the row whose only nonzero entry is a $1$ in the $a^*$ column. As $h_{a^*}=1$ , the relation represented by this row is also valid in $\IMG^2(L)$.

It follows that there is a well-defined homomorphism of abelian groups $e_D:\ker w_{\nu} \to \IMG^2(L)$, with $e_D(s'_D(a)) = h_a$ $\forall a \in A(D)$. The group $\IMG^2(L)$ is generated by the $h_a$ elements, so $e_D$ is surjective.
\end{proof}

\begin{proposition}
\label{fed}
If $x \in \ker w_{\nu}$ and $2x=0$, then $e_D(x)$ is an element of the kernel of the map $\beta:\IMG^2(L) \to \Dis(\QIMG(L))$ mentioned in Proposition \ref{qimgbetahom} and Corollary \ref{qimgh}.
\end{proposition}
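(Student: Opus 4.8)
The plan is to reduce the statement to Corollary \ref{imgcor} by exploiting the fact that $e_D$ is a homomorphism of abelian groups. First I would recall the precise nature of $e_D$ from Proposition \ref{ednew}: it is a homomorphism from $\ker w_\nu$, written additively, to $\IMG^2(L)$, which is abelian (by item 1 of Corollary \ref{imh}) and written multiplicatively as a subgroup of $\IMG(L)$. The homomorphism property then translates doubling in the source into squaring in the target; that is, for any $x \in \ker w_\nu$ we have $e_D(2x) = e_D(x+x) = e_D(x)^2$.

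Next I would feed in the hypothesis $2x = 0$. Since $e_D$ is a homomorphism, it sends the identity to the identity, so
\[
e_D(x)^2 = e_D(2x) = e_D(0) = 1
\]
in $\IMG^2(L)$. Thus $e_D(x)$ is an element of $\IMG^2(L)$ whose square is trivial, which is exactly the hypothesis required to apply Corollary \ref{imgcor}.

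Finally I would invoke Corollary \ref{imgcor} directly with $y = e_D(x)$: that corollary asserts that any $y \in \IMG^2(L)$ with $y^2 = 1$ satisfies $\beta(y) = 1 \in \Dis(\QIMG(L))$, where $\beta$ is the map of Proposition \ref{qimgbetahom} (restricted to $\IMG^2(L)$ as in Corollary \ref{qimgh}). This yields $\beta(e_D(x)) = 1$, i.e.\ $e_D(x) \in \ker \beta$, as claimed.

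There is essentially no obstacle internal to this argument—it is a two-line consequence of the homomorphism property of $e_D$ together with Corollary \ref{imgcor}. The genuine content was already absorbed into Corollary \ref{imgcor}, which in turn rests on the cancellation identity of Lemma \ref{reglem} for products of an odd number of conjugates of the generators $g_a$; it is that lemma, rather than the present deduction, that does the real work.
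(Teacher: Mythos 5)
Your proposal is correct and is essentially identical to the paper's proof: both use the homomorphism property of $e_D$ to deduce $e_D(x)^2 = e_D(2x) = e_D(0) = 1$ and then invoke Corollary \ref{imgcor}. Nothing further is needed.
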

\begin{proof}
The operation of the group $\IMG^2(L)$ is written in multiplicative notation, so the hypothesis $2x=0$ implies that $e_D(x)^2=e_D(0)=1$. Corollary \ref{imgcor} then implies that $\beta(e_D(x))=1$.
\end{proof}

\begin{corollary}
\label{inverse}
Let $a^*$ be a fixed arc of $D$. Then there is an epimorphism $f_D:\ker \phi_{\nu} \to \Dis(\QIMG(L))$ with 
\[
f_D(2(s_D(a)-s_D(a^{*}))) = \beta(h_a)=\beta_{g_a} \beta_{ g_{a^*}}  \quad \forall a \in A(D).
\]
\end{corollary}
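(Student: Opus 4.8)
The plan is to obtain $f_D$ by factoring a known surjection through the doubling map, so that almost all the work has already been done in the preceding propositions. First I would form the composition $\beta \circ e_D : \ker w_{\nu} \to \Dis(\QIMG(L))$, where $e_D$ is the epimorphism of Proposition \ref{ednew} and $\beta$ is the homomorphism of Proposition \ref{qimgbetahom}, which by Corollary \ref{qimgh} restricts to an epimorphism $\IMG^2(L) \to \Dis(\QIMG(L))$. Since $e_D$ lands in $\IMG^2(L)$, this composition is well defined; being a composition of two epimorphisms it is a surjective homomorphism, and by the defining property of $e_D$ it sends $s_D(a) - s_D(a^*)$ to $\beta(h_a) = \beta_{g_a}\beta_{g_{a^*}}$ for every $a \in A(D)$.

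Next I would invoke Lemma \ref{kerphi}, which gives $\ker \phi_{\nu} = 2 \cdot \ker w_{\nu}$. Hence multiplication by $2$ defines a surjective homomorphism $m : \ker w_{\nu} \to \ker \phi_{\nu}$, namely $m(x) = 2x$, whose kernel is the $2$-torsion subgroup $\{x \in \ker w_{\nu} \mid 2x = 0\}$. The goal is a homomorphism $f_D$ on $\ker \phi_{\nu}$ satisfying $f_D \circ m = \beta \circ e_D$; any such $f_D$ is automatically surjective (because $\beta \circ e_D$ is) and automatically satisfies $f_D(2(s_D(a)-s_D(a^*))) = (\beta \circ e_D)(s_D(a)-s_D(a^*)) = \beta_{g_a}\beta_{g_{a^*}}$, which is exactly the asserted formula. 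Note also that the elements $2(s_D(a)-s_D(a^*))$ generate $\ker \phi_{\nu}$, so this formula determines $f_D$ completely.

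The one point that genuinely requires justification is that $\beta \circ e_D$ factors through $m$, i.e.\ that $\ker m \subseteq \ker(\beta \circ e_D)$; this is the step guaranteeing that the assignment $2(s_D(a)-s_D(a^*)) \mapsto \beta_{g_a}\beta_{g_{a^*}}$ does not depend on the chosen preimage under doubling. But this inclusion is precisely Proposition \ref{fed}: if $x \in \ker w_{\nu}$ and $2x = 0$, then $e_D(x)$ lies in the kernel of $\beta : \IMG^2(L) \to \Dis(\QIMG(L))$, so $(\beta \circ e_D)(x) = 1$. With this inclusion in hand, the first isomorphism theorem applied to the surjection $m$ (equivalently, the universal property of the quotient $\ker w_{\nu} / \ker m \cong \ker \phi_{\nu}$) produces a unique homomorphism $f_D : \ker \phi_{\nu} \to \Dis(\QIMG(L))$ with $f_D \circ m = \beta \circ e_D$.

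I do not expect any serious obstacle here: every ingredient is already available, and the argument is essentially a diagram chase. The only substantive input is the well-definedness supplied by Proposition \ref{fed}, and surjectivity is inherited for free from the surjectivity of both $e_D$ and $\beta|_{\IMG^2(L)}$. I would close by verifying the displayed formula on generators, as above, to confirm that the $f_D$ so produced is the map named in the statement.
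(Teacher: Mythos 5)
Your proposal is correct and follows essentially the same route as the paper: compose the epimorphisms $e_D$ and $\beta$, use Proposition \ref{fed} to see that the $2$-torsion of $\ker w_{\nu}$ (the kernel of the doubling map) dies under $\beta e_D$, factor through the doubling map onto $2 \cdot \ker w_{\nu} = \ker \phi_{\nu}$ (Lemma \ref{kerphi}), and check the formula on generators. There is nothing to add.
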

\begin{proof}
According to Corollary \ref{qimgh} and Proposition \ref{ednew}, $e_D:\ker w_\nu \to \IMG^2(L)$ and $\beta:\IMG^2(L) \to \Dis(\QIMG(L))$ are both epimorphisms, so the composition $\beta e_D:\ker w_\nu \to \Dis(\QIMG(L))$ is an epimorphism too.

There is also an epimorphism $\ker w_{\nu} \to 2 \cdot \ker w_{\nu}$, given by $x \mapsto 2x$ $\forall x \in \ker w_{\nu}$. The kernel of this epimorphism is $(\ker w_{\nu})(2)=\{x \in \ker w_\nu \mid 2x=0\}$. Proposition \ref{fed} tells us that $(\ker w_{\nu})(2) \subseteq \ker (\beta e_D)$, so there is an epimorphism $f _D:2 \cdot \ker w_{\nu} \to \allowbreak \Dis(IMQ(L))$ induced by $\beta e_D$. That is, $f_D(2x)=\beta e_D(x)$ $\forall x \in \ker w_{\nu}$. Then for every $a \in A(D)$,
\[
f_D(2(s_D(a)-s_D(a^{*}))) =\beta e_D(s _D(a) - s_D(a^*) ) = \beta(g_ag_{a^*}) =\beta_{g_a} \beta_{g_{a^*}}.
\]
Lemma \ref{kerphi} tells us that $2 \cdot \ker w_\nu = \ker \phi _\nu$, so the proposition follows.
\end{proof}
\begin{corollary}
\label{veryendcor}
If $L$ is any classical link, then the quandle map $\widehat s_D:\QIMG(L) \to Q_A(L)_\nu$ of Corollary \ref{mapping} is an isomorphism.
\end{corollary}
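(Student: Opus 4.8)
The plan is to apply the isomorphism criterion of Corollary \ref{surjiso}. Both quandles in question are semiregular involutory medial quandles: $\QIMG(L)$ is semiregular by Proposition \ref{imqsemi}, and $Q_A(L)_\nu$ is semiregular because, by Proposition \ref{imq'dis}, every displacement has the form $\delta(k):x \mapsto k+x$, which can have a fixed point only when $k=0$. So it suffices to show first that $\widehat s_D$ is surjective, and then to verify the two conditions of Corollary \ref{surjiso}: (a) the induced map $\Dis(\widehat s_D)$ is an isomorphism, and (b) $\widehat s_D$ sends elements of distinct orbits to distinct elements.

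For surjectivity, I would argue that the image of $\widehat s_D$ is a subquandle containing every $s_D(a)$, hence closed under each translation $\beta_{s_D(a)}$, and therefore under every elementary displacement $\beta_{s_D(a)}\beta_{s_D(a')}$. By the second scholium recorded after Proposition \ref{imq'dis}, these elementary displacements generate $\Dis(Q_A(L)_\nu)$, so the image is closed under all of $\Dis(Q_A(L)_\nu)$. By Proposition \ref{imq'orb} each of the $\mu$ orbits of $Q_A(L)_\nu$ contains some $s_D(a)$, and by Proposition \ref{orb} each orbit is a single displacement-orbit; hence the image meets, and therefore contains, every orbit. Thus $\widehat s_D$ is onto. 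Condition (b) is then the easier point: by Propositions \ref{qimgorb} and \ref{imq'orb} the orbits of both quandles are indexed by the components $K_1,\dots,K_\mu$, the map $\phi_\nu$ is constant on each orbit of $Q_A(L)_\nu$, and it takes the distinct values $\Phi_\nu(a)$ on arcs of distinct components. Consequently elements of $\QIMG(L)$ lying in different orbits map to elements of $Q_A(L)_\nu$ with different $\phi_\nu$-values, so their images are distinct.

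The crux is condition (a). Here I would exploit the epimorphism $f_D:\ker\phi_\nu \to \Dis(\QIMG(L))$ of Corollary \ref{inverse} together with the isomorphism $\delta:\ker\phi_\nu \to \Dis(Q_A(L)_\nu)$ of Proposition \ref{imq'dis}, and show they are compatible with $\Dis(\widehat s_D)$ via the identity $\Dis(\widehat s_D)\circ f_D = \delta$. It is enough to check this on the generators $2(s_D(a)-s_D(a^*))$ of $\ker\phi_\nu$, which generate by Lemma \ref{kerphi}. On such a generator, Corollary \ref{inverse} gives $f_D(2(s_D(a)-s_D(a^*)))=\beta_{g_a}\beta_{g_{a^*}}$; applying $\Dis(\widehat s_D)$ and using its defining formula from Corollary \ref{surjiso} yields $\beta_{s_D(a)}\beta_{s_D(a^*)}$, which by the first scholium after Proposition \ref{imq'dis} equals $\delta(2(s_D(a)-s_D(a^*)))$. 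Since the two homomorphisms agree on a generating set, they agree everywhere.

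With the identity $\Dis(\widehat s_D)\circ f_D=\delta$ established, the conclusion follows formally: $\delta$ is an isomorphism and $f_D$ is surjective, so the factorization forces $f_D$ to be injective, hence an isomorphism, and therefore $\Dis(\widehat s_D)=\delta\circ f_D^{-1}$ is an isomorphism, giving (a). Conditions (a) and (b) together let Corollary \ref{surjiso} conclude that $\widehat s_D$ is a quandle isomorphism. I expect the main obstacle to be assembling this commuting triangle of displacement-group maps correctly and keeping the multiplicative bookkeeping in $\IMG^2(L)$ consistent with the additive bookkeeping in $\ker\phi_\nu$; once the identity $\Dis(\widehat s_D)\circ f_D=\delta$ is verified on generators, the rest is purely formal.
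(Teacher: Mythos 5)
Your proposal is correct and follows essentially the same route as the paper: surjectivity via the scholia of Proposition \ref{imq'dis}, condition (b) via the orbit correspondence of Propositions \ref{qimgorb} and \ref{imq'orb}, and condition (a) via the triangle formed by $f_D$, $\delta$, and $\Dis(\widehat s_D)$ checked on the generators $\beta_{g_a}\beta_{g_{a^*}}$. The only cosmetic difference is that you verify $\Dis(\widehat s_D)\circ f_D=\delta$ while the paper verifies $f_D\,\delta^{-1}\,\Dis(\widehat s_D)=\mathrm{id}$; these are equivalent given the same generator computation.
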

\begin{proof}
Recall the definition: $\widehat s_D(g_a)=s_D(a)$ $\forall a \in A(D)$. The image of $\widehat s_D$ is a subquandle of $Q_A(L)_\nu$, and it contains $s_D(a)$ for every $a \in A(D)$. According to the second scholium of the proof of Proposition \ref{imq'dis}, the $s_D(a)$ elements generate $Q_A(L)_\nu$, so the image of $\widehat s_D$ is the entire quandle $Q_A(L)_\nu$. We claim that $\widehat s _D$ satisfies the two requirements for a surjective quandle map to be an isomorphism given in Corollary \ref{surjiso}.

According to Propositions \ref{qimgorb} and \ref{imq'orb}, $\widehat s _D$ maps each orbit of $\QIMG(L)$ onto the orbit of $Q_A(L)_\nu$ corresponding to the same component of $L$. It follows that $\widehat s _D$ satisfies requirement (b) of Corollary \ref{surjiso}.

To show that $\widehat s _D$ satisfies requirement (a) of Corollary \ref{surjiso}, we must show that the induced epimorphism $\Dis(\widehat s _D):\Dis(\QIMG(L)) \to \Dis(Q_A(L)_\nu)$ is an isomorphism. Recall the definition: if $a,a^* \in A(D)$, then
\[
\Dis(\widehat s _D)(\beta_{g_a} \beta_{g_{a^*}})=\beta_{ \widehat s _D(g_a) } \beta_{\widehat s _D(g_{a^*})}=\beta_{s _D(a) } \beta_{s_D(a^*) }.
\]

Lemma \ref{kerphi} tells us $2 \cdot \ker w_{\nu} = \ker \phi_ \nu$. Proposition \ref{imq'dis} provides an isomorphism $\delta:\ker \phi_ \nu \to \Dis(Q_A(L)_\nu)$. As noted in the first scholium of the proof of Proposition \ref{imq'dis}, this isomorphism $\delta$ has
\[
\delta(2(s_D(a)-s_D(a^*))) = \beta_{s_D(a)} \beta_{s_D(a^*)} \quad \forall a,a^* \in A(D).
\]

We claim that the identity map of $\Dis(\QIMG(L))$ is equal to the composition $f _D \delta^{-1} \Dis(\widehat s _D)$. To verify the claim, note that if $a^* \in A(D)$ is a fixed element then for every $a \in A(D)$, 
\[
f _D \delta^{-1} \Dis(\widehat s _D)(\beta_{g_a} \beta_{g_{a^*}}) =f_D \delta^{-1} (\beta_{s _D(a) } \beta_{s_D(a^*)})
\]
\[
=f _D(2(s _D(a)-s_D(a^*)))
=\beta_{g_a} \beta_{g_{a^*}}.
\]
The elementary displacements $\beta_{g_a} \beta_{g_{a^*}}$ generate $\Dis(\QIMG(L))$, so the claim holds.

The claim implies that $\Dis(\widehat s _D)$ is injective, so requirement (a) of Corollary \ref{surjiso} is satisfied, and Corollary \ref{surjiso} tells us that $\widehat s _D$ is an isomorphism.
\end{proof}

Corollary \ref{veryendcor} gives us item 2 of Theorem \ref{main1}. 

\subsection{Item 1 of Theorem \ref{main1}}

Definitions \ref{img} and \ref{qimg} imply that the quandle $\QIMG(L)$ is generated by the $g_a$ elements, and $g_{b'}=g_b \triangleright g_a$ whenever $a,b,b'$ appear at a crossing of $D$ as pictured in Fig.\ \ref{crossfig}. It is easy to see that $\QIMG(L)$ is an involutory medial quandle, so Definition \ref{imq} immediately implies that there is a surjective quandle map $\IMQ(L) \to \QIMG(L)$, with $q_a \mapsto g_a$ $\forall a \in A(D)$.

Suppose $\mu=1$. Then $L$ is a knot, so we denote it $K$. The surjection $\IMQ(K) \to \QIMG(K)$ and the isomorphism of item 2 imply that $|\IMQ(K)| \geq |\QIMG(K)| = |Q_A(K)_\nu|$. By Proposition \ref{imq'}, it follows that $|\IMQ(K)| \geq |\det K|$. On the other hand, Corollary \ref{imh} and Proposition \ref{ednew} provide an epimorphism $\beta e_D:\ker w_\nu \to \Dis(\IMQ(K))$, so according to Proposition \ref{kert}, $|\det K|=|\ker w_\nu| \geq |\Dis(\IMQ(K))|$. (N.b.\ As $K$ is a knot, $\det K$ is an odd integer; in particular, $\det K \neq 0$.) The quandle $\IMQ(K)$ has only one orbit, so $|\Dis(\IMQ(K))| \geq |\IMQ(K)|$. Combining inequalities, we conclude that $\IMQ(K)$ and $\QIMG(K)$ are both finite quandles of cardinality $|\det K|$. It follows that the surjective quandle map $\IMQ(K) \to \QIMG(K)$ must be an isomorphism.

It remains to verify the assertion of item 1 regarding two-component links of nonzero determinant. We begin with two more general results.

\begin{proposition}
\label{longfix}
Let $D$ be an even diagram of $L=K_1 \cup \dots \cup K_ \mu$, and let $i \in \{1, \dots, \mu \}$. The displacement $\beta e_D(\lambda_i) \in \Dis(\IMQ(L))$ fixes every element of the $K_i$ orbit of $\IMQ(L)$.
\end{proposition}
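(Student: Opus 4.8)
The plan is to reduce the statement to fixing a \emph{single} element of the $K_i$ orbit, and then to identify $\beta e_D(\lambda_i)$ with a monodromy automorphism that manifestly fixes such an element.

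First I would record a general reduction. Since $\beta e_D(\lambda_i) \in \Dis(\IMQ(L))$, and $\Dis(\IMQ(L))$ is abelian (Proposition \ref{imqprops}) and acts transitively on each orbit (Proposition \ref{orb}), it suffices to show that $\beta e_D(\lambda_i)$ fixes one element of the $K_i$ orbit. Indeed, if $d \in \Dis(\IMQ(L))$ satisfies $d(x)=x$, and $y=d'(x)$ for some $d' \in \Dis(\IMQ(L))$, then $d(y)=dd'(x)=d'd(x)=d'(x)=y$ by commutativity. So fixing $x$ forces $d$ to fix the entire orbit of $x$.

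Next I would compute $e_D(\lambda_i)$ in $\IMG^2(L)$. By item 1 of Theorem \ref{longprops}, $w_\nu(\lambda_i)=0$, so the coefficients $(-1)^j$ sum to zero and I may write $\lambda_i=\sum_{j=1}^{2n_i}(-1)^j\bigl(s_D(a_{ij})-s_D(a^*)\bigr)$. Proposition \ref{ednew} then gives, in the abelian group $\IMG^2(L)$,
\[
e_D(\lambda_i)=\prod_{j=1}^{2n_i} h_{a_{ij}}^{(-1)^j}.
\]
Introduce the monodromy word $V=g_{a_{i1}}g_{a_{i2}}\cdots g_{a_{i(2n_i)}}\in \IMG^2(L)$, which has an even number of factors precisely because $D$ is even. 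The central claim is that $e_D(\lambda_i)=V^{-1}$. To see this, write $g_a=h_a g_{a^*}$ (using $g_{a^*}^2=1$) and note that $g_{a^*}h_a g_{a^*}=h_a^{-1}$, so each consecutive pair collapses:
\[
(h_{a_{i(2m-1)}}g_{a^*})(h_{a_{i(2m)}}g_{a^*})=h_{a_{i(2m-1)}}\,(g_{a^*}h_{a_{i(2m)}}g_{a^*})=h_{a_{i(2m-1)}}h_{a_{i(2m)}}^{-1}.
\]
Multiplying these $n_i$ pairs in the abelian group $\IMG^2(L)$ gives $V=\prod_{j} h_{a_{ij}}^{(-1)^{j+1}}=e_D(\lambda_i)^{-1}$, as claimed.

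Finally I would apply $\beta$ and trace the monodromy. Since each $g_a$ is an involution, $\beta(V^{-1})=\beta_{q_{a_{i(2n_i)}}}\cdots\beta_{q_{a_{i1}}}$, which is the composition of the translations along $K_i$ in walking order. The crossing relations defining the $b_{ij},a_{ij}$ give $\beta_{q_{a_{ij}}}(q_{b_{ij}})=q_{b_{i(j+1)}}$, so tracing $q_{b_{i1}}\mapsto q_{b_{i2}}\mapsto\cdots\mapsto q_{b_{i(2n_i+1)}}=q_{b_{i1}}$ shows that $\beta(V^{-1})$ fixes $q_{b_{i1}}$. Because $\beta e_D(\lambda_i)=\beta(V^{-1})$ and $q_{b_{i1}}$ lies in the $K_i$ orbit, the reduction in the first paragraph finishes the argument. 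I expect the main obstacle to be the identification $e_D(\lambda_i)=V^{-1}$: it is where the alternating-sign algebraic longitude must be matched with the geometric monodromy word, and it is delicate because $g_{a^*}$ does not commute with the $h_a$ in $\IMG(L)$ (only the relation $g_{a^*}h_a g_{a^*}=h_a^{-1}$ saves the day) and because the pairing step relies essentially on $D$ being even.
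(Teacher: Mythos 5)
Your proof is correct and follows essentially the same route as the paper's: both rewrite $\lambda_i$ as $\sum (-1)^j (s_D(a_{ij})-s_D(a^*))$ so that $e_D(\lambda_i)$ becomes an alternating product of $h_{a_{ij}}$'s, telescope the $g_{a^*}$ factors to identify $\beta e_D(\lambda_i)$ (up to inversion, which is harmless) with the product $\prod_j \beta_{q_{a_{ij}}}$ of translations along $K_i$, trace the walk around $K_i$ to exhibit the fixed point $q_{b_{i1}}$, and then use commutativity of $\Dis(\IMQ(L))$ together with Proposition \ref{orb} to propagate the fixed point to the whole orbit. The only differences are cosmetic (you perform the orbit reduction first rather than last, and track $V$ versus $V^{-1}$ explicitly).
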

\begin{proof}
Recall first that according to Proposition \ref{evens}, assuming that $D$ is even does not involve a significant loss of generality. As in Sec.\ \ref{sec:longitudes}, let $b_{i0}, \dots, b_{i(2n_i)}=b_{i0}$ be the arcs of $K_i$ in $D$, indexed in order as we walk along $K_i$. Also, let $c_{ij}$ be the crossing of $D$ at which we pass from $b_{ij}$ to $b_{i(j+1)}$ as we walk along $K_i$, and let $a_{ij}$ be the overpassing arc at $c_{ij}$. 

Now, let $a^*$ be a fixed arc of $D$. Then $g_{a^*}^2=1$ in $\IMG^2(L)$, so
\[
\beta e_D(\lambda_i) = \beta e_D \bigg( \sum\limits _{j=0}^{2n_i-1} (-1)^j s_D(a_{ij}) \bigg) = \beta e_D \bigg( \sum\limits _{j=0}^{2n_i-1} (-1)^j (s_D(a_{ij})-s_D(a^*)) \bigg)
\]
\[
= \beta \bigg( \prod\limits_{j=0}^{2n_i-1} (g_{a_{ij}}g_{a^*})^{(-1)^{j}} \bigg)=\beta((g_{a_{i0}}g_{a^*})(g_{a^*}g_{a_{i1}}) \cdots (g_{a^*}g_{a_{i(2n_i-1)}}))
\]
\[
= \beta(g_{a_{i0}}g_{a^*}^2g_{a_{i1}} \cdots g_{a^*}^2 g_{a_{i(2n_i-1)}})) = \beta \bigg( \prod\limits_{j=0}^{2n_i-1} g_{a_{ij}} \bigg)= \prod\limits_{j=0}^{2n_i-1} \beta_{q_{a_{ij}}} \text{.}
\]
For each value of $j$, $c_{ij}$ is a crossing at which $a_{ij}$ separates $b_{ij}$ from $b_{i(j+1)}$, so $\beta_{q_{a_{ij}}}(q_{b_{i(j+1)}})=q_{b_{i(j+1)}}\triangleright q_{a_{ij}} = q_{b_{ij}}$. Therefore
\[
(\beta e_D(\lambda_i))(q_{b_{i0}}) = \bigg (\prod\limits_{j=0}^{2n_i-1} \beta_{q_{a_{ij}}} \bigg ) (q_{b_{i(2n_i)}})= \bigg( \prod\limits_{j=0}^{2n_i-2} \beta_{q_{a_{ij}}} \bigg) \big( \beta_{q_{a_{i(2n_i-1)}}} (q_{b_{i(2n_i)}}) \big)
\]
\[
= \bigg( \prod\limits_{j=0}^{2n_i-2} \beta_{q_{a_{ij}}} \bigg) \big (q_{b_{i(2n_i-1)}} \big) = \cdots =  \beta_{q_{a_{i0}}} (q_{b_{i1}})=q_{b_{i0}}.
\]

As $\Dis(\IMQ(L))$ is commutative, it follows that every $d \in \Dis(\IMQ(L))$ has $\beta e_D(\lambda_i)(d(q_{b_{i0}}))=d(\beta e_D(\lambda_i)(q_{b_{i0}}))= d(q_{b_{i0}})$. Every element of the orbit of $q_{b_{i0}}$ is $d(q_{b_{i0}})$ for some $d \in \Dis(\IMQ(L))$, so the proposition is proven.
\end{proof}

\begin{corollary}
\label{orbcard}
Let $L$ be a link of $\mu\geq 2$ components, with $\det L \neq 0$. Then each orbit of $\IMQ(L)$ contains no more than $|\det L|/2$ elements.
\end{corollary}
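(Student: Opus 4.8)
The plan is to count each orbit of $\IMQ(L)$ as a coset space of the displacement group, and then to use the longitudes to force either the displacement group or a point-stabilizer to be nontrivial. Throughout I would work with an even diagram $D$ of $L$, which exists by Proposition \ref{evens} and costs no generality since $\IMQ(L)$ is a link invariant. Write $N=|\det L|$; since $\det L\neq 0$, Proposition \ref{kert} tells us $\ker w_\nu$ is finite of order $N$, so all the groups and orbits below are finite.

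First I would assemble the three inputs that drive the argument. (i) The composition $\beta e_D\colon \ker w_\nu \to \Dis(\IMQ(L))$ is an epimorphism, being the composite of the epimorphism $e_D$ of Proposition \ref{ednew} with the epimorphism $\beta$ of part 4 of Corollary \ref{imh}; writing $K$ for its kernel, we get $|\Dis(\IMQ(L))|=N/|K|$. (ii) Fixing $i$ and an element $x$ of the $K_i$ orbit, Proposition \ref{invorb} identifies that orbit with $\Core(\Dis(\IMQ(L))/S_x)$, where $S_x$ is the stabilizer of $x$; hence the orbit has exactly $|\Dis(\IMQ(L))|/|S_x|=N/(|K|\cdot|S_x|)$ elements. (iii) Proposition \ref{longfix} shows that the displacement $\beta e_D(\lambda_i)$ lies in $S_x$.

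The crux is then to show $|K|\cdot|S_x|\geq 2$, which immediately yields the desired bound $N/2$. Here the key fact is that $\lambda_i\neq 0$: because $\mu\geq 2$, the singleton $\{i\}$ is a proper subset of $\{1,\dots,\mu\}$, so the contrapositive of item 4 of Theorem \ref{longprops} (using $\det L\neq 0$) rules out $\lambda_i=0$. I would then split into two cases according to whether $\lambda_i$ lies in $K$. If $\lambda_i\in K$, then $K$ is nontrivial and $|K|\geq 2$. If $\lambda_i\notin K$, then $\beta e_D(\lambda_i)$ is a nontrivial element of $S_x$ by input (iii), so $|S_x|\geq 2$. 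In either case $|K|\cdot|S_x|\geq 2$, and the $K_i$ orbit has at most $N/2=|\det L|/2$ elements, as claimed.

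I expect the substantive content to be entirely concentrated in Proposition \ref{longfix} and Theorem \ref{longprops}, so that the present corollary reduces to bookkeeping with no real obstacle; the only points requiring care are checking that the orbit-counting formula $|\Dis(\IMQ(L))|/|S_x|$ is legitimate (which needs the finiteness coming from $\det L\neq 0$) and that a one-element subset qualifies as proper precisely because $\mu\geq 2$. It is worth noting what makes the statement nonvacuous: the same trick applied to $\QIMG(L)$ would give nothing, since $2\lambda_i=0$ together with Proposition \ref{fed} forces the analogue of $\beta e_D(\lambda_i)$ to vanish in $\Dis(\QIMG(L))$; it is exactly the failure of semiregularity in $\IMQ(L)$ that keeps a nontrivial stabilizer (or kernel) available and thereby halves the possible orbit size.
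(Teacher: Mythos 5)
Your proof is correct and follows essentially the same route as the paper's: an even diagram, the orbit-as-coset-space count from Proposition \ref{invorb}, the fixed point supplied by Proposition \ref{longfix}, the nonvanishing of $\lambda_i$ from item 4 of Theorem \ref{longprops}, and the same dichotomy according to whether $\lambda_i$ lies in $\ker(\beta e_D)$ or gives a nontrivial stabilizer element. The only cosmetic difference is that you package the two cases into the single inequality $|K|\cdot|S_x|\geq 2$, whereas the paper bounds $|\Dis(\IMQ(L))|/|S|$ separately in each case.
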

\begin{proof} Let the arcs of an even diagram $D$ be indexed as in Proposition \ref{longfix}, and suppose $i \in \{1, \dots, \mu \}$. According to Proposition \ref{invorb}, the $K_i$ orbit of $\IMQ(L)$ has the same number of elements as the quotient $\Dis(\IMQ(L))/S$, where $S=\{d \in \Dis(\IMQ(L)) \mid d(q_{b_{i0}})= q_{b_{i0}}\}$. 

Proposition \ref{longfix} tells us that $\beta e_D(\lambda_i)$ fixes $q_{b_{i0}}$, so either (i) $\beta e_D(\lambda_i)$ is a nontrivial element of the subgroup $S$, or (ii) $\lambda_i \in \ker(\beta e_D)$. If (i) holds, then $|S| \geq 2$. The map $\beta e_D: \ker w_\nu \to \Dis(\IMQ(L))$ is surjective, so the cardinality of the $K_i$ orbit is $|\Dis(\IMQ(L))|/|S|\leq |\Dis(\IMQ(L))|/2 \leq |\ker w_\nu|/2 = |\det L|/2$. If (ii) holds, recall that according to Proposition \ref{longprops}, the hypothesis $\det L \neq 0$ implies that $\lambda_i \neq 0$; thus $|\ker(\beta e_D)| \geq 2$. The map $\beta e_D: \ker w_\nu \to \Dis(\IMQ(L))$ is surjective, so $|\Dis(\IMQ(L))| = |\ker w_\nu|/|\ker(\beta e_D)| \leq |\ker w_\nu|/2 = |\det L|/2$. \end{proof}

Now, suppose $\mu=2$ and $\det L \neq 0$. There are two orbits in $\IMQ(L)$, so Corollary \ref{orbcard} implies that $|\IMQ(L)| \leq |\det L|$. The isomorphism of item 2 of Theorem \ref{main1} implies that $|\QIMG(L)|=|Q_A(L)_\nu|=|\det L|$, so the surjective quandle map $\IMQ(L) \to \QIMG(L)$ must be an isomorphism.

To complete the proof of item 1 of Theorem \ref{main1}, observe that the three-component link of Sec.\ \ref{sec:twohopf} has $|\IMQ(L)|=6$ and $|\QIMG(L)|=3$.

\subsection{Item 3 of Theorem \ref{main1}}

As discussed in Sec.\ \ref{kerw}, $H_1(X_2) \cong \ker w_\nu$. Therefore, we may verify item 3 of Theorem \ref{main1} for $\ker w_\nu$ rather than $H_1(X_2)$. Recall that $A_\mu = \mathbb Z \oplus \mathbb Z_2^{\mu-1}$, and $w_\nu:M_A(L)_\nu \to \mathbb Z$ is the first coordinate of $\phi_\nu:M_A(L)_\nu \to A_\mu$. 

If $a^*$ is any fixed element of $A(D)$, then $w_\nu(s_D(a^*)) = 1 = w_\nu(x)$ $\forall x \in Q_A(L)_\nu$, so $x-s_D(a^*) \in \ker w_\nu$ $\forall x \in Q_A(L)_\nu$. It follows that there is a function $g:Q_A(L)_\nu \to \ker w_\nu$, defined by $g(x) = x-s_D(a^*)$. It is easy to see that $g$ is injective. Also, $g$ is a quandle map into $\Core(\ker w_\nu)$: 
\[
g(x \triangleright y) = g(2y-x) = 2y-x-s_D(a^*) = 2(y-s_D(a^*))-(x-s_D(a^*)) = g(x) \triangleright g(y).
\]

If $\mu=1$, then $Q_A(L)_\nu = \phi_{\nu}^{-1}(\phi_{\nu}(s_D(A(D)))) = \phi_\nu ^{-1}(\{1\}) =w_\nu ^{-1}(\{1\}) =\{x+s_D(a^*) \mid x \in \ker w_\nu \}$, so $g$ is surjective. If $\mu=2$, then $Q_A(L)_\nu = \phi_{\nu}^{-1}(\phi_{\nu}(s_D(A(D)))) = \phi_\nu ^{-1}(\{(1,0),(1,1)\})=w_\nu ^{-1}(\{1\}) =\{x+s_D(a^*) \mid x \in \ker w_\nu \}$. Again, it follows that $g$ is surjective.

Now, suppose $\mu>2$. Then according to Corollary \ref{kstruc},
\[
\ker w_\nu \cong \mathbb{Z}^{r-1}
\oplus \mathbb{Z}_{2^{n_1}}
\oplus \dots \oplus \mathbb{Z}_{2^{n_k}} \oplus B \text{,}
\]
where $r \in \{1, \dots, \mu \}$, $r+k=\mu$ and $|B|$ is an odd integer. We think of an element $x \in \ker w_\nu$ as a $\mu$-tuple $(x_1, \dots, x_\mu)$, with $x_\mu \in B$. Notice that then $x \triangleright y = (2y_1-x_1, \dots , 2y_\mu - x_\mu)$ and for $1 \leq i \leq \mu-1$, $2y_i-x_i$ has the same parity (mod 2) as $x_i$. (As $|B|$ is odd, ``parity (mod 2)'' is meaningless when $i=\mu$.) Therefore, every element of the orbit of $x$ in $\textup{Core}(\ker w_\nu)$ is a $\mu$-tuple with the same pattern of parities (mod 2) in its first $\mu-1$ coordinates. There are $2^{\mu-1}$ different patterns of parities, so there are $2^{\mu-1}$ different orbits in $\textup{Core}(\ker w_\nu)$. There are only $\mu$ different orbits in $Q_A(L)_\nu$, and the hypothesis $\mu >2$ implies $\mu < 2^{\mu-1}$. It follows that there cannot be a surjective quandle map $Q_A(L)_\nu \to \Core(\ker w_\nu)$.

\section{Proof of Theorem \ref{main3}}
\label{sec:proof3}

If $\mu=1$, Theorem \ref{main1} tells us that the quandles $\IMQ(L)$, $\QIMG(L)$, $Q_A(L)_\nu$ and $\textup{Core}(\ker w_\nu)$ are all isomorphic. According to Proposition \ref{kert}, their common cardinality is $|\ker w_\nu| = |\det L|$.

If $\det L = 0$, Theorem \ref{main1} provides surjective maps $\IMQ(L) \to \QIMG(L) \to Q_A(L)_\nu$, and an injective map $Q_A(L)_\nu \to \Core(\ker w_\nu)$. Proposition \ref{imq'} tells us that $Q_A(L)_\nu$ is infinite, so all of these quandles are infinite.

If $\mu>1$ and $\det L \neq 0$, Corollary \ref{orbcard} tells us that each orbit of $\IMQ(L)$ has no more than $|\det L|/2$ elements. There are $\mu$ orbits, so $|\IMQ(L)| \leq \mu |\det L|/2$. Theorem \ref{main1} provides a surjection $\IMQ(L) \to \QIMG(L)$ and a bijection $\QIMG(L) \to Q_A(L)_\nu$, so $|\IMQ(L)| \geq |\QIMG(L)| = |Q_A(L)_\nu|$. Proposition \ref{imq'} tells us that $|Q_A(L)_\nu| = \mu |\det L| / 2^{\mu -1}$.

\section{Using $Q_A(L)_\nu$ to construct $M_A(L) _\nu$}
\label{proof1}

In this section, we show that $Q_A(L)_\nu$ provides a presentation of the abelian group $M_A(L)_\nu$. The presentation also determines the map $\phi_\nu$, up to permutations of the components of $L=K_1 \cup \dots \cup K_\mu$. In the next two sections, we verify similar results for two other quandles. 

For convenience, we temporarily use $Q$ to denote $Q_A(L)_\nu$. This notation is used only in this section, until the end of the proof of Proposition \ref{kerf}.

Let $\mathbb Z ^Q$ be the free abelian group on the set $Q$, and let $f:\mathbb Z ^Q \to M_A(L)_\nu$ be the homomorphism that sends each $q \in Q$ (considered as a free generator of $\mathbb Z ^Q$) to itself (considered as an element of $M_A(L)_\nu$). Also, let $g:Q \to \mathbb Z ^Q$ be the function that sends each $q \in Q$ (considered as an element of $Q_A(L)_\nu$) to itself (considered as a free generator of $\mathbb Z ^Q$).

\begin{lemma}
\label{klem}
Let $D$ be a diagram of $L$, and let $K$ be the subgroup of $\mathbb Z ^Q$ generated by the subset $\{ 2g(y)-g(x)-g(2y-x) \mid x,y \in Q \}$.
Then for every $x \in \mathbb Z ^Q$, there are arcs $a_1, \dots, a_n \in A(D)$ and integers $m_1, \dots, m_n \in \mathbb Z$ such that
\[
x - \sum _{i=1}^n m_i g(s_D(a_i)) \in K.
\]
\end{lemma}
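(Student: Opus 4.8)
The plan is to reduce the statement to the free generators of $\mathbb Z^Q$ and then argue by induction on quandle-word length. First observe that the defining generators of $K$ encode exactly the failure of $g$ to be additive with respect to the core operation: for all $x,y \in Q$ we have $2g(y)-g(x)-g(2y-x) \in K$, and since $2y-x = x \triangleright y$ in $\Core(M_A(L)_\nu)$ this reads
\[
g(x \triangleright y) \equiv 2g(y) - g(x) \pmod{K}.
\]
Because $\mathbb Z^Q$ is free on $Q$, every $x \in \mathbb Z^Q$ is a finite integer combination $x = \sum_q c_q\, g(q)$, and passing to the quotient group $\mathbb Z^Q / K$ is a homomorphism, so congruence modulo $K$ respects integer combinations. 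Hence it suffices to prove the claim for a single free generator $g(q)$, $q \in Q$: that is, to show that $g(q)$ is congruent modulo $K$ to some integer combination of the elements $g(s_D(a))$, $a \in A(D)$.

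Next I would use the fact, established in the proof of Corollary \ref{veryendcor} via Proposition \ref{imq'orb} and the second scholium following Proposition \ref{imq'dis}, that the elements $s_D(a)$ generate $Q = Q_A(L)_\nu$ as a quandle. Concretely, any $q \in Q$ lies in the orbit of some $s_D(a_0)$, so by Proposition \ref{imq'orb} there is a displacement $d$ with $q = d(s_D(a_0))$, and by the scholium $d$ is a product of translations $\beta_{s_D(a)}$. Writing out $d(s_D(a_0))$ therefore exhibits $q$ as a quandle word in the generators $s_D(a)$, built from $s_D(a_0)$ by successively applying operations $z \mapsto z \triangleright s_D(a)$. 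I would then induct on the number of such operations. The base case $q = s_D(a_0)$ is immediate, since $g(s_D(a_0))$ is already of the required form. For the inductive step, write $q = q' \triangleright s_D(a)$, where $q'$ uses one fewer operation; the congruence above gives $g(q) \equiv 2 g(s_D(a)) - g(q') \pmod{K}$, and the inductive hypothesis expresses $g(q')$ modulo $K$ as an integer combination of the $g(s_D(a_i))$. Substituting yields such an expression for $g(q)$, which closes the induction and, by the linearity remark above, gives the claim for arbitrary $x \in \mathbb Z^Q$.

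The argument is essentially bookkeeping once the congruence $g(x \triangleright y) \equiv 2g(y)-g(x) \pmod{K}$ is isolated, so I do not expect a serious obstacle. The one point requiring care is the appeal to the $s_D(a)$ generating $Q$ as a quandle: one must confirm that every $q \in Q$ can genuinely be produced from the $s_D(a)$ using the single operation $\triangleright$, with no separate inverse operation needed. This holds because $Q$ is involutory, so each translation $\beta_{s_D(a)}$ is its own inverse; consequently the displacement decomposition of the second scholium is itself a word in the $\triangleright$ operation, and the induction on word length is well-founded.
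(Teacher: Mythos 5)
Your proof is correct and follows essentially the same route as the paper's: both reduce to a single generator $g(q)$, exhibit $q$ as the endpoint of a chain of $\triangleright$-operations by elements $s_D(a)$ starting from some $s_D(a_0)$, and telescope the congruence $g(x \triangleright y) \equiv 2g(y)-g(x) \pmod{K}$ along that chain. The only difference is packaging: the paper builds the chain directly from Lemma \ref{kerphi}, while you obtain it from Propositions \ref{orb} and \ref{imq'orb} and the scholia of Proposition \ref{imq'dis}, which rest on the same lemma.
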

\begin{proof}
It suffices to verify the lemma for an element $x=g(q)$, where $q \in Q$. According to Definition \ref{coreprime}, there is an $a \in A(D)$ with $\phi_\nu(q)=\phi_ \nu(s_D(a))$, so $q - s_D(a) \in \ker \phi_ \nu$. According to Lemma \ref{kerphi}, it follows that there are arcs $a_1, \dots, a_{2p} \in A(D)$ such that 
\[
q - s_D(a) = 2 \cdot \sum_{j=1}^{2p} (-1)^j s_D(a_j).
\]

Let $y_0=s_D(a)$, and for each $i \in \{1, \dots, 2p\}$, let 
\[
y_i = (-1)^i s_D(a) + 2 \cdot \sum _{j=1}^i (-1)^{j+i} s_D(a_j) = 2s_D(a_i) - y_{i-1}.
\]
Notice that $y_{2p} = q$. Also, $\phi_\nu(y_i) = \phi_\nu(s_D(a))$ for every $i \in \{0, \dots, 2p\}$, so $y_0, \dots, y_{2p} \in Q$. It follows that for every $i \in \{0, \dots, 2p-1\}$, $K$ includes the element
\[
z_i=2g(s_D(a_{i+1}))-g(y_i)-g(2s_D(a_{i+1})-y_i) 
\]
\[
= 2g(s_D(a_{i+1}))-g(y_i)-g(y_{i+1}).
\]
Therefore $K$ also includes the element
\[
\sum _{i=0}^{2p-1} (-1)^i z_i = g(y_{2p})-g(y_0) + 2 \cdot \sum _{i=0}^{2p-1} (-1)^i g(s_D(a_{i+1}))
\]
\[
=g(q) -g(s_D(a))+ 2 \cdot \sum _{i=1}^{2p} (-1)^{i-1} g(s_D(a_{i})).
\]
\end{proof}

\begin{proposition}
\label{kerf}
The kernel of $f$ is the subgroup $K$ mentioned in Lemma \ref{klem}.
\end{proposition}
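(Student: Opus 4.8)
The plan is to prove the two inclusions $K \subseteq \ker f$ and $\ker f \subseteq K$ separately, leaning on Lemma \ref{klem} for the harder direction. The inclusion $K \subseteq \ker f$ is immediate: since $f(g(q)) = q$ for every $q \in Q$, applying $f$ to a typical generator of $K$ gives
\[
f\bigl(2g(y)-g(x)-g(2y-x)\bigr) = 2y - x - (2y-x) = 0,
\]
so each generator of $K$ lies in $\ker f$.

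For the reverse inclusion I would first introduce the homomorphism $h:\mathbb Z^{A(D)} \to \mathbb Z^Q$ determined by $h(a) = g(s_D(a))$ for $a \in A(D)$; note that $f \circ h = s_D$ by construction. The key step is to show that $h(\operatorname{im} r_D) \subseteq K$. Since $\operatorname{im} r_D$ is generated by the elements $r_D(c)$ and $h$ is linear, it suffices to check that $h(r_D(c)) \in K$ for each crossing $c$ of $D$ as pictured in Fig.\ \ref{crossfig}, with arcs $a,b,b'$. Here
\[
h(r_D(c)) = h(2a - b - b') = 2g(s_D(a)) - g(s_D(b)) - g(s_D(b')).
\]
Because $s_D(r_D(c)) = 0$ in $M_A(L)_\nu$, we have $s_D(b') = 2s_D(a) - s_D(b)$, and all three of $s_D(a), s_D(b), s_D(b')$ lie in $Q$. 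Thus $h(r_D(c))$ is precisely the generator of $K$ with $y = s_D(a)$ and $x = s_D(b)$, which proves the claim. (No special treatment of coincidences among $a,b,b'$ is needed, since every expression here is read inside $M_A(L)_\nu$ rather than in $\mathbb Z^{A(D)}$.)

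With the claim in hand, the argument assembles quickly. Given $x \in \ker f$, Lemma \ref{klem} supplies arcs $a_1,\dots,a_n$ and integers $m_1,\dots,m_n$ with $x - \sum_{i=1}^n m_i g(s_D(a_i)) \in K$. Writing $v = \sum_{i=1}^n m_i a_i \in \mathbb Z^{A(D)}$, this says $x - h(v) \in K$. Applying $f$, and using both $K \subseteq \ker f$ and $f\circ h = s_D$, I get $0 = f(x) = s_D(v)$, so $v \in \ker s_D = \operatorname{im} r_D$. The claim then gives $h(v) \in K$, whence $x = (x - h(v)) + h(v) \in K$, as desired. The step I expect to be the real crux is the claim $h(\operatorname{im} r_D) \subseteq K$ — that is, recognizing the Fox-calculus crossing relation $2a-b-b'$ as exactly a core-quandle relation $2g(y)-g(x)-g(2y-x)$ — while the more substantial combinatorial labor of reducing an arbitrary element of $\mathbb Z^Q$ to a $\mathbb Z$-combination of the $g(s_D(a_i))$ modulo $K$ has already been discharged in Lemma \ref{klem}.
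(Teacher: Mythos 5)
Your proposal is correct and follows essentially the same route as the paper's own proof: the easy inclusion $K \subseteq \ker f$, the auxiliary homomorphism $\mathbb Z^{A(D)} \to \mathbb Z^Q$ sending $a \mapsto g(s_D(a))$ (called $\widehat g$ in the paper), the reduction via Lemma \ref{klem} to an element of $\ker s_D$, and the identification of each $\widehat g(r_D(c))$ with a generator of $K$. The only difference is organizational — you isolate the claim $h(\operatorname{im} r_D) \subseteq K$ up front rather than inline — which does not change the argument.
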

\begin{proof}
As $f(g(x))=x$ $\forall x \in Q$, $f(2g(y)-g(x)-g(2y-x))=2y-x-(2y-x)=0$ $\forall x,y \in Q$. Hence $K \subseteq \ker f$.

For the reverse inclusion, suppose $D$ is a diagram of $L$, and let $\widehat g: \mathbb Z^{A(D)} \to \mathbb Z ^Q$ be the homomorphism with $\widehat g (a) = g(s_D(a))$ $\forall a \in A(D)$.

Suppose $x \in \ker f$. According to Lemma \ref{klem}, there are arcs $a_1,\dots a_n \in A(D)$ and integers $m_1, \dots m_n \in \mathbb Z$ with
\[
x - \sum _{i=1}^n m_i g(s_D(a_i)) \in K.
\]
Let
\[
x' = \sum_{i=1}^n m_i a_i \in \mathbb Z ^{A(D)} \text{,}
\]
so that $x-\widehat g (x') \in K$.
As $f(x)=0$ and $K \subseteq \ker f$,
\[
0=f(x-(x-\widehat g (x'))) =f(\widehat g (x'))= \sum_{i=1}^n m_i s_D(a_i) = s_D(x').
\]
That is, $x' \in \ker s_D$. By definition, $\ker s_D$ is the subgroup of $\mathbb Z ^{A(D)}$ generated by the various elements $r_D(c) = 2 a - b-b'$, where $c \in C(D)$ is a crossing with overpassing arc $a$ and underpassing arcs $b,b'$. It follows that $\widehat g(x')$ is equal to a linear combination (with integer coefficients) of elements $\widehat g (r_D(c))$. Notice that if $c \in C(D)$ then the fact that $r_D(c)=2 a-b-b' \in \ker s_D$ implies that $s_D(b') = 2s_D(a) - s_D(b)$, so $\widehat g (r_D(c)) = 2g(s_D(a)) - g(s_D(b))-g(s_D(b')) \in K$. Therefore $\widehat g(x')$ is a linear combination of elements of $K$, so $\widehat g(x') \in K$.

As $x-\widehat g(x') \in K$ too, it follows that $x \in K$. \end{proof}

We now revert to our usual notation, with $Q_A(L)_\nu$ rather than $Q$. Proposition \ref{kerf} tells us that $Q_A(L)_\nu$ provides a presentation of $M_A(L)_\nu$, with the elements of $Q_A(L)_\nu$ as generators and the equations $2y-x-(2y-x)=0$, with $x,y \in Q_A(L)_\nu$, as defining relations. The map $\phi_\nu$ is constant on each orbit of $Q_A(L)_\nu$, so $\phi_\nu$ is determined by this presentation, together with the correspondence between the orbits of $Q_A(L)_\nu$ and the components $K_1, \dots, K_ \mu$ of $L$. 

The following notion will be useful.

\begin{definition}
\label{nueq}
Suppose $L$ and $L'$ are classical links, and there is an isomorphism $h:M_A(L)_\nu \to M_A(L')_\nu$ that is compatible with the $\phi_\nu$ maps of $L$ and $L'$, i.e., $\phi_\nu = \phi'_\nu h:M_A(L)_\nu \to A_ \mu$. Then we say that $L$ and $L'$ are \emph{$\phi_\nu$-equivalent}.
\end{definition}

\begin{theorem}
\label{imqmod}
Let $L_1$ and $L_2$ be links. Then $Q_A(L_1)_\nu \cong Q_A(L_2)_\nu$ if and only if the components of $L_1$ and $L_2$ can be indexed to make $L_1$ and $L_2$ $\phi_\nu$-equivalent.
\end{theorem}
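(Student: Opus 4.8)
The plan is to prove both implications by exploiting the presentation of $M_A(L)_\nu$ furnished by Proposition \ref{kerf}, which expresses the group $M_A(L)_\nu$ \emph{purely in terms of the abstract quandle} $Q_A(L)_\nu$. Before either direction, I would record two standing facts. Since every component contributes at least one arc, $\kappa_D$ is surjective, so $\phi_\nu(s_D(A(D)))$ is exactly the canonical set $S_\mu$ of $\mu$ ``markers'' in $A_\mu$, namely $(1,0,\dots,0)$ together with the tuples having a second $1$ in coordinate $i$ for $2 \le i \le \mu$; this set depends only on $\mu$. Consequently $Q_A(L)_\nu = \phi_\nu^{-1}(S_\mu)$, and by Proposition \ref{imq'orb} (together with the orbit computation in the introduction) $\phi_\nu$ is constant, equal to a single marker, on each orbit of $Q_A(L)_\nu$.

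For the easy direction, suppose $L_1$ and $L_2$ are $\phi_\nu$-equivalent via a group isomorphism $h:M_A(L_1)_\nu \to M_A(L_2)_\nu$ with $\phi_\nu = \phi'_\nu h$. Then $\phi_\nu(x) = \phi'_\nu(h(x))$ for all $x$, so $x \in \phi_\nu^{-1}(S_\mu)$ if and only if $h(x) \in \phi'^{-1}_\nu(S_\mu)$; since both links have $\mu$ components, this says precisely that $h$ carries $Q_A(L_1)_\nu$ bijectively onto $Q_A(L_2)_\nu$. As a group homomorphism, $h$ preserves the core operation $x \triangleright y = 2y-x$, so its restriction is the desired quandle isomorphism.

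For the converse, suppose $\psi:Q_A(L_1)_\nu \to Q_A(L_2)_\nu$ is a quandle isomorphism. First I would note that $\psi$ carries orbits bijectively to orbits, and each $Q_A(L_i)_\nu$ has exactly $\mu_i$ orbits by Proposition \ref{imq'orb}, whence $\mu_1 = \mu_2 =: \mu$; in particular both $\phi_\nu$ maps share the target $A_\mu$. By Proposition \ref{kerf}, $M_A(L_i)_\nu$ is presented by generators $g(q)$, $q \in Q_A(L_i)_\nu$, subject to the relations $g(x \triangleright y) = 2g(y) - g(x)$, which involve nothing but the quandle operation. Because $\psi(x \triangleright y) = \psi(x)\triangleright\psi(y)$, the induced map on free abelian groups sends each defining relation of $L_1$ to a defining relation of $L_2$, so it descends to a homomorphism $h:M_A(L_1)_\nu \to M_A(L_2)_\nu$ with $h(q)=\psi(q)$ for all $q \in Q_A(L_1)_\nu$; the same construction applied to $\psi^{-1}$ yields the inverse, so $h$ is an isomorphism.

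The hard part will be arranging $\phi_\nu = \phi'_\nu h$, since $\psi$ carries no a priori information about the $\phi_\nu$ maps; this is the main obstacle, and it is resolved by bookkeeping on orbits and a reindexing. The bijection $\psi$ matches the $\mu$ orbits of $Q_A(L_1)_\nu$ with those of $Q_A(L_2)_\nu$, and orbits correspond to components by Proposition \ref{imq'orb}, so I would reindex the components of $L_2$ so that $\psi$ sends the orbit of $L_1$ labelled $i$ onto the orbit of $L_2$ labelled $i$. This reindexing redefines $\phi'_\nu$ to assign the marker $m_i$ to the $i$th orbit (equivalently, it replaces the original $\phi'_\nu$ by its composite with an automorphism of $A_\mu$ permuting the markers). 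Then for $q$ in the $i$th orbit of $L_1$ one has $\phi_\nu(q) = m_i = \phi'_\nu(\psi(q)) = \phi'_\nu(h(q))$. Since the elements $s_D(a)$, hence all of $Q_A(L_1)_\nu$, generate $M_A(L_1)_\nu$, the two homomorphisms $\phi_\nu$ and $\phi'_\nu h$ agree on a generating set and therefore coincide. Thus $L_1$ and $L_2$ are $\phi_\nu$-equivalent for this choice of indexing, which completes the argument.
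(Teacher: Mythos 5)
Your proposal is correct and follows essentially the same route as the paper: both directions rest on the presentation of $M_A(L)_\nu$ from Proposition \ref{kerf}, with the quandle isomorphism inducing a group isomorphism and a re-indexing of components (matching orbits via Proposition \ref{imq'orb}) securing compatibility with the $\phi_\nu$ maps. You simply spell out the details that the paper's brief proof leaves implicit.
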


\begin{proof}
The difficult part of the proof has already been done: if $Q_A(L_1)_\nu \cong Q_A(L_2)_\nu$, then the quandle isomorphism gives us an equivalence between the presentations of the groups $M_A(L_1)_\nu$ and $M_A(L_2)_\nu$ provided by Proposition \ref{kerf}. If we re-index the components of $L_1$ and $L_2$ so that the quandle isomorphism $Q_A(L_1)_\nu \cong Q_A(L_2)_\nu$ always matches orbits corresponding to components with the same index, then the equivalence between the group presentations will be compatible with the $\phi_\nu$ maps.

The other direction is obvious, as Definition \ref{coreprime} defines $Q_A(L)_\nu$ using $\phi_\nu$. \end{proof}

It is important to allow re-indexing of link components in Theorem \ref{imqmod}, because it is possible for two links to fail to be $\phi_\nu$-equivalent even if the only difference between them is the indexing of their components. An example is given in Sec.\ \ref{tlink}.

\section{Using $\IMQ(L)$ to construct $M_A(L) _\nu$}
\label{proof2}

In this section, we modify the discussion of Sec.\ \ref{proof1} to provide a presentation of $M_A(L)_\nu$ derived from $\IMQ(L)$, rather than $Q_A(L)_\nu$. 

As mentioned in Theorem \ref{main1}, we have a surjective quandle map $\IMQ(L) \to \QIMG(L)$ under which $q_a \mapsto g_a  \thinspace \forall a \in A(D)$, and we have an isomorphism $\QIMG(L) \to Q_A(L)_\nu$ under which $g_a \mapsto s_D(a)$  $\forall a \in A(D)$. The composition is a surjective quandle map $\widetilde s_D:\IMQ(L) \to Q_A(L)_\nu$, with $\widetilde s_D(q_a)=s_D(a)$ $\forall a \in A(D)$. We also use $\widetilde s_D$ to denote the linear extension of $\widetilde s_D$ to a homomorphism $\mathbb Z ^{\IMQ(L)} \to M_A(L)_\nu$ of abelian groups.

\begin{lemma}
\label{imqlem}
Let $D$ be a diagram of $L$, and let $K$ be the subgroup of $\mathbb Z ^{\IMQ(L)}$ generated by the subset $\{ 2q-p-p \triangleright q \mid p,q \in \IMQ(L) \}$.
Then for every $x \in \mathbb Z ^{\IMQ(L)}$, there are arcs $a_1, \dots, a_n \in A(D)$ and integers $m_1, \dots, m_n \in \mathbb Z$ such that
\[
x - \sum _{i=1}^n m_i q_{a_i} \in K.
\]
\end{lemma}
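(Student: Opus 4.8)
The plan is to use the defining relations of $K$ to collapse the quandle operation down to the linear core-quandle operation, and then to induct on the way a given element of $\IMQ(L)$ is assembled from the generators $q_a$. Since the asserted conclusion is preserved under addition and integer scaling---if each of two elements of $\mathbb Z^{\IMQ(L)}$ differs from an integer combination of generators $q_{a_i}$ by an element of $K$, then so does every integer combination of the two---and since every element of $\mathbb Z^{\IMQ(L)}$ is a finite integer combination of free generators, it suffices to treat the case in which $x$ is a single free generator, corresponding to an element $t \in \IMQ(L)$.

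The key observation is that the generators of $K$ linearize $\triangleright$: a defining generator of $K$ has the form $2q-p-(p\triangleright q)$ with $p,q \in \IMQ(L)$, so writing $t = p \triangleright p'$ and taking $q = p'$ yields $2p'-p-t \in K$, that is $t \equiv 2p'-p \pmod{K}$ in $\mathbb Z^{\IMQ(L)}$. By Definition \ref{imq}, $\IMQ(L)$ is generated as an involutory medial quandle by the elements $q_a$, $a \in A(D)$, so $t$ is the value of some quandle word in these generators; I would fix such a word and induct on its length. In the base case $t = q_a$ for an arc $a$, and $t - q_a = 0 \in K$. In the inductive step $t = p \triangleright p'$ with $p$ and $p'$ the values of strictly shorter subwords, so $t \equiv 2p'-p \pmod{K}$, and the inductive hypothesis supplies arcs and integers with $p \equiv \sum_i m_i q_{a_i}$ and $p' \equiv \sum_j m'_j q_{a'_j}$ modulo $K$. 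Substituting, $t \equiv 2\sum_j m'_j q_{a'_j} - \sum_i m_i q_{a_i} \pmod{K}$, an integer combination of the generators $q_a$, which closes the induction.

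This follows the template of Lemma \ref{klem}, but is in fact simpler: there the relevant generating set had to be reached through $\ker \phi_\nu$ and Lemma \ref{kerphi}, whereas here Definition \ref{imq} hands us the generators $q_a$ directly, and even the orbit count of Proposition \ref{imqorb} is not needed. I therefore do not anticipate a genuine obstacle. The only points demanding care are the well-foundedness of the induction---secured by fixing a finite quandle word representing $t$ and inducting on the structure of its subwords rather than on $t$ itself---and the routine check in the first paragraph that the reduction to a single generator respects congruence modulo $K$.
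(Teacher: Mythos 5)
Your proposal is correct and follows essentially the same route as the paper: reduce to a single quandle element, observe that $p \triangleright q - (2q-p) \in K$ propagates the conclusion from $p$ and $q$ to $p \triangleright q$, and anchor the induction at the generators $q_a$. Your version merely makes explicit the induction on the length of a quandle word that the paper leaves implicit.
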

\begin{proof}
Every element of $\mathbb Z ^{\IMQ(L)}$ is a linear combination of elements of $\IMQ(L)$, with coefficients in $\mathbb Z$. Therefore, it suffices to verify the lemma when $x=q \in \mathbb Z ^{\IMQ(L)}$. If the lemma holds for $p$ and $q$, then the lemma also holds for $p \triangleright q$, because $p \triangleright q- (2q-p) \in K$. As $\IMQ(L)$ is generated by the elements $q_a$ with $a \in A(D)$, then, it suffices to verify the lemma when $x=q_a$.

When $x=q_a$ the lemma is obvious, as $x-q_a=0$.
\end{proof}

\begin{proposition}
\label{imqkerf}
The kernel of $\widetilde s_D$ is the subgroup $K$ mentioned in Lemma \ref{imqlem}.
\end{proposition}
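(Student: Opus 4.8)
The plan is to mirror the proof of Proposition \ref{kerf} line by line, replacing $Q_A(L)_\nu$ by $\IMQ(L)$ and the map $f$ by the quandle map $\widetilde s_D$. First I would dispose of the easy inclusion $K \subseteq \ker \widetilde s_D$. Since $\widetilde s_D:\IMQ(L) \to Q_A(L)_\nu$ is a quandle map into $\Core(M_A(L)_\nu)$, every $p,q \in \IMQ(L)$ satisfy $\widetilde s_D(p \triangleright q)=\widetilde s_D(p) \triangleright \widetilde s_D(q)=2\widetilde s_D(q)-\widetilde s_D(p)$ in $M_A(L)_\nu$. Applying the linear extension of $\widetilde s_D$ to a generator of $K$ then gives $\widetilde s_D(2q-p-p \triangleright q)=2\widetilde s_D(q)-\widetilde s_D(p)-(2\widetilde s_D(q)-\widetilde s_D(p))=0$, so each generator of $K$, and hence all of $K$, lies in $\ker \widetilde s_D$.

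For the reverse inclusion, suppose $x \in \ker \widetilde s_D$, and let $\widehat g:\mathbb Z ^{A(D)} \to \mathbb Z ^{\IMQ(L)}$ be the homomorphism with $\widehat g(a)=q_a$ $\forall a \in A(D)$. Lemma \ref{imqlem} supplies arcs $a_1, \dots, a_n \in A(D)$ and integers $m_1, \dots, m_n$ with $x - \widehat g(x') \in K$, where $x'=\sum_{i=1}^n m_i a_i \in \mathbb Z ^{A(D)}$. Because $\widetilde s_D(x)=0$ and $K \subseteq \ker \widetilde s_D$, it follows that $\widetilde s_D(\widehat g(x'))=0$; but $\widetilde s_D(\widehat g(x'))=\sum_{i=1}^n m_i s_D(a_i)=s_D(x')$, so $x' \in \ker s_D$.

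Now $\ker s_D$ is generated by the elements $r_D(c)=2a-b-b'$, where $c \in C(D)$ is a crossing with overpassing arc $a$ and underpassing arcs $b,b'$. Since $\widehat g$ is a homomorphism, $\widehat g(x')$ is an integer combination of the elements $\widehat g(r_D(c))=2q_a-q_b-q_{b'}$, so it suffices to show each such element lies in $K$. The crucial point, and the only place the quandle structure really enters, is that the crossing relation of Definition \ref{imq} gives $q_{b'}=q_b \triangleright q_a$ in $\IMQ(L)$, so $\widehat g(r_D(c))=2q_a - q_b - q_b \triangleright q_a$ is one of the generators of $K$. Hence $\widehat g(x') \in K$, and combined with $x - \widehat g(x') \in K$ this yields $x \in K$. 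The argument is essentially routine once Lemma \ref{imqlem} is in hand; the one subtlety to keep in view is that $\widetilde s_D$ is a quandle map but not a group homomorphism on $\IMQ(L)$, so the identity $\widetilde s_D(p \triangleright q)=2\widetilde s_D(q)-\widetilde s_D(p)$ must be invoked before passing to the linear extension.
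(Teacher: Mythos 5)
Your proposal is correct and follows essentially the same route as the paper's proof: the same use of the quandle-map identity $\widetilde s_D(p \triangleright q)=2\widetilde s_D(q)-\widetilde s_D(p)$ for the inclusion $K \subseteq \ker \widetilde s_D$, and the same reduction via Lemma \ref{imqlem} and the generators $r_D(c)$ of $\ker s_D$ (with $q_{b'}=q_b \triangleright q_a$ identifying $\widehat g(r_D(c))$ as a generator of $K$) for the reverse inclusion. Nothing is missing.
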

\begin{proof}
The argument is quite similar to the proof of Proposition \ref{kerf}.

If $p,q \in Q$ then as $\widetilde s_D:\IMQ(L) \to Q_A(L)_\nu$ is a quandle map, $\widetilde s_D(2q-p-p \triangleright q)=2 \widetilde s _D(q)-\widetilde s _D(p)-\widetilde s _D(p \triangleright q)= \widetilde s _D(p) \triangleright \widetilde s _D(q) - \widetilde s _D(p \triangleright q)=0$. Hence $K \subseteq \ker \widetilde s _D$.

For the reverse inclusion, suppose $D$ is a diagram of $L$, and let $\widehat q: \mathbb Z^{A(D)} \to \mathbb Z ^Q$ be the homomorphism with $\widehat q (a) = q_a$ $\forall a \in A(D)$.

Suppose $x \in \ker \widetilde s _D$. According to Lemma \ref{imqlem}, there are arcs $a_1,\dots a_n \in A(D)$ and integers $m_1, \dots m_n \in \mathbb Z$ such that
\[
x' = \sum _{i=1}^n m_i a_i \in \mathbb Z ^{A(D)}
\]
has $x-\widehat q (x') \in K$. As $\widetilde s _D(x)=0$ and $K \subseteq \ker \widetilde s _D$, $s_D(x') = \widetilde s _D(\widehat q (x')) = \widetilde s _D(x - (x- \widehat q (x')))=0-0=0$, so $x' \in \ker s_D = r_D(\mathbb Z ^{C(D)})$. Then $\widehat q(x')$ is equal to a linear combination (with integer coefficients) of elements $\widehat q (r_D(c))$, $c \in C(D)$. Notice that if $c \in C(D)$ is a crossing as pictured in Fig.\ \ref{crossfig}, then 
\[
\widehat q (r_D(c)) = \widehat q(2 s_D(a) - s_D(b) -s_D(b')) =  2q_a - q_b-q_{b \triangleright a}\in K.
\]
Therefore $\widehat q(x') \in K$. As $x-\widehat q(x') \in K$, it follows that $x \in K$. \end{proof}

Proposition \ref{imqkerf} tells us that the abelian group $M_A(L)_\nu$ has a presentation determined by the quandle $\IMQ(L)$, with generators the elements $\widetilde s _D(q)$, $q \in \IMQ(L)$, and relations $2\widetilde s _D(q)-\widetilde s _D(p)-\widetilde s _D(p \triangleright q)=0$ $\forall p,q \in \IMQ(L)$. Each orbit of $\IMQ(L)$ corresponds to a component $K_i$ of $L$, so once we know which component $K_i$ corresponds to each orbit, this presentation of $M_A(L)_\nu$ will also determine the map $\phi_\nu:M_A(L)_\nu \to A_ \mu$. We deduce an analogue of one direction of Theorem \ref{imqmod}:

\begin{theorem}
\label{imqmod2}
Let $L_1$ and $L_2$ be links. If $\IMQ(L_1) \cong \IMQ(L_2)$, then the components of $L_1$ and $L_2$ can be indexed to make $L_1$ and $L_2$ $\phi_\nu$-equivalent.
\end{theorem}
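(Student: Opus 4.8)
The plan is to mirror the proof of Theorem~\ref{imqmod}, using Proposition~\ref{imqkerf} in place of Proposition~\ref{kerf}. The key point established just before the statement is that $M_A(L)_\nu$ carries a presentation whose generators are the elements $\widetilde s_D(q)$, $q \in \IMQ(L)$, whose relations are $2\widetilde s_D(q)-\widetilde s_D(p)-\widetilde s_D(p \triangleright q)=0$, and whose associated map $\phi_\nu$ is determined once we record which component $K_i$ corresponds to each orbit. A quandle isomorphism transports all of this data, so it should transport $\phi_\nu$-equivalence.

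First I would take a quandle isomorphism $\psi \colon \IMQ(L_1) \to \IMQ(L_2)$ and extend it linearly to an isomorphism $\Psi \colon \mathbb Z^{\IMQ(L_1)} \to \mathbb Z^{\IMQ(L_2)}$ of free abelian groups. Since $\psi$ satisfies $\psi(p \triangleright q)=\psi(p) \triangleright \psi(q)$, it carries each generator $2q-p-p \triangleright q$ of the subgroup $N_1 = \ker \widetilde s_{D_1}$ (the subgroup $K$ of Lemma~\ref{imqlem} for $L_1$) to $2\psi(q)-\psi(p)-\psi(p) \triangleright \psi(q)$, a corresponding generator of $N_2 = \ker \widetilde s_{D_2}$. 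As $\psi$ is bijective, these images exhaust the generating set of $N_2$, so $\Psi(N_1)=N_2$. By Proposition~\ref{imqkerf}, $M_A(L_j)_\nu \cong \mathbb Z^{\IMQ(L_j)}/N_j$ via $\widetilde s_{D_j}$, so $\Psi$ descends to a group isomorphism $h \colon M_A(L_1)_\nu \to M_A(L_2)_\nu$ with $h(\widetilde s_{D_1}(q))=\widetilde s_{D_2}(\psi(q))$ for all $q \in \IMQ(L_1)$.

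Next I would check compatibility with the $\phi_\nu$ maps, writing $\phi_\nu$ and $\phi'_\nu$ for the respective maps $M_A(L_1)_\nu \to A_\mu$ and $M_A(L_2)_\nu \to A_\mu$. A quandle isomorphism respects the relation generated by $x \sim x \triangleright y$, so $\psi$ induces a bijection between the orbits of $\IMQ(L_1)$ and those of $\IMQ(L_2)$, hence (by Proposition~\ref{imqorb}) between the components of the two links. I would re-index $L_1$ and $L_2$ so that $\psi$ carries the $K_i$-orbit of $\IMQ(L_1)$ onto the $K_i$-orbit of $\IMQ(L_2)$ for every $i$. The crucial observation is that $\phi_\nu \circ \widetilde s_{D_1}$ is constant on each orbit of $\IMQ(L_1)$: indeed $\widetilde s_{D_1}$ maps the $K_i$-orbit onto the $K_i$-orbit of $Q_A(L_1)_\nu$ (Propositions~\ref{imqorb} and~\ref{imq'orb}), where $\phi_\nu$ takes the constant value $\Phi_\nu(a)$ for any arc $a$ of $D_1$ with $\kappa_{D_1}(a)=i$; the same holds for $\phi'_\nu \circ \widetilde s_{D_2}$. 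Because the re-indexing matches orbits with equal component index, the values $\phi_\nu(\widetilde s_{D_1}(q))$ and $\phi'_\nu(\widetilde s_{D_2}(\psi(q)))=\phi'_\nu(h(\widetilde s_{D_1}(q)))$ agree on every generator, so $\phi_\nu=\phi'_\nu \circ h$ throughout, and $L_1, L_2$ are $\phi_\nu$-equivalent.

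I do not expect a deep obstacle here, since the argument is formal once Proposition~\ref{imqkerf} is available; the step demanding the most care is the orbit-and-component bookkeeping in the previous paragraph. Specifically, I must confirm that $\phi_\nu \circ \widetilde s_D$ depends only on the orbit partition --- equivalently, that $\Phi_\nu(a)$ depends only on $\kappa_D(a)$ --- and that a quandle isomorphism sends orbits bijectively to orbits, which legitimizes the re-indexing. Finally I would emphasize that this yields only one implication, in contrast to Theorem~\ref{imqmod}: the converse is false, since the two links of Section~\ref{sec:twoexamples} are $\phi_\nu$-equivalent yet have non-isomorphic involutory medial quandles.
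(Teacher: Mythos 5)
Your proof is correct and follows essentially the same route as the paper: the paper deduces Theorem~\ref{imqmod2} directly from the presentation of $M_A(L)_\nu$ furnished by Proposition~\ref{imqkerf} together with the orbit--component correspondence, exactly as you do (mirroring the proof of Theorem~\ref{imqmod}). Your version simply spells out the details that the paper leaves implicit, and they check out.
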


Notice that unlike Theorem \ref{imqmod}, Theorem \ref{imqmod2} is not an ``if and only if.'' The difference is that Definition \ref{coreprime} uses $\phi_\nu$ to define $Q_A(L)_\nu$, but Definition \ref{imq} does not mention $\phi_\nu$. Examples that confirm that $\phi_\nu$ does not determine $\IMQ(L)$ are mentioned in Sec.\ \ref{sec:twoexamples}.

\section{Characteristic subquandles of core quandles}
\label{coresec}

Suppose $A$ is an abelian group
\[
A = \mathbb Z ^r \oplus \mathbb{Z}_{2^{n_1}}
\oplus \dots \oplus \mathbb{Z}_{2^{n_k}} \oplus B \text{,}
\]
where $k,r \geq 0$, $n_1,\dots,n_k \geq 1$, $\mathbb Z ^r$ is a free abelian group of rank $r$, and $|B|$ is odd. (If $r=0, k=0$ or $|B|=1$ then the corresponding direct summands need not appear.) Think of elements of $A$ as $(r+k+1)$-tuples, with the first $r$ coordinates coming from $\mathbb Z$ and the last coordinate coming from $B$. 

\begin{definition}
\label{charcore}
The \emph{characteristic subquandle} of $\Core(A)$ is
\[
\Core'(A) = \{ (x_1,\dots,x_{r+k+1}) \in A \mid \text{at most one of } x_1,\dots,x_{r+k} \text{ is odd}\} \text{,}
\]
considered as a quandle using the operation $\triangleright$ of $\Core(A)$.
\end{definition}

The number $r+k$ is the \emph{2-rank} of $A$. Notice that $\Core'(A)$ is the union of $r+k+1$ cosets of $2 \cdot A$ in $A$. There are $2^{r+k}$ cosets in all, so $\Core'(A)=\Core(A)$ if $r+k \leq 1$, and $\Core'(A)$ is a proper subset of $\Core(A)$ if $r+k > 1$. 

Every finitely generated abelian group $A$ is isomorphic to a direct sum like the one in Definition \ref{charcore}. The summands in the direct sum are uniquely determined, but the isomorphism is not. (For instance, if $A \cong \mathbb{Z}_2 \oplus \mathbb{Z}_2$ then there are three distinct direct sum decompositions of $A$, one for each basis of $A$ as a vector space over the two-element field $\textup{GF}(2)$.) In general, then, $\Core(A)$ has several different characteristic subquandles, all isomorphic to each other. We use the notation $\Core'(A)$ with the understanding that the characteristic subquandle is defined only up to automorphisms of $A$.

The purpose of this section is to prove that $\Core'(A)$ is a classifying invariant of $A$. We prove this by constructing a presentation of $A$ from $\Core'(A)$. The construction is similar to those of the preceding sections, though the argument is a bit more complicated. Because of the similarity, we use similar notation.

To wit: For convenience, we let $Q=\Core'(A)$ for much of the rest of this section (and only this section). Let $\mathbb{Z}^Q$ be the free abelian group on the set $Q$, and let $f:\mathbb{Z}^Q \to A$ be the homomorphism that sends each generator $x \in Q$ to itself, considered as an element of $A$. Let $g:Q \to \mathbb{Z}^Q$ be the function that sends each $x \in Q$ to itself, considered as a generator of $\mathbb{Z}^Q$.

Now, suppose $A$ is a finitely generated abelian group. Then $A$ is described up to isomorphism by a direct sum 
\[
A = \mathbb{Z}^r
\oplus \mathbb{Z}_{2^{n_1}}
\oplus \dots \oplus \mathbb{Z}_{2^{n_k}} \oplus \mathbb{Z}_{m_1}
\oplus \dots \oplus \mathbb{Z}_{m_ \ell} \text{,}
\]
where $r,k,\ell \geq 0$, if $k \geq 1$ then $n_1,\dots,n_k \geq 1$, and if $\ell \geq 1$ then $m_1, \dots, m_{\ell}$ are odd. 

Define a function $j:Q \to \{0,\dots,r+k\}$, as follows. If $x = (x_1,\dots,x_{r+k+ \ell }) \in Q$, $1 \leq j \leq r+k$ and $x_j$ is odd, then $j(x)=j$. If $x = (x_1,\dots,x_{r+k+ \ell}) \in Q$ and $x_1,\dots,x_{r+k}$ are all even, then $j(x)=0$. Also, for $1 \leq j \leq r+k + \ell$ let $1_j$ be the element $(0,\dots,0,1,0,\dots,0) \in Q$, with the $1$ in the $j$th coordinate. These elements generate $A$, so $f$ is surjective.

\begin{lemma}
\label{simpforml}
Let $K$ be the subgroup of $\mathbb{Z}^Q$ generated by 
\[
\{g(2y)-g(x)-g(x \triangleright y) \mid x,y \in Q \}.
\]
Then $g(mx)-mg(x) \in K \thinspace \forall x \in Q$ $\forall m \in \mathbb{Z}$.
\end{lemma}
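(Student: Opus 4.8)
The plan is to work modulo $K$ throughout, writing $u \equiv v$ to mean $u - v \in K$, and to read the defining relations of $K$ as a family of congruences. Reading a generator $g(2v) - g(u) - g(u \triangleright v)$ modulo $K$, and recalling that $u \triangleright v = 2v - u$ in $\Core'(A)$, I obtain the basic congruence $g(2v) \equiv g(u) + g(2v-u)$ for all $u,v \in Q$. The whole argument rests on two consequences of this, together with one structural observation: every integer multiple $mx$ of a fixed $x \in Q$ again lies in $Q$. Indeed, for $i \le r+k$ the $i$th coordinate of $mx$ has the same parity as $x_i$ when $m$ is odd and is even when $m$ is even, so $mx$ inherits the defining property that at most one of its first $r+k$ coordinates is odd. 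This closure is what guarantees that the generators of $K$ invoked below actually exist.

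First I would record two special cases. Taking $u = v = 0$ gives $g(0) \equiv 0$, since $0 \triangleright 0 = 0$. Taking $u = v = y$ gives, since $y \triangleright y = y$, the halving relation $g(2y) \equiv 2\,g(y)$ for every $y \in Q$. These serve as the base value and the key simplification, respectively.

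Next I would set up a linear recursion in $m$. Fixing $x \in Q$ and an integer $m$, I apply the basic congruence with $u = mx$ and $v = (m+1)x$; both lie in $Q$ by the closure observation, and $u \triangleright v = 2(m+1)x - mx = (m+2)x$. This yields $g((m+2)x) \equiv g(2(m+1)x) - g(mx)$, and after applying the halving relation $g(2(m+1)x) \equiv 2\,g((m+1)x)$ I obtain the three-term recursion
\[
g((m+2)x) \equiv 2\,g((m+1)x) - g(mx),
\]
valid for every integer $m$ (positive, zero, or negative), since the derivation only requires the three multiples $mx$, $(m+1)x$, $(m+2)x$ to lie in $Q$.

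Finally I would solve the recursion by induction in both directions. The base values $g(0) \equiv 0$ and $g(1\cdot x) = g(x)$, together with the recursion, force $g(mx) \equiv m\,g(x)$ for all $m \ge 0$: if the formula holds for $m$ and $m+1$, then $g((m+2)x) \equiv 2(m+1)g(x) - m\,g(x) = (m+2)\,g(x)$. Rewriting the recursion as $g(mx) \equiv 2\,g((m+1)x) - g((m+2)x)$ lets me induct downward in the same way to cover negative $m$, starting from $g(-x) \equiv 2\,g(0) - g(x) \equiv -g(x)$. This gives $g(mx) - m\,g(x) \in K$ for all $x \in Q$ and all $m \in \mathbb{Z}$, as required. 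I do not expect a serious obstacle; the only points needing care are the closure claim $mx \in Q$ (so that the relevant relations of $K$ are available) and the bookkeeping that lets the single recursion run in both directions from the two base values.
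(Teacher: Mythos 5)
Your proposal is correct and takes essentially the same approach as the paper: both arguments first extract $g(0)\in K$, $g(2x)-2g(x)\in K$ and $g(-x)+g(x)\in K$ from the defining generators and then induct on $m$, relying on the closure fact that $mx\in Q$ for every $m\in\mathbb Z$ (which you rightly make explicit and the paper uses tacitly). The only difference is organizational: you package the inductive step as a single two-sided recursion $g((m+2)x)\equiv 2g((m+1)x)-g(mx) \pmod{K}$ obtained from the generator with $u=mx$, $v=(m+1)x$, whereas the paper's step uses the generator $g(2x)-g((1-m)x)-g((m+1)x)$ to climb upward and then settles $m\le -2$ by applying the positive case to $-x$.
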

\begin{proof}
The lemma holds trivially when $m=1$, and it holds when $m=2$ because $x \triangleright x=x$ and $g(2x)-g(x)-g(x \triangleright x) \in K$. The lemma holds when $m=0$ because $g(2 \cdot 0)-g(0)-g(0 \triangleright 0)=g(0)-g(0)-g(0)=-g(0) \in K$, so $g(0 \cdot x)-0 \cdot g(x)=g(0) \in K$.
The lemma holds when $m=-1$ because $g(0)-g(x)-g(x \triangleright 0) = g(0)-g(x)-g(-x) \in K$, and $g(0) \in K$, so $g(-x)+g(x) \in K$.

Now, suppose $m \geq 2$, $x \in Q$ and $g(py)-pg(y) \in K$ $\forall p \in \{-1, \dots, m \}$ $\forall y \in Q$. Then $K$ contains $g(2x)-2g(x)$ and $g((m-1)x)-(m-1)g(x)$. Also, $K$ contains $g((m-1)x)+g((1-m)x)$, because $y=(m-1)x \in Q$ and $g(y)+g(-y)=g((m-1)x)+g((1-m)x)$. As $K$ contains
\[
g(2x)-g((1-m)x)-g(((1-m)x) \triangleright x) = g(2x)-g((1-m)x)-g((m+1)x) \text{,}
\]
it follows that $K$ contains
\[
2g(x)+(m-1)g(x)-g((m+1)x)=(m+1)g(x)-g((m+1)x).
\]

If $m \leq -2$ and $x \in Q$ then $-x \in Q$ too, so $K$ contains $g(|m|(-x))-|m|g(-x)=g(mx)+mg(-x)$. As $K$ contains $g(-x)+g(x)$, it follows that $K$ contains $g(mx)+m \cdot(-g(x))=g(mx)-mg(x)$.
\end{proof}

\begin{lemma}
\label{simpform}
For every $z \in \mathbb{Z}^Q$, there are integers $z_1, \dots,z_{r+k+ \ell}$ such that
\[
z - \sum_{j=1}^{r+k+ \ell} z_j \cdot g(1_j) \in K.
\]
\end{lemma}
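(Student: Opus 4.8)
The plan is to reduce the statement to a single generator of $\mathbb{Z}^Q$ and then reduce each $g(x)$, with $x \in Q=\Core'(A)$, to a combination of the $g(1_j)$ by adjusting one coordinate at a time. Since every $z \in \mathbb{Z}^Q$ is a $\mathbb{Z}$-linear combination of elements $g(x)$ with $x \in Q$, and $K$ is a subgroup, it suffices to prove the conclusion when $z = g(x)$ for a single $x \in Q$; the integers $z_j$ for a general $z$ are then obtained by combining those produced for each generator.

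The tool I would establish first is the stepwise additivity relation
\[
g(x + 2 \cdot 1_j) \equiv g(x) + 2\,g(1_j) \pmod K \qquad (x \in Q,\ 1 \le j \le r+k+\ell).
\]
To derive it, note that $Q$ is closed under negation: negation preserves the parity of every coordinate (in $\mathbb{Z}$ and in each $\mathbb{Z}_{2^{n_i}}$, while parity is irrelevant in the odd-order factors), hence it preserves the condition that at most one of the first $r+k$ coordinates is odd, so $-x \in Q$. Applying the defining generator of $K$ with roles $x \mapsto -x$ and $y \mapsto 1_j$ gives $g\big(2\cdot 1_j-(-x)\big)=g(x+2\cdot 1_j)\equiv g(2\cdot 1_j)-g(-x)$, and Lemma \ref{simpforml} rewrites $g(2\cdot 1_j)$ as $2\,g(1_j)$ and $g(-x)$ as $-g(x)$. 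Reading the relation backwards shows likewise $g(x-2\cdot 1_j)\equiv g(x)-2\,g(1_j)$ whenever both arguments lie in $Q$. The point is that for any $x\in Q$ the elements $x\pm 2\cdot 1_j$ are again in $Q$, since adding an element of $2A$ disturbs no coordinate parity.

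With this in hand the reduction is routine. Given $x\in Q$, let $p\in\{1,\dots,r+k\}$ be the index of its unique odd coordinate among the first $r+k$, or set $p=0$ (with $1_0=0$ and $g(0)\equiv 0$ by Lemma \ref{simpforml}) if no such coordinate exists. Then $x-1_p\in 2A$: its first $r+k$ coordinates are all even, and its final $\ell$ coordinates lie in the odd-order groups $\mathbb{Z}_{m_j}$, in which $2$ is invertible, so each is also an even multiple of $1_j$. Hence $x-1_p=\sum_j 2 d_j\,1_j$ in $A$ for suitable integers $d_j$. Starting from $1_p$ and adjoining the summands $2 d_j\,1_j$ one coordinate at a time, every partial sum equals $1_p$ plus an element of $2A$ and so remains in $Q$; thus the stepwise relation (and its backward form) applies at each step, giving
\[
g(x) \equiv g(1_p) + \sum_{j} 2 d_j\,g(1_j) \pmod K .
\]
Collecting coefficients yields integers $z_1,\dots,z_{r+k+\ell}$ with $g(x)-\sum_j z_j\,g(1_j)\in K$, as required.

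The one genuine obstacle is keeping every intermediate element inside $Q=\Core'(A)$: one cannot simply halve an arbitrary element of $2A$ and feed it to the core relation, because the half may have two odd coordinates and so escape $Q$ (for instance a half of $2\cdot 1_i+2\cdot 1_{i'}$ in the free part). The device that avoids this is to add even multiples of a \emph{single} coordinate $1_j$ at a time, for which the witness $y=1_j$ is always available in $Q$; together with the fact that $2A$-translates never break the defining parity condition, this confines the entire computation to $Q$.
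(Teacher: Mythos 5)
Your proof is correct and takes essentially the same route as the paper's: your stepwise relation is exactly the paper's formula (\ref{reduce}), derived in the same way from the generator $g(2\cdot 1_j)-g(-x)-g\bigl((-x)\triangleright 1_j\bigr)$ of $K$ together with Lemma \ref{simpforml}, and the final reduction to the $g(1_j)$ is the same coordinate-by-coordinate walk. The only difference is organizational: by observing that the relation can be applied in both directions you sidestep the paper's case split on the signs of the free coordinates and its induction on the number of positive ones, a mild streamlining rather than a different argument.
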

\begin{proof} 
Consider an element $x=(x_1,\dots,x_{r+k+ \ell}) \in Q$, with $x_1, \dots, x_r \leq 0$. Notice that if $1 \leq j \leq r+k+ \ell$ then $(-x) \triangleright 1_j=2 \cdot 1_j+x$, so $K$ contains $g(2 \cdot 1_j)-g(-x) -g(2 \cdot 1_j +x)$. According to Lemma \ref{simpforml}, $K$ also contains $g(2 \cdot 1_j)-2 \cdot g( 1_j)$ and $g(-x)+g(x)$, so 
\begin {equation}
\label{reduce}
g(x) - g(x + 2 \cdot 1_j) + 2 \cdot g(1_j) \in K .
\end{equation}

Suppose $j \neq j(x) \in \{1, \dots, r+k+\ell \}$. If $j> r+k$ then $m_{j - r - k}$ is odd, so multiplication by $2$ defines an automorphism of $\mathbb{Z}_{m_{j-r-k}}$; hence there is a non-negative integer $y_j$ with $2y_j=-x_{j}$ in $\mathbb{Z}_{m_{j-r-k}}$. If $r < j \leq r+k$ then $x_j$ is even, so there is a non-negative integer $y_j$ with $2y_j=-x_j$ in $\mathbb{Z}_{2^{n_{j-r}}}$. If $j \leq r$ there is a non-negative integer $y_j$ with $2y_j=-x_j$ in $\mathbb{Z}$. Applying formula (\ref{reduce}) $y_j$ times with respect to each such $j$, we deduce that $K$ contains the element
\[
x' = g(x) - g \bigg(x - \sum_{\substack{ j=1 \\ j \neq j(x)}} ^{r+k + \ell} x_{j} \cdot 1_{j} \bigg) - \sum_{\substack{ j=1 \\ j \neq j(x)}} ^{r+k + \ell} x_{j} \cdot g(1_j).
\]

If $j(x)=0$, then it follows that
\[
g(x) - g(0) - \sum_{j=1} ^{r+k+ \ell} x_{j} \cdot g(1_j) \in K .
\]
As $g(0) \in K$, we deduce that 
\[
g(x) - \sum_{j=1} ^{r+k+ \ell} x_{j} \cdot g(1_j) \in K .
\]

If $j(x)>r$ then $x_{j(x)}$ is odd, and there is a positive integer $y_{j(x)}$ with $2y_{j(x)}=1-x_{j(x)}$ in $\mathbb{Z}_{2^{n_{j(x)-r}}}$. We apply the formula (\ref{reduce}) to $x'$ $y_{j(x)}$ times, using $j=j(x)$. We deduce that $K$ contains
\[
g(x) - g\bigg(x - \sum_{\substack{ j=1 \\ j \neq j(x)}} ^{r+k + \ell} x_{j} \cdot 1_{j} + (1-x_{j(x)}) \cdot 1_{j(x)} \bigg)+ (1-x_{j(x)}) \cdot g(1_{j(x)}) -
\sum_{\substack{ j=1 \\ j \neq j(x)}} ^{r+k + \ell} x_{j} \cdot g(1_j)
\]
\[
=g(x) - g(1_{j(x)})+ g(1_{j(x)}) -  \sum_{j=1} ^{r+k+ \ell} x_{j} \cdot g(1_j)
=g(x) -  \sum_{j=1} ^{r+k+ \ell} x_{j} \cdot g(1_j).
\]

If $1 \leq j(x) \leq r$ then $x_{j(x)}$ is odd and negative, so there is a positive integer $y_{j(x)}$ with $2y_{j(x)}=1-x_{j(x)}$ in $\mathbb{Z}$. We apply (\ref{reduce}) to $x'$ $y_{j(x)}$ times, using $j=j(x)$. We deduce that $K$ contains
\[
g(x) - g\bigg(x - \sum_{\substack{ j=1 \\ j \neq j(x)}} ^{r+k + \ell} x_{j} \cdot 1_{j} + (1-x_{j(x)}) \cdot 1_{j(x)} \bigg)+ (1-x_{j(x)}) \cdot g(1_{j(x)}) -
\sum_{\substack{ j=1 \\ j \neq j(x)}} ^{r+k + \ell} x_{j} \cdot g(1_j)
\]
\[
=g(x) - g(1_{j(x)})+ g(1_{j(x)}) -  \sum_{j=1} ^{r+k+ \ell} x_{j} \cdot g(1_j)
=g(x) -  \sum_{j=1} ^{r+k+ \ell} x_{j} \cdot g(1_j).
\]

We see that if $x=(x_1,\dots,x_{r+k+ \ell}) \in Q$ and $x_1, \dots, x_r \leq 0$, then the lemma holds for $z=g(x)$. 

Now, suppose $x=(x_1,\dots,x_{r+k+ \ell}) \in Q$, $j_0\leq r$ and $x_{j_0} > 0$. Suppose further that whenever $y=(y_1,\dots,y_{r+k+ \ell}) \in Q$ and the list $y_1,\dots,y_r $ includes strictly fewer positive numbers than the list $x_1,\dots,x_r$,
\[
g(y)-  \sum_{j=1} ^{r+k+ \ell} y_{j} \cdot g(1_j) \in K.
\]
Let $y=(y_1,\dots,y_{r+k+ \ell}) \in Q$ be the element with $y_j=x_j$ for $j \neq j_0$ and $y_{j_0}=-x_{j_0}$. According to the formula (\ref{reduce}), for $i \geq 0$ 
\[
g(y+ 2i \cdot 1_{j_0}) - g(y + 2(i+1) \cdot 1_{j_0}) + 2 \cdot g(1_{j_0}) \in K.
\]
It follows that $K$ contains
\[
\sum_{i=0}^{x_{j_0}-1} \bigg( g(y+ 2i \cdot 1_{j_0}) - g(y + 2(i+1) \cdot 1_{j_0}) + 2 \cdot g(1_{j_0}) \bigg)=g(y)-g(x)+2x_{j_0} \cdot 1_{j_0} \text{,}
\]
so $K$ contains the difference
\[
g(y)-  \sum_{j=1} ^{r+k+ \ell} y_{j} \cdot g(1_j) - \bigg(g(y)-g(x)+2x_{j_0} \cdot 1_{j_0} \bigg) = g(x) - \sum_{j=1} ^{r+k+ \ell} x_{j} \cdot g(1_j). 
\]
Using induction on the number of positive coordinates $x_j$ with $j \leq r$, we conclude that the lemma holds for $z=g(x)$ whenever $x \in Q$.

If $z$ is an arbitrary element of $\mathbb{Z}^Q$, then $z$ is equal to a linear combination over $\mathbb Z$ of various elements $g(x)$ with $x \in Q$. Applying the lemma to each $g(x)$ and collecting terms, we conclude that $K$ contains the difference between $z$ and a linear combination over $\mathbb Z$ of $g(1_1), \dots, g(1_{r+k+ \ell})$. \end{proof}
\begin{proposition}
\label{coresecprop}
The kernel of the epimorphism $f:\mathbb{Z}^Q \to A$ is $K$.
\end{proposition}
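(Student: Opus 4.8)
The plan is to establish the two inclusions $K\subseteq\ker f$ and $\ker f\subseteq K$ separately, following the template already used in Propositions \ref{kerf} and \ref{imqkerf}. The first inclusion is immediate: $f$ restricts to the identity on $Q$, and in $\Core(A)$ we have $x\triangleright y=2y-x$, so for any $x,y\in Q$ (note $2y$ and $x\triangleright y$ again lie in $Q$) one computes $f(g(2y)-g(x)-g(x\triangleright y))=2y-x-(2y-x)=0$. Thus each generator of $K$, and hence all of $K$, lies in $\ker f$.

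For the reverse inclusion I would take an arbitrary $z\in\ker f$ and apply Lemma \ref{simpform}, which produces integers $z_1,\dots,z_{r+k+\ell}$ with $z-\sum_{j=1}^{r+k+\ell}z_j\,g(1_j)\in K$. Applying $f$ to this relation and using the inclusion already proved, the left term contributes $0$ and we obtain $0=f(z)=\sum_{j=1}^{r+k+\ell}z_j\cdot 1_j$ in $A$. Since the $1_j$ are the standard generators of the cyclic summands of $A$, this single equation decouples coordinatewise into the conditions $z_j=0$ for $1\le j\le r$, $z_j\equiv 0\pmod{2^{\,n_{j-r}}}$ for $r<j\le r+k$, and $z_j\equiv 0\pmod{m_{j-r-k}}$ for $r+k<j\le r+k+\ell$.

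It then remains to show $\sum_j z_j\,g(1_j)\in K$, and this is where Lemma \ref{simpforml} does the essential work. Because $z_j\cdot 1_j\in Q$ for each $j$, that lemma (with $x=1_j$, $m=z_j$) gives $z_j\,g(1_j)\equiv g(z_j\cdot 1_j)\pmod K$. The divisibility conditions extracted above force $z_j\cdot 1_j=0$ in $A$ for every $j$ (the order of $1_j$ divides $z_j$ when $j>r$, while $z_j=0$ when $j\le r$), so each $g(z_j\cdot 1_j)=g(0)$, and $g(0)\in K$ was established inside the proof of Lemma \ref{simpforml}. Summing, $\sum_j z_j\,g(1_j)\in K$, whence $z\in K$, completing the proof.

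The only place demanding genuine care is the translation between formal $\mathbb Z$-linear combinations of quandle elements in $\mathbb Z^Q$ and honest arithmetic in $A$. Lemma \ref{simpforml} is precisely the bridge that converts the \emph{scalar multiple} $z_j\,g(1_j)$ into the \emph{generator} $g(z_j\cdot 1_j)$ attached to the actual group element $z_j\cdot 1_j$, modulo $K$; once that linearization is in hand, the vanishing of $\sum_j z_j\cdot 1_j$ in $A$ is exactly the input needed and the argument closes without further obstruction. I expect no serious difficulty beyond bookkeeping, since the two preparatory lemmas have already absorbed the hard combinatorial content.
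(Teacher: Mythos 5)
Your proof is correct and follows essentially the same route as the paper's: the inclusion $K\subseteq\ker f$ by direct computation, then Lemma \ref{simpform} to reduce $z$ modulo $K$ to $\sum_j z_j\,g(1_j)$, the deduction $z_j\cdot 1_j=0$ from $f(z)=0$, and Lemma \ref{simpforml} together with $g(0)\in K$ to conclude $z_j\,g(1_j)\in K$ for each $j$. The coordinatewise divisibility conditions you spell out are just an expanded form of the paper's statement that $z_j\cdot 1_j=0$ for each $j$; no substantive difference.
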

\begin{proof}
If $x,y \in Q$ then certainly $f(g(2y)-g(x)-g(x \triangleright y))=2y-x-x \triangleright y = 0$ in $A$. Therefore $K \subseteq \ker f$.

Now, suppose $z \in \ker f$. According to Lemma \ref{simpform},
\begin{equation}
\label{last}
z - \sum_{j=1}^{r+k+ \ell} z_j \cdot g(1_j) \in K
\end{equation}
for some integers $z_1, \dots,z_{r+k+ \ell}$. As $K \subseteq \ker f$, it follows that 
\[
f(z) = f \bigg( \sum_{j=1}^{r+k+ \ell} z_j \cdot g(1_j) \bigg) = \sum_{j=1}^{r+k+ \ell} z_j \cdot fg(1_j) = \sum_{j=1}^{r+k+ \ell} z_j  \cdot 1_j \in A.
\]
As $f(z)=0$, it follows that $z_j \cdot 1_j=0$ for each $j$. Lemma \ref{simpforml} implies that for each $j$, $z_jg(1_j)-g(0) = z_jg(1_j)-g(z_j \cdot 1_j) \in K$. As $g(0) \in K$,  it follows that for each $j$, $z_j g(1_j) \in K$. With (\ref{last}), this implies that $z\in K$. \end{proof}

We are now ready to prove the following:

\begin{proposition}
\label{class}
If $A$ and $A'$ are finitely generated abelian groups, then $A \cong A'$ if and only if $\Core'(A) \cong \Core'(A')$.
\end{proposition}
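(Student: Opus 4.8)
The plan is to prove the two implications separately, with the forward direction essentially formal and the reverse direction resting on Proposition \ref{coresecprop}. For the forward direction, suppose $\psi:A \to A'$ is a group isomorphism. Being additive, $\psi$ is in particular a quandle isomorphism $\Core(A) \to \Core(A')$, and it carries the chosen direct sum decomposition of $A$ to a decomposition of $A'$ of the same form. Since the coordinate tuple of $\psi(a)$ with respect to the image decomposition equals the coordinate tuple of $a$, the condition ``at most one of the first $r+k$ coordinates is odd'' is preserved, so $\psi$ restricts to a quandle isomorphism from $\Core'(A)$ onto a characteristic subquandle of $A'$. As noted after Definition \ref{charcore}, any two characteristic subquandles of $A'$ are isomorphic, so $\Core'(A) \cong \Core'(A')$.

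The reverse direction is where the work lies. Suppose $h:\Core'(A) \to \Core'(A')$ is a quandle isomorphism; write $Q=\Core'(A)$ and $Q'=\Core'(A')$, and let $f:\mathbb Z^Q \to A$ and $f':\mathbb Z^{Q'} \to A'$ be the epimorphisms of this section, with kernels $K$ and $K'$. I would extend $h$ to an isomorphism $\bar h:\mathbb Z^Q \to \mathbb Z^{Q'}$ of free abelian groups by setting $\bar h(g(q))=g'(h(q))$ for each $q \in Q$, where $g,g'$ are the inclusions of $Q,Q'$ into the free groups. By Proposition \ref{coresecprop} we have $A \cong \mathbb Z^Q/K$ and $A' \cong \mathbb Z^{Q'}/K'$, so it suffices to show $\bar h(K)=K'$: then $\bar h$ descends to the desired isomorphism $A \to A'$. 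A typical generator $g(2y)-g(x)-g(x\triangleright y)$ of $K$ is sent by $\bar h$ to $g'(h(2y))-g'(h(x))-g'(h(x)\triangleright h(y))$, using that $h$ respects $\triangleright$. This is a generator of $K'$ exactly when $h(2y)=2\,h(y)$, and the symmetric computation for $h^{-1}$ then yields $\bar h(K)=K'$. So the entire reverse direction reduces to proving that $h$ commutes with doubling.

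The main obstacle is precisely this doubling identity, and it is genuine: the generators of $K$ refer to the element $2y$, which cannot be singled out from the bare quandle structure of $\Core'(A)$, since core quandles are homogeneous and admit no canonical basepoint. The resolution is to install a basepoint by normalizing $h$. First observe that $0 \in Q$ and that $2y = 0 \triangleright y$ in $\Core(A)$. Next observe that for any $t \in 2\cdot A$ the translation $\tau_t(x)=x+t$ is a quandle automorphism of $\Core(A)$ which preserves $Q$, because it shifts every coordinate by an even amount; in particular, since $h(0)$ lies in the orbit $2\cdot A'$, translation by $-h(0)$ is a quandle automorphism of $Q'$. Replacing $h$ by its composition with this translation alters neither the hypothesis that $h$ is a quandle isomorphism nor the conclusion $A \cong A'$, so I may assume $h(0)=0'$. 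Then for every $y \in Q$,
\[
h(2y)=h(0 \triangleright y)=h(0)\triangleright h(y)=0' \triangleright h(y)=2\,h(y),
\]
which is exactly the identity needed to conclude $\bar h(K)=K'$ and hence $A \cong A'$.
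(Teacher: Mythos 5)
Your forward direction is fine, and your identification of the key difficulty in the converse --- that the generators of $K$ refer to $2y$, which is not quandle-theoretically canonical, so one must somehow normalize $h$ at a basepoint --- is exactly the right diagnosis. But your resolution contains a genuine gap: the claim that ``$h(0)$ lies in the orbit $2\cdot A'$'' is unjustified and in fact false in general. A quandle isomorphism $h:\Core'(A) \to \Core'(A')$ carries orbits to orbits, but nothing forces it to carry the orbit $2A$ of $0$ to the orbit $2A'$ of $0'$; it may send $2A$ to a coset $1_j + 2A'$. For a concrete counterexample, take $A = A' = \mathbb{Z}_2 \oplus \mathbb{Z}_2$: then $\Core'(A) = \{(0,0),(1,0),(0,1)\}$ is the trivial three-element quandle (every element has order $2$, so $x \triangleright y = -x = x$), every permutation is a quandle automorphism, and the transposition $h$ exchanging $(0,0)$ and $(1,0)$ has $h(0) = (1,0) \notin 2A' = \{(0,0)\}$. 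Translation by $-h(0)$ then sends $(0,1)$ to $(1,1)$, which is not in $\Core'(A')$, so your $\tau_{-h(0)}$ is not an automorphism of $Q'$ and the normalization fails. (The quandle $\Core'(\mathbb{Z}_2 \oplus \mathbb{Z}_4)$ analyzed in Sec.\ \ref{tlink} gives another example: there is an automorphism interchanging the orbits $\{(0,0),(0,2)\}$ and $\{(1,0),(1,2)\}$.)

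The paper's proof meets this obstacle head-on rather than assuming it away. It composes $h$ with $\sigma(x) = x - h(0)$, accepting that the result $h' = \sigma h$ maps $Q$ onto a \emph{translate} $Q'' = \sigma(Q')$ that is generally a different subquandle of $\Core(A')$ from $Q'$; it checks that $Q''$ still generates $A'$ (using $2h(0) \in Q'$, hence $h(0) \in Q''$). Since $h'(0)=0$, the doubling identity $h'(2x)=2h'(x)$ holds just as in your computation, and one obtains a homomorphism $\mathbb{Z}^Q \to A'$ killing $K$ --- but because $Q''$ is not the characteristic subquandle, Proposition \ref{coresecprop} cannot be applied on the target side, so this yields only an \emph{epimorphism} $A \to A'$, not your isomorphism $\bar h(K)=K'$. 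The proof is then completed by producing the symmetric epimorphism $A' \to A$ and invoking the fact that paired epimorphisms between finitely generated $\mathbb{Z}$-modules force an isomorphism. To repair your argument you would need either this Hopfian-property detour or a separate (nontrivial, and not obviously true without further work) argument that some quandle isomorphism $Q \to Q'$ can always be chosen to carry the orbit $2A$ to the orbit $2A'$.
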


\begin{proof}
Let $A$ and $A'$ be finitely generated abelian groups. We use the notation established above for both $A$ and $A'$, with apostrophes where appropriate; for instance, $Q' = \Core'(A')$. If $A \cong A'$, then Definition \ref{charcore} makes it clear that $Q \cong Q'$. 

For the converse, suppose $h:Q \to Q'$ is a quandle isomorphism. Let $\sigma:A' \to A'$ be the function given by $\sigma(x)=x-h(0)$. Then $\sigma$ is an automorphism of $\Core(A')$, so the composition $h'=\sigma h$ maps $Q$ isomorphically onto a subquandle $Q''=\sigma(Q')$ of $\Core(A')$. Notice that according to Definition \ref{charcore}, $2x \in Q'$ $\forall x \in A'$. It follows that $Q''=\{y-h(0) \mid y \in Q'\}$ contains $2h(0)-h(0)=h(0)$. As $Q'$ generates $A'$, and the subgroup generated by $Q''$ includes $y=h(0)+y-h(0)$ for each $y \in Q'$, $Q''$ must also generate $A'$.

As $f':\mathbb Z ^{Q'} \to A'$ is surjective, for each $x \in Q$ we may choose an element  $\eta(x) \in \mathbb Z ^{Q'}$ with $f' \eta(x)=h'(x)$. Extending linearly, we obtain a homomorphism $\eta:\mathbb Z ^{Q} \to \mathbb Z ^{Q'}$ that has $f' \eta g=h': Q \to Q''$. The subset $Q''$ generates $A'$, and $h':Q \to Q''$ is surjective, so it follows that $f' \eta:\mathbb Z ^{Q} \to A'$ is an epimorphism.

As $h'(0)=h(0)-h(0)=0$ and $h':Q \to Q''$ is a quandle isomorphism, every $x \in Q$ has
\[
h'(2x)=h'(2x-0)=h'(0 \triangleright x)=h'(0) \triangleright h'(x)
=2h'(x)-h'(0)=2h'(x) .
\]
It follows that if $x,y \in Q$ then
\[
f' \eta (g(2y)-g(x)-g(x \triangleright y)) =h' (2y)-h' (x)- h' (x \triangleright y)
\]
\[
=  2 h'(y) - h' (x)- h'(x) \triangleright h'(y) =0.
\]
We deduce that $K \subseteq \ker (f' \eta)$. According to Proposition \ref{coresecprop}, $K = \ker f$; $f:\mathbb Z ^{Q} \to A$ is an epimorphism, so $f' \eta$ induces an epimorphism $A \to A'$. 

Interchanging the roles of $A$ and $A'$, we obtain an epimorphism $A' \to A$. As $A$ and $A'$ are finitely generated modules over the Noetherian ring $\mathbb Z$, such paired epimorphisms exist only if $A \cong A'$. \end{proof}

At this point we stop using $Q$ to denote $\Core'(A)$. 

Before proceeding, we discuss the relationship between orbits in core quandles and orbits in characteristic subquandles.
\begin{proposition}
\label{dischar}
 Consider a finitely generated abelian group,
\[
A = \mathbb{Z}^r
\oplus \mathbb{Z}_{2^{n_1}}
\oplus \dots \oplus \mathbb{Z}_{2^{n_k}} \oplus \mathbb{Z}_{m_1}
\oplus \dots \oplus \mathbb{Z}_{m_ \ell} \text{.}
\]
\begin{enumerate}
    \item The groups $\Dis(\Core(A))$ and $\Dis(\Core'(A))$ are isomorphic.
    \item Every orbit in $\Core'(A)$ is also an orbit in $\Core(A)$.
    \item There are $2^{r+k}$ orbits in $\Core(A)$.
    \item There are $r+k+1$ orbits in $\Core'(A)$.
\end{enumerate}
\end{proposition}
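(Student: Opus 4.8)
The plan is to reduce all four items to a single concrete description of the two displacement groups: I would show that both $\Dis(\Core(A))$ and $\Dis(\Core'(A))$ are nothing but the group of translations by the subgroup $2A \subseteq A$. The proof of Proposition \ref{Coreprop} already does the work for $\Core(A)$, since it exhibits every displacement as a map $x \mapsto 2a + x$ with $a \in A$, and conversely realizes every such map as a displacement, with the action faithful (only $2a=0$ fixes $0$); hence $\Dis(\Core(A)) \cong 2A \cong A/A(2)$, and by Proposition \ref{orb} the orbit of $x$ is the coset $x + 2A$. Since the discussion preceding this proposition already records that $2A$ has exactly $2^{r+k}$ cosets in $A$, item 3 is then immediate.

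Next I would treat $\Core'(A)$. Its elementary displacements are the restrictions of the maps $\beta_x\beta_y : z \mapsto 2(x-y) + z$ with $x,y \in \Core'(A)$, so they too are translations by elements of $2A$. The step I expect to be the crux is showing that these generate translation by \emph{all} of $2A$, not merely some proper subgroup. Here I would use that each standard generator $1_j$ lies in $\Core'(A)$ (for $j \le r+k$ it has exactly one odd coordinate, and for $j > r+k$ none), so taking $x = 1_j$, $y = 0$ makes translation by $2 \cdot 1_j$ a displacement of $\Core'(A)$; as the elements $2 \cdot 1_j$ generate $2A$, every translation by $2A$ is a displacement of $\Core'(A)$. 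Faithfulness is again checked at $0 \in \Core'(A)$, so $\Dis(\Core'(A)) \cong 2A \cong \Dis(\Core(A))$, which is item 1.

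Finally I would read off items 2 and 4. Adding an element of $2A$ leaves every coordinate parity unchanged and therefore preserves the defining condition of $\Core'(A)$, so $x + 2A \subseteq \Core'(A)$ for every $x \in \Core'(A)$. Combined with the previous paragraph and Proposition \ref{orb}, the orbit of $x$ in $\Core'(A)$ is the whole coset $x + 2A$, which is exactly an orbit of $\Core(A)$; this is item 2. Counting these orbits then amounts to counting the cosets of $2A$ that meet $\Core'(A)$, equivalently the admissible parity patterns on the first $r+k$ coordinates — all even, or exactly one odd — giving $1 + (r+k)$, which is item 4 and matches the count of cosets already noted in the text.

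The only genuinely nontrivial point is the generation claim for $\Dis(\Core'(A))$ in the second paragraph; once that is in hand, the rest is coset-and-parity bookkeeping built on Propositions \ref{Coreprop} and \ref{orb}.
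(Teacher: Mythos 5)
Your proof is correct and follows essentially the same route as the paper: both arguments hinge on realizing every translation by an element of $2A$ as a displacement of $\Core'(A)$ using the generators $1_j \in \Core'(A)$ (the paper packages this as surjectivity of the extension monomorphism $\Dis(\Core'(A)) \to \Dis(\Core(A))$, while you identify both groups with $2A$ outright), and both count orbits as cosets of $2A$ via parity patterns on the first $r+k$ coordinates. The generation step you flag as the crux is handled the same way in the paper, so there is no substantive difference.
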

\begin{proof}
As $\Core(A)$ is semiregular, its subquandle $\Core'(A)$ is semiregular too. It follows that there is a well-defined monomorphism $\text{ext}:\Dis(\Core'(A)) \to \Dis(\Core(A))$, with $\text{ext}(\beta_{x_1} \cdots \beta_{x_{2n}}) =\beta_{x_1} \cdots \beta_{x_{2n}}$ $\forall x_1, \dots, x_{2n} \in \Core'(A)$.

To verify the first assertion, we prove that ext is surjective. Suppose $d \in \Dis(\Core(A))$ is an elementary displacement. Then there are $x_1, x_2 \in A$ such that $d(x) = \beta_{x_1} \beta_{x_2}(x)=2x_1 -2x_2 + x$ $ \forall x \in A$. Choose integers $m_1, \dots, m_{r+k+\ell}$ such that 
\[
x_1-x_2=\sum_{j=1}^{r+k+\ell} (-1)^{j+1} m_j \cdot 1_j \text{,}
\]
and notice that $m_j \cdot 1_j \in \Core'(A)$ for each index $j$. If $r+k+\ell$ is even, then 
\[
d'=\beta_{(m_1 \cdot 1_1)} \cdots \beta_{(m_{r+k+\ell} \cdot 1_{r+k+\ell})} \in \Dis(\Core'(A)) \text{,}
\]
and $\text{ext}(d')(x)=d(x)$ $\forall x \in A$. If $r+k+\ell$ is odd then as $0 \in \Core'(A)$,
\[
d'=\beta_{(m_1 \cdot 1_1)} \cdots \beta_{(m_{r+k+\ell} \cdot 1_{r+k+\ell})} \beta_0 \in \Dis(\Core'(A)) \text{,}
\]
and $\text{ext}(d')(x)=d(x)$ $\forall x \in A$. Either way, $d=\text{ext}(d')$. The elementary displacements generate $\Dis(\Core(A))$, so the first assertion holds. The second assertion follows from the surjectivity of ext and Proposition \ref{orb}.

For the third assertion, notice that for each $j \in \{1,\dots,r+k+\ell\}$, $d_j=\beta_{1_j}\beta_0 \in \Dis(\Core(A))$ and  $d'_j=\beta_{(-1_j)}\beta_0 \in \Dis(\Core(A))$. It follows that if $x \in A$ then $d_j(x)=2 \cdot 1_j +x$ and $d'_j(x)=-2 \cdot 1_j+x$ are both elements of the orbit of $x$ in $\Core(A)$. Applying these displacements $d_j$ and $d'_j$ repeatedly, we see that every orbit in $\Core(A)$ includes an element $(y_1,\dots,y_{r+k},0,\dots,0)$ such that $y_1,\dots,y_{r+k} \in \{0,1\}$. It is easy to see that no two such elements appear in the same orbit in $\Core(A)$; this implies the third assertion. The fourth assertion follows from the fact that $\Core'(A)$ contains precisely $r+k+1$ of the elements $(y_1,\dots,y_{r+k},0,\dots,0)$ with $y_1,\dots,y_{r+k} \in \{0,1\}$.
\end{proof}

Combining Proposition \ref{dischar} with earlier results, we come to the conclusion that the quandles $\textup{\Core}'(\ker w_ \nu)$ and $Q_A(L)_\nu$ are very closely related to each other.

\begin{proposition}
\label{closeq} 
If $L$ is a link, the following statements hold.
\begin{enumerate}
    \item $\textup{Core}'(\ker w_ \nu)$ has $\mu$ orbits, each of which is also an orbit of $\textup{Core}(\ker w_ \nu)$.
     \item The displacement groups of $\textup{Core}'(\ker w_ \nu)$ and $Q_A(L)_\nu$ are isomorphic.
     \item $Q_A(L)_\nu$ is isomorphic to a subquandle $Q' \subseteq \Core(\ker w_ \nu)$, which satisfies item 1.
     \item $|\Core'(\ker w_ \nu)| = |Q_A(L)_\nu|$.
     \item If $\mu \leq 3$, then $\textup{Core}'(\ker w_ \nu) \cong Q_A(L)_\nu$.
\end{enumerate}
\end{proposition}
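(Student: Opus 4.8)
The plan is to dispatch items 1--4 by assembling results already in hand, writing $A=\ker w_\nu$ and recalling from Corollary~\ref{kstruc} that the 2-rank of $A$ is $(r-1)+k=\mu-1$. Item 1 is then immediate from Proposition~\ref{dischar}: part~(4) gives $(\mu-1)+1=\mu$ orbits in $\Core'(A)$, and part~(2) says each is an orbit of $\Core(A)$. For item 2 I would chain three identifications: $\Dis(\Core'(A))\cong\Dis(\Core(A))$ by Proposition~\ref{dischar}(1), $\Dis(\Core(A))\cong A/A(2)$ by Proposition~\ref{Coreprop}, and $\Dis(Q_A(L)_\nu)\cong\ker\phi_\nu=2\cdot A$ by Proposition~\ref{imq'dis} together with Lemma~\ref{kerphi}; since the doubling map $x\mapsto 2x$ gives $A/A(2)\cong 2\cdot A$, the two displacement groups agree.

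For item 3, the proof of Theorem~\ref{main1}(3) already supplies an injective quandle map $g:Q_A(L)_\nu\to\Core(A)$ with $g(x)=x-s_D(a^*)$; I would set $Q'=g(Q_A(L)_\nu)\cong Q_A(L)_\nu$. Using Proposition~\ref{imq'orb} to present each orbit of $Q_A(L)_\nu$ as a fiber of $\phi_\nu$, and Lemma~\ref{kerphi} to rewrite $\ker\phi_\nu=2\cdot A$, I would check that $g$ carries the $K_i$-orbit onto a coset of $2\cdot A$; as the orbits of $\Core(A)$ are exactly the cosets of $2\cdot A$, this shows $Q'$ satisfies item 1. Item 4 is then a count: both $\Core'(A)$ (by item 1) and $Q_A(L)_\nu$ (by the coset description in the proof of Proposition~\ref{imq'}) are unions of $\mu$ cosets of $2\cdot A=\ker\phi_\nu$, so their cardinalities coincide.

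The crux is item 5. When $\mu\le 2$ the 2-rank is at most $1$, so $\Core'(A)=\Core(A)$ by the remark after Definition~\ref{charcore}, while Theorem~\ref{main1}(3) makes $g$ surjective; hence $Q_A(L)_\nu\cong\Core(A)=\Core'(A)$. The genuine case is $\mu=3$, where the 2-rank is $2$, so $V:=A/2\cdot A$ has exactly four elements and $\Core(A)$ has four orbits; by items 1 and 3, each of $\Core'(A)$ and $Q'$ is the preimage in $A$ of a three-element subset of $V$. Here I would use that for fixed $t\in A$ the translation $\tau_t(x)=x+t$ is a quandle automorphism of $\Core(A)$, since $(x+t)\triangleright(y+t)=(x\triangleright y)+t$, acting on $V$ as translation by $\bar t$. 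A three-element subset of the four-element $V$ is the complement of one point, and translations act transitively on points, hence on their complements: if $Q'$ omits the coset $w_1$ and $\Core'(A)$ omits $w_2$, then any $t$ with $\bar t=w_1+w_2$ makes $\tau_t$ carry $Q'$ onto $\Core'(A)$, and restricting $\tau_t$ gives $Q_A(L)_\nu\cong Q'\cong\Core'(A)$. I expect this to be the main obstacle: the tempting route is to realize the change of basis of $V$ by a group automorphism of $A$, which need not exist (for instance, when $A\cong\mathbb Z\oplus\mathbb Z_4$ no automorphism interchanges the two $\mathbb F_2$-generators), whereas translations are always available as automorphisms of $\Core(A)$ and already act transitively on the three-element subsets of the four-point set $V$.
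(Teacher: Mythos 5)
Your proposal is correct and follows essentially the same route as the paper: items 1--4 are assembled from Propositions \ref{dischar}, \ref{Coreprop}, \ref{imq'dis}, \ref{imq'orb} and Lemma \ref{kerphi} exactly as in the text, and your item 5 argument via the translation automorphisms $x \mapsto x+t$ of $\Core(\ker w_\nu)$ acting transitively on three-orbit unions is precisely the paper's ``$x \mapsto x - q^*$'' argument, just spelled out. The only cosmetic difference is that you count $|\Core'(\ker w_\nu)|$ and $|Q_A(L)_\nu|$ as unions of $\mu$ cosets of $2\cdot\ker w_\nu$ rather than via (number of orbits) $\times$ (size of displacement group); both are valid.
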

\begin{proof}
For item 1, refer to Proposition \ref{dischar}.

For item 2, Proposition \ref{dischar} tells us $\Dis(\Core'(\ker w_ \nu)) \cong \Dis(\Core(\ker w_ \nu))$, and Proposition \ref{Coreprop} tells us $\Dis(\Core(\ker w_ \nu)) \cong \ker w_ \nu/ \ker w_ \nu(2)$. There is a natural surjection $\ker w_ \nu \to 2 \cdot \ker w_ \nu$, defined by $x \mapsto 2x$. It is obvious that the kernel of this surjection is $\ker w_ \nu(2)$, so $\ker w_ \nu/ \ker w_ \nu(2) \cong 2 \cdot\ker w_ \nu$. Lemma \ref{kerphi} tells us $2 \cdot\ker w_ \nu = \ker \phi_\nu$, and Proposition \ref{imq'dis} tells us $\ker \phi_\nu \cong \Dis(Q_A(L)_\nu)$.

For item 3, let $D$ be a diagram of $L$, pick any arc $a^* \in A(D)$, and let $Q'=\{ y - s_D(a^*) \mid y \in Q_A(L)_\nu \}$. The function $g(y) = y - s_D(a^*)$ defines a bijection between the subquandles $Q_A(L)_\nu$ and $Q'$ of $\textup{Core}(M_A(L))_\nu$, and this bijection is a quandle isomorphism because for any $y_1,y_2 \in Q_A(L)_\nu$,
\[
g(y_1 \triangleright y_2)=g(2y_2-y_1)=2y_2-y_1- s_D(a^*) 
\]
\[
= 2(y_2 - s_D(a^*)) - (y_1 - s_D(a^*)) = g(y_1) \triangleright g(y_2).
\]
The fact that $Q'$ has $\mu$ orbits follows from Corollary \ref{imq'orb}. 

It remains to verify that for every $q \in Q'$, the orbits of $q$ in $\textup{Core}(\ker w_ \nu)$ and $Q'$ are the same. According to Proposition \ref{orb}, if $q \in Q'$ then the orbit of $q$ in $Q'$ is $\{d(q) \mid d \in \Dis(Q') \}$. As $g:Q_A(L)_\nu \to Q'$ is an isomorphism, Proposition \ref{imq'dis} implies that $\Dis(Q')$ is the set of compositions $g \circ \delta(k) \circ g^{-1}$ such that $k \in \ker \phi_ \nu$. It follows that if $q = g(y) \in Q'$ then the orbit of $q$ in $Q'$ is the set of all elements
\[
(g \circ \delta(k) \circ g^{-1})(q) = g (\delta(k)(y)) = g(k+y) = k+y - s_D(a^*) =k+q
\]
such that $k \in \ker \phi_ \nu$.

On the other hand, if $q \in Q'$ then the orbit of $q$ in $\textup{Core}(\ker w_ \nu)$ is $\{d(q) \mid d \in \Dis(\Core(\ker w_ \nu)) \}$. Proposition \ref{Coreprop} tells us that every displacement of $\textup{Core}(\ker w_ \nu)$ is $f(y)$ for some $y \in \ker w_ \nu$, where $f(y)(x) = \beta_y \beta_0(x) = 2y+x$ $\forall x \in \ker w_ \nu$. It follows that the orbit of $q$ in $\textup{Core}(\ker w_ \nu)$ is the set of all elements $2y + q$ such that $y \in \ker w_ \nu$. Lemma \ref{kerphi} tells us that $2 \cdot \ker w_ \nu = \ker \phi _ \nu$, so the orbit of $q$ in $\textup{Core}(\ker w_ \nu)$ is the same as the orbit of $q$ in $Q'$.

Item 4 follows from items 1, 2 and 3, as the cardinality of a semiregular involutory medial quandle is the product (number of orbits) $\times$ (size of displacement group).

For item 5, consider that Proposition \ref{dischar} and Corollary \ref{kstruc} tell us that $\textup{Core}(\ker w_ \nu)$ has $2^{\mu-1}$ orbits. If $\mu=1$ or $\mu=2$, then $2^{\mu-1}=\mu$ and items 1 and 3 imply that $\textup{Core}'(\ker w_ \nu)=Q'=\Core(\ker w_ \nu)$.

If $\mu=3$ then according to Corollary \ref{kstruc}, there is an isomorphism
\[
\ker w_{\nu} \cong \mathbb{Z}^{r-1}
\oplus \mathbb{Z}_{2^{n_1}}
\oplus \dots \oplus \mathbb{Z}_{2^{n_k}} \oplus B \text{,}
\]
with $r+k=3$ and $B$ a finite group of odd order. We can use such an isomorphism to think of elements of $\ker w_ \nu$ as $3$-tuples $(x_1,x_2,x_3)$, with $x_3 \in B$. As discussed in Proposition \ref{dischar}, $\textup{Core}(\ker w_ \nu)$ has four orbits, the cosets of the elements $(0,0,0),(1,0,0),(0,1,0)$ and $(1,1,0)$ with respect to the subgroup $2 \cdot \ker w_ \nu$ of $\ker w_ \nu$.

For any fixed element $q^*$ of $\textup{Core}(\ker w_ \nu)$, the map $x \mapsto x-q^*$ is an automorphism of $\textup{Core}(\ker w_ \nu)$. It is easy to see that for any two sets of three orbits in $\textup{Core}(\ker w_ \nu)$, one of these automorphisms maps the union of the first three orbits onto the union of the second three orbits. As $\textup{Core}'(\ker w_ \nu)$ and $Q'$ are both unions of three of the four orbits in $\textup{Core}(\ker w_ \nu)$, it follows that $\textup{Core}'(\ker w_ \nu) \cong Q'$. \end{proof}

With Proposition \ref{closeq}, we complete the proofs of the positive assertions in Theorem \ref{main2}. The general implication $3 \implies 4$ follows from Theorem \ref{imqmod}, and the fact that the converse holds when $\mu \leq 3$ follows from item 5 of Proposition \ref{closeq}. The equivalence $2 \iff 3$ follows from item 2 of Theorem \ref{main1}. Theorem \ref{imqmod2} gives us the general implication $1 \implies 3$, and of course $1 \implies 2$ follows from this, as $2$ and $3$ are equivalent. Item 1 of Theorem \ref{main1} gives us the converse of $1 \implies 2$ when $\mu=1$, or $\mu=2$ and $\det L \neq 0$.

In the next sections we show that when $\mu>3$, it is not always true that $\textup{Core}'(\ker w_ \nu)$ and $Q_A(L)_\nu$ are isomorphic. First, though, we need to explain a way to distinguish these quandles from each other.

\section{The quandles $\textup{Core}'(\ker w_\nu)$ and $Q_A(L)_\nu$}
\label{proof3}
Proposition \ref{closeq} tells us that the quandles $\textup{Core}'(\ker w_\nu)$ and $Q_A(L)_\nu$ are closely related to each other, and the results of Secs. \ref{proof1} and \ref{coresec} tell us that both of these quandles provide presentations of the group $M_A(L)_\nu$. With these similarities in mind, it seems reasonable to guess that $\textup{Core}'(\ker w_\nu)$ and $Q_A(L)_\nu$ are always isomorphic. (In early versions of this work, we mistakenly asserted that this is the case.) But it turns out that despite their many similarities, the two quandles are distinct from each other. The purpose of this section is to explain this point.

Lemma \ref{mstruc} tells us that
\begin{equation}
\label{endsum}
M_A(L)_{\nu} \cong \mathbb{Z}^r
\oplus \mathbb{Z}_{2^{n_1}}
\oplus \dots \oplus \mathbb{Z}_{2^{n_{(\mu-r)}}} \oplus B \text ,
\end{equation}
where $r \in \{1,\dots,\mu \}$, $n_1, \dots, n_{(\mu -r)} \geq 1$ and $B$ is an abelian group of odd order. (If $\mu=r$ then the $\mathbb Z _{2^{n_i}}$ summands are absent.) We can use (\ref{endsum}) to represent elements of $M_A(L)_{\nu}$ as $(\mu+1)$-tuples $(x_1, \dots, x_{\mu+1})$, with $x_1, \dots, x_{r} \in \mathbb Z$ and $x_{\mu+1} \in B$. It is apparent that there is an epimorphism $M_A(L)_{\nu} \to A_ \mu $ defined by $(x_1, \dots, x_{\mu+1}) \mapsto (x_1, \overline x _2, \dots , \overline x_ {\mu})$, where the overline denotes reduction modulo $2$. We say that this epimorphism is \emph{obtained directly} from (\ref{endsum}).

\begin{proposition}
\label{kertchar}
The characteristic subquandle $\textup{Core}'(\ker w_{\nu})$ is isomorphic to $Q_A(L)_\nu$ if, and only if, there is some way to index the components of $L=K_1 \cup \dots \cup K_ \mu$ so that $\phi_ \nu$ is obtained directly from a direct sum (\ref{endsum}).
\end{proposition}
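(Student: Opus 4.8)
The plan is to reduce the statement to a comparison of two unions of orbits inside a single core quandle, and to isolate one quandle-intrinsic invariant that detects the ``obtained directly'' condition. First I would set $A=\ker w_\nu$ and use item 3 of Proposition \ref{closeq} to replace $Q_A(L)_\nu$ by the isomorphic subquandle $Q'\subseteq\textup{Core}(A)$, which is a union of $\mu$ orbits of $\textup{Core}(A)$; by Proposition \ref{dischar}, $\textup{Core}'(A)$ is likewise a union of $\mu$ orbits. By item 2 of Proposition \ref{closeq} and Proposition \ref{dischar}, both quandles have displacement group isomorphic to $2A\cong\ker\phi_\nu$, and any quandle isomorphism between them induces an isomorphism of displacement groups, hence of $\Dis/2\Dis\cong 2A/4A$.

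For the direction $\Leftarrow$, suppose that after some indexing $\phi_\nu$ is obtained directly from a decomposition (\ref{endsum}). Then $w_\nu$ is the first coordinate (Lemma \ref{tepi}), so $\ker w_\nu$ is the subgroup of elements with vanishing first coordinate, and the projection $\pi:M_A(L)_\nu\to\ker w_\nu$ deleting the first $\mathbb Z$-summand is a group homomorphism, hence a quandle map of core quandles. Since $w_\nu\equiv 1$ on $Q_A(L)_\nu$, the map $\pi$ is injective there, and because $\phi_\nu$ is literal coordinate reduction one checks that $\pi(Q_A(L)_\nu)$ is exactly the set of elements of $\ker w_\nu$ having at most one odd coordinate among the first $\mu-1$, that is, $\pi$ restricts to an isomorphism $Q_A(L)_\nu\to\textup{Core}'(\ker w_\nu)$ (Definition \ref{charcore}). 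This direction is routine.

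For $\Rightarrow$ I would attach to each orbit $O$, relative to a fixed reference orbit $O_0$ with chosen points $p\in O$ and $p_0\in O_0$, the class $\Gamma(O)=[\beta_p\beta_{p_0}]\in\Dis/2\Dis$. Here $\beta_p\beta_{p_0}$ is translation by $2(p-p_0)$, and replacing $p$ by another point of $O$ changes this by an element of $2\Dis$, so $\Gamma(O)$ is well defined and is preserved, up to the induced automorphism of $\Dis/2\Dis$ and the choice of $O_0$, by any quandle isomorphism. Identifying $\Dis/2\Dis$ with $2A/4A$, a direct computation shows that for $\textup{Core}'(\ker w_\nu)$ the nonzero values $\Gamma(O)=[2\cdot 1_j]$ are precisely a basis of $2A/4A$ (the generators of the free summands and of the $\mathbb Z_{2^{n_i}}$ with $n_i\geq 2$), while $O_0$ and the $\mathbb Z_2$-summands give $\Gamma=0$. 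An isomorphism $\textup{Core}'(\ker w_\nu)\cong Q'$ carries this family to the corresponding family of $Q'$ by a \emph{linear} isomorphism of $2A/4A$, so the $Q'$-values $\Gamma(O)=[2q_j]$, where $q_j\in\ker w_\nu$ lifts the $j$-th component direction in $A/2A$, must also have their nonzero members forming a basis, with the correct number of zeros.

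The last step converts this into a decomposition: the images $\bar q_j$ already form a basis of $A/2A$, and the basis condition on $[2q_j]$ says exactly that this basis is compatible with the doubling map $A/2A\to 2A/4A$; after adjusting each $q_j$ by an element of $2A$, the ``zero'' ones become honest order-$2$ elements and the rest map to a basis of $2A/4A$, so by the structure theorem $\{q_j\}$ completes to a direct-sum decomposition of $\ker w_\nu$ into cyclic $2$-power summands plus an odd summand $B$; adjoining the infinite cyclic summand generated by $s_D(a^*)$ (Lemma \ref{tepi}) yields (\ref{endsum}), and indexing component $i>1$ by $q_{i-1}$ makes $\phi_\nu$ literal coordinate reduction. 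The main obstacle is twofold: establishing that $\Gamma$ is a genuine invariant and computing it honestly for $\textup{Core}'(\ker w_\nu)$ (tracking exactly which summands double to $0$ in $2A/4A$), and the reconstruction, whose delicate point is forcing the \emph{orders} of the cyclic summands generated by the $q_j$ to be correct so that $\overline{\phi_\nu}$ is coordinate reduction and not merely surjective. This is precisely where a generic, non-direct $\phi_\nu$ fails, and it is what makes the two quandles distinct once $\mu>3$.
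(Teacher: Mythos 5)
Your backward direction is correct and is essentially the paper's (the paper subtracts $s_D(a^*)$ where you delete the first coordinate; these are interchangeable). The forward direction has a genuine gap: the invariant $\Gamma(O)=[\beta_p\beta_{p_0}]\in\Dis/2\Dis\cong 2A/4A$ is too coarse, and the step ``so by the structure theorem $\{q_j\}$ completes to a direct-sum decomposition'' is false. First, even in purely finite groups the lifting fails: in $A=\mathbb Z_4\oplus\mathbb Z_8$ the classes of $(1,1)$ and $(0,1)$ form a basis of $A/2A$ whose doubles form a basis of $2A/4A$, yet every element of either coset has order $8$, so no choice of representatives generates a $\mathbb Z_4\oplus\mathbb Z_8$ decomposition. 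Second, and decisively, the paper's own link $L$ of Sec.\ \ref{llink} satisfies your necessary condition while the conclusion fails: there $\ker w_\nu\cong\mathbb Z\oplus\mathbb Z_8\oplus\mathbb Z_8$, the orbit cosets of $Q'$ are $0$, $\bar x$, $\overline{x+y}$, $\bar z$, and the $\Gamma$-values $[2x]$, $[2x+2y]$, $[2z]$ form a basis of $2A/4A$ with the correct number (namely zero) of vanishing values --- exactly the profile of $\Core'(\ker w_\nu)$ --- even though $Q_A(L)_\nu\not\cong\Core'(\ker w_\nu)$. The obstruction there is invisible to $2A/4A$: the cosets $\bar x+2A$ and $\overline{x+y}+2A$ consist entirely of infinite-order elements, while a decomposition of type $\mathbb Z\oplus\mathbb Z_8\oplus\mathbb Z_8$ can place only one generator in such a coset. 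So the data you retain does not determine the orders of the cyclic subgroups generated by coset representatives, which is precisely the point you flag as ``delicate'' but do not resolve.

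To repair the forward direction you must use the whole quandle isomorphism, not its shadow on $\Dis/2\Dis$. The route consonant with the paper is: compose $h:\Core'(\ker w_\nu)\to Q'$ with a translation so that $h(0)=0$ (this is where re-choosing the reference orbit, i.e.\ which component plays the role of $K_1$, genuinely matters --- the $\mathbb Z_4\oplus\mathbb Z_8$ configuration above is rescued only by moving the base point); then invoke the presentation results of Secs.\ \ref{proof1} and \ref{coresec}, as in the proof of Proposition \ref{class}, to extend $h$ to a group automorphism of $\ker w_\nu$. That automorphism carries the standard decomposition (\ref{endsum2}) to one whose generators lie in the nonzero orbits of $Q'$, and adjoining the summand $\langle s_D(a^*)\rangle$ then exhibits $\phi_\nu$ as coordinate reduction for the corresponding indexing.
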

\begin{proof}
Suppose $\textup{Core}'(\ker w_{\nu}) \cong Q_A(L)_\nu$. As noted after Lemma \ref{mstruc}, 
\begin{equation}
\label{endsum2}
\ker w_{\nu} \cong \mathbb Z ^{r-1}  \oplus \mathbb Z_{2^{n_1}} \oplus \cdots \oplus \mathbb Z_{2^{n_{(\mu - r)}}} \oplus B
\end{equation}
where $r \in \{1,\dots,\mu \}$, $n_1, \dots, n_{(\mu -r)} \geq 1$ and $B$ is an abelian group of odd order. We can use (\ref{endsum2}) to think of elements of $\ker w _ \nu$ as $\mu$-tuples $(y_1, \dots, y_{\mu})$, with $y_1, \dots, y_{r-1} \in \mathbb Z$ and $y_ \mu \in B$. For $1 \leq j \leq \mu -1$, let $1_j=(0, \dots,0,1,0, \dots, 0)$, with $1$ in the $j$th coordinate. Then one orbit of $\textup{Core}'(\ker w_{\nu})$ contains $0$, and each of the other orbits of $\textup{Core}'(\ker w_{\nu})$ contains precisely one element $1_j$. 

As $Q_A(L)_\nu$ has one orbit for each component of $L$, an isomorphism between $Q_A(L)_\nu$ and $\textup{Core}'(\ker w_{\nu})$ will provide a correspondence between the components $K_1, \dots ,K_{\mu}$ and the orbits of $\textup{Core}'(\ker w_{\nu})$. Reindex $K_1, \dots ,K_{\mu}$ so that $K_1$ corresponds to the orbit of $\textup{Core}'(\ker w_{\nu})$ that contains $0$, and for $2 \leq j \leq \mu$, $K_j$ corresponds to the orbit of $\textup{Core}'(\ker w_{\nu})$ that contains $1_{(j-1)}$. 

If $a^* \in A(D)$ has $\kappa_D(a^*)=1$, then Lemma \ref{mstruc} tells us that $\mathbb Z \oplus \ker w_{\nu} \cong M_A(L) _ \nu$, with $(n,x) \in \mathbb Z \oplus \ker w_{\nu}$ corresponding to $n s_D(a^*) + x \in M_A(L) _ \nu$. Therefore (\ref{endsum2}) provides a direct sum representation of $M_A(L) _ \nu$ in the obvious way, by attaching a $\mathbb Z$ summand at the front, and the map $\phi _\nu$ is obtained directly from this direct sum representation.

For the converse, suppose $\phi _ \nu$ is obtained directly from (\ref{endsum}). If we use (\ref{endsum}) to represent elements of $M_A(L)_ \nu$ as $(\mu+1)$-tuples, then an element $x =(x_1, \dots, \allowbreak x_{\mu+1})  \in M_A(L)_{\nu}$ is included in $\ker w_{\nu}$ if and only if $x_1=0$. Hence (\ref{endsum}) yields a direct sum representation of $\ker w _ \nu$, by suppressing the first $\mathbb Z$ summand. Therefore $\textup{Core}'(\ker w_{\nu})$ is isomorphic to the following subquandle of $\textup{Core}(M_A(L)_ \nu)$:
\[
S=\{(x_1, \dots, x_{\mu+1}) \mid x_1 = 0 \text{ and no more than one of }x_2, \dots, x_{\mu} \text{  is odd} \}.
\]

Let $D$ be a diagram of $L$, and $a^*$ a fixed arc of $D$ with $\kappa_D(a^*)=1$. According to Definition \ref{coreprime}, $Q_A(L)_\nu=\phi_{\nu}^{-1}(\phi_{\nu}(s_D(A(D))))$. It is obvious that $Q_A(L)_\nu$ is isomorphic, as a subquandle of $\textup{Core}(M_A(L)_{\nu})$, to $\{z-s_D(a^*) \mid z \in Q_A(L)_\nu\}$. As $\phi _ \nu$ is obtained directly from (\ref{endsum}), the latter subquandle is precisely the same as the subquandle $S$ mentioned at the end of the previous paragraph. \end{proof}

In early versions of this paper, we made the mistake of assuming that the indexing requirement of Proposition \ref{kertchar} can always be satisfied. But this assumption is not justified: for the link $L$ discussed in Subsection \ref{llink} below, $\phi_\nu$ cannot be obtained directly from any direct sum decomposition of $M_A(L)_\nu$. 

\section{Five examples}
\label{sec:fiveexamples}

In this section, we present two pairs of examples to illustrate the failure of the converse of the implication $3 \implies 4$ of Theorem \ref{main2}. First, we mention an example to verify a point mentioned in Sec.\ \ref{proof1}: re-indexing the components of a link can produce a new link that is not $\phi_\nu$-equivalent to the original.

\subsection{The link $T_{(2,2)} \# T_{(2,4)}$}
\label{tlink}

The connected sum $T_{(2,2)} \# T_{(2,4)}$ of a Hopf link and a $(2,4)$-torus link has $\ker w_\nu \cong \mathbb Z_2 \oplus \mathbb Z _4$. (We leave the easy calculation to the reader.) It follows from Proposition \ref{closeq} that $Q_A(T_{(2,2)}\# T_{(2,4)})_\nu$ is isomorphic to
\[
\Core'(\mathbb Z_2 \oplus \mathbb Z _4) = \{(0,0),(0,2),(1,0),(1,2),(0,1),(0,3)\} \subset \mathbb Z_2 \oplus \mathbb Z _4 \text{.}
\]

The quandle $\Core'(\mathbb Z_2 \oplus \mathbb Z _4)$ has three orbits: $\{(0,0),(0,2)\}$, $\{(1,0),(1,2)\}$ and $\{(0,1), (0,3)\}$. As predicted by Proposition \ref{invorb}, the orbits are isomorphic to each other as separate quandles. But the orbits are not equivalent to each other within $\Core'(\mathbb Z_2 \oplus \mathbb Z _4)$: each of the translations $\beta_{(0,0)}$, $\beta_{(0,2)}$, $\beta_{(1,0)}$, and $\beta_{(1,2)}$ has four fixed points, but $\beta_{(0,1)}$ and $\beta_{(0,3)}$ have only two fixed points. (This happens because $2 \cdot (0,0)=2 \cdot (0,2)=2 \cdot (1,0)=2 \cdot (1,2)=(0,0)$, while $2 \cdot (0,1)=2 \cdot (0,3)=(0,2).$) It follows that the orbit $\{(0,1),(0,3)\}$ is preserved by every automorphism of $\Core'(\mathbb Z_2 \oplus \mathbb Z _4)$, so the component of $T_{(2,2)} \# T_{(2,4)}$ corresponding to the orbit $\{(0,1),(0,3)\}$ is singled out by the structure of $Q_A(T_{(2,2)} \# T_{(2,4)})_\nu$.

Therefore, if we index the components of $T_{(2,2)} \# T_{(2,4)}$ in such a way that the singled-out component is $K_1$, we obtain a link that is not $\phi_\nu$-equivalent to the result of indexing the components of $T_{(2,2)} \# T_{(2,4)}$ in such a way that the singled-out component is $K_2$.

\subsection {A four-component link}
\label{llink}

Let $L$ be the link represented by the diagram $E$ pictured in Fig.\ \ref{lfig}. 

We obtain a description of $M_A(L)_ \nu$ by using the crossings not marked with $*$ to eliminate the generators other than $s_E(a)$, $s_E(b)$, $s_E(d)$ and $s_E(j)$.
\begingroup
\allowdisplaybreaks
\begin{align*}
  &  s_E(c)=2s_E(a)-s_E(b) \\
  &  s_E(g)=2s_E(d)-s_E(a) \\
  &  s_E(e)=2s_E(c)-s_E(d) = 4s_E(a)-2s_E(b)-s_E(d) \\
  &  s_E(f)=2s_E(e)-s_E(a)=7s_E(a)-4s_E(b)-2s_E(d) \\
  &  s_E(h)=2s_E(g)-s_E(c) = -4s_E(a)+s_E(b)+4s_E(d) \\
  &  s_E(i)=2s_E(f)-s_E(j) = 14s_E(a)-8s_E(b)-4s_E(d)-s_E(j) \\
  &  s_E(k)=2s_E(j)-s_E(f) = -7s_E(a)+4s_E(b)+2s_E(d)+2s_E(j) \\
  &  s_E(m)=2s_E(j)-s_E(g) = s_E(a)-2s_E(d)+2s_E(j) \\
  &  s_E(n)=2s_E(m)-s_E(h) = 6s_E(a)-s_E(b)-8s_E(d)+4s_E(j) \\
  &  s_E(p)=2s_E(e)-s_E(k) = 15s_E(a)-8s_E(b)-4s_E(d)-2s_E(j)
\end{align*}
\endgroup
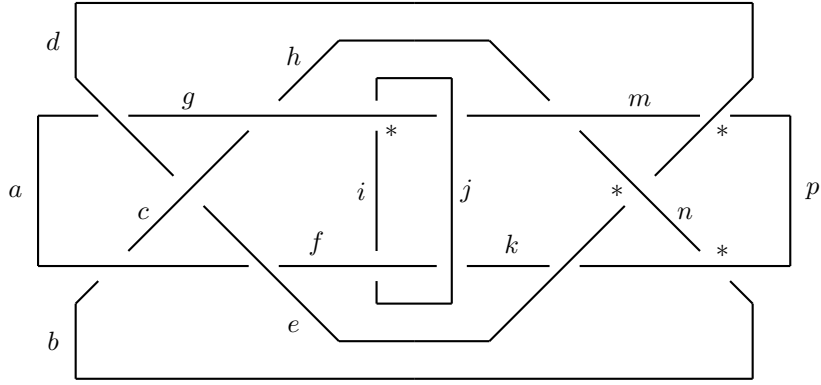
\begin{figure} [bht]
\centering
\begin{tikzpicture}
%left
\draw [thick] (-5,-1) -- (-5,1);
\draw [thick] (-5,-1) -- (-2.2,-1);
\draw [thick] (-4.2,1) -- (-5,1);
\draw [thick] (-4.5,1.5) -- (-3.2,0.2);
\draw [thick] (-2.8,-0.2) -- (-1,-2);
\draw [thick] (0,-2) -- (-1,-2);
\draw [thick] (-3.8,-0.8) -- (-2.2,0.8);
\draw [thick] (-3.8,1) -- (0.3,1);
\draw [thick] (-1.8,-1) -- (0.3,-1);
\draw [thick] (-4.5,-1.5) -- (-4.2,-1.2);
\draw [thick] (-4.5,-1.5) -- (-4.5,-2.5);
\draw [thick] (0,-2.5) -- (-4.5,-2.5);
\draw [thick] (-4.5,1.5) -- (-4.5,2.5);
\draw [thick] (0,2.5) -- (-4.5,2.5);
\draw [thick] (-1.8,1.2) -- (-1,2);
\draw [thick] (0,2) -- (-1,2);
%middle
\draw [thick] (0.5,1.5) -- (0.5,-1.5);
\draw [thick] (0.5,1.5) -- (-0.5,1.5);
\draw [thick] (0.5,-1.5) -- (-0.5,-1.5);
\draw [thick] (-0.5,1.5) -- (-0.5,1.2);
\draw [thick] (-0.5,-1.5) -- (-0.5,-1.2);
\draw [thick] (-0.5,-0.8) -- (-0.5,0.8);
%right
\draw [thick] (5,-1) -- (5,1);
\draw [thick] (5,-1) -- (2.2,-1);
\draw [thick] (4.2,1) -- (5,1);
\draw [thick] (4.5,1.5) -- (3.2,0.2);
\draw [thick] (2.8,-0.2) -- (1,-2);
\draw [thick] (0,-2) -- (1,-2);
\draw [thick] (3.8,-0.8) -- (2.2,0.8);
\draw [thick] (3.8,1) -- (0.7,1);
\draw [thick] (1.8,-1) -- (0.7,-1);
\draw [thick] (4.5,-1.5) -- (4.2,-1.2);
\draw [thick] (4.5,-1.5) -- (4.5,-2.5);
\draw [thick] (0,-2.5) -- (4.5,-2.5);
\draw [thick] (4.5,1.5) -- (4.5,2.5);
\draw [thick] (0,2.5) -- (4.5,2.5);
\draw [thick] (1.8,1.2) -- (1,2);
\draw [thick] (0,2) -- (1,2);
\node at (-5.3,0) {$a$};
\node at (-4.8,-2) {$b$};
\node at (-3.6,-0.3) {$c$};
\node at (-4.8,2) {$d$};
\node at (-1.6,-1.8) {$e$};
\node at (-1.3,-0.7) {$f$};
\node at (-3,1.2) {$g$};
\node at (-1.6,1.8) {$h$};
\node at (-0.7,0) {$i$};
\node at (0.7,0) {$j$};
\node at (1.3,-0.7) {$k$};
\node at (3,1.2) {$m$};
\node at (3.6,-0.3) {$n$};
\node at (5.3,0) {$p$};
\node at (-0.3,0.8) {$*$};
\node at (4.1,-0.8) {$*$};
\node at (4.1,0.8) {$*$};
\node at (2.7,0) {$*$};
\end{tikzpicture}
\caption{The link $L$.}
\label{lfig}
\end{figure}

The four crossings marked with $*$ then yield the following relations.
\begin{align*}
  &  0=2s_E(g)-s_E(i)-s_E(j) = -16s_E(a)+8s_E(b)+8s_E(d) \\
  &  0=2s_E(n)-s_E(d)-s_E(e) = 8s_E(a)-16s_E(d)+8s_E(j) \\
  &  0=2s_E(d)-s_E(m)-s_E(p) = -16s_E(a)+8s_E(b)+8s_E(d) \\
  &  0=2s_E(p)-s_E(n)-s_E(b) = 24s_E(a)-16s_E(b)-8s_E(j) 
\end{align*}

The last two relations are redundant: the third relation is the same as the first, and the fourth relation is $-2$ times the first relation, minus the second relation. We conclude that 
\[
M_A(L)_ \nu \cong \mathbb Z \oplus \mathbb Z \oplus \mathbb Z_8 \oplus \mathbb Z_8 \text ,
\]
with the four direct summands generated by $s_E(a)$, $x=s_E(b)-s_E(a)$, $y=s_E(b)+s_E(d)-2s_E(a)$ and $z=s_E(a)+s_E(j)-2s_E(d)$. 

\begin{proposition}
There is no way to index the components of $L$ so that the resulting map $\phi_{\nu}:M_A(L)_{\nu} \to A_4 = \mathbb{Z} \oplus \mathbb{Z}_2 \oplus \mathbb{Z}_2 \oplus \mathbb{Z}_2$ sends two elements of finite order to two elements of the set $\{ (1,0,0,0),(0,1,0,0),(0,0,1,0),(0,0,0,1) \}$.
\end{proposition}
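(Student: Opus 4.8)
The plan is to reduce the statement to a finite check on a four-element subgroup of $\mathbb{Z}_2^4$. First I would record the partition of the arcs of $E$ into the four components of $L$, which can be read off directly from the crossing relations: at each crossing the two underpassing arcs belong to the same component, so the relations $s_E(c)=2s_E(a)-s_E(b)$, $s_E(g)=2s_E(d)-s_E(a)$, and so on, link the arcs into four classes. Tracing these equalities (including the four starred crossings) yields $K_A=\{a,f,g,k,m,p\}$, $K_B=\{b,c,h,n\}$, $K_D=\{d,e\}$, and $K_J=\{i,j\}$. Since $M_A(L)_\nu \cong \mathbb{Z}\oplus\mathbb{Z}\oplus\mathbb{Z}_8\oplus\mathbb{Z}_8$ with the free summands generated by $s_E(a)$ and $x=s_E(b)-s_E(a)$ and the torsion summands by $y$ and $z$, the finite-order elements are exactly the torsion subgroup $T=\langle y,z\rangle$. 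As $w_{\nu}$ is the first coordinate of $\phi_{\nu}$ and takes values in the torsion-free group $\mathbb{Z}$, every element of $T$ has $\phi_{\nu}$-image in $\{0\}\oplus\mathbb{Z}_2^3$; hence $(1,0,0,0)$ is never the image of a finite-order element, and it suffices to show that no indexing places two of the three vectors $(0,1,0,0),(0,0,1,0),(0,0,0,1)$ simultaneously in $\phi_{\nu}(T)$.

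Next I would isolate the only feature of $\phi_{\nu}$ that depends on the indexing. Define a component-indicator map $\chi:M_A(L)_\nu\to\mathbb{Z}_2^4$, with one coordinate per component in the order $A,B,D,J$, by setting $\chi(s_E(a))=\mathbf{1}_{\kappa_E(a)}$ reduced modulo $2$. Because the two underpassing arcs at a crossing share a component, each relation $r_E(c)=2(\text{over})-(\text{under})-(\text{under}')$ maps to $0$ in $\mathbb{Z}_2^4$, so $\chi$ descends to $M_A(L)_\nu$. For a given indexing, choosing which component is $K_1$ amounts to deleting the corresponding coordinate of $\chi$, and the definition of $\phi_{\nu}$ shows that coordinates $2,3,4$ of $\phi_{\nu}$ are exactly the three remaining coordinates of $\chi$, reordered by the indexing permutation. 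Since reordering coordinates $2,3,4$ carries standard basis vectors to standard basis vectors, the number of torsion standard basis vectors lying in $\phi_{\nu}(T)$ equals the number of weight-one vectors in the image of $\chi(T)$ under deletion of the $K_1$-coordinate, independent of the reordering.

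Finally I would carry out the finite check. A short computation using the component data gives $\chi(y)=(0,1,1,0)$ and $\chi(z)=(1,0,0,1)$, so
\[
\chi(T)=\{(0,0,0,0),\,(0,1,1,0),\,(1,0,0,1),\,(1,1,1,1)\}.
\]
Deleting each of the four coordinates in turn and inspecting the four projected vectors, one finds in every case exactly one projected vector of weight one: deleting the $A$, $B$, $D$, or $J$ coordinate gives the weight-one vector $(0,0,1)$, $(0,1,0)$, $(0,1,0)$, or $(1,0,0)$ respectively. Thus for every indexing $\phi_{\nu}(T)$ contains exactly one of the three torsion standard basis vectors, so two of them can never be hit at once, and the proposition follows.

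The main obstacle is not any individual computation but the bookkeeping that makes the reduction rigorous: correctly extracting the four-component partition from the diagram, and verifying that reindexing the components acts on the torsion coordinates of $\phi_{\nu}$ precisely by deleting one coordinate of $\chi$ and permuting the other three. Once that correspondence is pinned down, the invariance of the weight-one count under permutation turns the entire question into the four-case inspection of $\chi(T)$ above, and I expect no further difficulty.
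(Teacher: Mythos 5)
Your proof is correct, and the component partition, the values $\chi(y)=(0,1,1,0)$ and $\chi(z)=(1,0,0,1)$, and the four deletion checks all agree with what the relations in the paper force. The underlying computation is the same as the paper's: the paper also observes that $\phi_\nu(uy+vz)$ has first coordinate $0$ and that its remaining coordinates are the mod-$2$ parities of $u$ and $v$ distributed according to the component indexing, and then verifies the claim ``directly, by checking each of the $24$ ways to index the components,'' displaying four representative cases (one for each choice of which component is $K_1$). What you add is the explicit justification that only the choice of $K_1$ matters: since permuting coordinates $2,3,4$ of $A_4$ permutes the standard basis vectors, the count of basis vectors in $\phi_\nu(T)$ depends only on which coordinate of $\chi$ is deleted. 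That symmetry observation turns the paper's $24$-case brute force into a rigorously sufficient $4$-case check, and the auxiliary indicator map $\chi$ (which descends to $M_A(L)_\nu$ for exactly the reason you give) packages the bookkeeping cleanly. The one point worth stating more carefully in a final write-up is the derivation of the arc partition $K_A,K_B,K_D,K_J$ itself, since the whole argument rests on it; but it is forced by the underpass pairs visible in the listed crossing relations, and your partition is the correct one.
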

\begin{proof} The proposition is verified directly, by checking each of the $24$ ways to index the components of $L$. We give details for four of the 24.

Suppose we index the components of $L$ so that $K_1$, $K_2$, $K_3$ and $K_4$ correspond to the arcs $a$, $b$, $d$ and $j$, respectively. Then according to the definition given in Sec.\ \ref{2red}, the map $\phi_{\nu}:M_A(L)_{\nu} \to A_4 = \mathbb{Z} \oplus \mathbb{Z}_2 \oplus \mathbb{Z}_2 \oplus \mathbb{Z}_2$ has $\phi_{\nu}(s_E(a))=(1,0,0,0)$, $\phi_{\nu}(x)=(0,1,0,0)$, $\phi_{\nu}(y)=(0,1,1,0)$ and $\phi_{\nu}(z)=(0,0,0,1)$. Every element of $M_A(L)_ \nu$ of finite order is $uy+vz$ for some integers $u,v$. The image of such an element under $\phi_ \nu$ is $(0,\overline u, \overline u, \overline v)$, where the overline indicates reduction modulo 2. This image cannot equal $(1,0,0,0)$, $(0,1,0,0)$ or $(0,0,1,0)$.

Similarly, if $K_1$, $K_2$, $K_3$ and $K_4$ correspond respectively to $j$, $b$, $a$ and $d$, then $\phi_{\nu}(s_E(a))=(1,0,1,0)$, $\phi_{\nu}(x)=(0,1,1,0)$, $\phi_{\nu}(y)=(0,1,0,1)$ and $\phi_{\nu}(z)=(0,0,1,0)$. Therefore a finite-order element $uy+vz$ has $\phi_{\nu}(uy+vz)=(0,\overline u, \overline v, \overline u)$. This cannot equal $(1,0,0,0)$, $(0,1,0,0)$ or $(0,0,0,1)$.

If $K_1$, $K_2$, $K_3$ and $K_4$ correspond respectively to $b$, $d$, $j$, and $a$, then $\phi_{\nu}(uy+vz)=(0,\overline u, \overline v, \overline v)$, which cannot equal $(1,0,0,0)$, $(0,0,1,0)$ or $(0,0,0,1)$. If $K_1$, $K_2$, $K_3$ and $K_4$ correspond respectively to $d$, $j$, $a$ and $b$, then $\phi_{\nu}(uy+vz)=(0,\overline v, \overline v, \overline u)$, which cannot equal $(1,0,0,0)$, $(0,1,0,0)$ or $(0,0,1,0)$. \end{proof}

According to Proposition \ref{kertchar}, it follows that $Q_A(L)_\nu \not \cong \Core'(\mathbb Z \oplus \mathbb Z_8 \oplus \mathbb Z _8)$.

\subsection{The link $(T_{(2,8)} \# T_{(2,8)}) \# T_{(2,0)}$}
\label{tlink2}
We use the link diagram $D$ pictured in Fig.\ \ref{tfig} to describe the group $M_A(T)_ \nu$, where $T$ is the connected sum of torus links $(T_{(2,8)} \# T_{(2,8)}) \# T_{(2,0)}$.
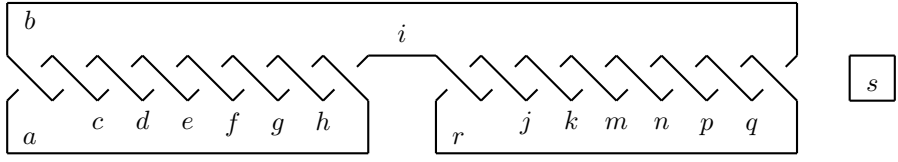
\begin{figure} [bht]
\centering
\begin{tikzpicture}
%left
\draw [thick] (-6,0.3) -- (-5.4,-0.3);
\draw [thick] (-5.4,0.3) -- (-4.8,-0.3);
\draw [thick] (-4.8,0.3) -- (-4.2,-0.3);
\draw [thick] (-4.2,0.3) -- (-3.6,-0.3);
\draw [thick] (-3.6,0.3) -- (-3,-0.3);
\draw [thick] (-3,0.3) -- (-2.4,-0.3);
\draw [thick] (-2.4,0.3) -- (-1.8,-0.3);
\draw [thick] (-6,-0.3) -- (-5.85,-0.15);
\draw [thick] (-5.4,0.3) -- (-5.55,0.15);
\draw [thick] (-5.4,-0.3) -- (-5.25,-0.15);
\draw [thick] (-4.8,0.3) -- (-4.95,0.15);
\draw [thick] (-4.8,-0.3) -- (-4.65,-0.15);
\draw [thick] (-4.2,0.3) -- (-4.35,0.15);
\draw [thick] (-4.2,-0.3) -- (-4.05,-0.15);
\draw [thick] (-3.6,0.3) -- (-3.75,0.15);
\draw [thick] (-3.6,-0.3) -- (-3.45,-0.15);
\draw [thick] (-3,0.3) -- (-3.15,0.15);
\draw [thick] (-3,-0.3) -- (-2.85,-0.15);
\draw [thick] (-2.4,0.3) -- (-2.55,0.15);
\draw [thick] (-2.4,-0.3) -- (-2.25,-0.15);
\draw [thick] (-1.8,0.3) -- (-1.95,0.15);
\draw [thick] (-1.8,-0.3) -- (-1.65,-0.15);
\draw [thick] (-1.2,0.3) -- (-1.35,0.15);
\draw [thick] (-1.8,0.3) -- (-1.2,-0.3);
%centerandacross
\draw [thick] (-1.5+1.2,0.3) -- (-1.2,0.3);
\draw [thick] (-1.5+6,0.3) -- (-1.5+6,1);
\draw [thick] (-6,0.3) -- (-6,1);
\draw [thick] (-1.5+6,1) -- (-6,1);
\draw [thick] (-6,-0.3) -- (-6,-1);
\draw [thick] (-1.2,-1) -- (-6,-1);
\draw [thick] (-1.2,-1) -- (-1.2,-0.3);
\draw [thick] (-1.5+6,-0.3) -- (-1.5+6,-1);
\draw [thick] (-1.5+1.2,-1) -- (-1.5+6,-1);
\draw [thick] (-1.5+1.2,-1) -- (-1.5+1.2,-0.3);
\draw [thick] (5.2,-0.3) -- (5.8,-0.3);
\draw [thick] (5.2,-0.3) -- (5.2,0.3);
\draw [thick] (5.2,0.3) -- (5.8,0.3);
\draw [thick] (5.8,-0.3) -- (5.8,0.3);
%right
\draw [thick] (-1.5+6,-0.3) -- (-1.5+5.4,0.3);
\draw [thick] (-1.5+5.4,-0.3) -- (-1.5+4.8,0.3);
\draw [thick] (-1.5+4.8,-0.3) -- (-1.5+4.2,0.3);
\draw [thick] (-1.5+4.2,-0.3) -- (-1.5+3.6,0.3);
\draw [thick] (-1.5+3.6,-0.3) -- (-1.5+3,0.3);
\draw [thick] (-1.5+3,-0.3) -- (-1.5+2.4,0.3);
\draw [thick] (-1.5+2.4,-0.3) -- (-1.5+1.8,0.3);
\draw [thick] (-1.5+1.8,-0.3) -- (-1.5+1.2,0.3);
\draw [thick] (-1.5+6,0.3) -- (-1.5+5.85,0.15);
\draw [thick] (-1.5+5.4,-0.3) -- (-1.5+5.55,-0.15);
\draw [thick] (-1.5+5.4,0.3) -- (-1.5+5.25,0.15);
\draw [thick] (-1.5+4.8,-0.3) -- (-1.5+4.95,-0.15);
\draw [thick] (-1.5+4.8,0.3) -- (-1.5+4.65,0.15);
\draw [thick] (-1.5+4.2,-0.3) -- (-1.5+4.35,-0.15);
\draw [thick] (-1.5+4.2,0.3) -- (-1.5+4.05,0.15);
\draw [thick] (-1.5+3.6,-0.3) -- (-1.5+3.75,-0.15);
\draw [thick] (-1.5+3.6,0.3) -- (-1.5+3.45,0.15);
\draw [thick] (-1.5+3,-0.3) -- (-1.5+3.15,-0.15);
\draw [thick] (-1.5+3,0.3) -- (-1.5+2.85,0.15);
\draw [thick] (-1.5+2.4,-0.3) -- (-1.5+2.55,-0.15);
\draw [thick] (-1.5+2.4,0.3) -- (-1.5+2.25,0.15);
\draw [thick] (-1.5+1.8,-0.3) -- (-1.5+1.95,-0.15);
\draw [thick] (-1.5+1.8,0.3) -- (-1.5+1.65,0.15);
\draw [thick] (-1.5+1.2,-0.3) -- (-1.5+1.35,-0.15);
\node at (-5.7,-0.8) {$a$};
\node at (-5.7,0.77) {$b$};
\node at (-4.8,-0.6) {$c$};
\node at (-4.2,-0.55) {$d$};
\node at (-3.6,-0.6) {$e$};
\node at (-3,-0.62) {$f$};
\node at (-2.4,-0.65) {$g$};
\node at (-1.8,-0.57) {$h$};
\node at (-.75,0.6) {$i$};
\node at (-1.5+5.4,-0.65) {$q$};
\node at (-1.5+4.8,-0.65) {$p$};
\node at (-1.5+4.2,-0.6) {$n$};
\node at (-1.5+3.6,-0.6) {$m$};
\node at (-1.5+3,-0.55) {$k$};
\node at (-1.5+2.4,-0.6) {$j$};
\node at (-1.5+1.5,-0.8) {$r$};
\node at (5.5,-0.1) {$s$};
\end{tikzpicture}
\caption{The link $T=(T_{(2,8)} \# T_{(2,8)}) \# T_{(2,0)}$.}
\label{tfig}
\end{figure}

The generators $s_D(c),s_D(d),s_D(e),s_D(f),s_D(g)$ and $s_D(h)$ can be eliminated using relations from the six leftmost crossings: $s_D(c)=2s_D(b)-s_D(a)$, $s_D(d)=2s_D(c)-s_D(b)=3s_D(b)-2s_D(a)$,  $s_D(e)=2s_D(d)-s_D(c)=4s_D(b)-3s_D(a)$,  $s_D(f)=2s_D(e)-s_D(d)=5s_D(b)-4s_D(a)$,  $s_D(g)=2s_D(f)-s_D(e)=6s_D(b)-5s_D(a)$ and  $s_D(h)=2s_D(g)-s_D(f)=7s_D(b)-6s_D(a)$. Similarly, the generators $s_D(q),s_D(p),s_D(n),s_D(m),s_D(k)$ and $s_D(j)$ can be eliminated using relations from the six rightmost crossings: $s_D(q)=2s_D(r)-s_D(b)$, $s_D(p)=2s_D(q)-s_D(r)=3s_D(r)-2s_D(b)$,  $s_D(n)=2s_D(p)-s_D(q)=4s_D(r)-3s_D(b)$,  $s_D(m)=2s_D(n)-s_D(p)=5s_D(r)-4s_D(b)$,  $s_D(k)=2s_D(m)-s_D(n)=6s_D(r)-5s_D(b)$ and  $s_D(j)=2s_D(k)-s_D(m)=7s_D(r)-6s_D(b)$.

We are left with the generators $s_D(a),s_D(b), s_D(i), s_D(r)$ and $s_D(s)$. The four crossings in the middle provide the relations $0=2s_D(h)-s_D(g)-s_D(a)=8s_D(b)-8s_D(a)$,
$s_D(i)=2s_D(a)-s_D(h)=8s_D(a)-7s_D(b)$, $0=2s_D(i)-s_D(j)-s_D(r) = 16s_D(a)-8s_D(b)-8s_D(r)$ and $0=2s_D(j)-s_D(i)-s_D(k)=8s_D(r)-8s_D(a)$.

We conclude that $M_A(T)_{\nu}$ is generated by $s_D(a),s_D(b), s_D(r)$ and $s_D(s)$, subject to two relations: $8(s_D(b)-s_D(a))=0$ and $8(s_D(r)-s_D(a))=0$. Therefore
\[
M_A(T)_{\nu} \cong \mathbb Z \oplus \mathbb Z _8 \oplus \mathbb Z _8 \oplus \mathbb Z \text ,
\]
with the four direct summands generated (in order) by $s_D(a),s_D(b)-s_D(a),$ $s_D(r)-s_D(a)$ and $s_D(s)-s_D(a)$. If the components of $T$ corresponding to $a,b,r$ and $s$ are indexed as $K_1$, $K_2$, $K_3$ and $K_4$ respectively, then $\phi_ \nu (s_D(a))=(1,0,0,0)$, $\phi_ \nu (s_D(b)-s_D(a))=(0,1,0,0)$, $\phi_ \nu (s_D(r)-s_D(a))=(0,0,1,0)$ and $\phi_ \nu (s_D(s)-s_D(a))=(0,0,0,1)$. 

According to Proposition \ref{kertchar}, it follows that $Q_A(T)_\nu \cong \Core'(\mathbb Z _8 \oplus \mathbb Z _8 \oplus \mathbb Z)$. Comparing this result with the discussion of the link $L$ in Sec.\ \ref{llink}, we 
see that both links have $\ker w_\nu \cong \mathbb Z _8 \oplus \mathbb Z _8 \oplus \mathbb Z$, but $Q_A(L)_\nu \not \cong Q_A(T)_\nu$.

\subsection{Two split links}

We close with another pair of examples, $L'$ and $L''$, to illustrate that $4 \centernot \implies 3$ in Theorem \ref{main2}. These two links are both split, so they may seem less interesting than the links $L$ and $T$ considered above. We mention them because it is easy to see that $L'$ and $L''$ are mutants. According to Viro \cite{V}, it follows that if $X'_2$ and $X''_2$ are the cyclic double covers of $\mathbb S ^3$ branched over $L'$ and $L''$, then $X'_2 \cong X''_2$. We deduce that in general, for a link $L$ the branched double cover does not determine the quandle $Q_A(L)_\nu$; of course, it follows that the branched double cover does not determine $\IMQ(L)$ either.

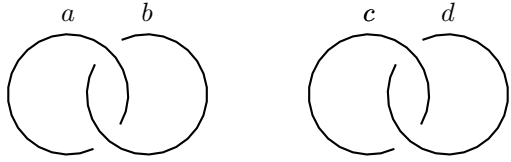
\begin{figure} [bht]
\centering
\begin{tikzpicture} 
\draw [thick, domain=-30:295] plot ({-5+(0.8)*cos(\x)}, {(0.8)*sin(\x)});
\draw [thick, domain=150:475] plot ({-3.95+(0.8)*cos(\x)}, {(0.8)*sin(\x)});
\draw [thick, domain=-30:295] plot ({4-5+(0.8)*cos(\x)}, {(0.8)*sin(\x)});
\draw [thick, domain=150:475] plot ({4-3.95+(0.8)*cos(\x)}, {(0.8)*sin(\x)});
\node at (-5,1.05) {$a$};
\node at (-3.95,1.1) {$b$};
\node at (-1,1.05) {$c$};\node at (-1,1.05) {$c$};
\node at (0.05,1.1) {$d$};
\end{tikzpicture}
\caption{$L'$ consists of two copies of the Hopf link.}
\label{hhfig}
\end{figure}

If $L'$ is the link with the diagram $D'$ illustrated in Fig.\ \ref{hhfig}, then $M_A(L')_\nu$ is generated by the four elements $s_{D'}(a), s_{D'}(b), s_{D'}(c)$ and $s_{D'}(d)$. The crossing relations are $2s_{D'}(a)=2s_{D'}(b)$ and $2s_{D'}(c)=2s_{D'}(d)$. It follows that 
\begin{equation}
\label{amods4}
M_A(L')_\nu \cong \mathbb Z \oplus \mathbb Z_2 \oplus
\mathbb Z \oplus \mathbb Z_2 \textup,
\end{equation}
with the four summands generated by $s_{D'}(a), s_{D'}(b)-s_{D'}(a), s_{D'}(c)-s_{D'}(a)$ and $s_{D'}(d)-s_{D'}(a)$, respectively.

Now, let $L''$ be the link with the diagram $D''$ illustrated in Fig.\ \ref{hhufig}. Then $M_A(L'')_\nu$ is generated by $s_{D''}(w),s_{D''}(x),s_{D''}(x'),s_{D''}(y)$ and $s_{D''}(z)$. The two crossings on the left tell us that $2s_{D''}(w)=2s_{D''}(x')=s_{D''}(x)+s_{D''}(x')$, so $s_{D''}(x')=s_{D''}(x)$ and $2s_{D''}(w)=2s_{D''}(x)$. Taking $s_{D''}(x')=s_{D''}(x)$ into account, the two crossings on the right tell us that $2s_{D''}(x)=2s_{D''}(y)$. Therefore
\begin{equation}
\label{amods5}
M_A(L'')_\nu \cong \mathbb Z \oplus \mathbb Z_2 \oplus \mathbb Z_2 \oplus \mathbb Z  \textup,
\end{equation}
with the four summands generated by $s_{D''}(w),s_{D''}(x)-s_{D''}(w),s_{D''}(y)-s_{D''}(w)$ and $s_{D''}(z)-s_{D''}(w)$, respectively. 

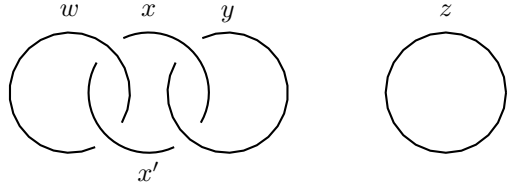
\begin{figure} [bht]
\centering
\begin{tikzpicture} 
\draw [thick, domain=-30:295] plot ({-5+(0.8)*cos(\x)}, {(0.8)*sin(\x)});
\draw [thick, domain=150:295] plot ({-3.95+(0.8)*cos(\x)}, {(0.8)*sin(\x)});
\draw [thick, domain=-30:115]  plot ({-3.95+(0.8)*cos(\x)}, {(0.8)*sin(\x)});
\draw [thick, domain=150:475] plot ({-2.9+(0.8)*cos(\x)}, {(0.8)*sin(\x)});
\draw [thick, domain=0:360] plot ({(0.8)*cos(\x)}, {(0.8)*sin(\x)});
\node at (-5,1.1) {$w$};
\node at (-3.95,1.1) {$x$};
\node at (-2.9,1.05) {$y$};
\node at (-3.95,-1.05) {$x'$};
\node at (0,1.1) {$z$};
\end{tikzpicture}
\caption{The link $L''$.}
\label{hhufig}
\end{figure}

It is easy to see that $M_A(L')_\nu \cong M_A(L'')_\nu$. It is only a little bit harder to see that $L'$ and $L''$ are not $\phi_\nu$-equivalent with respect to any order of their components.

\begin{proposition}
\label{nophinu}
No matter how their components are indexed, $L'$ and $L''$ are not $\phi_\nu$-equivalent.
\end{proposition}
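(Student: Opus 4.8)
The plan is to extract from each of $M_A(L')_\nu$ and $M_A(L'')_\nu$ a small, manifestly $\phi_\nu$-invariant combinatorial datum and then to check by direct computation that the two data disagree. Write $T_L$ for the torsion subgroup of $M_A(L)_\nu$ and set $V_L = (M_A(L)_\nu/T_L)\otimes_{\mathbb Z}\mathbb Z_2$, with canonical quotient map $q_L:M_A(L)_\nu \to V_L$. By the decompositions (\ref{amods4}) and (\ref{amods5}) both groups have free rank $2$, so $V_{L'}\cong V_{L''}\cong \mathbb Z_2^2$. Since $w_\nu$ vanishes on torsion, it descends to $M_A(L)_\nu/T_L \to \mathbb Z$ and induces $\overline w_L:V_L\to\mathbb Z_2$; as $w_\nu(s_D(a))=1$ for every arc $a$, the map $\overline w_L$ is surjective, so $\overline w_L^{-1}(1)$ is a coset of a rank-$1$ subspace of $\mathbb Z_2^2$, i.e. a set of exactly two points, and every $q_L(s_D(a))$ lies in it.

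First I would compute the multiset of images of the component generators. Using $2s_{D'}(a)=2s_{D'}(b)$ and $2s_{D'}(c)=2s_{D'}(d)$, the differences $s_{D'}(b)-s_{D'}(a)$ and $s_{D'}(d)-s_{D'}(c)$ generate $T_{L'}$, so modulo $T_{L'}$ one has $\overline{s_{D'}(a)}=\overline{s_{D'}(b)}$ and $\overline{s_{D'}(c)}=\overline{s_{D'}(d)}$, and $\overline{s_{D'}(a)},\overline{s_{D'}(c)}$ form a basis of the rank-$2$ free quotient. Hence $q_{L'}$ sends the four component generators to $\{(1,0),(1,0),(0,1),(0,1)\}$, two landing on each point of $\overline w_{L'}^{-1}(1)$. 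The analogous computation for $L''$, using $2s_{D''}(w)=2s_{D''}(x)=2s_{D''}(y)$ together with $s_{D''}(x')=s_{D''}(x)$, gives $\overline{s_{D''}(w)}=\overline{s_{D''}(x)}=\overline{s_{D''}(y)}$ and an independent class $\overline{s_{D''}(z)}$, so $q_{L''}$ sends the four generators to $\{(1,0),(1,0),(1,0),(0,1)\}$, three landing on one point of $\overline w_{L''}^{-1}(1)$ and one on the other. Thus the partition of $4$ recording how the components distribute over the two points of $\overline w_L^{-1}(1)$ is $2+2$ for $L'$ but $3+1$ for $L''$.

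The remaining step is to show a $\phi_\nu$-equivalence would have to preserve this partition. Suppose the components are indexed and $h:M_A(L')_\nu\to M_A(L'')_\nu$ is an isomorphism with $\phi'_\nu=\phi''_\nu h$. For each index $i$ pick an arc of the $i$-th component of each link and write $u'_i,u''_i$ for the corresponding generators $s_{D'}(\cdot),s_{D''}(\cdot)$. As $h$ carries torsion to torsion and, because $w_\nu$ is the first coordinate of $\phi_\nu$, satisfies $w''_\nu h=w'_\nu$, it descends to a linear isomorphism $\overline h:V_{L'}\to V_{L''}$ with $q_{L''}h=\overline h\,q_{L'}$ and $\overline w_{L''}\,\overline h=\overline w_{L'}$; in particular $\overline h$ restricts to a bijection $\overline w_{L'}^{-1}(1)\to\overline w_{L''}^{-1}(1)$ of two-element sets. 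Now $\phi''_\nu(h(u'_i))=\phi'_\nu(u'_i)=\phi''_\nu(u''_i)$, so by Proposition \ref{imq'orb} and Lemma \ref{kerphi} the elements $h(u'_i)$ and $u''_i$ lie in one coset of $\ker\phi''_\nu=2\cdot\ker w''_\nu\subseteq 2\,M_A(L'')_\nu$; hence they agree after applying $q_{L''}$, giving $\overline h(q_{L'}(u'_i))=q_{L''}(u''_i)$ for every $i$. Thus $\overline h$ matches the $L'$-configuration with the $L''$-configuration point by point, which is impossible since a bijection of two-element sets cannot turn the partition $2+2$ into $3+1$. This contradiction proves Proposition \ref{nophinu}, for every indexing.

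I expect the main obstacle to be the bookkeeping in the last paragraph: verifying that $h(u'_i)$ and $u''_i$ differ by an element of $2\,M_A(L'')_\nu$, so that they become equal under $q_{L''}$. This is exactly where the description of $Q_A(L)_\nu$ as a union of $\phi_\nu$-fibres (Proposition \ref{imq'orb}) and the identity $\ker\phi_\nu=2\cdot\ker w_\nu$ (Lemma \ref{kerphi}) carry the argument; once they are in hand, the numerical contradiction $2+2\neq 3+1$ is immediate.
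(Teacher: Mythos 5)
Your proof is correct, but it isolates a different invariant than the paper does. The paper works directly with the torsion subgroups: each of $M_A(L')_\nu$ and $M_A(L'')_\nu$ has exactly three nonzero elements of finite order, and since $\phi_\nu$ sends the four component generators to $(1,0,0,0),(1,1,0,0),(1,0,1,0),(1,0,0,1)$ in some order, the torsion element $s_{D'}(b)-s_{D'}(a)+s_{D'}(d)-s_{D'}(c)$ (a signed sum involving all four generators) always maps to $(0,1,1,1)$, whereas every torsion element of $M_A(L'')_\nu$ is a difference of just two generators and so never maps to $(0,1,1,1)$; as any $\phi_\nu$-compatible isomorphism preserves torsion, this settles all indexings at once. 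You instead pass to $V_L=(M_A(L)_\nu/T_L)\otimes\mathbb Z_2$ and record how the four component classes distribute over the two points of $\overline w_L^{-1}(1)$, obtaining $2+2$ for $L'$ versus $3+1$ for $L''$, and then use $\ker\phi_\nu=2\cdot\ker w_\nu\subseteq 2M_A(L)_\nu$ (Lemma \ref{kerphi}) to show that a $\phi_\nu$-equivalence would have to match these configurations point by point. The two obstructions are essentially dual --- your partition encodes the subgroup $T_L+2M_A(L)_\nu$, the paper's encodes $\phi_\nu(T_L)$ modulo $2A_4$ --- but your packaging is manifestly independent of the indexing and of the choice of arcs, at the cost of some bookkeeping with induced maps on quotients, while the paper's argument needs only the single evaluation $(0,1,1,1)$ and the computations already displayed in (\ref{amods4}) and (\ref{amods5}).
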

\begin{proof}
According to (\ref{amods4}), $M_A(L')_\nu$ has three nonzero elements of finite order: $s_{D'}(b)-s_{D'}(a),s_{D'}(d)-s_{D'}(c)$ and their difference, $s_{D'}(b)-s_{D'}(a)+s_{D'}(d)-s_{D'}(c)$. The map $\phi_{\nu}:M_A(L')_\nu \to A_4 = \mathbb Z \oplus \mathbb Z_2 \oplus \mathbb Z_2 \oplus \mathbb Z_2$ sends $s_{D'}(a),s_{D'}(b)$, $s_{D'}(c)$ and $s_{D'}(d)$ to $(1,0,0,0),(1,1,0,0),(1,0,1,0)$ and $(1,0,0,1)$, in some order. No matter what order is used, the image of $s_{D'}(b)-s_{D'}(a)+s_{D'}(d)-s_{D'}(c)$ will be $(0,1,1,1)$.

According to (\ref{amods5}), $M_A(L'')_\nu$ also has three nonzero elements of finite order: $s_{D''}(x)-s_{D''}(w),s_{D''}(y)-s_{D''}(w)$ and their difference, $s_{D''}(x)-s_{D''}(y)$. The map $\phi_{\nu}:M_A(L'')_\nu \to A_4$ sends $s_{D''}(w),s_{D''}(x),s_{D''}(y)$ and $s_{D''}(z)$, in some order, to $(1,0,0,0),(1,1,0,0),(1,0,1,0)$ and $(1,0,0,1)$. No matter what order is used, $(0,1,1,1)$ will not be the image of an element of finite order under $\phi_\nu$.

Of course, every isomorphism $f:M_A(L')_\nu \to M_A(L'')_\nu$ has the property that whenever $m$ is of finite order, so is $f(m)$. It follows that no such isomorphism is compatible with the $\phi_\nu$ maps of $L'$ and $L''$. \end{proof}

\end{document}